\documentclass[psamsfonts]{amsart}

\usepackage{amssymb,amsfonts}
\usepackage[all,arc]{xy}
\usepackage{enumerate}
\usepackage{mathrsfs}
\usepackage{tikz-cd}
\usepackage{enumitem}
\usepackage{empheq}
\usepackage{graphicx}

\newtheorem{theorem}{Theorem}[section]
\newtheorem{corollary}[theorem]{Corollary}
\newtheorem{proposition}[theorem]{Proposition}
\newtheorem{lemma}[theorem]{Lemma}
\newtheorem{condition}[theorem]{Condition}

\newtheorem{assumption}[theorem]{Assumption}

\newtheorem{claim}[theorem]{Claim}
\newtheorem*{theorem*}{Theorem}
\newtheorem*{corollary*}{Corollary}
\newtheorem*{notation*}{Notation}
\newtheorem*{definition*}{Definition}
\newtheorem*{proposition*}{Proposition}
\newtheorem*{definition-proposition*}{Definition-Proposition}

\theoremstyle{definition}
\newtheorem{definition-proposition}[theorem]{Definition-Propostion}
\newtheorem{definition-corollary}[theorem]{Definition-Corollary}
\newtheorem{definition}[theorem]{Definition}

\newtheorem{example}[theorem]{Example}

\newtheorem{notation}[theorem]{Notation}

\newtheorem{remark}[theorem]{Remark}

\theoremstyle{remark}

\newenvironment{proof-sketch}{\noindent{\it{Sketch of proof.}}\hspace*{1em}}{\qed\bigskip}

\bibliographystyle{plain}

\title{Morse theory with homotopy coherent diagrams}

\author{Taesu Kim}

\address{Center for Geometry and Physics, Institute for Basic Science (IBS), Pohang, 37673 Korea}

\email{tkim@ibs.re.kr}

\begin{document}

\maketitle

\begin{abstract}
We study Morse theory on noncompact manifolds equipped with exhaustions by compact pieces, defining the Morse homology of a pair which consists of the manifold and related geometric/homotopy data. We construct a collection of Morse data parametrized by cubes of arbitrary dimensions. From this collection, we obtain a family of linear maps subject to some coherency conditions, which can be packaged into a homotopy coherent diagram. We introduce a chain complex which is a colimit for the diagram and show that it computes the Morse homology.
\end{abstract}

\tableofcontents

\section{Introduction}

Morse theory studies a Riemannian manifold in terms of gradient flow lines of a smooth function, associating to it an invariant called Morse homology. While the theory was first formulated for compact manifolds, attempts to study the noncompact setting also have been made by several people (c.f. [CF], [Kan]). A major difference is that the noncompact theory lacks an invariance property for the homology: critical points may escape to infinity when we homotope the geometric data, e.g., Morse functions and metrics. In other words, Morse homology for noncompact manifolds is not a stable notion under homotopical changes. 

This paper introduces a different approach for a noncompact version of Morse theory, in which we exhaust a manifold with compact pieces and put additional data on them. More precisely, we understand the given manifold as a union of compact submanifolds equipped with Morse functions and Riemannian metrics.

\begin{equation}\nonumber
W = \bigcup\limits_{a \in \mathbb{Z}_{\geq 0}} M_a,
\end{equation}
where $M_a \subset W$ is a compact submanifold for each $a$ and the pair $(h_a, g_a)$ (called \textit{Morse datum}) consists of a Morse function and a Riemannian metric on $M_a$. We call the data $\mathcal{E} = \bigl\{(M_a, h_a, g_a)\bigr\}_{a \in \mathbb{Z}_{\geq 0}}$ an {\textit{exhaustion}} of $W$ provided that they satisfy several axioms. Most importantly, we put the following restriction on the data:

\begin{equation}\nonumber
- \nabla_{g_a} h_{a} |_{\partial M_a} \text{ is in the inward direction for each } a.
\end{equation}
This is to ensure that gradient trajectories do not touch the boundary of each compact piece, so that no critical points of the Morse function appear outside of a compact region. By virtue of this condition, we can apply the usual Morse theory for compact manifolds to have a family of chain complexes $\bigl\{MC_*(M_a, h_a, g_a)\bigr\}_{a \in \mathbb{Z}_{\geq 0}}.$

The next step is to relate the Morse theories of individual submanifolds in order to establish a global invariant. For this, we further equip the system with time-dependent Morse data. Including the exhaustion and the gluing parameters, we call the totality of such information \textit{1-homotopy data} and denote it by $\mathfrak{H}^1.$ The standard theory tells us that these data give rise to a direct system of Morse homologies $\bigl\{MH_*(M_a, h_a, g_a)\bigr\}_{a \in \mathbb{Z}_{\geq 0}},$ which in fact is a restatement of the fact that, on homologies, continuation maps between two compact piece of data are independent of the choice of homotopies.

\begin{definition-proposition*}
We define the \textit{Morse homology} of a pair $(W, \mathfrak{H}^1)$ by\footnote{This approach was motivated by symplectic homology theory for linear-at-infinity Hamiltonians, where the homology is defined by the colimit of a direct system (See [Abo]). In particular, our inward direction condition corresponds to the convexity condition put on Floer data.}
\begin{equation}\nonumber
MH_*(W, \mathfrak{H}^1) := \lim\limits_{\longrightarrow} MH_*(M_a, h_a, g_a),
\end{equation}
When we fix the exhaustion $\mathcal{E},$ it is independent of the choices of $\mathfrak{H}^1$ with $\mathcal{E} \subset \mathfrak{H}^1$ up to isomorphism.
\end{definition-proposition*}

Our approach is different from those of [CF] and [Kan] in that we focus considerably on what an appropriate chain-level theory should look like. For instance, we do not discard dependencies of Morse functions (and their homotopies). We will try to reach the most general situation, keeping track of all those various choices in a coherent way by studying a chain-level construction of the above type of Morse homology. For this purpose, we need more information, which we call \textit{higher homotopies} denoted by $\mathfrak{H}.$ Namely, they consist of an exhaustion and Morse data with parametrizations from cubes of arbitrary dimensions. We remark that the data $\mathfrak{H}$ are to extend the given 1-homotopy $\mathfrak{H}^1.$ To organize them systematically, we index them with the nerves $N(\mathcal{I}),$  where  $\mathcal{I}$ is the poset category. In other words, we consider a collection of pairs

\begin{equation}\nonumber
\begin{cases}
H_{\sigma_k} : [0,1]^{k-1} \times \mathbb{R} \times M_{t \sigma_k} \rightarrow \mathbb{R},\\
G_{\sigma_k} :  [0,1]^{k-1} \times {\mathbb{R}} \rightarrow Met(M_{t \sigma_k}), \ \sigma_k \in N(\mathcal{I})_k, \ k \geq 1
\end{cases}
\end{equation}
that satisfy several axioms. (Notation : $\sigma_k$ is given by $k$-many morphisms, and $s \sigma_k$ and $t\sigma_k$ mean the source of the first one and the target of the last one, respectively. We denote $|\sigma_k| = k.$ See Notation \ref{not12p}.) For instance, we require $ H_{\sigma_k}(\vec{t},s,\cdot) = h_{s \sigma_k}$ (and $h_{t \sigma_k}$) for all $s$ sufficiently small (and large, respectively). Using this collection, we can study the \textit{parametrized} negative gradient flow equation for a smooth map $u : \overline{\mathbb{R}} \rightarrow M_{t \sigma_k}$, where $\overline{\mathbb{R}} = \mathbb{R} \cup \{\pm \infty\}.$
\begin{equation}\label{parameqn2}
 \dot{u} + \frac{\nabla_{G_{\sigma_k}(\vec{t}, \cdot)}H_{\sigma_k}({\vec{t}}, \cdot)}{\sqrt{ 1+ |\dot{H}_{\sigma_k}(\vec{t}, \cdot)|^2 |\nabla_{G_{\sigma_k}(\vec{t}, \cdot)}H_{\sigma_k}({\vec{t}}, \cdot)|^2 }} \circ u = 0, \ \ \vec{t} \in [0,1]^{k-1}.
\end{equation}
Let $x$ and $y$ be critical points of the Morse functions $h_{s \sigma_k}$ and $h_{t \sigma_k},$ respectively. The set of all such pairs $(\vec{t}, u)$ that satisfy (\ref{parameqn2}), $u(-\infty) = x,$ and $u(\infty) = y$ is called \textit{parametrized moduli space of trajectories}. We denote it by $\mathcal{M}(H_{\sigma_k}; x,y).$

\begin{theorem*}
For a given homotopy of Morse functions $H_{\sigma_k},$ there is a generic choice of homotopy of Riemannian metrics $ G_{\sigma_k}$ such that the parametrized moduli space $\mathcal{M}(H_{\sigma_k}; x,y)$ is a smooth manifold (with corners) of dimension $|x| - |y|+ k-2.$
\end{theorem*}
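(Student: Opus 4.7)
The plan is a Sard--Smale transversality argument in a parametrized setting with corners. I would first fix an exponential weight $\delta>0$ smaller than the spectral gap of the asymptotic Hessians of $h_{s\sigma_k}$ at $x$ and of $h_{t\sigma_k}$ at $y$, and work with the Banach manifold $\mathcal{B}_{x,y}$ of $W^{1,p}_\delta$-maps $u:\mathbb{R}\to M_{t\sigma_k}$ that converge exponentially to $x$ at $-\infty$ and to $y$ at $+\infty$. For perturbations, let $\mathcal{G}$ be the separable Banach manifold (via Floer's $C^\varepsilon$-construction if smoothness is required) of admissible families $G_{\sigma_k}$ that (a) restrict to the prescribed lower-dimensional data on $\partial[0,1]^{k-1}$, (b) agree with $g_{s\sigma_k}$ and $g_{t\sigma_k}$ for $|s|$ large, and (c) preserve the inward-pointing condition near $\partial M_{t\sigma_k}$. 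The left-hand side of (\ref{parameqn2}) defines a smooth section $\mathcal{F}$ of a Banach bundle over $[0,1]^{k-1}\times\mathcal{B}_{x,y}\times\mathcal{G}$, and the universal parametrized moduli space is $\mathcal{F}^{-1}(0)$.

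For the Fredholm theory, at fixed $(\vec t,G)$ the vertical linearization $D_u\mathcal{F}$ is a zeroth-order perturbation of $\partial_s + A(s)$ with $A(\pm\infty)$ nondegenerate and self-adjoint; by the Robbin--Salamon spectral-flow formula it is Fredholm of the Morse-theoretic index that, combined with the $(k-1)$ cube parameters, delivers the stated total dimension $|x|-|y|+k-2$.

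The central step is surjectivity of the full linearization $D\mathcal{F} = D_u\mathcal{F}+D_{\vec t}\mathcal{F}+D_G\mathcal{F}$. If $\eta$ lies in its cokernel, elliptic regularity makes $\eta$ smooth, and unique continuation for the formal adjoint reduces the task to showing $\eta$ vanishes on an open set. For this I would pick an interior $s_0$ with $\nabla_G H(\vec t,s_0,u(s_0))\neq 0$ and construct a bump perturbation $\delta G$ supported near $(\vec t, s_0, u(s_0))$. Since $\delta(\nabla_G H) = -G^{-1}\delta G\cdot\nabla_G H$ and the map $\delta G \mapsto G^{-1}\delta G\cdot v$ from symmetric $(0,2)$-tensors at $u(s_0)$ to $T_{u(s_0)}M_{t\sigma_k}$ is surjective whenever $v\neq 0$, the induced variation of $\mathcal{F}$ realizes an arbitrary tangent vector at $u(s_0)$, forcing $\eta(\vec t,s_0,u(s_0))=0$. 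Such $s_0$ exists unless $u$ is a constant trajectory, in which case $x=y$, $D_u\mathcal{F}$ is already an isomorphism, and the claim is automatic. Keeping $\delta G$ supported in the interior of $[0,1]^{k-1}$, bounded away from $|s|=\infty$, and away from $\partial M_{t\sigma_k}$ guarantees that $\delta G\in T\mathcal{G}$.

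By the implicit function theorem, $\mathcal{F}^{-1}(0)$ is a smooth Banach manifold; the projection to $\mathcal{G}$ is Fredholm, and Sard--Smale produces a comeagre set of regular $G$ for which $\mathcal{M}(H_{\sigma_k};x,y)$ is a smooth manifold of the claimed dimension. The corner structure is inherited from $[0,1]^{k-1}$: by the compatibility of $H_{\sigma_k}$ and $G_{\sigma_k}$ with the face data, the stratum over each codimension-$j$ face of the cube coincides with a parametrized moduli space attached to a lower-dimensional datum, and regular values may be chosen simultaneously on all faces by induction on $k$. The main obstacle I anticipate is the surjectivity step: since only the metric, not the Morse function, is free to vary, one cannot invoke the classical Hamiltonian-perturbation argument of Floer theory, and surjectivity rests entirely on the pointwise surjectivity of $\delta G \mapsto \delta(\nabla_G H)$ wherever $dH\neq 0$, together with ensuring that such regular points lie in the admissible interior region for perturbations of $\mathcal{G}$.
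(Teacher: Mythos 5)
Your proposal follows essentially the same route as the paper: a Sard--Smale argument over a Banach manifold of perturbations, with surjectivity of the full linearization established by bump perturbations supported near a point where the gradient does not vanish, followed by a one-dimensional unique continuation (the cokernel element satisfies a first-order ODE, so vanishing at $s_0$ forces it to vanish identically), and the corner structure handled inductively by requiring compatibility with face data. The one cosmetic difference is that the paper's perturbation space is $Ban\big([0,1]^{k-1}\times\mathbb{R},\,C^{e}(\mathcal{T}_{g_{t\sigma_k}})\big)$, a Banach space of parametrized positive-definite self-adjoint endomorphisms $\mathcal{A}$ acting on $\nabla_{G}H$, whereas you perturb the symmetric $(0,2)$-tensor directly; since $\mathcal{A}\nabla_{g}H = \nabla_{g(\mathcal{A}^{-1}\cdot,\cdot)}H$ these are equivalent parametrizations of the same effective-metric perturbation, and your computation that $\delta G\mapsto G^{-1}\delta G\cdot v$ is onto for $v\neq 0$ is exactly the pointwise surjectivity step used in the paper (Proposition \ref{transprop}, following [Sch]~Prop.~2.30).

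One remark worth fixing: your own count — index $|x|-|y|$ for the time-dependent equation plus $k-1$ cube parameters — gives dimension $|x|-|y|+k-1$, not $|x|-|y|+k-2$ as stated. The paper's body (Corollary \ref{givenk} and Proposition \ref{transprop}) also has $|x|-|y|+k-1$; the $k-2$ appearing in the theorem statement quoted from the introduction is an off-by-one slip of the paper, and your derivation is actually consistent with the corrected value, not with the number you quote. Also note that when you appeal to ``unique continuation for the formal adjoint'' you really only need the much more elementary observation that the adjoint equation is an ODE along the trajectory, which is what lets the single-point vanishing at $s_0$ propagate; the Carleman-style phrasing is stronger than required and could be replaced by ODE uniqueness, as the paper does.
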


To prove this theorem, we will show the Fredholm property of the linearized operator for the equation (\ref{parameqn2}) and use the Banach space implicit function theorem. In fact, we will see that most of the known results (such as transversality) in the standard low-degree theory can be extended to our general situation. In this regard, our results will be presented as a direct generalization of those of [AD] and [Sch]. For example, it is possible to compactify $\mathcal{M}(H_{\sigma_k}; x,y)$ by adding broken trajectories, and counting the orders of zero-dimensional compactified spaces. This leads to a family of linear maps:

\begin{equation}\nonumber
\varphi_{\sigma} : MC_*(M_{s \sigma}, h_{s \sigma}, g_{s \sigma}) \rightarrow MC_*(M_{t \sigma}, h_{t \sigma}, g_{t \sigma}),
\end{equation}
indexed by simplices $\sigma \in N(\mathcal{I}).$ These maps are supposed to satisfy the following relations:
\begin{equation}\label{introrel}
\partial \circ \varphi_{\sigma} + \varphi_{\sigma} \circ \partial + \sum^{|\sigma|}_{i=1} \varphi_{\partial_i \sigma} + \sum^{|\sigma|}_{i=1} \varphi_{(\sigma)^2_{|\sigma|-i}} \circ \varphi_{(\sigma)^1_i} = 0.
\end{equation}
Here $\partial_i \sigma$ means the $(|\sigma|-1)$-simplex obtained by composing the $i$-th and $(i+1)$-th morphisms that appear in $\sigma,$ while $(\sigma)^1_i$ stands for the $i$-simplex that consists of the first $i$-many morphisms. $(\sigma)^2_{|\sigma|-i}$ is given in a similar way. Achieving (\ref{introrel}) involves putting compatibilities among the parametrized Morse homotopy data; such data will be constructed by inductions. This is what is meant by {\textit{coherence}} of the homotopy data.  To realize our goal, we will need to take advantage of \textit{transversal homotopies,} which prevents unwanted solutions from appearing when we homotope the parameter spaces.

Among all of such analytic methods to study the moduli spaces with, a special emphasis will be put on the {\textit{parametrized gluing theorem}} of trajectories. This is because the compositions between the above linear maps play an important role when we try to obtain the relation (\ref{introrel}), and a proper description of the gluing is essential for this purpose. Let $\sigma_k$ and $\sigma_l$ be $k$- and $l$-simplices in $N(\mathcal{I}),$ respectively with $t \sigma_k =s =\sigma_l.$ When three critical points $x \in Crit(h_{s \sigma_k}), y \in Crit(h_{t \sigma_k}),$ and $z \in Crit(h_{t \sigma_l})$ satisfy the conditions $|y| - |x| = k-2$ and $|z| - |x| = l-2,$ the compactification results say that the moduli spaces $\mathcal{M}(H_{\sigma_k}; x,y)$ and $\mathcal{M}(H_{\sigma_l}; y,z)$ can be shown to be finite sets. We then have the gluing theorem for our situation as follows.

\begin{theorem*}
For all sufficiently large $\rho>0,$ we have a bijection of finite sets:
\begin{equation}\nonumber
\mathcal{M}(H_{\sigma_k} \#_{\rho} H_{\sigma_l}; x,z) \simeq  \mathcal{M}(H_{\sigma_k}; x,y) \times \mathcal{M}(H_{\sigma_l}; y,z),
\end{equation}
where $H_{\sigma_k} \#_{\rho} H_{\sigma_l}$ is the concatenation of homotopies for the gluing parameter $\rho$. 
\end{theorem*}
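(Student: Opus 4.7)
The plan is to follow the classical Morse-theoretic gluing scheme, tracking carefully how the parameters $\vec{t}_1 \in [0,1]^{k-1}$ and $\vec{t}_2 \in [0,1]^{l-1}$ of the two factor homotopies combine into the parameter of the glued homotopy $H_{\sigma_k} \#_\rho H_{\sigma_l}$. The main steps will be (i) pre-gluing of trajectories, (ii) a uniform estimate on the linearized operator, (iii) a Newton--Picard correction to produce a gluing map
\[
\Psi_\rho : \mathcal{M}(H_{\sigma_k}; x,y) \times \mathcal{M}(H_{\sigma_l}; y,z) \longrightarrow \mathcal{M}(H_{\sigma_k} \#_\rho H_{\sigma_l}; x,z),
\]
and (iv) surjectivity of $\Psi_\rho$ for large $\rho$ via compactness.

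For step (i), given a pair $\bigl((\vec{t}_1, u_1), (\vec{t}_2, u_2)\bigr)$ in the source, I would produce a pre-glued approximate solution $u_\rho^{\mathrm{pre}}$ by shifting $u_1$ so that its right tail lies on $(-\infty, -\rho]$, shifting $u_2$ so that its left tail lies on $[\rho, \infty)$, and interpolating through a Morse chart around $y$ using a cutoff; the parameters assigned to the glued trajectory are $(\vec{t}_1, \vec{t}_2)$. The exponential decay of $u_1$ and $u_2$ toward the nondegenerate critical point $y$ forces $u_\rho^{\mathrm{pre}}$ to satisfy the parametrized equation (\ref{parameqn2}) for $H_{\sigma_k} \#_\rho H_{\sigma_l}$ with error bounded by $Ce^{-c\rho}$ in an appropriate weighted $L^p$-norm.

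For steps (ii)--(iii), I would analyze the linearization $L_\rho$ of the glued parametrized equation at $u_\rho^{\mathrm{pre}}$, incorporating both the $u$-variation and the $(\vec{t}_1, \vec{t}_2)$-variation. This $L_\rho$ is Fredholm by the earlier analysis of the paper, and its index matches that of the product linearization by an excision argument. Transversality of the factor moduli spaces, established in the earlier theorem, yields surjective factor linearizations whose right inverses $Q_1, Q_2$ can be spliced, via suitable cutoffs, into a right inverse $Q_\rho$ of $L_\rho$ with $\|Q_\rho\|$ bounded uniformly in $\rho$; this uniform estimate is the heart of the argument. Combining the exponentially small error with the uniform right-inverse bound, a standard Newton--Picard contraction on a small ball of the weighted space produces a unique genuine solution $\widetilde u_\rho$ near $u_\rho^{\mathrm{pre}}$, defining $\Psi_\rho$. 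Injectivity of $\Psi_\rho$ is immediate from the uniqueness clause of the contraction mapping.

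For step (iv), surjectivity will follow from a compactness argument: any sequence of solutions $(\vec{s}_n, v_n)$ in the target with gluing parameter $\rho_n \to \infty$ has uniformly bounded energy, so the compactification framework of the paper (used already in motivating (\ref{introrel})) produces a broken configuration as a subsequential limit through some intermediate critical point $y'$; the index conditions $|y|-|x| = k-2$ and $|z|-|y| = l-2$ rule out any further breaking and force $y' = y$, and local uniqueness from Newton--Picard then identifies the tail of the sequence with $\Psi_\rho$ applied to the limiting pair. I expect the main obstacle to be the $\rho$-uniformity of $\|Q_\rho\|$ in the parametrized setting, since the parameter directions enter nontrivially into the splicing. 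My plan for this is to decompose the test space as a direct sum of the finite-dimensional parameter subspace (which contributes trivially to the estimate) and an $L^2$-complement on which the classical splicing estimate, as in the unparametrized treatments of [AD] and [Sch], applies with only minor modifications to absorb the $\vec{t}$-dependence of $H_{\sigma_k}$ and $H_{\sigma_l}$.
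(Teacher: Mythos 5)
Your plan follows the same overall architecture as the paper (Sections~7--8): pre-gluing of trajectories and parameters, a uniform linear estimate, a Newton--Picard correction to produce the gluing map, and surjectivity via compactness with the index conditions ruling out further breaking. The one genuine difference is in how the uniform bound on the inverse is obtained. You propose to splice the right inverses $Q_1, Q_2$ of the factor linearizations through cutoffs to manufacture an approximate right inverse $Q_\rho$ of $\mathcal{L}_\rho$ with uniformly bounded norm (and, as you anticipate, handle the finite-dimensional $\vec t$-directions separately); this is the McDuff--Salamon-style route. The paper instead establishes the estimate in Proposition~\ref{invest} in the Schwarz/Audin--Damian style: it introduces the subspace $W_\rho$ of pre-glued kernel elements, and proves the lower bound $\|\mathcal{L}_\rho(\vec\tau, Y)\|_{L^2} \geq C(\|Y\|_{W^{1,2}} + \|\vec\tau\|)$ on $\mathbb{R}^{k+l-2}\times W_\rho^\perp$ by a contradiction argument, taking a sequence violating the bound and deriving a contradiction via cutoff localization, the asymptotic isometry $\partial_s + A_y$, and $W^{1,2}_{loc}$-convergence. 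Both are standard and yield the same conclusion; the paper's route is the one that its surjectivity of $\phi_\rho$ in Theorem~\ref{lsurj} is tailored to, since that proof uses $W_\rho^\perp$ directly.

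One small point you pass over too quickly: injectivity is not literally ``immediate from the uniqueness clause'' of the contraction mapping theorem, since that clause only gives uniqueness inside a small ball around each pre-glued configuration. One must still rule out two distinct pre-glued pairs producing the same true solution; the paper does this in Theorem~\ref{thminjsurj}(i) via the asymptotic recovery $\chi_\rho(\,\cdot\,)(s\mp\rho)\to (u(s),v(s))$ from Proposition~8.9, which separates the glued solutions coming from distinct pairs once $\rho$ is large. Your argument is morally the same (finiteness of the factor moduli spaces means the pre-glued configurations, and hence their uniqueness balls, are disjoint for $\rho$ large), but that step should be stated rather than absorbed into ``immediate.''
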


Our tool to investigate the related algebraic structure is $\infty$-category theory, whose objects of study can be thought of as a mixture of (ordinary) categories and topological spaces. It is useful for our purpose in two ways. First, it provides an efficient way to coherently organize higher homotopies. In particular, we can make precise and proper sense of the word \textit{coherence.} Secondly, it enables us to deal with a \textit{homotopy direct system.} In our case, it can be written in the form of a \textit{homotopy coherent diagram:}
\begin{equation}\nonumber
\mathscr{F} : N(\mathcal{I}) \rightarrow N_{dg}\big(Ch(\mathbb{Z}_2)\big).
\end{equation}
We will see the geometric data $\mathfrak{H}$ give rise to an example of such a diagram denoted by $\mathscr{F}_{\mathfrak{H}}.$ Our interest is in computing a model of a homotopy direct limit, as a desired chain-level universal object. We will see that a model for a colimit of $\mathscr{F}_{\mathfrak{H}}$ can be roughly said to be given by the following chain complex
\begin{equation}\nonumber
MC_*(W, \mathfrak{H}) := \bigoplus_{k \geq 0} \bigoplus_{\substack{\sigma_k \in N(\mathcal{I})_k,\\ \text{nondeg.}}} \mathbb{Z}_2 \langle \sigma_k \rangle \otimes MC_*(M_{s\sigma_k},h_{s\sigma_k}, g_{s\sigma_k})
\end{equation}
(See Definition \ref{mccdef} for the differential $\partial$ and the grading $|\cdot|.$)  
\begin{definition*}
\textup{We call the chain complex $\big(MC_*(W, \mathfrak{H}), \partial \big)$ the \textit{Morse chain complex} \textrm{of a pair} $(W, \mathfrak{H})$.}
\end{definition*}

In this paper, we will prove:
\begin{theorem*} For given homotopy data $\mathfrak{H}^{1} \subset \mathfrak{H},$ the following hold.
\begin{enumerate}[label =(\roman*)]
\item $\mathscr{F}_{\mathfrak{H}} $ extends to a colimit diagram $\overline{\mathscr{F}}_{\mathfrak{H}},$ so that its evaluation at the cone point $\overline{\mathscr{F}}_{\mathfrak{H}}(*)$ is given by $MC_*(W, \mathfrak{H}).$
\item $H_*\big(MC_*(W, \mathfrak{H}), \partial \big)$ is isomorphic to $MH_* (W, \mathfrak{H}^1).$
\end{enumerate}
\end{theorem*}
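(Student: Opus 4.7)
\emph{Strategy.} The plan is to view $MC_*(W,\mathfrak{H})$ as a bar-type model for the homotopy colimit of $\mathscr{F}_{\mathfrak{H}}$ inside the $\infty$-category $N_{dg}(Ch(\mathbb{Z}_2))$. From this viewpoint (i) reduces to unwinding the dg-nerve and verifying a universal property, while (ii) follows from a spectral sequence that filters $MC_*(W,\mathfrak{H})$ by simplex length.

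\emph{Proof of (i).} Unpacking the dg-nerve, a $k$-simplex of $N_{dg}(Ch(\mathbb{Z}_2))$ is exactly a family of maps of shifted degrees, indexed by the totally ordered subsets of $[k]$, satisfying a master identity of the same shape as (\ref{introrel}); hence $\{\varphi_\sigma\}$ already encodes $\mathscr{F}_{\mathfrak{H}}$. I extend it to a colimit diagram by setting $\overline{\mathscr{F}}_{\mathfrak{H}}(*) := MC_*(W,\mathfrak{H})$ and, for each simplex $\sigma$, declaring the cone edge $MC_*(M_{s\sigma},h_{s\sigma},g_{s\sigma}) \to MC_*(W,\mathfrak{H})$ to be the canonical inclusion $x \mapsto \sigma \otimes x$. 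Collecting the cone-edge identities together with the $\varphi_\sigma$ yields precisely the formula defining $\partial$ in Definition \ref{mccdef}, so the extension is a valid simplicial map into $N_{dg}(Ch(\mathbb{Z}_2))$. For the universal property, any other cocone $(D,\psi_\bullet)$ assembles uniquely into a chain map $MC_*(W,\mathfrak{H}) \to D$, $\sigma \otimes x \mapsto \psi_\sigma(x)$, whose compatibility with differentials is the cocone identity itself; equivalently one may cite the standard fact that the simplicial bar construction computes the homotopy colimit for diagrams of chain complexes over a simplicial set.

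\emph{Proof of (ii).} Equip $MC_*(W,\mathfrak{H})$ with the ascending filtration $F^p := \bigoplus_{|\sigma|\leq p} \mathbb{Z}_2\langle\sigma\rangle \otimes MC_*(M_{s\sigma},h_{s\sigma},g_{s\sigma})$. Reading off (\ref{introrel}), the face terms $\varphi_{\partial_i\sigma}$ and the splitting terms $\varphi_{(\sigma)^2}\circ\varphi_{(\sigma)^1}$ all strictly decrease $|\sigma|$, so $d_0$ is simply the internal Morse differential and
\begin{equation*}
E^1_{p,q} = \bigoplus_{\substack{\sigma_p \in N(\mathcal{I})_p \\ \text{nondeg.}}} \mathbb{Z}_2\langle\sigma_p\rangle \otimes MH_q(M_{s\sigma_p},h_{s\sigma_p},g_{s\sigma_p}).
\end{equation*}
On $E^1$ the higher homotopies with $|\sigma|\geq 2$ act trivially (they are chain homotopies), so $d_1$ reduces to the standard simplicial bar differential for the ordinary direct system $\{MH_*(M_a,h_a,g_a)\}_{a \in \mathbb{Z}_{\geq 0}}$, with simplicial face maps $\partial_i\sigma$ and the honest continuation action at the leading vertex. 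Since $\mathcal{I} = \mathbb{Z}_{\geq 0}$ is a filtered poset---$N(\mathcal{I})$ has contractible geometric realization and filtered colimits of $\mathbb{Z}_2$-modules are exact---this bar complex has homology concentrated in degree zero equal to the direct limit; thus $E^2_{p,q} = 0$ for $p>0$ and $E^2_{0,q} = \lim_{\longrightarrow} MH_q(M_a,h_a,g_a) = MH_q(W,\mathfrak{H}^1)$. The filtration is exhaustive and touches only finitely many steps in each total degree, so the spectral sequence converges and gives (ii).

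\emph{Main obstacle.} The delicate input underlying everything is the identification of (\ref{introrel}) with the codimension-one boundary stratification of the parametrized moduli spaces $\mathcal{M}(H_\sigma;x,y)$: I expect the hardest bookkeeping step to be confirming that each boundary component contributes exactly one of the four term types appearing in (\ref{introrel}) (internal Morse, face, left composition, right composition), with the correct sign conventions coming from the cubical parameter spaces, so that the $\partial$ of Definition \ref{mccdef} really does make the cone edges into an $N_{dg}$-simplex. Once this geometric input is in place, both parts become a mostly formal matter of simplicial algebra.
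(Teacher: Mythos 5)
Your proof is correct in substance, but part (ii) takes a genuinely different route from the paper.

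For part (i), both you and the paper construct the extension $\overline{\mathscr{F}}_{\mathfrak{H}}$ in the same way: evaluate the cone point at $MC_*(W,\mathfrak{H})$ and send a simplex $(\sigma_k,*)$ to the degree-$k$ map $x \mapsto (\sigma_k;x)$. The paper (Appendix A) then checks the initiality condition by an explicit Kan-filling argument showing that $\mathrm{Hom}^L_{\mathcal{D}}(\overline{\mathscr{F}}_{\mathfrak{H}},q)$ is contractible for every cocone $q$. Your phrasing — ``any other cocone assembles uniquely into a chain map'' — is the ordinary-categorical notion of universal property; for an $\infty$-categorical colimit one needs contractibility of the mapping space in the undercategory, not literal uniqueness of an induced map in degree zero, so the honest verification is the horn-filling argument the paper carries out. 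Your fallback citation to the fact that the bar construction models the homotopy colimit over the dg-nerve is the right abstract substitute, but it is a citation rather than a proof.

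For part (ii), the two arguments are genuinely different. You filter $MC_*(W,\mathfrak{H})$ by simplex length, identify $E^1$ with the bar complex of the direct system $\{MH_*(M_a)\}$, and then use the exactness of filtered colimits of $\mathbb{Z}_2$-modules to conclude $E^2_{p,q}=0$ for $p>0$ and $E^2_{0,q}=\varinjlim MH_q$. The paper instead works $\infty$-categorically: it introduces the simplicial subset $K^T\subset N(\mathcal{I})$ (strings of composable morphisms with at most one non-identity, and that one consecutive), shows the inclusion $K^T\hookrightarrow N(\mathcal{I})$ is cofinal by comparing the mapping cubes in the simplicially enriched categories $\mathfrak{C}(K^T)$ and $\mathfrak{C}(N(\mathcal{I}))$, and concludes a chain-level quasi-isomorphism $MC_*(W,\mathfrak{H})\simeq\widetilde C^T$ onto the algebraic mapping telescope, whose homology is computed directly in Lemma B.1 by a bare-hands kernel/image argument. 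Your spectral-sequence proof is valid and more classical; what the paper's route buys is an explicit smaller chain model (the telescope), a concrete zigzag of quasi-isomorphisms rather than an abutting spectral sequence, and consistency with the cofinality machinery that the paper also needs in Section 11 for the comparison of two choices $\mathfrak{H}$, $\mathfrak{H}'$. Two small remarks: the ``sign conventions'' you flag as a worry are a non-issue since everything is over $\mathbb{Z}_2$; and the geometric ``main obstacle'' you identify (matching boundary strata of $\overline{\mathcal{M}}(\mathcal{H}_\sigma;x,y)$ with the four term types of the master identity) is indeed delicate, but it is already settled in Corollary \ref{correl} via Lemma \ref{isotil} before the present theorem is stated, so it is an input to, not part of, this theorem's proof.
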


Our previously described situation is depicted by the following diagram.
\begin{equation}\nonumber
\begin{tikzcd}
{}&{}&\substack{\text{Higher homotopy data } \\ \mathfrak{H} \text{ on } W \arrow[dotted]{d}}\\
\substack{\text{1-homotopy data } \\ \mathfrak{H}^1 \text{ on } W } \arrow[hook]{rru} \arrow[dotted]{d} & {} & MC_*(W,\mathfrak{H}) \arrow{d}{\text{Homology}}\\
 MH_* \big(W,\mathfrak{H}^1 \big) & \xrightarrow{\qquad \simeq \qquad} & H_*\big(MC_*(W, \mathfrak{H})\big).
\end{tikzcd}
\end{equation}

Finally, we will try to find a relation between the constructions for two different geometric choices. Namely, for two data $\mathfrak{H}, \mathfrak{H}'$ that share an exhaustion, there is a way to relate the two chain complexes $MC_*(W, \mathfrak{H})$ and $MC_*(W, \mathfrak{H}'),$ provided that there is another diagram ${{\mathscr{F}}^{ext}}$ which extends ${{\mathscr{F}}}_{\mathfrak{H}}$ and ${{\mathscr{F}}}_{\mathfrak{H}'}$ in the sense of Condition \ref{extfuncchn} or Condition \ref{condlast}. 

\begin{proposition*}
For two higher data $\mathfrak{H}$ and $\mathfrak{H}'$ that admit an extension of diagrams ${{\mathscr{F}}^{ext}}$, the two chain complexes $MC_*(W,\mathfrak{H})$ and $MC_*(W,\mathfrak{H}')$  are related by the following diagram:
\begin{equation}\nonumber
\begin{tikzcd}
{} & colim {{\mathscr{F}}^{ext}}(*)  & {}\\
MC_*(W, \mathfrak{H}) \arrow{ru}{\simeq}[swap]{\text{q-isom.}} & {} & MC_*(W, \mathfrak{H}'), \arrow{lu}{\text{q-isom.}}[swap]{\simeq}
\end{tikzcd}
\end{equation}
where  colim ${{\mathscr{F}}^{ext}}(*)$ is the evaluation of its colimit at the cone point $*.$ 
\end{proposition*}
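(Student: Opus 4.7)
The plan is to produce chain maps from $MC_*(W, \mathfrak{H})$ and $MC_*(W, \mathfrak{H}')$ into $\mathrm{colim}\,\mathscr{F}^{ext}(*)$ out of the extension data, and then identify the induced maps on homology with the invariance isomorphism for Morse homology of a pair supplied by the opening Definition-Proposition. The core point is that the chain-level construction of part (i) of the main theorem is functorial in the homotopy coherent diagram, while part (ii) identifies all three homologies with the same invariant.

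First I would unpack the extension hypothesis. In either Condition \ref{extfuncchn} or Condition \ref{condlast}, the diagram $\mathscr{F}^{ext}$ is defined over an indexing nerve that contains the nerves indexing $\mathscr{F}_{\mathfrak{H}}$ and $\mathscr{F}_{\mathfrak{H}'}$ as full subcomplexes, with $\mathscr{F}^{ext}$ restricting to each of these diagrams. Applying the explicit cone model for the colimit from part (i) of the main theorem to $\mathscr{F}^{ext}$ and to the two subdiagrams, the inclusions of indexing subcomplexes induce natural chain maps
\begin{equation}\nonumber
\phi : MC_*(W, \mathfrak{H}) \longrightarrow \mathrm{colim}\,\mathscr{F}^{ext}(*), \qquad \phi' : MC_*(W, \mathfrak{H}') \longrightarrow \mathrm{colim}\,\mathscr{F}^{ext}(*),
\end{equation}
given on generators by sending a basis element $\sigma_k \otimes x$ to the corresponding generator in the direct summand of the larger cone complex, and tautologically commuting with the differentials because the defining data of $\mathscr{F}^{ext}$ restrict back to those of $\mathscr{F}_{\mathfrak{H}}$ and $\mathscr{F}_{\mathfrak{H}'}$.

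Second, I would apply part (ii) of the main theorem simultaneously to all three homotopy coherent diagrams. Writing $\mathfrak{H}^{ext,1}$ for the $1$-homotopy data underlying $\mathscr{F}^{ext}$, this gives isomorphisms $H_*(MC_*(W, \mathfrak{H})) \cong MH_*(W, \mathfrak{H}^1)$, $H_*(MC_*(W, \mathfrak{H}')) \cong MH_*(W, (\mathfrak{H}')^1)$, and $H_*(\mathrm{colim}\,\mathscr{F}^{ext}(*)) \cong MH_*(W, \mathfrak{H}^{ext,1})$. Because $\mathfrak{H}^1$, $(\mathfrak{H}')^1$ and $\mathfrak{H}^{ext,1}$ all share the common exhaustion $\mathcal{E}$, the invariance clause of the opening Definition-Proposition yields canonical isomorphisms between any two of these Morse homologies, realised concretely by the continuation maps between the compact pieces $(M_a, h_a, g_a)$ and $(M_a, h'_a, g'_a)$, which are themselves part of the data packaged into $\mathscr{F}^{ext}$.

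Third, I would verify that the homology-level maps $\phi_*$ and $\phi'_*$ coincide with these invariance isomorphisms. Concretely, under the identification of part (ii) the induced map $\phi_*$ is computed by sending a $1$-simplex continuation class in $\mathscr{F}_{\mathfrak{H}}$ to the corresponding class in $\mathscr{F}^{ext}$ and then to its image under the extra continuation $1$-simplices that link $\mathfrak{H}$ to $\mathfrak{H}^{ext}$, and this is exactly the direct-system map used to prove the Definition-Proposition. The main obstacle here is a bookkeeping one: one must check, using the explicit cone differential of Definition \ref{mccdef} and the relations (\ref{introrel}) satisfied by the $\varphi_\sigma$, that the higher simplices in $\mathscr{F}^{ext}$ do not contribute new terms at the level of homology, so that passing from $\mathfrak{H}$ to $\mathfrak{H}^{ext}$ to $\mathfrak{H}'$ reproduces, on the nose up to a chain homotopy realised by the $2$-simplices of $\mathscr{F}^{ext}$, the classical continuation-map comparison between the two direct systems. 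Once this compatibility is in place, $\phi_*$ and $\phi'_*$ are isomorphisms, so $\phi$ and $\phi'$ are quasi-isomorphisms and the diagram in the statement commutes up to the natural identifications.
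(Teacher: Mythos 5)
Your approach is genuinely different from the paper's and contains a real gap. The paper's argument is purely categorical: the inclusion $\Phi_* : N(\mathcal{I}) \hookrightarrow N(\widehat{\mathcal{I}})$ (resp. $\Phi'_{0*}, \Phi'_{1*} : N(\mathcal{I}) \hookrightarrow N(\overline{\mathcal{I}})$) is a categorical equivalence, hence cofinal; cofinal maps preserve colimits, so $\mathrm{colim}\,\mathscr{F}_{\mathfrak{H}} \simeq \mathrm{colim}\,\mathscr{F}^{ext}$ in $N_{dg}\big(Ch(\mathbb{Z}_2)\big)$, and evaluation at the cone point delivers the quasi-isomorphism immediately, with no Morse homology computed at all. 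That is the content of the two Lemmas in Section 2 together with Propositions \ref{cofit}--\ref{cofprop}.

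Your first step (that the inclusions of index nerves induce chain maps $\phi$, $\phi'$) is correct: nondegenerate simplices of $N(\mathcal{I})\times\{0\}$ remain nondegenerate in the larger nerve, and their faces and splittings $(\sigma_k)^1_i$, $(\sigma_k)^2_{k-i}$ stay inside the subcomplex, so the cone differential restricts. The gap is in the second step. Theorem \ref{ftiso} (equivalently, part (ii) of the main theorem) is stated and proved only for diagrams over $N(\mathcal{I})$: its proof in Appendix B goes through the mapping telescope complex $\widetilde{C}^T$ and the cofinal subcomplex $K^T \subset N(\mathcal{I})$, which depend on $\mathcal{I}$ being the linear poset. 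You apply this theorem to $\mathrm{colim}\,\mathscr{F}^{ext}(*)$, but $\mathscr{F}^{ext}$ is indexed by $N(\widehat{\mathcal{I}})$ or $N(\overline{\mathcal{I}})$, where the theorem has not been established. Relatedly, there is no well-defined $1$-homotopy datum "$\mathfrak{H}^{ext,1}$'' in the sense of Definition \ref{def1htp}: the edges of $N(\widehat{\mathcal{I}})$ include comparison morphisms between the $\mathfrak{H}$- and $\mathfrak{H}'$-sides that are not $1$-homotopies attached to a single compact exhaustion over $\mathcal{I}$, so $MH_*(W,\mathfrak{H}^{ext,1})$ is not meaningful as written. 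Your third step (that $\phi_*$, $\phi'_*$ agree with the invariance isomorphism of Proposition \ref{1huptoisom}) is asserted only as bookkeeping; filling it in would require precisely the kind of telescope or cofinality analysis you have skipped. Any repair of the second step --- re-proving Theorem \ref{ftiso} over the larger nerve, or observing that the inclusion of nerves is cofinal and reducing to the $N(\mathcal{I})$ case --- collapses back to the paper's cofinality argument, which is shorter and exhibits the statement as a formal consequence of the extension hypothesis.
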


Notice that this verifies our earlier claim that the Morse homology is independent of the choices of 1-data $\mathfrak{H}^1$ up to isomorphism. 

\begin{notation*} 
\textup{(i) In this paper, we consider various types of homotopies, indexed by some simplicial sets. Whenever it is clear from the context, we put $\bullet$ for simplicity, not explicitly writing down the specific indices. (ii) The homotopies are given as a pair of parametrized Morse functions and Riemannian metrics. In most cases, we only consider homotopies of Morse functions, and those of Riemannian metrics are omitted from the notation. They should be regarded as being implicitly included in our discussions. (iii) The same letter $g$ is used for two different contexts: a Riemannian metric and a morphism of the indexing category $\mathcal{I}.$} 
\end{notation*}

\subsection*{Acknowledgements}
We thank Tomoyuki Abe, Anna Cepek, Gabriel C. Drummond-Cole, Kenji Fukaya, Damien Lejay, Andrew Macpherson, Yong-Geun Oh, and Otto van Koert for helpful discussions. Additionally, we appreciate Anna Cepek for her grammatical advice.

\section{Homotopy coherent diagrams in chain complexes}

\subsection{Homotopy direct systems}

Recall that the poset category $\mathcal{I}$ is given by:

\begin{itemize}
\item Ob($\mathcal{\mathcal{I}}) = \mathbb{Z}_{\geq 0},$
\item Mor($\mathcal{\mathcal{I}}$) is given by associating $ \text{exactly one morphism } f_{a,b} \text{ to each ordered}$ $\text{ pair } a \leq b$ of objects. In particular, when $a=b, \ f_{ab}$ is necessarily the identity morphism $id_a.$ 
\end{itemize}

Recall that a \textit{homotopy coherent diagram} is defined by a map of simplicial sets
\begin{equation}\nonumber
\mathscr{D} : K \rightarrow \mathcal{C},
\end{equation}
where $K$ is a simplicial set and $\mathcal{C}$ is an $\infty$-category. In this paper, we are interested in the following example.
\begin{equation}\nonumber
\mathscr{F} : N(\mathcal{I}) \rightarrow N_{dg}\big(Ch(\mathbb{Z}_2)\big).
\end{equation}
See appendices for its detail (e.g., on nerves and dg-nerves).

Effectively, the data of $\mathscr{F}$ can be thought of as follows.
\begin{enumerate}[label = \textbullet]
\item To a vertex $a \in N(\mathcal{I})_0,$ 
we assign a chain complex $C_a.$ If $a \in N(\mathcal{I})_0,$ then $C_a = \mathscr{F}(a).$
\item To a nondegenerate $k$-simplex $\sigma_k \in  N(\mathcal{I})_k$ with $k \geq 1,$ we assign a degree $k-1$ linear map, $\varphi_{\sigma_k} : C_{s \sigma_k} \rightarrow C_{t \sigma_k},$ satisfying   
\begin{equation}\label{hceq}
\partial \circ \varphi_{\sigma_k} + \varphi_{\sigma_k} \circ \partial + \sum^k_{i=1} \varphi_{\partial_i \sigma_k} + \sum^k_{i=1} \varphi_{(\sigma_k)^2_{k-i}} \circ \varphi_{(\sigma_k)^1_i} = 0.
\end{equation}
\item To a degenerate $k$-simplex, with $k \geq 1,$ we assign the zero map.
\end{enumerate} 
For more about $\mathscr{F}$, the reader can refer to appendices.

\begin{definition}
Let us call such a map $\mathscr{F}$ a \textit{homotopy direct system} of chain complexes (over $\mathbb{Z}_{2}$). We say that $\mathscr{F}$ is {\textit{strict}} if it satisfies 
\begin{equation}\nonumber
\mathscr{F}(f_1, \cdots, f_{k}) = 0
\end{equation}
for all $(f_1, \cdots, f_{k}) \in N(\mathcal{I})_k$ with $k \geq 2.$
\end{definition}

The following lemma is an easy exercise.
\begin{lemma}
When $\mathscr{F}$ is strict, we have for all $k \geq 1,$
\begin{equation}\nonumber
\mathscr{F}(f_k \circ f_{k-1} \circ \cdots \circ f_{1}) = \varphi_{f_k \circ f_{k-1} \circ \cdots \circ f_{1}} = \varphi_{f_{k}} \circ \cdots \circ \varphi_{f_1}= \mathscr{F}(f_{k}) \circ \cdots \circ \mathscr{F}(f_1).
\end{equation}
\end{lemma}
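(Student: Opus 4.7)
The plan is to prove the identity by induction on $k$, using strictness to collapse the structural relation (\ref{hceq}) to the familiar functoriality identity at $k=2$ and then bootstrapping to higher $k$. The base case $k=1$ is immediate from the definitions, since $\varphi_{f_1} = \mathscr{F}(f_1)$ by convention.

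For $k=2$, I would apply (\ref{hceq}) to the $2$-simplex $\sigma_2 = (f_1, f_2)$. By strictness, $\varphi_{\sigma_2} = 0$, killing the first two summands. Among the inner-face terms $\varphi_{\partial_i \sigma_2}$ only $\partial_1 \sigma_2 = f_2 \circ f_1$ contributes a nontrivial $1$-simplex, giving $\varphi_{f_2 \circ f_1}$. Among the composition terms $\varphi_{(\sigma_2)^2_{2-i}} \circ \varphi_{(\sigma_2)^1_i}$, the only relevant index is $i=1$, which yields the product $\varphi_{f_2} \circ \varphi_{f_1}$ of two $1$-simplex maps; any other index involves either a $0$-simplex factor or the full simplex $\sigma_2$ itself, both of which contribute $0$ under strictness. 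Working over $\mathbb{Z}_2$, (\ref{hceq}) then collapses to the clean functoriality identity $\varphi_{f_2 \circ f_1} = \varphi_{f_2} \circ \varphi_{f_1}$.

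For the inductive step from $k-1$ to $k$, I set $g := f_{k-1} \circ \cdots \circ f_1$ and apply the just-established $k=2$ identity to the composable pair $(g, f_k)$, obtaining $\varphi_{f_k \circ g} = \varphi_{f_k} \circ \varphi_g$. The induction hypothesis then gives $\varphi_g = \varphi_{f_{k-1}} \circ \cdots \circ \varphi_{f_1}$, so $\varphi_{f_k \circ \cdots \circ f_1} = \varphi_{f_k} \circ \cdots \circ \varphi_{f_1}$ as desired. The outer equalities in the lemma are purely notational, since $\mathscr{F}(\cdot)$ evaluated on a simplex is by definition denoted $\varphi_{\cdot}$. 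The only real bookkeeping lies in the $k=2$ reduction of (\ref{hceq}); once the vanishing of every other term under strictness is pinned down, the induction is automatic, and I expect no serious obstacle.
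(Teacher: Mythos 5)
The paper records no proof, leaving the lemma as an exercise, and your induction is the expected one: apply (\ref{hceq}) to the $2$-simplex $(f_1,f_2)$, use strictness to kill $\partial\circ\varphi_{(f_1,f_2)}+\varphi_{(f_1,f_2)}\circ\partial$, and read off $\varphi_{f_2\circ f_1}=\varphi_{f_2}\circ\varphi_{f_1}$ over $\mathbb{Z}_2$; then induct by peeling off $f_k$ from $(f_{k-1}\circ\cdots\circ f_1,\,f_k)$. One small imprecision is worth correcting: the reason the other indices do not contribute is the range of summation, not strictness. As the paper's own Corollary \ref{correl} and the dg-nerve relation (\ref{diadgn}) make explicit, the sums in (\ref{hceq}) run over the interior indices $i=1,\dots,k-1$ only, so for $k=2$ the only summand in each sum is $i=1$. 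Your appeal to ``a $0$-simplex factor'' vanishing is misplaced, since $\mathscr{F}$ assigns a chain complex rather than a linear map to a $0$-simplex---that term is not zero, it is simply not in the sum. Likewise, the outer face $\partial_2\sigma_2=(f_1)$ would contribute the generally nonzero $\varphi_{f_1}$ if it appeared, so it is essential that the sum exclude it; strictness cannot help there. With the index range read correctly, your argument is complete and matches what the paper intends.
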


A \textit{colimit} of a homotopy coherent diagram is by definition an extension
\begin{equation}\nonumber
\overline{\mathscr{F}}: N(\mathcal{I})^{\triangleright} \rightarrow N_{dg}\big(Ch(\mathbb{Z}_2)\big)
\end{equation}
of $\mathscr{F}$ that is initial (unique up to a contractible choice). Here $ N(\mathcal{I})^{\triangleright}$ is a simplicial set obtained from $ N(\mathcal{I})$ by adding a cone point $*.$ (See Appendix A for the notation and the precise definition.)

\begin{proposition}\label{prcol}
There exists a colimit $\overline{\mathscr{F}}$ such that $\overline{\mathscr{F}}(*)$ is a chain complex over $\mathbb{Z}_2$ given by 

\begin{equation}\nonumber
\overline{\mathscr{F}}(*) := \bigoplus_{k \geq 0} \bigoplus_{\substack{\sigma_k \in N(\mathcal{I}),\\ \textup{nondeg.}}} \mathbb{Z}_2 \langle \sigma_k \rangle \otimes C_{s\sigma_k}, 
\end{equation}
with the differential $\partial$
\begin{equation}\label{diffctild}
\begin{split}
\partial : (\sigma_k ; x) \mapsto &\sum\limits_{i=0}^{k-1} (\partial_i \sigma_k ; x)+ \sum\limits_{i=0}^{k-1} \big((\sigma_k)_{i}^2 ; {\varphi}_{(\sigma)_{k-i}^1}(x)\big) + (\sigma_k ; \partial x),\\
\end{split}
\end{equation}
and the grading
\begin{equation}\nonumber
|(\sigma_k;x)| := k+ |x|.
\end{equation}
\end{proposition}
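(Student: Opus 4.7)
The plan is to split the proof into three stages: first, verify that $(\overline{\mathscr{F}}(*), \partial)$ as described is indeed a chain complex; second, exhibit a concrete extension $\overline{\mathscr{F}}: N(\mathcal{I})^{\triangleright} \to N_{dg}(Ch(\mathbb{Z}_2))$ of $\mathscr{F}$ whose evaluation at the cone point is the prescribed complex; and third, confirm that this extension is initial, so that it genuinely represents a colimit.

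For $\partial^2 = 0$, I would expand $\partial^2(\sigma_k; x)$ by two applications of (\ref{diffctild}) and regroup the output into three classes of terms. Pure double-face contributions of shape $(\partial_i \partial_j \sigma_k; x)$ cancel pairwise by the simplicial identities for face maps. Internal terms proportional to $(\sigma_k; \partial^2 x)$ vanish because each $C_a$ is already a chain complex. All remaining mixed terms, which pair either a face with a splitting, a splitting with the internal $\partial$, or two nested splittings, regroup by initial subsimplex into the coherence relation (\ref{hceq}) applied at each $(\sigma_k)^1_i$, and hence sum to zero. Working over $\mathbb{Z}_2$ removes all sign bookkeeping.

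For the second stage I describe $\overline{\mathscr{F}}$ explicitly. On $N(\mathcal{I}) \subset N(\mathcal{I})^{\triangleright}$ it agrees with $\mathscr{F}$; the cone vertex $*$ is sent to $\overline{\mathscr{F}}(*)$. A nondegenerate simplex of $N(\mathcal{I})^{\triangleright}$ not already lying in $N(\mathcal{I})$ is necessarily of the form $\tau = (\sigma_k, *)$ for some nondegenerate $k$-simplex $\sigma_k \in N(\mathcal{I})_k$, and to such a $\tau$ I assign the degree-$k$ linear map $\varphi_\tau: C_{s\sigma_k} \to \overline{\mathscr{F}}(*)$ given by $x \mapsto (\sigma_k; x)$. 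Checking the coherence equation (\ref{hceq}) for $\tau$ then matches, term by term, the defining formula (\ref{diffctild}) for $\partial$: the faces $\partial_i \tau$ still involving $*$ produce the face summands of $\partial(\sigma_k; x)$, the face omitting $*$ contributes $\varphi_\tau \circ \partial_{C_{s\sigma_k}}$, and the splittings $(\sigma_k)^1_i \cdot ((\sigma_k)^2_{k-i}, *)$ produce the quadratic terms $\bigl((\sigma_k)^2_{k-i};\, \varphi_{(\sigma_k)^1_i}(x)\bigr)$; the remaining compositions close into $\partial \circ \varphi_\tau$ on the target.

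For the third stage I would verify initiality by identifying $\overline{\mathscr{F}}(*)$ with a dg bar-construction style homotopy colimit of $\mathscr{F}$, whose universal property in $N_{dg}(Ch(\mathbb{Z}_2))$ is standard. Equivalently, for any other cocone $\mathscr{G}$ over $\mathscr{F}$, the forced factorization $\psi: \overline{\mathscr{F}}(*) \to \mathscr{G}(*)$ defined on generators by $\psi(\sigma_k; x) := \mathscr{G}(\sigma_k, *)(x)$ is a chain map precisely because the coherences of $\mathscr{G}$ on simplices $(\sigma_k, *)$ reproduce (\ref{diffctild}), and the space of such factorizations in the relevant slice fills contractibly using the higher cells of $\mathscr{G}$ through $*$. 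The main obstacle I anticipate is this last contractibility claim; by contrast, the $\partial^2 = 0$ computation and the construction of the extension itself, while notationally dense, are essentially forced once the coherence relation (\ref{hceq}) is in hand.
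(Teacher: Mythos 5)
Your three-stage plan follows the same route as the paper's proof in Appendix~A, and the first two stages are substantially right. Over $\mathbb{Z}_2$ the check $\partial^2 = 0$ on $\overline{\mathscr{F}}(*)$ does reduce, as you say, to the simplicial face identities, $\partial^2 = 0$ on each $C_a$, and the coherence relation (\ref{hceq}). Your description of the extension is the paper's extension verbatim: send the cone point to the prescribed complex and send a nondegenerate simplex $\tau = (\sigma_k, *)$ to the degree-$k$ map $x \mapsto (\sigma_k; x)$; the coherence of $\overline{\mathscr{F}}$ at $\tau$ then unfolds term by term into the defining formula (\ref{diffctild}). One bookkeeping correction: the summand $\varphi_\tau \circ \partial$ is not a ``face'' contribution but a stand-alone term of the dg-nerve relation (\ref{diadgn}); the face of $\tau$ that omits $*$ has a different target and so does not appear in the inner sum of (\ref{diadgn}) at all. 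The correspondence is rather: interior faces of $\tau$ that keep $*$ give the $(\partial_i\sigma_k; x)$ terms, the splittings $(\tau)^1_i, (\tau)^2_{k+1-i}$ give the $\big((\sigma_k)^2_{k-i};\varphi_{(\sigma_k)^1_i}(x)\big)$ terms, $\varphi_\tau\circ\partial$ gives $(\sigma_k;\partial x)$, and $\partial\circ\varphi_\tau$ collects the whole right-hand side.

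The genuine gap is exactly the one you flag at the end, and it is not a small one: stage three is the bulk of the proof. Exhibiting a chain-level factorization $\psi(\sigma_k; x) := \mathscr{G}(\sigma_k, *)(x)$ for a competing cocone $\mathscr{G}$ does not establish initiality in $\mathcal{D} = N_{dg}\big(Ch(\mathbb{Z}_2)\big)_{\mathscr{F}/}$; one must show that $\mathrm{Hom}^L_{\mathcal{D}}(\overline{\mathscr{F}}, q)$ is a contractible Kan complex for every vertex $q$. The paper does this by an explicit filler construction: given boundary data $X_0, \ldots, X_k \in \mathcal{D}_k$ agreeing on overlaps, it builds $\widetilde{X} \in \mathcal{D}_{k+1}$ by assigning to the top simplex the sum $\varphi_{\Delta^{k+1}}(\sigma_l; x) := \sum_{i=0}^{k+1}\varphi_{(\sigma_l, d_i(\Delta^k))}(x)$, assigning zero to the simplices $(\vec{a}; \Delta^k)$ of length $m > k+1$, and then verifying by direct computation that both assignments satisfy (\ref{diadgn}). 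That computation is the content of the proof. Your alternative of invoking a universal property for a dg bar construction could close the gap, but only with a precise reference; absent that, the horn-filling argument has to be carried out.
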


On the homologies, $\mathscr{F}$ induces a strict diagram $H\mathscr{F}$. We can consider its colimit, denoted by colim$H\mathscr{F},$ which exists in the category of chain complexes. In the case of $\mathscr{F}$, this amounts to considering a direct system of chain complexes and its direct limit, which we denote by $\lim\limits_{\longrightarrow} H\mathscr{F}.$ 
\begin{theorem}\label{ftiso}
There is an isomorphism
\begin{equation}\nonumber
H_*\big(\overline{\mathscr{F}}(*), \partial \big) \simeq \lim\limits_{\longrightarrow} H\mathscr{F}.
\end{equation}
\end{theorem}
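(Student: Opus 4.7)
The plan is to exhibit $\overline{\mathscr{F}}(*)$ as a directed union of truncations $F_n$, each quasi-isomorphic to $C_n = \mathscr{F}(n)$, and conclude by the exactness of directed colimits in chain complexes. For each $n \geq 0$ define
\[
F_n := \bigoplus_{k \geq 0} \bigoplus_{\substack{\sigma_k \in N(\mathcal{I})_k,\ \textup{nondeg.}\\ t\sigma_k \leq n}} \mathbb{Z}_2 \langle \sigma_k \rangle \otimes C_{s\sigma_k} \ \subseteq \ \overline{\mathscr{F}}(*).
\]
Inspection of (\ref{diffctild}) shows that $F_n$ is a subcomplex: each face operation $\partial_i \sigma_k$ has target at most $t\sigma_k$ (composing adjacent morphisms preserves it, while the outer face removing the target strictly decreases it), and each splitting $(\sigma_k)_i^2$ keeps the same target. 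Clearly $F_n \subseteq F_{n+1}$ and $\overline{\mathscr{F}}(*) = \bigcup_n F_n$, and exactness of directed colimits yields
\[
H_*(\overline{\mathscr{F}}(*)) \cong \lim\limits_{\longrightarrow} H_*(F_n).
\]

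Next I would show that the tautological inclusion $\iota_n : C_n \hookrightarrow F_n$, $x \mapsto (n; x)$ (viewing $n$ as a $0$-simplex), is a quasi-isomorphism. By the long exact sequence of $0 \to \iota_n(C_n) \to F_n \to Q_n \to 0$ it suffices to prove $Q_n$ is acyclic, and my candidate contracting homotopy is
\[
s_n(\sigma_k; x) := \begin{cases} (\sigma_k \star f_{t\sigma_k, n};\, x) & \textup{if } t\sigma_k < n,\\ 0 & \textup{if } t\sigma_k = n, \end{cases}
\]
where $\sigma_k \star f_{t\sigma_k, n}$ is the $(k+1)$-simplex obtained by appending the unique morphism $f_{t\sigma_k, n} \in \textup{Mor}(\mathcal{I})$.

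The heart of the argument, and its main obstacle, is verifying $\partial s_n + s_n \partial = \textup{id}_{Q_n}$. The bookkeeping rests on a bijection between the faces and splittings of $\tau := \sigma_k \star f_{t\sigma_k, n}$ and those of $\sigma_k$: the outer face $\partial_0 \tau$ (which removes the appended vertex $n$) restores $\sigma_k$ itself, and each other face or splitting of $\tau$ at a position greater than zero corresponds under $s_n$ to a face or splitting of $\sigma_k$ one position earlier. Uniqueness of morphisms in $\mathcal{I}$ is crucial here, as the composition $g_k \circ f_{t\sigma_k, n}$ must coincide with $f_{a_{k-1}, n}$ for the pairing to work. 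All terms then cancel except the splitting $(n; \varphi_\tau(x)) \in \iota_n(C_n)$ at position $0$, which is precisely what is killed in the quotient. The case $t\sigma_k = n$, $k \geq 1$ is symmetric: $s_n(\sigma_k; x) = 0$, while $s_n \partial$ reconstructs $(\sigma_k; x)$ from the outer face of $\sigma_k$ (which has target $a_{k-1} < n$ and so lies in the domain of $s_n$). I expect this verification to be a routine but lengthy computation in $A_\infty$-style algebra.

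Once $\iota_n$ is a quasi-isomorphism, the induced system $\{H_*(F_n)\}$ is isomorphic to $H\mathscr{F}$: the identity $\partial(f_{n, n+1}; x) = (n; x) + (n+1; \varphi_{f_{n, n+1}}(x)) + (f_{n, n+1}; \partial x)$ shows that $\iota_n(x)$ and $\iota_{n+1}(\varphi_{f_{n, n+1}}(x))$ represent the same class in $H_*(F_{n+1})$ for every cycle $x$, so the inclusion $F_n \hookrightarrow F_{n+1}$ intertwines on homology with the transition map $(\varphi_{f_{n, n+1}})_*$ of $H\mathscr{F}$. Combined with the first display,
\[
H_*(\overline{\mathscr{F}}(*)) \cong \lim\limits_{\longrightarrow} H_*(F_n) \cong \lim\limits_{\longrightarrow} H_*(C_n) = \lim\limits_{\longrightarrow} H\mathscr{F}.
\]
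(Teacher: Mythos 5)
Your proof is correct, and it takes a genuinely different route from the paper's. The paper (Appendix B) replaces $\overline{\mathscr{F}}(*)=\widetilde{C}$ by the algebraic mapping telescope $\widetilde{C}^T=\bigoplus_i (C_i\oplus\overline{C}_i)$, computes $H_*(\widetilde{C}^T)\simeq\lim\limits_{\longrightarrow}H_*(C_i)$ by a direct hands-on lemma, and then identifies $H_*(\widetilde{C})$ with $H_*(\widetilde{C}^T)$ via the $\infty$-categorical machinery: the simplicial subset $K^T\subset N(\mathcal{I})$ (vertices, identity chains, and single non-identity edges $f_{a,a+1}$) is shown to induce a cofinal map, so both $\widetilde{C}$ and $\widetilde{C}^T$ present the same colimit and are quasi-isomorphic. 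You instead filter $\widetilde{C}$ by the maximal target vertex, so that $F_n=\bigcup_{t\sigma\leq n}$, and exhibit each $F_n$ as quasi-isomorphic to $C_n$ by a direct contracting homotopy $s_n$ on $F_n/\iota_n(C_n)$ given by appending the unique morphism $f_{t\sigma,n}$. The key cancellation ($\partial_{k+1}\tau=\sigma_k$, and conversely $\partial_k\sigma_k\star f_{a_{k-1},n}=\sigma_k$ when $t\sigma_k=n$, using uniqueness of morphisms in the poset $\mathcal{I}$) is precisely the extra-degeneracy argument and does check out; I verified the bookkeeping explicitly. Your approach is entirely elementary and self-contained --- it bypasses dg-nerves, cofinality, and the comparison of homotopy categories, substituting a direct filtration and spectral-sequence-free homotopy. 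It also makes the induced direct system on homology transparent: the identity $\partial(f_{n,n+1};x)=(n;x)+(n+1;\varphi_{f_{n,n+1}}x)+(f_{n,n+1};\partial x)$ shows the inclusions $F_n\hookrightarrow F_{n+1}$ intertwine with the continuation maps. The paper's route, by contrast, is conceptually embedded in the $\infty$-categorical framework and generalizes more readily (cofinality applies to any diagram, not just one indexed by a totally ordered poset); your argument leans on the poset structure (the unique $f_{t\sigma,n}$ to append) but gives an explicit chain-level contraction with no black boxes. Both are valid; yours is arguably cleaner for a reader not steeped in higher category theory.

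One small remark: it is worth stating explicitly that appending $f_{t\sigma_k,n}$ to a nondegenerate simplex yields a nondegenerate one (since $f_{t\sigma_k,n}\neq\mathrm{id}$ when $t\sigma_k<n$), so that $s_n$ does land in the nondegenerate-simplex direct sum defining $F_n$; this is immediate but needed for the formula to make sense.
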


For proofs of Proposition \ref{prcol} and Theorem \ref{ftiso}, we refer the reader to Appendix B.

\subsection{Morphisms} We propose models for the notion of a morphism between two homotopy direct systems of chain complexes.

\subsubsection*{Indexing categories} We introduce two indexing categories
\begin{enumerate}[label= (\roman*)]
\item We denote by $\mathcal{I}_0$ the category with two objects, say $0$ and $1,$ and one nontrivial morphism in each direction between them, we consider the product category $\widehat{\mathcal{I}} := \mathcal{I} \times \mathcal{I}_0.$
 
\item Let $\overline{\mathcal{I}}$ denote the category with 

\begin{enumerate}[label = \textbullet]
\item $Ob(\mathcal{\mathcal{I}}) = \mathbb{Z}_{\geq 0}$,\\
\item $Mor(\mathcal{\mathcal{I}})$ given by associating\\
\begin{equation}\nonumber
\begin{split}
- \text{exactly one morphism } f_{a,b}& \text{ to each ordered pair } a \leq b,\\
- \text{exactly one morphism } f_{a,b}& \text{ to each ordered pair } a \geq b, \text{ and }\\ 
&  a =2c+1, b=2c \text{ for some } c \in \mathbb{Z}_{\geq 0}.
\end{split}
\end{equation}
\end{enumerate}
In particular, we have \begin{equation}\nonumber
f_{2c+1, 2c} \circ f_{2c,2c+1} = id_{2c}, \ f_{2c, 2c+1} \circ f_{2c+1,2c} = id_{2c+1}.\\
\end{equation}
\end{enumerate}

\subsubsection*{A model with the category $\widehat{\mathcal{I}}$} Given homotopy coherent diagrams $\mathscr{F}, \mathscr{F}' : N(\mathcal{I}) \rightarrow N_{dg}\big(Ch(\mathbb{Z}_2)\big),$ we consider another
\begin{equation}\nonumber
\widehat{\mathscr{F}} : N(\widehat{\mathcal{I}}) \rightarrow N_{dg}\big(Ch(\mathbb{Z}_2)\big)
\end{equation} 
which extends $\mathscr{F}$ and $\mathscr{F}',$ i.e., $\widehat{\mathscr{F}}|_{N(\mathcal{I}) \times \{0\}}= \mathscr{F}$ and $\widehat{\mathscr{F}}|_{N(\mathcal{I}) \times \{1\}}= \mathscr{F}'$ 

\begin{example}\label{egstrdia}
For two strict diagrams $\mathscr{F}_s$ and $\mathscr{F}'_s,$ such that $\mathscr{F}_s (a) = \mathscr{F}'_s (a)$ for all $a \in Ob(\mathcal{I}),$ we can check that such an extension $\widehat{\mathscr{F}}$ exists. Consider a $k$-simplex $\big( (f_1, f_1'), \cdots, (f_k, f_k') \big) \in N(\widehat{\mathcal{I}})_k,$ where $(f_1, \cdots, f_k) \in N(\mathcal{I})_k$ and $(f_1', \cdots, f_k') \in N(\mathcal{I}_0)_k.$ Let $\#_{ni}$ denote the number of nonidentity morphisms in $ (f_1', \cdots, f_k').$ We define
\begin{equation}\nonumber
\widehat{\mathcal{F}} : \big( (f_1, f_1'), \cdots, (f_k, f_k') \big) \mapsto \widehat{\varphi}_{\big( (f_1, f_1'), \cdots, (f_k, f_k') \big)},
\end{equation}
as follows.
\begin{equation}\nonumber
\widehat{\varphi}_{\big( (f_1, f_1'), \cdots, (f_k, f_k') \big)}:= \begin{cases}
0 & \text{ if } \#_{ni} \geq 1 \text{ and } k \geq 2, \\
& \quad \quad \text{ or } \big( (f_1, f_1'), \cdots , (f_k, f_k') \big) \text{ is degenerate,}\\
id_{a} & \text{ if } \#_{ni}  = 1, k =1, \text{ and }  f'_{1} = id, \\
f_{1} & \text{ if } \#_{ni}  = 1, k =1, \text{ and }  f'_{1} \neq id, \\
& \quad \quad \text{ or } \#_{ni} = 0 \text{ and } k = 1,\\
0 & \text{ if } \#_{ni} = 0 \text{ and } k \geq 2.\\
\end{cases}
\end{equation}
It is straightforward to check that $\bigl\{ \widehat{\varphi}_{\big( (f_1, f_1'), \cdots, (f_k, f_k') \big)} \bigr\}$ satisfies the relation (\ref{hceq}), and hence defines a homotopy coherent diagram. 
\end{example}

\begin{remark}
We can think of a colimit of the $\infty$-functor $\widehat{\mathscr{F}}$, whose existence is guaranteed by the fact that in general the $\infty$-category $N_{dg}\big(Ch(\mathbb{Z}_2)\big)$ admits colimits for any type of diagrams.
\end{remark}

We consider the following pair of functors 
\begin{equation}\nonumber
\Phi : \mathcal{I} \rightleftarrows \widehat{\mathcal{I}} : \Psi,
\end{equation}
where $\Phi$ is the embedding to either obvious copy of $\mathcal{I}$ inside $\widehat{\mathcal{I}},$ and $\Psi$ is the map that forgets the $\mathcal{I}_0$ part. These functors give rise to an equivalence of categories, and the induced maps of simplicial sets 
\begin{equation}\nonumber
\Phi_* : N(\mathcal{I}) \rightleftarrows N(\widehat{\mathcal{I}}) : \Psi_*
\end{equation}
are categorical equivalences in the sense of [Lur1] and [Rie]. The equivalence

\begin{equation}\label{ceco12}
colim \mathscr{F} = colim(\widehat{\mathscr{F}} \circ \Phi_*) \simeq colim \widehat{\mathscr{F}},
\end{equation}
follows from the fact that a categorical equivalence gives rise to a cofinal map (c.f. Definition \ref{coflur}). Hence, we have a quasi-isomorphism of chain complexes $\overline{\mathscr{F}}(*) \xrightarrow{\simeq} colim \widehat{\mathscr{F}}(*).$ In following the same procedure for $\mathscr{F}',$ we obtain the following lemma.

\begin{lemma}
Suppose that the above type of extension $\widehat{\mathscr{F}}$ exists, then we have quasi-isomorphisms of chain complexes
\begin{equation}\nonumber\label{htpydirsysintro}
\begin{tikzcd}
{} & colim {\widehat{\mathscr{F}}}(*)  & {}\\
\overline{\mathscr{F}}(*) \arrow{ru}{\simeq} & {} &\overline{\mathscr{F}}'(*) \arrow[swap]{lu}{\simeq} .
\end{tikzcd}
\end{equation}
\end{lemma}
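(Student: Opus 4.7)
The strategy is to symmetrize the argument carried out just before the lemma, culminating in the equivalence (\ref{ceco12}). That argument already produces the left-hand quasi-isomorphism $\overline{\mathscr{F}}(*) \xrightarrow{\simeq} colim\, \widehat{\mathscr{F}}(*)$ from the embedding $\Phi : \mathcal{I} \hookrightarrow \widehat{\mathcal{I}}$ into the $\{0\}$-copy together with the forgetful functor $\Psi : \widehat{\mathcal{I}} \to \mathcal{I}$. My plan is to obtain the right-hand quasi-isomorphism by the identical argument, replacing $\Phi$ by the parallel embedding $\Phi' : \mathcal{I} \hookrightarrow \widehat{\mathcal{I}}$, $a \mapsto (a,1)$, $f \mapsto (f, id_1)$ into the $\{1\}$-copy.

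First I would verify that $(\Phi', \Psi)$ is again an equivalence of categories: $\Psi \circ \Phi' = id_{\mathcal{I}}$ by construction, and $\Phi' \circ \Psi \cong id_{\widehat{\mathcal{I}}}$ via the natural isomorphism assembled from the two mutually inverse non-identity morphisms of $\mathcal{I}_0$. Passing to nerves yields categorical equivalences $\Phi'_*, \Psi_*$ of simplicial sets, and the same general principle cited for $\Phi_*$ (a categorical equivalence gives a cofinal map, cf.\ Definition \ref{coflur}) shows that $\Phi'_*$ is cofinal.

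By hypothesis $\widehat{\mathscr{F}}|_{N(\mathcal{I}) \times \{1\}} = \mathscr{F}'$, so $\widehat{\mathscr{F}} \circ \Phi'_* = \mathscr{F}'$. Cofinality of $\Phi'_*$ then gives the equivalence $colim\, \mathscr{F}' = colim\,(\widehat{\mathscr{F}} \circ \Phi'_*) \simeq colim\, \widehat{\mathscr{F}}$ in $N_{dg}\big(Ch(\mathbb{Z}_2)\big)$. Evaluating at the cone point, using Proposition \ref{prcol} for the explicit model of $\overline{\mathscr{F}}'(*)$, together with the fact that equivalences in the dg-nerve are realized by quasi-isomorphisms of chain complexes, yields the desired map $\overline{\mathscr{F}}'(*) \xrightarrow{\simeq} colim\, \widehat{\mathscr{F}}(*)$. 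Assembling this with the $\mathscr{F}$-side quasi-isomorphism from (\ref{ceco12}) produces the diagram in the statement. The only external input is the ``categorical equivalence implies cofinal'' principle, which was already invoked just before the lemma, so I do not expect any real obstacle; the proof amounts to observing that the factor $\mathcal{I}_0$ admits two symmetric inclusions of $\mathcal{I}$ and applying the cofinality argument to each.
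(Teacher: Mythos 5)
Your proposal is correct and matches the paper's intended argument exactly: the text just before the lemma states that $\Phi$ embeds $\mathcal{I}$ into ``either obvious copy'' of $\mathcal{I}$ inside $\widehat{\mathcal{I}}$ and concludes with ``In following the same procedure for $\mathscr{F}'$, we obtain the following lemma.'' You have simply spelled out that symmetrization, using the parallel embedding into the $\{1\}$-copy, its cofinality, and $\widehat{\mathscr{F}}\circ\Phi'_* = \mathscr{F}'$, which is precisely what the paper leaves implicit.
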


\subsubsection*{A model with the category $\overline{\mathcal{I}}$} We consider functors $\Psi', \Phi'_{0}, \Phi'_{1} : \mathcal{I} \hookrightarrow \overline{\mathcal{I}}$ given by
\begin{equation}\nonumber
\begin{cases}
\Psi'(a) = a, \ \Psi'( a \rightarrow b ) = (a \rightarrow b),\\
\Phi'_{i}(a) = 2 a +i, \  \Phi'_{i}(a \rightarrow b) = (2 a +i \rightarrow 2 b+i), \ i = 0,1,
\end{cases}
\end{equation}
and $\Phi''_{0}, \Phi''_{1} : \mathcal{I} \hookrightarrow {\mathcal{I}}$ by
\begin{equation}\nonumber
\Phi''_{i}(a) = 2 a +i, \  \Phi''_{i}(a \rightarrow b) = (2 a +i \rightarrow 2 b+i), \ i = 0,1.
\end{equation}
Observe that $\Phi'_{0}, \Phi'_{1}$ are equivalences of categories, while $\Phi''_{0*}, \Phi''_{1*}$ are not.
These functors induce maps of simplicial sets:
\begin{equation}\nonumber
\begin{split}
\Psi_{*}', \Phi'_{0*}, \Phi'_{1*} &: N(\mathcal{I}) \hookrightarrow N(\overline{\mathcal{I}}),\\
\Phi''_{0*}, \Phi''_{1*} &: N(\mathcal{I}) \hookrightarrow N({\mathcal{I}}).
\end{split}
\end{equation}
Then $\Phi'_{0*}, \Phi'_{1*}$ are categorical equivalences similarly as before.

For given homotopy coherent diagrams $\mathscr{F}, \mathscr{F}' : N(\mathcal{I}) \rightarrow N_{dg}\big(Ch(\mathbb{Z}_2)\big)$ with $\mathscr{F}(a) = \mathscr{F}'(a)$ for all $a \in \mathbb{Z}_{\geq 0},$ we consider the two diagrams
\begin{equation}\nonumber
\begin{split}
\mathscr{F}^{\overline{\mathcal{I}}} : N(\overline{\mathcal{I}}) \rightarrow  N_{dg}\big( Ch(\mathbb{Z}_{2}) \big),\\
\dot{\mathscr{F}} : N({\mathcal{I}}) \rightarrow  N_{dg}\big( Ch(\mathbb{Z}_{2}) \big).\\
\end{split}
\end{equation}
(Here $\mathscr{F}^{\overline{\mathcal{I}}}$ should not be mistaken for $\overline{\mathscr{F}}.$) We require them to be {\textit{extensions}} of $\mathscr{F}$ and $\mathscr{F}'$ in the the sense that the following relations hold:
\begin{equation}\nonumber
\begin{split}
\mathscr{F}^{\overline{\mathcal{I}}} \circ \Phi''_{0*} = \mathscr{F}&, \ \mathscr{F}^{\overline{\mathcal{I}}} \circ \Phi''_{1*} = \mathscr{F}',\\
\dot{\mathscr{F}} \circ \Phi'_{0*} = \mathscr{F}&, \ \dot{\mathscr{F}} \circ \Phi'_{1*} = \mathscr{F}'.\\
\end{split}
\end{equation}
And we require $\mathscr{F}^{\overline{\mathcal{I}}}$ to be an extension of $\dot{\mathscr{F}},$ i.e.,
\begin{equation}\nonumber
\mathscr{F}^{\overline{\mathcal{I}}} \circ \Psi'_{*} = \dot{\mathscr{F}}.
\end{equation}

If these extensions exist, then we have 
\begin{equation}\nonumber
colim \mathscr{F}  \xrightarrow{\simeq}  colim \mathscr{F}^{\overline{\mathcal{I}}}  \xleftarrow{\simeq} colim\mathscr{F}'
\end{equation}
for the same reason as (\ref{ceco12}). Now the following lemma follows.

\begin{lemma}
If an extension $\mathscr{F}^{\overline{\mathcal{I}}}$ exists, we have quasi-isomorphisms of chain complexes
\end{lemma}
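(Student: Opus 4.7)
The plan is to mirror the proof of the preceding lemma, replacing $\widehat{\mathcal{I}}$ by $\overline{\mathcal{I}}$. First I would observe that, as noted immediately before the statement, the functors $\Phi'_{0}, \Phi'_{1} : \mathcal{I} \to \overline{\mathcal{I}}$ are equivalences of ordinary categories. Consequently, their nerves
\begin{equation}\nonumber
\Phi'_{0*}, \ \Phi'_{1*} : N(\mathcal{I}) \to N(\overline{\mathcal{I}})
\end{equation}
are categorical equivalences of simplicial sets in the sense of [Lur1], [Rie]. This is a standard consequence of Joyal's model structure: the nerve functor carries equivalences of small categories to categorical equivalences.

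Second, I would invoke the same principle used to derive (\ref{ceco12}): a categorical equivalence is cofinal (Definition \ref{coflur}), and cofinality of a map of simplicial sets implies that restriction along it induces an equivalence on colimits of $\infty$-functors valued in $N_{dg}\big(Ch(\mathbb{Z}_2)\big)$. Applying this to $\mathscr{F}^{\overline{\mathcal{I}}}$, and using the extension relations (so that $\mathscr{F}$ and $\mathscr{F}'$ are the pullbacks of $\mathscr{F}^{\overline{\mathcal{I}}}$ along the two equivalences $\Phi'_{0*}$ and $\Phi'_{1*}$), I obtain
\begin{equation}\nonumber
colim \mathscr{F} \ = \ colim(\mathscr{F}^{\overline{\mathcal{I}}} \circ \Phi'_{0*}) \ \xrightarrow{\simeq} \ colim \mathscr{F}^{\overline{\mathcal{I}}} \ \xleftarrow{\simeq} \ colim(\mathscr{F}^{\overline{\mathcal{I}}} \circ \Phi'_{1*}) \ = \ colim \mathscr{F}'.
\end{equation}
Evaluating at the cone point and identifying $colim \mathscr{F}(*) = \overline{\mathscr{F}}(*)$ and $colim \mathscr{F}'(*) = \overline{\mathscr{F}}'(*)$ via Proposition \ref{prcol} then produces the desired zigzag of quasi-isomorphisms
\begin{equation}\nonumber
\begin{tikzcd}
{} & colim \mathscr{F}^{\overline{\mathcal{I}}}(*)  & {}\\
\overline{\mathscr{F}}(*) \arrow{ru}{\simeq} & {} &\overline{\mathscr{F}}'(*) \arrow[swap]{lu}{\simeq}.
\end{tikzcd}
\end{equation}

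I do not foresee any substantive obstacle: the argument is entirely formal and reuses the cofinality step already established in the preceding lemma. The intermediate diagram $\dot{\mathscr{F}}$ and the functor $\Psi'$ are not needed to obtain the quasi-isomorphisms themselves; they would be relevant only if one wished to factor the zigzag through an intermediate colimit indexed by $N(\mathcal{I})$. One should, however, be careful not to try the same trick with $\Phi''_{0*}, \Phi''_{1*}$, since these are explicitly not categorical equivalences (the paper remarks that $\Phi''_{0}, \Phi''_{1}$ are not equivalences of categories), so cofinality fails for them and the direct zigzag through $\dot{\mathscr{F}}$ would require a separate argument.
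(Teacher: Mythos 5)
Your proof is correct and takes essentially the same route as the paper's: both use the observation that $\Phi'_{0}, \Phi'_{1}$ are equivalences of categories, so that $\Phi'_{0*}, \Phi'_{1*}$ are categorical equivalences, hence cofinal, and then pull back the colimit of $\mathscr{F}^{\overline{\mathcal{I}}}$ along the two inclusions exactly as in (\ref{ceco12}). The identification of $\mathscr{F}$, $\mathscr{F}'$ as restrictions of $\mathscr{F}^{\overline{\mathcal{I}}}$ along $\Phi'_{0*}$, $\Phi'_{1*}$ is what makes the sources and targets match (the displayed relations in the paper appear to have the roles of $\Phi'_{\bullet *}$ and $\Phi''_{\bullet *}$ interchanged by a typo, and your reading is the one consistent with the stated domains of $\mathscr{F}^{\overline{\mathcal{I}}}$ and $\dot{\mathscr{F}}$).

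One small cautionary point about your closing remark: it is true that $\Phi''_{0*}, \Phi''_{1*}$ are not categorical equivalences, but it does not follow that they fail to be cofinal. Cofinality is strictly weaker than categorical equivalence; the embeddings $a \mapsto 2a$ and $a \mapsto 2a+1$ of $\mathcal{I}$ into itself both have cofinal image (every $b \in \mathbb{Z}_{\geq 0}$ admits a morphism to some $2a$ and to some $2a+1$), so $\Phi''_{0*}$ and $\Phi''_{1*}$ are in fact cofinal. This does not affect the validity of your argument, which uses only $\Phi'_{0*}$ and $\Phi'_{1*}$, but the sentence as written gives the wrong reason that the alternative route would be different.
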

\begin{equation}\nonumber
\begin{tikzcd}
{} & colim {{\mathscr{F}}^{\overline{\mathcal{I}}}}(*)  & {}\\
\overline{\mathscr{F}}(*) \arrow{ru}{\simeq} & {} & \overline{\mathscr{F}'}(*) \arrow[swap]{lu}{\simeq} .
\end{tikzcd}
\end{equation}

\section{Morse theory on compact manifolds}

In this section, we provide several standard results of Morse theory without proof. The reader can refer to [Sch] for full details. Those who familiar with basic Morse theory may feel free to skip this section and go to Section 4.

Let $(M,g)$ be a compact Riemannian manifold andwith a Morse function $h:M \rightarrow \mathbb{R}$ on it.

\subsection{The space of trajectories}

Consider the compactification $\overline{\mathbb{R}} = \mathbb{R} \cup \{\pm \infty\}.$ For a small open neighborhood $\mathcal{O}$ of the zero section in $TM$ and a smooth map $w : \overline{\mathbb{R}} \rightarrow M$ from $w(-\infty) = x$ to $w(\infty) = y,$ consider the exponential map $\exp_w : W^{1,2}(w^*\mathcal{O}) \rightarrow M.$ Then the map $\exp_w$ is a diffeomorphism onto the image when $\mathcal{O}$ is sufficiently small.

\subsubsection*{The Banach manifold $\mathcal{P}^{1,2}(x,y)$} For two points $x,y$ in $M,$ we denote 
\begin{equation}\nonumber
\begin{split}
C^{\infty}_{x,y}(\overline{\mathbb{R}}, M) := \bigl\{ w : \overline{\mathbb{R}} \rightarrow M \mid w(- \infty) & = x, \ w(\infty) = y, \ |w(\pm s)| < e^{\mp k s},\\ & \text{ for}  \text{ sufficiently large } |s| \text{ and some }k>0 \bigr\}.
\end{split}
\end{equation}
We also denote
\begin{equation}\nonumber
\begin{split}
\mathcal{P}^{1,2}(x,y) : = \bigl\{ \gamma : \overline{\mathbb{R}} \rightarrow M \mid & \gamma(-\infty) = x, \ \gamma(\infty) = y,\\ & \quad \exp_{w}(s) = \gamma \text{ for some } s \in W^{1,2}(w^*\mathcal{O})\\ & \quad \quad \text{ with } s(\pm \infty) =0, \text{ and } \ w \in C^{\infty}_{x,y}(\overline{\mathbb{R}}, M) \bigr\}.
\end{split}
\end{equation}
In our case, the inequality $k - \frac{m}{p} > 0$ holds for $(k,p,m) = (1,2,1),$ so we know $\gamma \in \mathcal{P}^{1,2}(x,y)$ is continuous by the Sobolev embedding theorem.

\begin{proposition}
We have
\begin{enumerate}[label=(\roman*)]
\item $\mathcal{P}^{1,2}(x,y)$ is a smooth Banach manifold with local charts 
\begin{equation}\nonumber
\bigl\{W^{1,2}(w^*\mathcal{O}), \exp_w( \cdot )\bigr\}_{w  \in C^{\infty}_{x,y}(\overline{\mathbb{R}}, M) }.
\end{equation}
\item $T\mathcal{P}^{1,2}(x,y)$ is given by $\bigcup\limits_{\gamma \in \mathcal{P}^{1,2}(x,y)} \gamma^* TM$, and we have 
\begin{equation}\nonumber
W^{1,2}\big(T\mathcal{P}^{1,2}(x,y)\big) = \bigcup\limits_{\gamma \in \mathcal{P}^{1,2}(x,y)}W^{1,2}(\gamma^* TM).
\end{equation} 
\end{enumerate}
\end{proposition}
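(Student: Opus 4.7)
The plan is to verify the Banach manifold structure by checking that the proposed charts cover $\mathcal{P}^{1,2}(x,y)$ and that the transition maps are smooth diffeomorphisms of open sets in Banach spaces, and then to identify the tangent bundle by differentiating the chart maps at the zero section.

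First, I would show that for each smooth reference path $w \in C^{\infty}_{x,y}(\overline{\mathbb{R}}, M)$ the map $\exp_w$ sends $W^{1,2}(w^*\mathcal{O})$ homeomorphically onto an open subset of $\mathcal{P}^{1,2}(x,y)$. The Sobolev embedding $W^{1,2}(\mathbb{R}) \hookrightarrow C^0$, which holds under the inequality $k - m/p = 1 - 1/2 > 0$ recorded in the excerpt, guarantees that any $s \in W^{1,2}(w^*\mathcal{O})$ is continuous with $s(\pm\infty) = 0$, so $\exp_w(s)$ is a continuous curve joining $x$ to $y$. The exponential decay required for membership in $\mathcal{P}^{1,2}(x,y)$ is inherited from that of $w$ together with the decay of $s$. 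Injectivity and openness of the image follow from choosing $\mathcal{O}$ small enough that the fiberwise exponential is a diffeomorphism onto its image. The covering property is almost tautological from the very definition of $\mathcal{P}^{1,2}(x,y)$.

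Next, I would check smoothness of transition maps. For two reference paths $w_1, w_2$ whose charts overlap, the transition takes the form
\begin{equation}\nonumber
s \longmapsto \exp_{w_1}^{-1}\big(\exp_{w_2}(s)\big),
\end{equation}
which is induced pointwise by a fiberwise smooth bundle map $F : w_2^*\mathcal{O} \to w_1^*\mathcal{O}$ over $\overline{\mathbb{R}}$. The resulting map between open subsets of $W^{1,2}(w_2^*TM)$ and $W^{1,2}(w_1^*TM)$ is smooth by the $\omega$-lemma for Sobolev sections over a one-dimensional base, which applies because $W^{1,2}(\mathbb{R})$ embeds into $C^0$ and carries a Banach algebra structure. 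The decay condition at $\pm\infty$ is preserved because $F$ fixes the zero sections there. For part (ii), I would differentiate a chart at the zero section to identify $T_{w}\mathcal{P}^{1,2}(x,y) = W^{1,2}(w^*TM)$, since $W^{1,2}(w^*\mathcal{O})$ is open in the Banach space $W^{1,2}(w^*TM)$. For a general $\gamma$ in the chart centered at $w$, this identification is transported to $W^{1,2}(\gamma^*TM)$ via fiberwise parallel transport along the short geodesic from $w(t)$ to $\gamma(t)$, which is well-defined provided $\mathcal{O}$ lies within the injectivity radius; two such identifications coming from different reference paths agree by the chain rule for the transition maps.

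The main obstacle I anticipate is a clean invocation of the $\omega$-lemma in the present noncompact setting: one must verify that composition with the fiberwise smooth $F$ defines a map between open subsets of Sobolev spaces that is Fr\'echet differentiable of all orders with continuous derivatives, while simultaneously preserving both $W^{1,2}$-regularity and the prescribed exponential decay at $\pm\infty$. This is handled by combining the Sobolev embedding $W^{1,2}(\mathbb{R}) \hookrightarrow C^0$, the Banach algebra property of $W^{1,2}$ on the line, and the uniform boundedness of $F$ together with all its fiberwise derivatives on the closure of $w_2^*\mathcal{O}$. Once this analytic input is secured, the remaining verifications reduce to routine bookkeeping.
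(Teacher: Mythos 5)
The paper states this proposition without proof, referring the reader to [Sch] (Schwarz, \emph{Morse homology}) for the details; your proposal is a correct sketch of essentially the argument found there. You correctly identify the two analytic ingredients that make the construction work on the noncompact domain $\overline{\mathbb{R}}$: the Sobolev embedding $W^{1,2}(\mathbb{R}) \hookrightarrow C^0$ (which gives pointwise control of sections and makes the chart maps well-defined and injective), and the $\omega$-lemma for composition with fiberwise smooth bundle maps (which yields smoothness of the transition maps). The tangent-space identification in (ii) is correctly obtained by differentiating a chart at the zero section and transporting via the fiber derivative of the exponential map; your description via parallel transport along short geodesics is a valid alternative phrasing of the same mechanism that the paper encodes in the operators $\nabla_1\exp$ and $\nabla_2\exp$ a few lines below the proposition. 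The one point you flag as the ``main obstacle''---uniform control of $F$ and its derivatives near $\pm\infty$ so that the $\omega$-lemma applies over the noncompact base---is indeed the part that requires care, and the exponential decay conditions built into $C^{\infty}_{x,y}(\overline{\mathbb{R}}, M)$ are precisely what make this work; Schwarz handles it the same way, so there is no gap.
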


\subsubsection*{The fiber derivatives} Consider the tangent bundle $\tau : TM \rightarrow M$ and a connection $K$ on it. For $\xi \in \mathcal{O},$ we have the following two isomorphisms:
\begin{equation}\nonumber
\begin{split}
\nabla_1 \exp (\xi) &:= d \exp(\xi) \circ \big( d \tau (\xi) |_{H} \big)^{-1} : T_{\tau(\xi)}M \xrightarrow{\simeq} T(T_{\tau(\xi)}M) \xrightarrow{\simeq} T_{\exp(\xi)}M\\
\nabla_2 \exp (\xi) &:= d \exp(\xi) \circ \big( K (\xi) |_{V} \big)^{-1} : T_{\tau(\xi)}M \xrightarrow{\simeq} T(T_{\tau(\xi)}M) \xrightarrow{\simeq} T_{\exp(\xi)}M,
\end{split}
\end{equation}
where $d \tau (\xi) |_H $ and $K(\xi)|_V$ are isomorphisms obtained from the restrictions to the horizontal and vertical subspaces, respectively. (Recall that $d \exp(\xi)$ is an isomorphism when $\mathcal{O}$ is sufficiently small.) $\nabla_2 \exp(\xi)$ is called the \textit{fiber derivative} of the exponential map. The fiber derivative of the transition map of the local charts $\varphi_{f_2 \circ f_1} : = \varphi_{f_2} \circ \varphi^{-1}_{f_1} : \mathcal{O} \rightarrow \mathcal{O}$ is given by 
\begin{equation}\nonumber
\nabla_2 \exp \big( \exp_{f_1(t)}^{-1} \circ \exp_{f_2(t)} (\xi) \big)^{-1} \circ \nabla_2 \exp(\xi).
\end{equation}
We have 
\begin{equation}\nonumber
\begin{split}
\frac{\partial}{\partial t} \exp \big(\xi(t) \big) & = \nabla_1 \exp(\xi)\big(d \tau (\dot{\xi})\big) + \nabla_2 \exp(\xi)\big(K(\dot{\xi})\big)\\
& = \nabla_1 \exp(\xi)(\dot{h}) + \nabla_2 \exp(\xi)(\nabla_t \xi),
\end{split}
\end{equation}
where $\nabla_t \xi = K(\dot{\xi})$ and $\dot{h} = d \tau(\dot{\xi}).$

\subsubsection*{The flow equation} We consider a map of Banach manifolds
\begin{equation}\nonumber
F : \mathcal{P}^{1,2}(x,y) \rightarrow \bigcup\limits_{\gamma \in \mathcal{P}^{1,2}(x,y)} L^2 (\gamma^* TM)
\end{equation}
that is given by $F(\gamma) = \dot{\gamma} + X \circ \gamma,$ where $X$ is either
\begin{equation}\nonumber
\begin{cases}
X = \nabla h \text{ for a time-independent Morse function $h,$ or } \\
X = \frac{\nabla h}{\sqrt{1 + |\nabla h|^2 |\dot{h}|^2}} \text{ for a time-dependent Morse function $h$}
\end{cases}
\end{equation}
The image of $F$ can be shown to lie in the class of $L^2$ maps. In the local chart, $\bigl\{W^{1,2}(w^* \mathcal{O}), \exp_w\bigr\},$ $F$ can be written for the time-independent case as follows:
\begin{equation}\nonumber
\begin{split}
F_{loc} : W^{1,2}(w^* \mathcal{O}) \rightarrow & L^2(w^* TM),\\
\xi \mapsto & \big( \nabla_2 \exp (\xi) \big)^{-1} \circ \big( \nabla_1 \exp(\xi)(\dot{h})\\  & \quad + \nabla_2 \exp (\xi)(\nabla_t \xi)  + (\nabla h \circ \exp_h )(\xi) \big)\\
& \quad \quad = \nabla_t \xi (t) + g(t, \xi(t)). 
\end{split}
\end{equation}
Here the map $g : {\mathbb{R}} \times w^* \mathcal{O} \rightarrow w^* TM$ is smooth and fiber respecting at each $t$ with the condition $\lim_{s \rightarrow \pm \infty} g(s, 0) = 0.$ Moreover, the asymptotical fiber derivatives $F_2(\pm \infty, 0)$ are conjugated to the Hessians of $h^{\alpha}$ and $h^{\beta}$ at $x^{\alpha}$ and $x^{\beta},$ respectively. The time-dependent case is similar.

It turns out that the zero set of $F$ coincides with the set of smooth curves $w : \overline{\mathbb{R}} \rightarrow M$ that are solutions of $\dot{w} + X \circ w = 0$ with the condition $w(-\infty) = x$ and $w(\infty) = y.$ We denote the zero set of $F$ by
\begin{equation}\nonumber
\begin{cases}
 \widehat{\mathcal{M}}(h;x,y) \text{ for the time-independent case,} \\
 \mathcal{M}(h;x,y) \text{ for the time-dependent case.}
  \end{cases}
 \end{equation}

The linearization of $F$ is computed as follows:

\begin{equation}\nonumber
\begin{split}
dF : W^{1,2}(\mathbb{R}, \mathbb{R}^n) \rightarrow L^2(\mathbb{R}, \mathbb{R}^n)\\
\xi \mapsto \sum\limits_{i} dF(u)(\xi),
\end{split}
\end{equation}
where 
\begin{equation}\label{aaaaa}
dF(u) (\xi) = \frac{\partial}{\partial s} \xi + A(\xi),
\end{equation}
where $A$ is a continuous family of the endomorphisms of $L^2(\mathbb{R}, \mathbb{R}^n)$ such that $A(\pm \infty)$ is nondegenerate and self-adjoint. Then we have

\begin{theorem}
$dF$ is a Fredholm operator of index $|x| - |y|$ between Banach spaces.
\end{theorem}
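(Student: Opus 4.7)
The plan is to carry out the proof in two stages: first establishing the semi-Fredholm property of $dF$, and then computing the index via a homotopy argument. Working in the local chart as in (\ref{aaaaa}), I view $dF(u)$ as the operator $L : W^{1,2}(\mathbb{R}, \mathbb{R}^n) \to L^2(\mathbb{R}, \mathbb{R}^n)$ given by $L\xi = \partial_s \xi + A(s)\xi$, where $A(s)$ is a continuous path of endomorphisms with $A(\pm\infty)$ nondegenerate and self-adjoint, the latter being conjugate to the Hessians of $h$ at $x$ and $y$.

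For the semi-Fredholm property, I would first establish an a priori estimate of the form
\[
\|\xi\|_{W^{1,2}(\mathbb{R})} \le C\bigl( \|L\xi\|_{L^2(\mathbb{R})} + \|\xi\|_{L^2([-R,R])} \bigr)
\]
for sufficiently large $R > 0$. The idea is that for $|s| \ge R$ the operator $A(s)$ is close to the invertible self-adjoint endpoint $A(\pm\infty)$, and the constant-coefficient operator $\partial_s + A(\pm\infty) : W^{1,2}(\mathbb{R},\mathbb{R}^n) \to L^2(\mathbb{R},\mathbb{R}^n)$ is a Banach space isomorphism (diagonalize $A(\pm\infty)$ and reduce to scalar ODEs $\dot\eta + \lambda \eta = f$ with $\lambda \ne 0$, solving explicitly). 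A cut-off patching argument then controls $\|\xi\|_{W^{1,2}(|s|\ge R)}$ by $\|L\xi\|_{L^2} + \|\xi\|_{L^2([-R,R])}$, and the interior piece is handled by the basic one-dimensional elliptic estimate. Since $W^{1,2}([-R,R]) \hookrightarrow L^2([-R,R])$ is compact, the abstract Rellich-type lemma (Proposition 2.3 in Schwarz) then yields that $L$ has finite-dimensional kernel and closed range. To see the cokernel is finite-dimensional, I pass to the formal $L^2$-adjoint $L^*\eta = -\partial_s \eta + A(s)^T \eta$, which has exactly the same structure (transposed nondegenerate self-adjoint endpoints), and repeat the argument; the standard identification $\mathrm{coker}(L) \simeq \ker(L^*)$ then completes the Fredholm property.

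For the index, I would use homotopy invariance: the Fredholm index is locally constant on the space of such operators, so I may deform the path $A(s)$ within paths having fixed nondegenerate endpoints. After conjugating $A(\pm\infty)$ to diagonal form and performing a further homotopy, I reduce to the case where $A(s)$ is diagonal with one-dimensional blocks whose eigenvalues cross zero transversally a finite number of times. Each block is a scalar problem $\dot\eta + \lambda(s)\eta = f$, and an explicit computation (or the Robbin--Salamon spectral flow formula) shows that each block contributes $+1$ to the index when $\lambda$ crosses from negative to positive and $-1$ in the reverse case. Summing, the index equals the number of negative eigenvalues of $A(-\infty)$ minus that of $A(+\infty)$, which under the identification with the Hessians translates to $|x| - |y|$.

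The main obstacle I expect is the index computation rather than the semi-Fredholm step, which is routine. Specifically, one must carefully track sign conventions: the Morse index $|x|$ counts the negative eigenspace of $\mathrm{Hess}_x h$, while the spectral flow of $A$ depends on the signs of the asymptotic operators as they appear in $L$. Since the negative gradient flow contributes a sign in the linearization, the identification of $A(\pm\infty)$ with $\pm\mathrm{Hess} h$ must be done with care so that the spectral flow agrees with $|x| - |y|$ rather than its negative. Once this bookkeeping is in place, the argument reduces to the standard calculation for the model operator on $\mathbb{R}$.
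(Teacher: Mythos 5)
Your proof is correct and follows essentially the same route as the paper's cited reference [Sch] (the paper states this result without proof, pointing to Schwarz; the same argument also appears in [AD]): an a priori estimate of the form $\|\xi\|_{W^{1,2}} \le C(\|L\xi\|_{L^2} + \|\xi\|_{L^2([-R,R])})$ combined with Rellich compactness gives the semi-Fredholm property, the cokernel is controlled through the $L^2$-adjoint, and a homotopy/spectral-flow computation produces the index $|x| - |y|$. The only point you gloss over slightly is that realizing $\text{coker}(L)$ as $\ker(L^*)$ sitting inside $W^{1,2}$ (not merely $L^2$) requires a brief elliptic-regularity bootstrap; your caution about the sign bookkeeping in the spectral flow step is well placed and correctly resolved, since the index of $\partial_s + A$ equals the number of eigenvalue crossings from negative to positive minus the reverse, which matches $|x| - |y|$ under the identification of $A(\pm\infty)$ with the Hessians.
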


\subsubsection*{A parameter space Banach manifold} Let $E_g:= End_{sym, g}(TM)$ denote the subset of endomorphisms $End(TM)$ consisting of symmetric self-adjoint ones with respect to the Riemannian metric $g.$ Also, let $\mathcal{T}_g$ denote the set of positive definite elements in $E_g.$ Let $e = (e_1, e_2, \cdots)$ be a sequence of positive real numbers. For $s \in \Gamma(End_{sym,g})(TM),$ we define a norm by 
\begin{equation}\nonumber
\|s\|_e:= \sum\limits_{k \geq 0} e_k \sup\limits_{p \in M} | \nabla^k s(p) |,
\end{equation}
where
\begin{equation}\nonumber\\
| \nabla^k s (p)| : = \max \bigl\{ \nabla_{v_1} \cdots \nabla_{v_k} s (p) \mid \| v_1 \| = \cdots = \| v_k \| =1, v_1, \cdots v_k \in T_p M \bigr\}. 
\end{equation}
We write $C^{e}(E_g) := \bigl\{s \in C^{\infty}(E_g) \mid \|s\|_{\epsilon} < \infty \bigr\}.$ Then we have:
\begin{lemma}\label{l2lem}
 $\big(C^{e}(E_g), \| \cdot \| \big)$ is a Banach space unless it is empty. Moreover, there is a sequence $e$ such that $C^{e}(E_g)$ is dense in $L^2(E_g)$.
\end{lemma}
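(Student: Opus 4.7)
My plan is to handle the two assertions separately, following the construction that goes back to Floer's work on transversality.

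For completeness of $\big(C^e(E_g), \|\cdot\|_e\big)$, I would start with a $\|\cdot\|_e$-Cauchy sequence $(s_n)$. Since $\|\cdot\|_e$ dominates each seminorm $e_k \sup_M |\nabla^k \cdot|$, for every $k$ the sequence $(\nabla^k s_n)_n$ is uniformly Cauchy on $M$, hence converges uniformly to some continuous tensor field $t_k$. A standard chart argument (writing differences of derivatives as telescoping integrals of the next derivative along geodesics and using the uniform $C^{k+1}$-bounds) shows $t_k = \nabla^k t_0$, so $s := t_0$ is smooth. To see $s \in C^e(E_g)$ and $\|s_n - s\|_e \to 0$, I would fix $\epsilon > 0$, pick $N$ with $\|s_n - s_m\|_e < \epsilon$ for $n,m \geq N$, truncate the series $\sum_k e_k \sup_M |\nabla^k(s_n - s)|$ at some level $K$, let $m \to \infty$ in the finite head, and bound the tail uniformly via Cauchy-ness. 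Because $E_g \subset \mathrm{End}(TM)$ is cut out pointwise by the linear condition of $g$-symmetry, the limit remains a section of $E_g$.

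For density of $C^e(E_g)$ in $L^2(E_g)$ for a suitable choice of $e$, I would first invoke the (standard) density of $C^\infty(E_g)$ in $L^2(E_g)$ on the compact manifold $M$, and pick a countable dense subset $\{s_n\}_{n \geq 1} \subset C^\infty(E_g)$. Setting
\begin{equation}\nonumber
C_{n,k} := \sup_{p \in M}|\nabla^k s_n(p)| \in [0, \infty),
\end{equation}
I would then choose the weights diagonally: take $e_k > 0$ small enough that
\begin{equation}\nonumber
e_k \cdot \bigl(1 + \max_{1 \leq n \leq k} C_{n,k}\bigr) \leq 2^{-k}.
\end{equation}
For each fixed $n$, the tail $\sum_{k \geq n} e_k C_{n,k} \leq \sum_{k \geq n} 2^{-k}$ is finite, and the head $\sum_{k < n} e_k C_{n,k}$ is a finite sum, so $\|s_n\|_e < \infty$. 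Hence $\{s_n\} \subset C^e(E_g)$, which gives the desired $L^2$-density.

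The main subtlety I anticipate is the smoothness/identification step in the completeness argument, namely that a $C^e$-Cauchy sequence has a limit which is itself smooth and whose higher covariant derivatives are the uniform limits $t_k$. This is routine on the compact $M$ using local charts and the uniform $C^k$-convergence packaged into $\|\cdot\|_e$, but it is the point where compactness of $M$, the fixed background connection used to define $\nabla^k$, and the interchangeability of derivatives with uniform limits all enter; it is also the step that would fail without further decay hypotheses in a noncompact setting.
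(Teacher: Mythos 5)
Your proof is correct and is essentially the standard Floer--Schwarz construction of the $C^\epsilon$ spaces; the paper itself gives no proof of this lemma (Section 3 states that results are quoted from [Sch] without proof), so your argument is the natural one to compare against the cited reference, and it matches. The completeness argument via uniform Cauchyness of each $(\nabla^k s_n)_n$, identification of the uniform limits $t_k$ with $\nabla^k t_0$ by the telescoping/chart argument, and then the truncate-and-pass-to-the-limit step to get $\|s_n - s\|_e \to 0$ is exactly right, and the observation that $g$-symmetry is a pointwise closed linear condition so the limit stays in $E_g$ is the right way to close that loop. The density argument via a countable $L^2$-dense family $\{s_n\} \subset C^\infty(E_g)$ and the diagonal choice of weights $e_k$ forcing $\sum_{k \geq n} e_k C_{n,k} < \infty$ for every fixed $n$ is likewise the standard (and correct) device. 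One very minor bookkeeping point: with the paper's convention $e = (e_1, e_2, \ldots)$ while the norm sums over $k \geq 0$, you should either fix $e_0 = 1$ or start your diagonalization at $k = 1$; this does not affect the substance. Also note that since $0 \in C^e(E_g)$ always, the space is never actually empty, so the "unless it is empty" caveat in the statement is vacuous and your proof need not address it.
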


\begin{lemma}
$C^{e}(\mathcal{T}_g)$ is an open neighborhood of $id \in C^e(E_g)$ and there is an isomorphism $T_XC^{e}(\mathcal{T}_g) \simeq C^{e}(E_g)$ for any $X \in C^{e}(\mathcal{T}_g).$
\end{lemma}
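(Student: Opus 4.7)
The plan is to establish two claims: (i) $C^{e}(\mathcal{T}_g)$ is an open subset of the Banach space $C^{e}(E_g)$, and (ii) the tangent space identification then follows automatically from the resulting Banach manifold structure.

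For (i), I would exploit that positive definiteness of a self-adjoint operator is stable under small $C^{0}$ perturbations, together with the fact that the $C^{e}$-norm dominates the $C^{0}$-norm (concretely, when $e_0 > 0$ one reads off $\|s\|_e \geq e_0 \sup_{p} |s(p)|$ directly from the definition; in any case the weighted norm controls enough derivatives on the compact manifold $M$ to bound the supremum norm). Given $X \in C^{e}(\mathcal{T}_g)$, since $M$ is compact and the minimum-eigenvalue function $p \mapsto \lambda_{\min}(X(p))$ is continuous and strictly positive, there is a uniform constant $\lambda > 0$ with $X(p) \geq \lambda \cdot \mathrm{id}_{T_p M}$ for every $p \in M$. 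A standard spectral perturbation estimate for symmetric operators (Weyl's inequality, applied fiberwise) then gives $\lambda_{\min}(Y(p)) \geq \lambda_{\min}(X(p)) - |Y(p) - X(p)|$, so for $Y \in C^{e}(E_g)$ with $\|Y - X\|_e$ small enough that $\sup_p |Y(p) - X(p)| < \lambda/2$, the section $Y$ remains fiberwise positive definite. This exhibits an open $\|\cdot\|_e$-ball about $X$ inside $C^{e}(\mathcal{T}_g)$.

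For (ii), once $C^{e}(\mathcal{T}_g)$ has been recognized as an open subset of the Banach space $C^{e}(E_g)$ (which is a genuine Banach space by the previous lemma, and contains $\mathrm{id}$ so is nonempty), it automatically inherits the structure of a smooth Banach manifold modeled on $C^{e}(E_g)$. The inclusion $C^{e}(\mathcal{T}_g) \hookrightarrow C^{e}(E_g)$ serves as a single global chart, so the tangent space at every point $X$ is canonically identified with the ambient Banach space $C^{e}(E_g)$.

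The main (and very mild) obstacle is merely extracting the uniform lower eigenvalue bound $\lambda > 0$ for $X$, which is immediate from compactness of $M$ and continuity of $\lambda_{\min}$; no deeper analytic difficulties arise. The entire argument reduces to fiberwise finite-dimensional linear algebra plus the definition of a Banach manifold.
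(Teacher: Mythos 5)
Your argument is correct, and since the paper states this lemma without proof (Section~3 opens by noting that its results are standard facts taken from [Sch]), there is no in-text argument to compare against; what you write is precisely the expected standard proof. The only point to tighten is the parenthetical fallback asserting that the weighted norm controls $\sup_p |s(p)|$ ``in any case.'' If the $k=0$ term were genuinely absent from the $\|\cdot\|_e$-sum, the higher covariant derivatives alone could not bound the $C^0$-norm: a parallel section has $\nabla^k s \equiv 0$ for all $k\geq 1$ but arbitrary pointwise length, so one cannot recover $\sup_p|s(p)|$ from derivative data. The argument therefore genuinely requires $e_0 > 0$. That, however, is the only sensible reading of the paper's definition, since otherwise $\|\cdot\|_e$ would fail to be a norm, and with it your main chain of reasoning is exactly right: the bound $\|s\|_e \geq e_0 \sup_p|s(p)|$, compactness of $M$ giving a uniform lower bound $\lambda > 0$ on $\lambda_{\min}(X(p))$, fiberwise Weyl stability of the smallest eigenvalue for $Y$ with $\sup_p|Y(p)-X(p)| < \lambda/2$, and finally the automatic identification $T_X C^e(\mathcal{T}_g)\simeq C^e(E_g)$ coming from the global chart that an open subset of a Banach space carries.
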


We consider the following map of Banach manifolds:
\begin{equation}\nonumber
\begin{split}
\Phi : C^{e}(\mathcal{T}_g) \times & \mathcal{P}^{1,2}(x,y) \longrightarrow \bigcup\limits_{\gamma \in \mathcal{P}^{1,2}(x,y)} L^2(\gamma^* TM),\\
(A, \gamma) & \longmapsto 
\begin{cases}
 \dot{\gamma} + A(\nabla \gamma) \circ \gamma,  \quad \text{ for the time-independent case, }\\
 \dot{\gamma} + \frac{A (\nabla h)}{\sqrt{ 1+ |\dot{h}| |A (\nabla h)|^2}} \circ \gamma, \text{ for the time-dependent case.}
\end{cases}
\end{split}
\end{equation}

\subsection{Transversality and compactness of the moduli space}

\subsubsection*{Transversality}When we try to achieve transversality for the moduli space of trajectories, we will refer to the following theorem.
\begin{theorem}\label{schtransv}([Sch] Proposition 2.24)
Suppose that
\begin{enumerate}[label = (\roman*)]
\item 0 is a regular value of $\Phi,$
\item $\Phi_A := \Phi(A, \cdot)$ is Fredholm of index $l$ for each $A \in C^{e}(\mathcal{T}_g),$
\end{enumerate}
then there exists a generic subset $S$ of $C^{e}(\mathcal{T}_g)$ such that $\Phi_A$ is surjective for each $A \in S.$ In particular, $\Phi^{-1}(0)$ is a submanifold of $\mathcal{P}^{1,2}(x,y)$ of dimension $l$ for each $A \in S$ by the implicit function theorem for Banach manifolds.
\end{theorem}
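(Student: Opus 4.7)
\begin{proof-sketch}
The plan is to invoke the Sard--Smale theorem applied to a natural projection out of the universal zero set, following the classical parametric transversality scheme (this is exactly the setup of [Sch]).

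First, I would use hypothesis (i) to define the universal moduli space
\[
\mathcal{Z} := \Phi^{-1}(0) \subset C^{e}(\mathcal{T}_g) \times \mathcal{P}^{1,2}(x,y).
\]
Since $0$ is a regular value of $\Phi$, the implicit function theorem for Banach manifolds makes $\mathcal{Z}$ a smooth Banach submanifold, with tangent space at $(A,\gamma)$ given by $\ker d\Phi(A,\gamma)$. Next, I would consider the projection $\pi : \mathcal{Z} \to C^{e}(\mathcal{T}_g)$, $(A,\gamma) \mapsto A$, and carry out the standard linear-algebra identification of its differential: using that $d\Phi(A,\gamma)$ is surjective, one checks
\[
\ker d\pi(A,\gamma) \simeq \ker d\Phi_A(\gamma), \qquad \mathrm{coker}\, d\pi(A,\gamma) \simeq \mathrm{coker}\, d\Phi_A(\gamma).
\]
Hypothesis (ii) then says $\pi$ is a Fredholm map of index $l$ at every point.

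Second, I would appeal to the Sard--Smale theorem: because $C^{e}(\mathcal{T}_g)$ is a separable Banach manifold (Lemma \ref{l2lem}) and $\pi$ is a smooth Fredholm map, the set $S$ of regular values of $\pi$ is residual, hence generic, in $C^{e}(\mathcal{T}_g)$. For any $A \in S$, the above identification of the cokernel forces $\mathrm{coker}\, d\Phi_A(\gamma) = 0$ at every $\gamma \in \Phi_A^{-1}(0)$, i.e.\ $\Phi_A$ is transverse to $0$. Another application of the implicit function theorem then presents $\Phi_A^{-1}(0)$ as a smooth Banach submanifold of $\mathcal{P}^{1,2}(x,y)$ whose dimension equals the Fredholm index $l$.

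The only real subtlety is the usual one in parametric Sard--Smale arguments: one must verify that $C^{e}(\mathcal{T}_g)$ is separable and that $\pi$ is of class $C^{k}$ for some $k$ strictly larger than $\max(l,0)$, so that the Sard--Smale hypothesis on differentiability is met. This is the step I expect to be the main obstacle, and it is handled by the standard choice of the weights $e = (e_1, e_2, \dots)$ in the definition of the $\|\cdot\|_e$-norm (the "Floer trick"), which simultaneously produces a separable Banach space and makes the evaluation/flow map smooth enough. Once this is in place, the rest of the argument is purely formal.
\end{proof-sketch}
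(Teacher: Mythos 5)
The paper offers no proof of this statement, deferring entirely to [Sch] Proposition 2.24, and your sketch is precisely the standard parametric Sard--Smale argument used there: form the universal zero set $\mathcal{Z} = \Phi^{-1}(0)$, identify $\ker d\pi$ and $\mathrm{coker}\,d\pi$ with $\ker d\Phi_A$ and $\mathrm{coker}\,d\Phi_A$, and apply Sard--Smale to the Fredholm projection $\pi$. You also correctly flag the one genuine technical point, namely that the weighted norm $\|\cdot\|_e$ of Floer's $C^\varepsilon$-trick (cf.\ Lemma \ref{l2lem}) is exactly what makes $C^e(\mathcal{T}_g)$ a separable Banach manifold so that the Sard--Smale hypotheses hold.
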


For time-independent Morse function $h,$ there is a free action by $\mathbb{R}$ on $\widehat{\mathcal{M}}(h;x,y)$ by 
$\gamma (\cdot) \mapsto \gamma( \cdot + \tau)$ for $\tau \in \mathbb{R}.$ Then we denote $\mathcal{M}(h;x,y) := \widehat{\mathcal{M}}(h;x,y) / \mathbb{R}.$

\begin{corollary}
There is a generic choice of Riemannian metric such that the space $\mathcal{M}(h;x,y)$ for time-independent $h$ it is a smooth manifold of of dimension $|x| - |y| - 1.$ When $h$ is time-dependent, it is a smooth manifold of dimension $|x| - |y|.$
\end{corollary}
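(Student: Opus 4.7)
The plan is to deduce the corollary from Theorem \ref{schtransv} applied to the parametrized map $\Phi$. The Fredholm hypothesis (ii) is already in hand: for each fixed $A$ the operator $\Phi_A$ has the same linearization (up to lower-order zeroth-order terms) as the operator $F$ whose linearization $dF$ was shown to be Fredholm of index $|x|-|y|$. So the entire content of the argument is to verify hypothesis (i), namely that $0$ is a regular value of $\Phi$.

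Fix a zero $(A_0,\gamma_0)\in \Phi^{-1}(0)$. The total differential splits as
\begin{equation}\nonumber
d\Phi_{(A_0,\gamma_0)}(B,\xi) = dF_{A_0}(\gamma_0)(\xi) + B\bigl(\nabla h\bigr)\circ\gamma_0,
\end{equation}
so its image contains the image of the Fredholm operator $dF_{A_0}(\gamma_0)$, hence is closed with finite-dimensional cokernel. It therefore suffices to show that any $\eta\in L^{2}(\gamma_0^{*}TM)$ which is $L^{2}$-orthogonal to the entire image of $d\Phi_{(A_0,\gamma_0)}$ must vanish. Orthogonality against $dF_{A_0}(\gamma_0)(\xi)$ for all $\xi\in W^{1,2}$ says that $\eta$ is a weak solution of the adjoint ODE; elliptic regularity for this first-order linear operator implies $\eta\in C^{\infty}$. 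Orthogonality against $B(\nabla h)\circ \gamma_0$ for every $B\in C^{e}(E_g)$ then gives the integral constraint
\begin{equation}\nonumber
\int_{\mathbb{R}}\bigl\langle \eta(t),\, B(\nabla h)(\gamma_0(t))\bigr\rangle_{g}\,dt = 0 \quad \text{for all } B\in C^{e}(E_g).
\end{equation}

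The key step is to contradict this, assuming $\eta\not\equiv 0$. Pick $t_0$ with $\eta(t_0)\neq 0$. Two geometric facts cooperate here. First, since $\gamma_0$ connects two distinct critical points, $\nabla h(\gamma_0(t))\neq 0$ for every finite $t$. Second, $\gamma_0:\mathbb{R}\to M$ is injective: a time-autonomous negative gradient trajectory cannot revisit a point without being constant. So I may choose a small $C^{\infty}$-localized bump $B$ near $\gamma_0(t_0)$ whose value at $\gamma_0(t_0)$, viewed as a symmetric endomorphism, is tuned so that $B(\nabla h(\gamma_0(t_0)))$ pairs positively with $\eta(t_0)$, and whose support on $M$ is so small that $\gamma_0^{-1}(\operatorname{supp} B)$ is a single short interval around $t_0$. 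By Lemma \ref{l2lem} the $C^{e}$ class is $L^{2}$-dense, so after approximation I may take $B\in C^{e}(E_g)$. This makes the integral nonzero, a contradiction. Hence $\eta\equiv 0$, $0$ is a regular value, and Theorem \ref{schtransv} produces the generic set $S\subset C^{e}(\mathcal{T}_g)$ for which $\Phi^{-1}(0)\cap\bigl(\{A\}\times\mathcal{P}^{1,2}(x,y)\bigr)=\widehat{\mathcal{M}}(h;x,y)$ is a smooth submanifold of dimension $|x|-|y|$.

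The time-dependent statement is exactly this conclusion (the only change in $\Phi$ is the time-dependent rescaling of $\nabla h$, which does not affect the vanishing locus argument since the scalar prefactor is nonzero wherever $\nabla h$ is). For the time-independent case, the free $\mathbb{R}$-action by time translation acts smoothly and without fixed points on $\widehat{\mathcal{M}}(h;x,y)$ (no zero-trajectories since $x\neq y$), so the quotient $\mathcal{M}(h;x,y)$ inherits a smooth manifold structure of one dimension less, namely $|x|-|y|-1$. The main obstacle is the injectivity-plus-bump construction producing a variation that is both localized enough to isolate $t_0$ and still lies in the prescribed class $C^{e}$; everything else is bookkeeping around the already established Fredholm theory.
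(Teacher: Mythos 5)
The paper gives no proof of this corollary; it is stated as an immediate consequence of Theorem \ref{schtransv}, with the verification of its hypotheses (in particular that $0$ is a regular value of $\Phi$) outsourced to [Sch]. Your proposal attempts to fill in that verification and follows the right overall path: Fredholm theory gives index $|x|-|y|$, the bump-variation argument aims to show regularity of $0$, Theorem \ref{schtransv} produces the generic set, and quotienting by the free $\mathbb{R}$-action in the autonomous case drops the dimension by one.

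However, your treatment of the time-dependent case has a genuine gap. The heart of your argument is a bump $B \in C^e(E_g)$ whose support in $M$ pulls back under $\gamma_0$ to a single short interval around $t_0$; you secure this with the injectivity of $\gamma_0$, which is correct for autonomous gradient flow because $h \circ \gamma_0$ is strictly decreasing. For time-dependent $h$ this argument does not transfer: $h(s, \gamma_0(s))$ need not be monotone and the trajectory may revisit points of $M$. Since $B$ is a section over $M$ with no explicit $s$-dependence, one cannot localize directly in $s$, and $\gamma_0^{-1}(\operatorname{supp} B)$ may consist of several intervals contributing with mixed signs. Your one-sentence dismissal about the positive scalar prefactor does not touch this.

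Notice that the paper's own parametrized version, Proposition \ref{transprop}, handles exactly this kind of difficulty with a more economical argument: one shows $c(s_0)=0$ at a single, well-chosen $s_0$ and then invokes uniqueness for the first-order linear ODE satisfied by $c$ to conclude $c \equiv 0$ globally. You do observe that $\eta$ satisfies the adjoint ODE (your phrase ``elliptic regularity'' should be ``ODE bootstrapping,'' but the conclusion is right), yet never exploit the uniqueness: you instead try to derive a contradiction at arbitrary $t_0$, which is precisely where injectivity becomes indispensable. Restructuring the argument around the one-point-plus-ODE-uniqueness step would both be cleaner and isolate the real remaining question in the non-autonomous case, namely why a single good $s_0$ always exists. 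The autonomous half of your proof and the quotient step are fine.
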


\subsubsection*{Compactness theorem} We remark that our moduli space of trajectories is a metric space, and thus compactness is equivalent to sequential compactness. We say that a compact subset $K \subset \widehat{\mathcal{M}}(h;x,y)$ is \textit{compact up to broken trajectories} if either (i) any subset $\{u_n\}_n \subset K,$ has a convergent subsequence in $K$ or (ii) there exist critical points of $h,$ $x = x_0, x_1, \cdots, x_l =y$ with $|x| > |x_1| > \cdots |y|,$ $v_i \in \mathcal{M}(h;x_i,x_{i+1}),$ together with time reparametrizations $\{ \tau_{m, i} \}_m \subset \mathbb{R}, \ (i =0,\cdots, l-1)$, so that we have a convergence $u_{m_k}(\cdot + \tau_{m_k,i}) \xrightarrow{C^{\infty}_{loc}} v_i$ as $k \rightarrow \infty.$

\begin{theorem}
Any sequence $\{ u_n \}_n \subset \mathcal{M}(h;x,y) \subset C^{\infty}(\overline{\mathbb{R}}, M)$ converges to $v \in C^{\infty}(\overline{\mathbb{R}}, M)$ in $C^{\infty}_{loc}$. Moreover, if $v \in \mathcal{M}(h;x,y),$ then it converges to $v$ in $W^{1,2}$.  
\end{theorem}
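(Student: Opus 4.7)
\begin{proof-sketch}
The strategy follows the standard Floer-theoretic compactness argument adapted to the gradient flow (see [Sch], [AD]). The plan is to first extract a locally smooth limit by combining an energy estimate with Arzel\`a--Ascoli and ODE bootstrapping, and then upgrade the convergence to $W^{1,2}$ using exponential decay near the hyperbolic critical points $x$ and $y$.

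First I would establish a uniform a priori bound. Since $M$ is compact, $h$ is bounded and $|\nabla h|$ is bounded; in the time-dependent case the nonlinearity $\nabla h / \sqrt{1+|\dot h|^2|\nabla h|^2}$ is still bounded uniformly in $t$. The flow equation then forces $|\dot u_n|$ to be uniformly bounded pointwise, which combined with compactness of $M$ gives equicontinuity and uniform boundedness of $\{u_n\}$ on every compact interval $[-T,T] \subset \mathbb{R}$. By Arzel\`a--Ascoli and a diagonal extraction, a subsequence converges in $C^0_{loc}$ to a continuous map $v : \mathbb{R} \to M$. Because $u_n$ solves the first order ODE $\dot{u} + X\circ u = 0$ with $X$ smooth and autonomous (resp.\ smooth with all derivatives bounded in the time-dependent case), standard ODE dependence on initial conditions and repeated differentiation of the equation upgrade the convergence to $C^\infty_{loc}$, and the limit $v$ satisfies the same flow equation on $\mathbb{R}$.

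Next, to extend $v$ to a map $\overline{\mathbb{R}} \to M$, I would use the energy identity $h(u_n(-\infty)) - h(u_n(\infty)) = \int_{\mathbb{R}} |\dot u_n|^2 \,dt$ (respectively its time-dependent variant), which is a finite constant $h(x)-h(y)$ independent of $n$. Fatou's lemma then bounds the energy of $v$, so $\dot v \in L^2$, and the $\omega$-limits $v(\pm\infty)$ exist and lie in $\mathrm{Crit}(h)$. This gives $v \in C^\infty(\overline{\mathbb{R}},M)$. (If no energy is lost in a neighborhood of $\pm\infty$ the endpoints will coincide with $x,y$; otherwise energy concentrates at breaking points, which is the scenario of the compactness-up-to-broken-trajectories statement discussed just before the theorem.)

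For the final $W^{1,2}$ assertion, suppose $v \in \mathcal{M}(h;x,y)$. Working in the fixed local chart $\{W^{1,2}(v^*\mathcal{O}), \exp_v\}$ centered at $v$, $C^\infty_{loc}$ convergence gives $\exp_v^{-1}(u_n) \to 0$ in $W^{1,2}$ on each compact interval. The remaining task is to control the tails $(-\infty,-T]$ and $[T,\infty)$ uniformly in $n$. Because $x$ and $y$ are nondegenerate critical points, the linearized operator $dF(v) = \partial_s + A(s)$ from (\ref{aaaaa}) has self-adjoint nondegenerate asymptotics, so both $v$ and, for $n$ large, the $u_n$ decay exponentially to $x$ and $y$ at prescribed rates (this is the standard argument via the local stable/unstable manifold theorem at a hyperbolic rest point of the gradient flow). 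Choosing $T$ large so that $u_n$ and $v$ both lie in arbitrarily small neighborhoods of $x$ and $y$ on the tails gives a uniform exponential tail bound of the form $\|\exp_v^{-1}(u_n)\|_{W^{1,2}([T,\infty))} \leq C e^{-\lambda T}$, and analogously on $(-\infty,-T]$. Combining the compact-interval convergence with the tail estimate yields $u_n \to v$ in $W^{1,2}$. The main technical point, and the only part that really goes beyond ODE bootstrapping, is precisely this uniform exponential tail control at the hyperbolic endpoints.
\end{proof-sketch}
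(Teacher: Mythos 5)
Your proposal is correct and takes essentially the same route as the paper. The paper states this theorem without proof in Section 3 (deferring to [Sch]), but proves the directly analogous parametrized statement in Section 10 (Theorems \ref{cptwc} and \ref{thm15}) by the same three steps you give: equicontinuity plus Arzel\`a--Ascoli for $C^0_{loc}$ convergence, ODE bootstrapping to upgrade to $C^\infty_{loc}$, and then uniform exponential decay near the hyperbolic asymptotic critical points to pass from $C^\infty_{loc}$ to $W^{1,2}$ (citing [Sch] Lemma 2.39). The only small divergence is in how equicontinuity is obtained: you invoke a pointwise a priori bound on $|\dot u_n|$ from compactness of $M$, whereas the paper (in Proposition \ref{equicont}) runs the H\"older-plus-energy estimate $d(u_n(s),u_n(s')) \le \sqrt{|s'-s|}\,\|\dot u_n\|_{L^2}$ and then bounds the energy by $h(x)-h(y)$ plus the time-dependent correction. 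Both are standard; your version is slightly more direct for the time-independent case, while the paper's phrasing generalizes more cleanly to the parametrized/time-dependent setting where the energy is not just the difference of the endpoint values and it is the $L^2$-energy bound rather than the $C^0$ bound on the vector field that persists uniformly in the homotopy parameter.
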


\begin{theorem}
$\widehat{\mathcal{M}}(h;x,y)$ is compact up to broken trajectories.
\end{theorem}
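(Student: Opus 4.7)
The plan is to proceed by induction on the number of critical values of $h$ contained in $[h(y), h(x)]$, using the preceding $C^\infty_{loc}$ convergence theorem as the main analytic input, together with a time-shift argument to peel off successive pieces of an a priori broken limit.

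Given a sequence $\{u_n\} \subset \widehat{\mathcal{M}}(h;x,y)$, I would first apply the previous theorem to a subsequence to obtain a $C^\infty_{loc}$ limit $v_0 \in C^\infty(\overline{\mathbb{R}}, M)$. Since smooth convergence on compacta preserves the gradient equation, $v_0$ itself solves $\dot v_0 + \nabla h \circ v_0 = 0$, so $v_0(\pm \infty)$ exist and are critical points, say $x_0'$ and $x_1$; the monotonicity of $h \circ u_n$ together with the pointwise limits yields $h(x) \geq h(x_0') \geq h(x_1) \geq h(y)$. A short argument using the uniform energy bound $h(x)-h(y)$ and the structure of trajectories emanating from $x$ identifies $x_0' = x$. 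If in addition $x_1 = y$, then $v_0 \in \widehat{\mathcal{M}}(h;x,y)$ and the second assertion of the previous theorem upgrades convergence to $W^{1,2}$, finishing the base case.

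Otherwise $h(x_1) > h(y)$, and I would extract a second piece by time-shifting. Fix a regular value $c \in \big(h(y), h(x_1)\big)$; by the strict monotonicity of $h \circ u_n$ and the intermediate value theorem, for each sufficiently large $n$ there is a unique $\tau_n \in \mathbb{R}$ with $h(u_n(\tau_n)) = c$, and since $h(v_0(+\infty)) = h(x_1) > c$ while $C^\infty_{loc}$ convergence holds on compact subsets where $u_n$ stays above $c$, one concludes $\tau_n \to +\infty$. Applying the previous theorem to the shifted sequence $u_n(\cdot + \tau_n)$ yields a further limit $v_1 \in \widehat{\mathcal{M}}(h;x_1,x_2)$ with $h(x_2) \leq c < h(x_1)$. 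The analogous construction on the negative side handles the case $x_0' \neq x$, and iterating produces the desired broken trajectory $(v_0, \dots, v_{l-1})$ between critical points $x = x_0, x_1, \dots, x_l = y$.

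The main technical obstacle is verifying that the extraction terminates in finitely many steps and that no energy is lost between consecutive pieces. Termination is immediate because each $x_i$ is critical with $h(x_i)$ strictly decreasing, and the compact manifold $M$ has only finitely many critical values. Faithful energy accounting, i.e.\ that the endpoint at $+\infty$ of $v_i$ agrees with the endpoint at $-\infty$ of $v_{i+1}$, requires a uniform Morse-type estimate: near any critical point $p$, the norm $|\nabla h|$ is bounded below on the complement of any fixed neighborhood $U_p$, so uniformly in $n$ a trajectory cannot spend time greater than $(h(x)-h(y))/\inf_{M \setminus \bigcup_p U_p} |\nabla h|$ outside $\bigcup_p U_p$. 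This quantitative control guarantees that the intermediate shifts $\tau_n^{(i)}$ always capture the next piece, with no residual fragment escaping in between.
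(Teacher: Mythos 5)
The paper cites [Sch] for this statement and does not give a proof of its own, so the comparison is with the standard argument there; your overall strategy ($C^\infty_{loc}$ limit, time-shifts to regular sublevels to peel off further pieces, termination via finiteness of critical values) is the same as the standard one.

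There is, however, a genuine gap in the energy-accounting step. You assert that the shifted sequence converges to $v_1\in\widehat{\mathcal{M}}(h;x_1,x_2)$, i.e.\ that $v_1(-\infty)=x_1=v_0(+\infty)$, and you invoke a uniform bound on the time spent outside neighborhoods of critical points. That bound, as stated, does not yield the conclusion. First, $c$ was chosen to be an arbitrary regular value in $\big(h(y),h(x_1)\big)$, so there may be critical values strictly between $c$ and $h(x_1)$, in which case $v_1(-\infty)$ can lie at a strictly lower level and a piece has been skipped. Second, even after choosing $c$ so that no critical value intervenes, $h$ may have several critical points at level $h(x_1)$; nothing you wrote excludes $v_1(-\infty)$ landing on a different one. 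The missing ingredient is a quantitative monotonicity statement: a trajectory passing between two distinct critical points must cross a region of fixed width on which $|\nabla h|$ is bounded below, and so must drop $h$ by a definite amount $\eta_0>0$ independent of $n$; choosing $c$ with $h(x_1)-c<\eta_0$ then forbids such a crossing inside the slab $\{c\le h\le h(x_1)+\varepsilon\}$ and forces $v_1(-\infty)=x_1$. (An alternative, cleaner device is to take $\tau_n$ to be the first exit time of $u_n$ from a small ball $B(x_1,r)$ after a large fixed time $T$; then $v_1(s)\in\overline{B(x_1,r)}$ for all $s\le 0$, giving $v_1(-\infty)=x_1$ immediately.) Two further remarks. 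Your claim that ``a short argument'' gives $x_0'=x$ for the raw limit is false in general --- the unnormalized limit may be constant at an intermediate critical point; you later invoke the symmetric extraction, which is the correct fix, so the earlier assertion should simply be dropped. And the paper's definition requires $|x|>|x_1|>\cdots>|y|$ in Morse \emph{index}, which you have not established; strict decrease of $h$ only gives distinctness of the $x_i$, whereas the index inequality comes from transversality via $\dim\mathcal{M}(h;x_i,x_{i+1})=|x_i|-|x_{i+1}|-1\ge 0$. Also, a small typo: the time bound should have $\inf|\nabla h|^2$ in the denominator, since $\frac{d}{ds}h(u(s))=-|\nabla h(u(s))|^2$.
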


By this theorem, $\mathcal{M}(h;x,y)$ (for either time-independent or time-dependent Morse function $h$) can be compactified to $\overline{\mathcal{M}}(h;x,y)$ by adding broken trajectories, so that we have $\partial \overline{\mathcal{M}}(h;x,y) = \bigcup\limits_{\substack{z \in Crit(h),\\ |y| < |z| <|x|}} \overline{\mathcal{M}}(h;x,z) \times \overline{\mathcal{M}}(h;z,y).$ In particular, when $|x| = |y|+1, \ \overline{\mathcal{M}}(h;x,y)$ is a finite set, and when $|x| = |y| + 2,$ it is a compact 1-dimensional manifold.

\subsection{Morse chain complexes and homologies}

We assume that $g$ is chosen from the generic set of Theorem \ref{schtransv}, and consider the following free $\mathbb{Z}_2$-module associated to a triple $(M, h, g)$:
\begin{equation}\nonumber\nonumber
MC_*(M,h,g) :=  \bigoplus\limits_{x \in Crit(f)} \mathbb{Z}_2 \cdot x,
\end{equation}
where we put a grading by the Morse index, i.e., the number of negative eigenvalues of the Hessian of $h$ at the critical point. We define a map of degree $-1$ by
\begin{equation}\nonumber\nonumber
\begin{split}
d : MC_*(M,h,g) \longrightarrow MC_{*-1}(M,h,g)\\
Crit(f) \ni x \longmapsto \sum\limits_{\substack{y \in Crit(f), \\ |y| = |x| -1 }} \#_2 \overline{\mathcal{M}}(h;x,y) \cdot y.
\end{split}
\end{equation}
Since $\overline{\mathcal{M}}(h;x,y)$ is a finite set when $|x| - |y|  = 1,$ this map is well-defined. In fact, we have $d \circ d = 0,$ which can be checked from the following observation:
\begin{equation}\nonumber\nonumber
\begin{split}
d \circ d (x) &= \sum\limits_{\substack{ x, y \in Crit(h), \\ |z| = |x| -1, \\ |y| = |x| -2 }} \#_2 \big(\overline{\mathcal{M}}(h;x,z) \times \overline{\mathcal{M}}(h;z,y) \big) \cdot y  \\
&= \sum\limits_{\substack{ y \in Crit(h), \\ |y| = |x| -2}} \#_2 \big(\partial \overline{\mathcal{M}}(h;x,y)\big) \cdot y  = 0,
\end{split}
\end{equation}
where the last equality follows from the fact that any compact connected 1-dimensional manifold is either a circle or a closed interval, and thus the number of its boundary components is 0 modulo 2. 

\begin{definition}
We call $\big( MC_*(M,h,g), d \big)$ the \textit{Morse chain complex} associated to $(M,h,g).$ We define the \textit{Morse homology} of a triple $(M,h,g)$ by
\begin{equation}\nonumber\nonumber
MH_*(M,h,g) : = \frac{\ker d}{\text{im } d}.
\end{equation}
\end{definition}

\begin{notation}
For simplicity, we sometimes omit one or two components of the triple $(M,h,g)$ in the notations of Morse chain complexes and homologies without changing the meanings.  
\end{notation}

\begin{theorem}
$HM_*(M,h,g)$ is independent of the choice of $(h,g)$ up to isomorphism.
\end{theorem}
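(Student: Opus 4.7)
The plan is to prove the invariance by constructing continuation maps between Morse chain complexes associated to different choices of data and showing they are mutually inverse quasi-isomorphisms. Concretely, given two triples $(h_0, g_0)$ and $(h_1, g_1)$, I will choose a smooth homotopy $(H,G)$ with $H : \mathbb{R} \times M \to \mathbb{R}$ and $G : \mathbb{R} \to \mathrm{Met}(M)$ such that $(H,G)(s,\cdot) = (h_0,g_0)$ for $s \ll 0$ and $(H,G)(s,\cdot) = (h_1,g_1)$ for $s \gg 0$. This is exactly an instance of the time-dependent parametrized Morse data already considered in the excerpt (corresponding to a $1$-simplex in $N(\mathcal{I})$ in the language of later sections), so the analytic package --- Fredholm property of the linearization, genericity of $G$ for transversality via Theorem \ref{schtransv}, and compactification up to broken trajectories --- applies directly.

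I would then define the continuation map
\begin{equation}\nonumber
\Phi_{(H,G)} : MC_*(M,h_0,g_0) \to MC_*(M,h_1,g_1), \qquad x \mapsto \sum_{\substack{y \in \mathrm{Crit}(h_1), \\ |y| = |x|}} \#_2 \,\mathcal{M}(H;x,y)\cdot y,
\end{equation}
where $\mathcal{M}(H;x,y)$ is zero-dimensional (hence finite, by the compactification) precisely when $|x| = |y|$. The chain map identity $d_1 \circ \Phi_{(H,G)} + \Phi_{(H,G)} \circ d_0 = 0$ (over $\mathbb{Z}_2$) follows from analyzing the boundary of the one-dimensional components of $\overline{\mathcal{M}}(H;x,y)$ when $|y| = |x| - 1$: the boundary consists of once-broken trajectories, either a rigid flow of $h_0$ followed by a continuation trajectory, or a continuation trajectory followed by a rigid flow of $h_1$, and these contribute the two terms of the chain map relation.

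Next, I would show that the induced map on homology is independent of the choice of $(H,G)$. Given two such homotopies $(H^0,G^0)$ and $(H^1,G^1)$ between the same endpoints, choose a further two-parameter homotopy $(\mathbf{H},\mathbf{G})$ parametrized by $[0,1]$ interpolating between them while keeping the asymptotics fixed. Counting index $-1$ solutions of the corresponding parametrized flow equation produces a chain homotopy $K$ with $d_1 K + K d_0 = \Phi_{(H^1,G^1)} - \Phi_{(H^0,G^0)}$; again, the defining identity is read off from the boundary of the one-dimensional cut locus of the two-parameter moduli space, which this time includes two extra contributions coming from the corners $\vec{t} \in \{0,1\}$ of the parameter interval. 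Specializing to the constant homotopy $(h_0,g_0)$ to itself gives $\Phi_{(h_0,g_0)} \simeq \mathrm{id}$ on homology, since the rigid solutions of the autonomous equation are exactly the constant trajectories at critical points. Finally, a standard gluing/concatenation argument for the parameter $\rho$ large shows $\Phi_{(H^{10},G^{10})} \circ \Phi_{(H^{01},G^{01})}$ is homotopic to $\Phi_{\text{const}}$, hence the identity on $MH_*$; the analogous statement in the other order shows the continuation map is an isomorphism on homology.

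The main obstacle is the analytic one: ensuring that the parametrized moduli spaces $\mathcal{M}(H;x,y)$ are cut out transversally and admit the right compactifications, which in particular requires the asymptotic Morse data at $s = \pm\infty$ to coincide with the fixed data $(h_i, g_i)$. On a compact manifold this is standard and follows from the transversality and compactness results recalled above (Theorem \ref{schtransv} and the compactness theorem), applied to the parameter space of metrics and, if necessary, to perturbations of the homotopy of Morse functions; the only genuine care needed is that, for the chain homotopy between two continuations, one must also achieve transversality within the family, which again falls under the Sard--Smale apparatus already invoked in the proof of the listed transversality theorem.
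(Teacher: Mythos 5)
The paper states this theorem without proof, deferring to [Sch] as a standard result, and your argument is exactly the classical continuation-map proof that the cited reference gives: build a chain map from a monotone homotopy of data by counting rigid parametrized trajectories, extract the chain-map identity from the boundary of one-dimensional moduli spaces, use a second parameter to produce a chain homotopy showing independence of the interpolating homotopy, identify the constant homotopy with the identity, and use gluing of large concatenation parameter to show the two continuation maps are mutually inverse on homology. Your proposal is correct, including the caveats you raise about transversality within the family, and it matches the approach the paper implicitly relies upon.
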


\section{Homotopy constructions}

We begin our study of the noncompact case by introducing homotopical notions. Some of them are just restatements of known results, written in the forms which we will take advantage of in later sections. 

\subsection{A noncompact setting}

\subsubsection*{Morse functions with boundary conditions} Let $M$ be a compact manifold with a possibly nonempty boundary $\partial M$ and $h$ a Morse function on it, satisfying
\begin{equation}\label{inwdircon}
-\nabla h \text{ is in the inward direction along } \partial M.
\end{equation}

\begin{figure}[h!]
  \includegraphics[width=4.5cm]{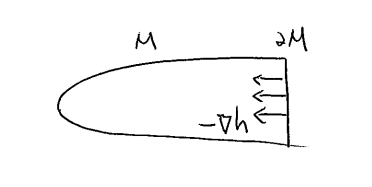}
  \caption{The inward direction condition}
  \label{}
\end{figure}

Throughout this paper, we will call this the {\textit{inward direction condition.}} This condition requires the gradient vector field to be transversal to the boundary. It also prevents the gradient vector field from vanishing, so that there are no critical points at the boundary. 

\begin{remark}
Trajectories connecting two critical points cannot intersect $\partial M$; hence the discussion of the previous section for compact manifolds is available.
\end{remark}

\subsubsection*{Compact exhaustions} We are interested in a noncompact manifold $W$ equipped with a \textit{compact exhaustion} (or an \textit{exhaustion} in short) by which we mean the following data:

\begin{itemize}[ label = \textbullet]
\item $\bigl\{(M_a, h_a, g_a)\bigr\}_{a \in \mathbb{Z}_{\geq 0}}$ a family of compact submanifolds of $W$ with Morse functions and Riemannian metrics, 
\item $\bigcup\limits_{a \in \mathbb{Z}_{\geq 0} } M_a = W,$
\item $M_a \subseteq \text{int}({M_{a+1}}), \text{ for each } a \in Ob(\mathcal{I}),$
\item Each $h_a$ satisfies the condition (\ref{inwdircon})
\item $\{\overline{h}^b_a : M_j \rightarrow \mathbb{R}\}_{a \leq b},$ a family of Morse functions, (called \textit{extensions}),
\item Each $\overline{h}^b_a$ satisfies the condition (\ref{inwdircon}),
\item $\overline{h}^b_a |_{M_a} \equiv h_a, \text{for each pair } a \leq b$ (in particular, $\overline{h}^a_a \equiv h_a, \text{for each } a$).
\end{itemize}

\begin{figure}[h!]
  \includegraphics[width=6.5cm]{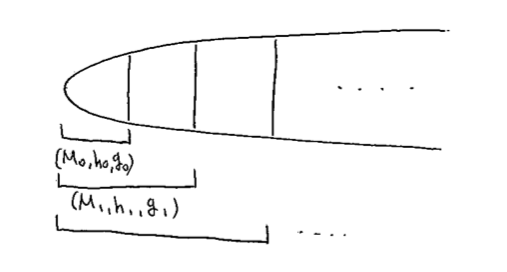}
  \caption{A compact exhaustion}
  \label{}
\end{figure}

\subsection{1-homotopies}

We introduce {\textit{1-homotopies}} and the related analytical statements. They can be dealt with the standard Morse theory for the time-dependent case, and the reader can refer to [Sch] or [AD] for proofs.

\begin{definition}\label{1htaxo}
Given an exhaustion $\bigl\{ (M_a, h_a, g_a)\bigr\}_{a \in \mathbb{Z}_{\geq 0}}$ of $W,$ by \textit{1-homotopies}, we mean a family of pairs of time-dependent Morse functions and Riemannian metrics 
\begin{equation}\nonumber
\begin{cases}
h_f : {\mathbb{R}} \times M_{tf}   \rightarrow \mathbb{R}\\ 
g_f : {\mathbb{R}} \rightarrow Met(M_{tf}), \ {f \in Mor(\mathcal{I})},
\end{cases}
\end{equation}
(where $Met(M_{\bullet})$ is the set of all Riemannian metrics on $M_{\bullet}$) that satisfy the following axioms: 
\begin{itemize}[label = \textbullet]
\item (Stability at the ends) There exists $R_f >0$ such that 
\begin{equation}\nonumber
h_f(s, \cdot) = \begin{cases}
\overline{h}^{tf}_{sf} \text{ if } s < -R_f,\\
h_{tf}  \text{ if } s > R_f,\\
\end{cases}
\end{equation}
\begin{equation}\nonumber
g_f(s) = \begin{cases}
\overline{g}^{tf}_{sf} \text{ if } s < -R_f,\\
g_{tf} \text{ if } s > R_f,\\
\end{cases}
\end{equation}
\item (Constant homotopies) $(h_{id_{a}}, g_{id_{a}})$ is the constant homotopy for each $a \in Ob(\mathcal{I}).$
\item (Extensions) There exists a family of time-dependent Morse functions and Riemannian metrics 
\begin{equation}
\begin{cases}
\overline{h}^k_f : {\mathbb{R}} \times M_k \rightarrow \mathbb{R}\\
\overline{g}^k_f : {\mathbb{R}} \rightarrow Met(M_{k}), \  tf \leq k, \ f \in Mor(\mathcal{I}),
\end{cases}
\end{equation}
\item (Trivial extensions) $\overline{h}^{tf}_f \equiv h_f \text{ for each } f \in Mor(\mathcal{I}),$
\item (Stability for extensions) For the same $R_f >0$ as above, and for all $k \geq tf,$ we have
\begin{equation}\nonumber
\overline{h}^k_f(s, \cdot) = \begin{cases}
\overline{h}^k_{sf}  \text{ if } s < -R_f,\\
\overline{h}^k_{tf}  \text{ if } s > R_f,\\
\end{cases}
\end{equation}
\begin{equation}\nonumber
\overline{g}^k_f(s, \cdot) = \begin{cases}
\overline{g}^k_{sf}  \text{ if } s < -R_f,\\
\overline{g}^k_{tf}  \text{ if } s > R_f,\\
\end{cases}
\end{equation}
\item (Inward direction condition) $-\nabla_{g_f} h_f$ and $-\nabla_{\overline{g}^{k}_f}\overline{h}^k_f$ are in the inward direction along the boundaries $\partial M_{tf}$ and $\partial M_k,$ respectively for all $f \in Mor(\mathcal{I})$ and $k$ with $k \geq tf.$
\item (Regularity) $h_f$ and $\overline{h}^k_f$ (simply denoted by $h_{\bullet}$ in what follows) are regular in the following sense: for any $p \in M_{tf}$ with $\nabla h (s,p) = 0,$ the operator $\frac{\partial}{\partial s} + H^2h(s,p) : W^{1,2}(T_pM) \rightarrow L^2(T_pM)$ is onto, where $H^2h(s,p) \in End(T_pM)$ denotes the Hessian of $h(s, \cdot)$ at p. 
\end{itemize}
\end{definition}

\begin{remark}
It is a standard fact that the regularity can always be achieved by a local perturbation of the 1-homotopy $(h_f, g_f)$ near critical points.
\end{remark}

\subsubsection*{The moduli space of trajectories} Given a 1-homotopy $\{h_f\}_{f \in Mor(\mathcal{I})}$ on $W,$ we consider the space of time-dependent trajectories for $x \in Crit(h_{sf})$ and $y \in Crit(h_{tf}),$
\begin{equation}\nonumber
\begin{split}
\mathcal{M}(h_f ; x,y) := \lbrace u : \overline{\mathbb{R}} \rightarrow {M}_{tf} \mid \dot{u} + \frac{\nabla h_f}{\sqrt{1+ |\dot{h}_f|^2 |\nabla h_f|^2 }} & \circ u = 0, \\ & u(-\infty) = x, u(\infty) = y \rbrace.
\end{split}
\end{equation}
Indeed, it is the zero set of the following map between Banach manifolds:
\begin{equation}\nonumber
\begin{split}
F_f : \mathcal{P}^{1,2}(x,y) & \longrightarrow \bigcup\limits_{\gamma \in \mathcal{P}^{1,2}(x,y)} L^2(\gamma^* TM_{tf}),\\
u & \longmapsto \dot{u} + \frac{\nabla h_f}{\sqrt{1+ |\dot{h}_f|^2|\nabla h_f|^2}} \circ u.
\end{split}
\end{equation}
Similar to the time-independent case, we have $\mathcal{M}(h_f ; x,y) \subset C^{\infty}(\overline{\mathbb{R}}, M_{tf}).$

\subsubsection*{Transversality and compactness of the moduli space} We state the transversality and compactification theorems for 1-homotopies.
\begin{theorem}\label{trasv1h}
There exists a generic choice of ${\mathbb{R}}$-family of Riemannian metrics $\{g_f\}$$_{f \in Mor(\mathcal{I})}$ on $M_{tf}$ for which $\mathcal{M}(h_f ; x,y)$ is a smooth manifold of dimension $|x|-|y|.$
\end{theorem}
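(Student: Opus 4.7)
The plan is to deduce Theorem~\ref{trasv1h} from the abstract transversality package of Theorem~\ref{schtransv}. For each fixed morphism $f \in \mathrm{Mor}(\mathcal{I})$ and each pair $(x,y)$ with $x \in \mathrm{Crit}(h_{sf})$ and $y \in \mathrm{Crit}(h_{tf})$, I introduce the parametrized section functional from Section~3,
\begin{equation}\nonumber
\Phi : C^{e}(\mathcal{T}_{g_f}) \times \mathcal{P}^{1,2}(x,y) \longrightarrow \bigcup_{\gamma} L^2(\gamma^{*} TM_{tf}), \quad (A,\gamma) \longmapsto \dot{\gamma} + \frac{A(\nabla h_f)}{\sqrt{1+|\dot{h}_f|^2\,|A(\nabla h_f)|^2}}\circ\gamma,
\end{equation}
whose zero set at fixed $A$ is the moduli space $\mathcal{M}(h_f;x,y)$ for the correspondingly perturbed metric. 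The inward direction axiom of Definition~\ref{1htaxo} forces every connecting trajectory to stay in $\mathrm{int}(M_{tf})$, so restricting the perturbations to $M_{tf}$ is harmless.

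The Fredholm hypothesis (ii) of Theorem~\ref{schtransv} is verified exactly as in the compact time-dependent setting. In the local chart model of Section~3, $d\Phi_A$ takes the form $\partial_s + A(s)$ of (\ref{aaaaa}); the regularity axiom of Definition~\ref{1htaxo} ensures that the asymptotic endpoints $A(\pm\infty)$ are conjugate to the Hessians of $h_{sf}$ at $x$ and of $h_{tf}$ at $y$, hence nondegenerate and self-adjoint. The resulting Fredholm index is $|x|-|y|$, with no $-1$ correction since the time-dependent equation carries no $\mathbb{R}$-translation symmetry.

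The \emph{main obstacle} is hypothesis (i), that $0$ is a regular value of the full $\Phi$. Here I would argue that the total linearization at every zero $(A,u)$ surjects onto $L^2(u^{*}TM_{tf})$. Its image is closed because $d_1\Phi$ is already Fredholm, so it suffices to show that the $L^2$-orthogonal complement is trivial. Suppose $\eta \in L^2$ annihilates the image. Testing against vertical variations produces a linear elliptic equation for $\eta$, whose solutions are smooth by elliptic regularity. Testing against variations in $A$ yields the pointwise identity $\langle \eta(s),\,B(\nabla h_f)(u(s))\rangle = 0$ for all admissible $B \in C^{e}(E_{g_f})$ and all $s$. Because $x \neq y$ the trajectory $u$ is non-constant, so there exists $s_0$ at which $\nabla h_f(s_0, u(s_0)) \neq 0$ and at which $u$ is locally injective; invoking the density of $C^{e}(E_{g_f})$ in $L^2(E_{g_f})$ from Lemma~\ref{l2lem} to approximate bump-like perturbations of $A$ supported near $u(s_0)$ forces $\eta(s_0) = 0$. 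Unique continuation for the linear ODE satisfied by $\eta$ then propagates this to $\eta \equiv 0$.

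Once both hypotheses of Theorem~\ref{schtransv} are in place, I conclude that there is a comeager subset $S_{f,x,y} \subset C^{e}(\mathcal{T}_{g_f})$ of perturbations for which $\mathcal{M}(h_f;x,y)$ is cut out transversely as a smooth manifold of dimension $|x|-|y|$. Since $\mathrm{Mor}(\mathcal{I})$ is countable and each $\mathrm{Crit}(h_{\bullet})$ is finite, intersecting over all $f$ and all critical-point pairs preserves comeagerness, producing the claimed generic $\mathbb{R}$-family of Riemannian metrics $\{g_f\}_{f \in \mathrm{Mor}(\mathcal{I})}$. The analogous transversality for the extensions $\overline{h}^k_f$ is handled by the same argument applied to each extended function.
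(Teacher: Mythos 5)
Your proposal is correct and takes essentially the same approach that the paper itself uses: the paper defers the $k=1$ case to the standard references [Sch]/[AD] but then carries out precisely this Sard--Smale argument in the higher-parametrized setting in Proposition~\ref{transprop} (verifying hypothesis~(i) of Theorem~\ref{schtransv} by testing against bump-supported metric variations $B$, using the density from Lemma~\ref{l2lem} to force $c(s_0)=0$, and then propagating via unique continuation for the linear ODE). Your reconstruction is the specialization of that argument to $k=1$, with the same reduction to Theorem~\ref{schtransv}, the same Fredholm index computation without the $-1$ translation quotient, and the same countable-intersection step over morphisms and critical pairs.
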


\begin{theorem}\label{trasv2hh}
$\mathcal{M}(h_f ; x,y)$ can be compactified to $\overline{\mathcal{M}}(h_f ; x,y)$, so that 
\begin{equation}\nonumber
\begin{split}
\partial \overline{\mathcal{M}}(h_f ; x,y) = \bigcup\limits_{\substack{x' \in Crit(h_{tf}), \\ |x'| = |x| - 1}} \mathcal{M}(h_{sf} ; x,x') & \times \mathcal{M}(h_{f} ; x',y ) \\ 
& \cup \bigcup\limits_{\substack{y' \in Crit(h_{tf}), \\ |y'| = |y| +1}} \mathcal{M} (h_{f} ; x,y') \times \mathcal{M}(h_{tf} ; y',y).
\end{split}
\end{equation}
\end{theorem}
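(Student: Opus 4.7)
The plan is to follow the standard compactness scheme for parametrized Morse trajectories (compare [Sch], [AD]) and isolate the one place where the noncompact setting demands extra care. The argument has three stages: an a priori $C^0$ confinement, local $C^\infty$ compactness on $\overline{\mathbb{R}}$, and a breaking analysis at $s \to \pm \infty$. First I would show that every $u \in \mathcal{M}(h_f;x,y)$ takes values in $M_{tf}$. The inward direction condition on $h_f$ (and on $\overline{h}^{tf}_f$) in Definition \ref{1htaxo} guarantees that the normalized vector field driving the flow points strictly inward along $\partial M_{tf}$ for every time $s$. Since $u(\pm \infty) = x,y \in \mathrm{int}(M_{tf})$, any crossing of $\partial M_{tf}$ would have to occur transversally in the outward direction, a contradiction. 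Hence $u(\overline{\mathbb{R}}) \subset M_{tf}$, giving the uniform $C^0$ bound.

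With $u$ confined to the compact piece $M_{tf}$, energy is controlled by the standard identity
\begin{equation}\nonumber
E(u) \;\leq\; h_f(-\infty, x) - h_f(+\infty, y) + C(R_f),
\end{equation}
where the correction accounts for the compact interpolating window $|s| \leq R_f$ in which $h_f$ is genuinely time-dependent. Together with the $C^0$ bound and elliptic bootstrap on the ODE, Arzel\`a--Ascoli plus a diagonal argument yields a subsequence $\{u_{n_k}\}$ converging in $C^\infty_{loc}$ to a smooth solution $v : \overline{\mathbb{R}} \to M_{tf}$. For the breaking analysis, the stability condition at the ends is crucial: for $s < -R_f$ the equation coincides with the time-independent flow for $\overline{h}^{tf}_{sf}$ (which restricts to $h_{sf}$), while for $s > R_f$ it coincides with the time-independent flow for $h_{tf}$. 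If the limit $v$ lies in $\mathcal{M}(h_f;x,y)$ then convergence upgrades to $W^{1,2}$ via exponential decay near the nondegenerate asymptotes. Otherwise energy escapes at one or both ends, and I would reparametrize $u_{n_k}(\cdot + \tau_{n_k})$ for suitable $\tau_{n_k} \to \pm \infty$ and apply the time-independent compactness theorem of Section 3 inside the stabilized regime. In the index regime $|x| - |y| = 2$ that governs boundary strata, a single additional bubble can occur either at the negative end (a trajectory of $h_{sf}$ from $x$ to some $x'$ of index $|x|-1$) or at the positive end (a trajectory of $h_{tf}$ from some $y'$ of index $|y|+1$ to $y$), giving exactly the two products in the statement.

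For the matching direction, turning these pairs into actual boundary points, one invokes the parametrized gluing theorem: each element of $\mathcal{M}(h_{sf};x,x') \times \mathcal{M}(h_f;x',y)$ or $\mathcal{M}(h_f;x,y') \times \mathcal{M}(h_{tf};y',y)$ arises as the limit of a unique one-parameter family in $\mathcal{M}(h_f;x,y)$. This is proved by a Newton-iteration argument on a preglued approximate solution, using the Fredholm property of the linearized operator established for Theorem \ref{trasv1h}; the more elaborate parametrized version appearing later in the paper specializes to this case. The main obstacle in the present result is not the breaking analysis itself, which is routine once local compactness is in hand, but rather the $C^0$ confinement step: without the inward direction condition imposed uniformly on $h_f$ and on all of its extensions $\overline{h}^k_f$, a trajectory could in principle leave $M_{tf}$ during the interpolating window and escape into larger pieces of the exhaustion, breaking compactness. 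Once confinement is secured inside a single compact $M_{tf}$, the argument reduces to a direct transcription of the standard compact-manifold proof.
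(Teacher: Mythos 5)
Your proof follows the same three-stage scheme the paper relies on here (by pointing to [Sch] and [AD]) and later carries out explicitly for the parametrized case in Section 10: uniform $C^0$ confinement of trajectories to $M_{tf}$ via the inward direction condition, $C^{\infty}_{loc}$ compactness via an energy estimate and Arzel\`a--Ascoli, a breaking analysis at the ends using the stabilized time-independent equations outside $[-R_f,R_f]$, and the gluing theorem to establish the reverse inclusion and turn broken pairs into genuine boundary points. Your identification of the confinement step as the essential noncompact-specific ingredient is exactly the content of the paper's remark in Section 4.1.

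One small imprecision worth flagging: you say the index regime $|x|-|y|=2$ "governs boundary strata," but for the time-dependent moduli space $\mathcal{M}(h_f;x,y)$, Theorem \ref{trasv1h} gives $\dim\mathcal{M}(h_f;x,y)=|x|-|y|$, so the codimension-one boundary formula stated in the theorem applies for any $|x|-|y|\ge 1$, and the chain-map computation uses $|x|-|y|=1$ (one-dimensional moduli, zero-dimensional boundary). The threshold $|x|-|y|=2$ is the convention for the time-independent unparametrized case, where $\dim\mathcal{M}(h;x,y)=|x|-|y|-1$ after quotienting by the $\mathbb{R}$-action. This slip does not affect the body of your argument, which is otherwise sound.
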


As a result, we have
\begin{corollary}
If $|x| = |y|,$ then $ \overline{\mathcal{M}}( h_f ; x,y) = \mathcal{M}( h_f ; x,y),$ and it is a finite set.
\end{corollary}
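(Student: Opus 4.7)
The plan is to extract the statement directly from Theorems \ref{trasv1h} and \ref{trasv2hh} by a dimension count.

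First, I would apply Theorem \ref{trasv1h} to conclude that, for generically chosen Riemannian metrics, $\mathcal{M}(h_f; x, y)$ is a smooth manifold of dimension $|x|-|y|=0$, i.e., a discrete set. The content of the corollary is therefore that this discrete set is already compact, and compactness plus zero-dimensionality combined with the ambient $C^{\infty}_{loc}$-control (so that no sequence escapes to infinity in $M_{tf}$) will force finiteness.

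Next, I would invoke Theorem \ref{trasv2hh} to analyze the boundary of the compactification $\overline{\mathcal{M}}(h_f; x, y)$. The boundary decomposes as a union of two types of products:
\begin{equation}\nonumber
\mathcal{M}(h_{sf}; x, x') \times \mathcal{M}(h_f; x', y), \qquad |x'|=|x|-1,
\end{equation}
and
\begin{equation}\nonumber
\mathcal{M}(h_f; x, y') \times \mathcal{M}(h_{tf}; y', y), \qquad |y'|=|y|+1.
\end{equation}
The key step is a dimension/index check on the time-dependent factor in each product. Since $h_f$ is time-dependent, $\mathcal{M}(h_f; x', y)$ has virtual dimension $|x'|-|y|=(|x|-1)-|y|=-1$ under the assumption $|x|=|y|$, so transversality forces it to be empty. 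The same calculation for $\mathcal{M}(h_f; x, y')$ yields virtual dimension $|x|-|y'|=|x|-(|y|+1)=-1$, again empty. Hence both types of boundary strata vanish, and no genuine broken trajectories appear.

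Therefore $\overline{\mathcal{M}}(h_f; x, y) = \mathcal{M}(h_f; x, y)$, which is a compact $0$-dimensional smooth manifold, hence a finite set. I do not expect a serious obstacle here: everything rests on Theorems \ref{trasv1h} and \ref{trasv2hh}, and the only subtlety is making sure that one correctly distinguishes the dimension formula $|x|-|y|$ for time-dependent moduli spaces from the formula $|x|-|y|-1$ for time-independent ones (the latter enters only through the compact-piece factors $\mathcal{M}(h_{sf}; x, x')$ and $\mathcal{M}(h_{tf}; y', y)$, which are not the ones being forced to be empty).
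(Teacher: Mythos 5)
Your proposal is correct and is essentially an expanded version of the paper's own argument: the paper also invokes Theorem \ref{trasv2hh} to rule out breaking "for reasons of degree" and Theorem \ref{trasv1h} for zero-dimensionality. You have simply made explicit the index count ($|x'|-|y|=-1$ and $|x|-|y'|=-1$ for the time-dependent factor in each boundary stratum) that the paper leaves implicit.
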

\begin{proof}
When $|x | = |y|,$ no breaking can take place for reasons of degree as is suggested by Theorem \ref{trasv2hh}. So $\mathcal{M}( h_f ; x,y)$ is already compact and it is zero dimensional by Theorem \ref{trasv1h}.

\end{proof}

\subsection{Continuation homomorphisms}

For the extension,
\begin{equation}\nonumber
(M_a, h_a) \hookrightarrow (M_b, \overline{h}^b_a),
\end{equation}
we have the inclusion
\begin{equation}\nonumber
Crit(h_a) \subset Crit(\overline{h}^b_a).
\end{equation}
For each $f \in Mor(\mathcal{I}),$ we define an injective $\mathbb{Z}_2$-linear map:
\begin{equation}\nonumber
\begin{split}
\iota_f : MC_*(M_{sf},h_{sf}) &\rightarrow MC_*(M_{tf}, \overline{h}^{tf}_{sf}),\\
x \in Crit(h_{sf}) &\mapsto x \in Crit(\overline{h}_{sf}^{tf}).
\end{split}
\end{equation}

Then we have:

\begin{corollary}
$\iota_f$ is a chain map.
\end{corollary}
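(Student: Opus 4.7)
The plan is to show directly that $\partial_{tf} \circ \iota_f = \iota_f \circ \partial_{sf}$, where $\partial_{sf}$ and $\partial_{tf}$ denote the Morse differentials on $MC_*(M_{sf}, h_{sf})$ and $MC_*(M_{tf}, \overline{h}_{sf}^{tf})$, respectively. Both differentials count (mod 2) index-1 trajectories, so the content of the statement reduces to comparing the corresponding moduli spaces. For $x \in Crit(h_{sf})$, $\iota_f \circ \partial_{sf}(x)$ is a sum over $y \in Crit(h_{sf})$ with $|y|=|x|-1$ of $\#_2 \overline{\mathcal{M}}(h_{sf}; x, y) \cdot y$, while $\partial_{tf} \circ \iota_f(x)$ is a sum over $z \in Crit(\overline{h}_{sf}^{tf})$ with $|z|=|x|-1$ of $\#_2 \overline{\mathcal{M}}(\overline{h}_{sf}^{tf}; x, z) \cdot z$. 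The goal is thus to establish two claims: (i) if $z \in Crit(\overline{h}_{sf}^{tf}) \setminus Crit(h_{sf})$, then $\overline{\mathcal{M}}(\overline{h}_{sf}^{tf}; x, z) = \emptyset$; and (ii) for $y \in Crit(h_{sf})$, $\overline{\mathcal{M}}(\overline{h}_{sf}^{tf}; x, y) = \overline{\mathcal{M}}(h_{sf}; x, y)$ as finite sets.

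The key geometric input for both claims is the inward direction condition applied to $h_{sf}$ along $\partial M_{sf}$, together with the identities $\overline{h}_{sf}^{tf}|_{M_{sf}} \equiv h_{sf}$ (and the implicit agreement of the corresponding metric on $M_{sf}$). Because a critical point $x$ of $h_{sf}$ necessarily lies in $\textup{int}(M_{sf})$ (the inward condition forbids critical points on $\partial M_{sf}$), a negative gradient trajectory $u$ of $\overline{h}_{sf}^{tf}$ with $u(-\infty)=x$ begins inside $M_{sf}$. I would argue by a standard flow-invariance argument that $u(t) \in M_{sf}$ for all $t \in \mathbb{R}$: if the trajectory were ever to reach $\partial M_{sf}$ for the first time at some $t_0$, the vector $\dot{u}(t_0) = -\nabla h_{sf}(u(t_0))$ would point strictly inward by hypothesis, contradicting the trajectory exiting $M_{sf}$. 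Consequently $u(\infty) \in M_{sf}$, and since $\overline{h}_{sf}^{tf}$ restricts to $h_{sf}$ on $M_{sf}$, the limit lies in $Crit(h_{sf})$, proving (i).

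For (ii), I would note that any $u \in \overline{\mathcal{M}}(\overline{h}_{sf}^{tf}; x, y)$ with $x, y \in Crit(h_{sf})$ in fact maps into $M_{sf}$ by the argument above, and on $M_{sf}$ the equation $\dot{u} + \nabla \overline{h}_{sf}^{tf} \circ u = 0$ coincides with $\dot{u} + \nabla h_{sf} \circ u = 0$. This gives an inclusion $\overline{\mathcal{M}}(\overline{h}_{sf}^{tf}; x, y) \hookrightarrow \overline{\mathcal{M}}(h_{sf}; x, y)$, and the reverse inclusion is immediate since every $h_{sf}$-trajectory in $M_{sf}$ is also an $\overline{h}_{sf}^{tf}$-trajectory in $M_{tf}$. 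Transversality is preserved under this bijection because the linearized operators agree on the common trajectories, so the counts with $\mathbb{Z}_2$-coefficients match.

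The main obstacle is the confinement argument — carefully verifying that the inward direction of $-\nabla h_{sf}$ along $\partial M_{sf}$ really does prevent an $\overline{h}_{sf}^{tf}$-trajectory from escaping $M_{sf}$, despite the fact that the inward condition for $\overline{h}_{sf}^{tf}$ is stated only along $\partial M_{tf}$. This uses the compatibility $\overline{h}_{sf}^{tf}|_{M_{sf}} = h_{sf}$ (and the analogous compatibility of metrics) in an essential way, and is really the only nontrivial ingredient; once established, combining (i) and (ii) yields $\partial_{tf} \circ \iota_f(x) = \sum_{y \in Crit(h_{sf}), |y|=|x|-1} \#_2 \overline{\mathcal{M}}(h_{sf}; x, y) \cdot y = \iota_f \circ \partial_{sf}(x)$, completing the proof.
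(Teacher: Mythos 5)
Your proposal is correct and matches the paper's approach: the paper's one-line proof invokes exactly the same confinement argument, namely that condition (\ref{inwdircon}) for $h_{sf}$, combined with $\overline{h}_{sf}^{tf}|_{M_{sf}} \equiv h_{sf}$, prevents any trajectory issuing from a critical point of $h_{sf}$ from escaping $M_{sf}$. Your write-up simply makes explicit the two sub-claims (emptiness of moduli spaces ending outside $M_{sf}$ and identification of moduli spaces inside) that the paper leaves implicit.
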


\begin{proof}
This follows from the fact that there exists no outward trajectories from a critical point of $h_{sf}$ to the one of $\overline{h}^{tf}_{sf}$ which lies outside $M_{sf}$ due to condition (\ref{inwdircon}).
\end{proof}

\subsubsection*{Continuation homomorphisms} For $f \in Mor(\mathcal{I}),$ we define a $\mathbb{Z}_2$-linear map
\begin{equation}\nonumber
\begin{split}
\overline{\varphi}_f : MC_*(M_{sf}, h_{sf}) & \rightarrow MC_*(M_{tf},h_{tf}),\\
x \in Crit(\overline{h}^{tf}_{sf}) & \mapsto \sum\limits_{\substack{y \in Crit(h_{tf}), \\ |x| = |y| } } \#_2 \overline{\mathcal{M}}(h_f ; x,y) \cdot y.
\end{split}
\end{equation}
\begin{lemma}
$\overline{\varphi}_f$ is a chain map.
\end{lemma}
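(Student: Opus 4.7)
The plan is to deduce the chain map property from the boundary description of the compactified one-dimensional moduli spaces, exactly as in the standard proof of invariance of Morse homology under homotopy of data (see [Sch], [AD]), and then to recycle the inward direction condition to ensure that this standard argument goes through in our setting with extensions.

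First I would fix $x \in Crit(h_{sf})$ and compute both sides on $x$. Writing
\begin{equation}\nonumber
\overline{\varphi}_f \circ \partial (x) = \sum_{\substack{x' \in Crit(h_{sf})\\ |x'|=|x|-1}} \sum_{\substack{z \in Crit(h_{tf}) \\ |z|=|x'|}} \#_2 \overline{\mathcal{M}}(h_{sf};x,x') \cdot \#_2 \overline{\mathcal{M}}(h_f;x',z) \cdot z,
\end{equation}
\begin{equation}\nonumber
\partial \circ \overline{\varphi}_f (x) = \sum_{\substack{y \in Crit(h_{tf})\\ |y|=|x|}} \sum_{\substack{z \in Crit(h_{tf}) \\ |z|=|y|-1}} \#_2 \overline{\mathcal{M}}(h_f;x,y) \cdot \#_2 \overline{\mathcal{M}}(h_{tf};y,z) \cdot z,
\end{equation}
I would observe that the coefficient of each $z \in Crit(h_{tf})$ with $|z|=|x|-1$ in the sum $\overline{\varphi}_f \circ \partial(x) + \partial \circ \overline{\varphi}_f(x)$ is precisely the count (mod $2$) of boundary points of the compactification $\overline{\mathcal{M}}(h_f;x,z)$ predicted by Theorem \ref{trasv2hh}.

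Next I would invoke Theorem \ref{trasv1h} to ensure that, for the generic choice of $\{g_f\}$ already fixed, $\mathcal{M}(h_f;x,z)$ is a smooth one-dimensional manifold whenever $|z|=|x|-1$. Then Theorem \ref{trasv2hh} upgrades this to a compact one-dimensional manifold with boundary $\overline{\mathcal{M}}(h_f;x,z)$ whose boundary points are exactly the broken trajectories
\begin{equation}\nonumber
\bigcup_{\substack{x'\in Crit(h_{sf})\\|x'|=|z|}} \mathcal{M}(h_{sf};x,x') \times \mathcal{M}(h_f;x',z) \ \cup \ \bigcup_{\substack{y\in Crit(h_{tf})\\|y|=|x|}} \mathcal{M}(h_f;x,y) \times \mathcal{M}(h_{tf};y,z).
\end{equation}
Since every compact $1$-manifold has an even number of boundary points, the total count vanishes modulo~$2$, and therefore the coefficient of $z$ in $\overline{\varphi}_f \circ \partial(x) + \partial \circ \overline{\varphi}_f(x)$ is zero. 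Summing over $z$ gives $\partial \circ \overline{\varphi}_f = \overline{\varphi}_f \circ \partial$, which is the chain map property (over $\mathbb{Z}_2$).

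The only subtlety — and the point that distinguishes our noncompact setting from the textbook one — is to make sure the boundary of $\overline{\mathcal{M}}(h_f;x,z)$ consists of \emph{only} the two types of breakings written above, with no trajectory escaping through $\partial M_{tf}$ or contributing a boundary component of a different nature. This is precisely what the inward direction condition on $h_f$, $\overline{h}^{tf}_{sf}$ and $h_{tf}$ (items in Definition \ref{1htaxo}) guarantees: the negative gradient vector field of $h_f(s,\cdot)$ points strictly inward along $\partial M_{tf}$ for every $s$, so any trajectory starting at a critical point remains in the interior of $M_{tf}$, and the compactness statement of Theorem \ref{trasv2hh} applies without modification. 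Once this is recorded, the rest of the argument reduces verbatim to the compact time-dependent Morse theory of [Sch] and [AD]. I expect this boundary-control step to be the only place where the noncompactness of $W$ enters, and it is handled entirely by the axioms already imposed on the $1$-homotopy.
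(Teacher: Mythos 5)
Your proposal is correct and matches the paper's intended argument: the paper's own proof is the one-liner ``It is a standard procedure to show $\overline{\varphi}_f$ is a chain map, which follows from Theorem \ref{trasv2hh}.'' You have simply unpacked that standard procedure — identifying the coefficients of $\overline{\varphi}_f\circ\partial + \partial\circ\overline{\varphi}_f$ with $\#_2\,\partial\overline{\mathcal{M}}(h_f;x,z)$ and invoking the mod-$2$ vanishing of boundary counts of compact $1$-manifolds — and correctly flagged that the inward direction condition is what lets the compact-manifold compactness theorems apply verbatim.

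One small point worth tightening: $\overline{\varphi}_f$ is really defined on $MC_*(M_{tf},\overline{h}^{tf}_{sf})$ (its formula takes $x\in Crit(\overline{h}^{tf}_{sf})$, and $\varphi_f=\overline{\varphi}_f\circ\iota_f$ only makes sense that way), so the ``incoming'' differential in the chain map identity should a priori count trajectories of $\overline{h}^{tf}_{sf}$ rather than of $h_{sf}$. Your formula writes $\overline{\mathcal{M}}(h_{sf};x,x')$; this is justified, but only because the inward direction condition forces trajectories of $\overline{h}^{tf}_{sf}$ between critical points of $h_{sf}$ to stay inside $M_{sf}$ (the same observation underlying Lemma \ref{lem411}), which identifies the two moduli spaces. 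Making that identification explicit would close the gap cleanly; as written it is implicit, just as it is in the statement of Theorem \ref{trasv2hh} itself.
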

\begin{proof}
By Theorem \ref{trasv1h}, $\overline{\varphi}_f$ is well-defined. It is a standard procedure to show $\overline{\varphi}_f$ is a chain map, which follows from Theorem \ref{trasv2hh}. 
\end{proof}

We define
\begin{equation}\nonumber
\varphi_f : MC_*(M_{sf}, h_{sf}) \rightarrow MC_*(M_{tf}, h_{tf})
\end{equation}
by $\varphi_f := \overline{\varphi}_f \circ \iota_f$ and call the family $\{\varphi_f\}_{f \in Mor(\mathcal{I})}$ the \textit{continuation maps}.

The following proposition will be proved in a later section, in a more general context.

\begin{proposition}
$\varphi_f$ is a chain map for each $f \in Mor(\mathcal{I}).$ Moreover, the maps induced on homologies satisfy $\varphi_{g \circ f *}= \varphi_{g*} \circ \varphi_{f*}$ for all morphisms $f,g$ of $\mathcal{I}$ with $tf = sg.$
\end{proposition}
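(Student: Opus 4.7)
\begin{proof-sketch}
The first statement is immediate: $\varphi_f = \overline{\varphi}_f \circ \iota_f$ is a composition of two chain maps established just above, hence itself a chain map. The plan for the second statement is to relate $\varphi_g \circ \varphi_f$ and $\varphi_{g \circ f}$ at the chain level by producing a glued homotopy realizing the composition, then comparing it to the preselected homotopy defining $\varphi_{g \circ f}$ by a standard chain-homotopy argument.

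Fix $f : a \to b$ and $g : b \to c$ with $tf = sg$. For a large parameter $\rho > 0$, I would form the concatenation $\overline{h}^c_f \#_\rho h_g$ on $M_c$, obtained by splicing the two homotopies through a stable intermediate region on which both equal $\overline{h}^c_b$. By the stability axioms and the inward direction conditions in Definition \ref{1htaxo}, the result is again a valid 1-homotopy from $\overline{h}^c_a$ to $h_c$ on $M_c$. A parametrized gluing theorem (a version of the one stated in the introduction, proved via pre-gluing and Newton iteration) then produces, for all $\rho$ sufficiently large, a bijection
\begin{equation*}
\overline{\mathcal{M}}(\overline{h}^c_f \#_\rho h_g; x, z) \;\simeq\; \bigcup_{\substack{y \in Crit(h_b),\\ |y|=|x|}} \overline{\mathcal{M}}(\overline{h}^c_f; x, y) \times \overline{\mathcal{M}}(h_g; y, z),
\end{equation*}
for $x \in Crit(h_a)$ and $z \in Crit(h_c)$ with $|x|=|z|$; the inward direction property guarantees that any intermediate critical point $y$ must lie in $M_b$, so that only critical points of $h_b$ contribute. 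Counting mod $2$ yields the chain-level identity $\overline{\varphi}_{\overline{h}^c_f \#_\rho h_g} \circ \iota_{g \circ f} = \varphi_g \circ \varphi_f$.

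Next I would interpolate between $\overline{h}^c_f \#_\rho h_g$ and the preselected homotopy associated with the morphism $g \circ f$ by a smooth one-parameter family $\{H_s\}_{s \in [0,1]}$ of 1-homotopies from $\overline{h}^c_a$ to $h_c$, together with metrics satisfying the inward direction condition pointwise in $s$. For a generic such family, the parametrized moduli space $\bigcup_s \{s\} \times \mathcal{M}(H_s; x, z)$ is a smooth manifold with corners whose one-dimensional stratum has boundary naturally splitting into the two endpoint moduli and broken pairs meeting at intermediate critical points. The standard cobordism count produces a degree $+1$ map $K$ satisfying $\varphi_{g \circ f} - \overline{\varphi}_{\overline{h}^c_f \#_\rho h_g} \circ \iota_{g \circ f} = d K + K d$, and combining with the previous step gives $\varphi_{g \circ f *} = \varphi_{g *} \circ \varphi_{f *}$ on homology.

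The main obstacle is the parametrized gluing theorem invoked above: it requires the usual analytic package of pre-gluing estimates, right inverses uniformly bounded in $\rho$, and Newton iteration, adapted so as to ensure that no glued trajectory escapes the compact piece $M_c$ through its boundary. Both the gluing and the interpolation arguments are in fact treated in greater generality in later sections during the construction of the homotopy coherent diagram $\mathscr{F}_{\mathfrak{H}}$; in particular, the $k=2$ case of relation \eqref{hceq} encodes precisely the chain homotopy produced here.
\end{proof-sketch}
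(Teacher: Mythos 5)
Your proposal follows essentially the same route as the paper. The paper proves the chain-map statement exactly as you do (composition $\overline{\varphi}_f \circ \iota_f$); it establishes $\varphi_{h_f \#_\rho h_g} = \varphi_g \circ \varphi_f$ at the chain level via the gluing theorem together with the inward-direction argument restricting intermediate critical points to $M_b$ (Theorem \ref{1gt}, Lemma \ref{lem411}, Corollary \ref{corgt}); and it then passes to homology by a chain homotopy between $h_{g\circ f}$ and $h_f \#_\rho h_g$ via standard Morse-theoretic cobordism (Corollary \ref{cordire}), which the paper also defers in full generality to the later sections on parametrized gluing, exactly as you note at the end.
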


\subsubsection*{Concatenations of homotopies} Assume that we are given a transversal family of 1-homotopies $\{h_f\}_{f \in Mor(\mathcal{I})}.$ Let $f,g$ be morphisms of $\mathcal{I}$ with $tf = sg.$ From the pair $h_f$ and $h_g,$ we can construct a new 1-homotopy $h_f \#_{\rho} h_g,$ called the \textit{concatenation} of $h_f$ and $h_g,$ where we require $\rho > \max\{R_f, R_g\}$:\begin{equation}\nonumber
h_f \#_{\rho} h_g (s, \cdot) := \begin{cases}
\overline{h}_f^{tg}(s, \cdot) &\text{ if } s < -\max \{R_f, R_g \},\\
{h}_g(s, \cdot) &\text{ if } s > \max \{R_f, R_g \}.\\
\end{cases}
\end{equation}
We define the \textit{concatenation} of Riemannian metrics $g_f$ and $g_g,$ similarly. Its well-definedness and smoothness of the concatenation are guaranteed by the stability-at-the-ends condition. In Theorem \ref{1gt}, we show that it is transversal when $\rho$ is sufficiently large.

\subsubsection*{Gluing theorem} We can glue two trajectories, an important consequence of which is the following theorem.
\begin{theorem}\label{1gt}
There exists $\rho_0 >0$ such that for every $\rho > \rho_0, \ \mathcal{M}(h_f \#_{\rho} h_g ; x,y)$ is a smooth manifold of dimension $|x| - |y|,$ and when $|x| = |y| = |z|,$ we have a bijection between finite sets
\begin{equation}\nonumber
\mathcal{M}(h_f \#_{\rho} h_g; x,y) \simeq \bigcup\limits_{\substack{z \in Crit({h}_{sg}),\\ |x| = |y| = |z| }} \mathcal{M}({h}_f; x,z) \times \mathcal{M}(h_g ;z,y).
\end{equation}
\end{theorem}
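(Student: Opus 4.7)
The plan is to follow the standard gluing argument, adapted to the time-dependent concatenation setting, as presented in [Sch] or [AD], and split the statement into four pieces: (a) smoothness of $\mathcal{M}(h_f \#_\rho h_g; x, y)$ of the expected dimension for large $\rho$, (b) construction of a gluing map $\#_\rho : \mathcal{M}(h_f; x, z) \times \mathcal{M}(h_g; z, y) \to \mathcal{M}(h_f \#_\rho h_g; x, y)$, (c) injectivity of $\#_\rho$, and (d) surjectivity of $\#_\rho$ onto the (already finite) set $\mathcal{M}(h_f \#_\rho h_g; x, y)$ when $|x| = |y| = |z|$.

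For (a), first one notes that the stability-at-the-ends condition guarantees $h_f \#_\rho h_g$ is a genuine 1-homotopy in the sense of Definition \ref{1htaxo}, so transversality machinery applies in principle. The actual regularity for each fixed large $\rho$ is proved indirectly: the pre-gluing and Newton-iteration argument below produces a $C^1$-chart covering a neighborhood of the broken locus, and away from the broken locus one relies on the unparametrized transversality of $h_f$ and $h_g$ together with a compactness argument to rule out any other solutions. Combined, this yields a manifold of the expected dimension $|x|-|y|$.

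For (b), given $(u_1, u_2) \in \mathcal{M}(h_f; x, z) \times \mathcal{M}(h_g; z, y)$, I would build a pre-glued approximate solution $u_1 \#_\rho u_2 \in \mathcal{P}^{1,2}(x, y)$ by cutting off $u_1$ and $u_2$ near $z$ with smooth bump functions supported on $[-\rho, \rho]$, using the exponential chart at the constant curve $z$ (this makes sense because of the exponential decay built into the definition of $C^\infty_{x,y}(\overline{\mathbb{R}}, M)$). The error $F_{h_f \#_\rho h_g}(u_1 \#_\rho u_2)$ is then $O(e^{-c\rho})$ in $L^2$ for some $c>0$ depending on the smallest absolute eigenvalue of the Hessian at $z$. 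The core analytic step is to show that the linearized operator $dF_{h_f \#_\rho h_g}$ at $u_1 \#_\rho u_2$ admits a right inverse $Q_\rho$ with operator norm bounded \emph{uniformly in $\rho$}; this is achieved by patching right inverses of $dF_{h_f}(u_1)$ and $dF_{h_g}(u_2)$ (which exist by the transversality in Theorem \ref{trasv1h}) via cutoffs, then correcting by a Neumann series once $\rho$ is large enough that the patched parametrix becomes close to a genuine right inverse. A Newton/Picard iteration on the quadratic contraction $\xi \mapsto -Q_\rho(F(u_1\#_\rho u_2 + \xi) - dF(\xi))$ then produces a unique small $\xi_\rho$ for which $\exp_{u_1 \#_\rho u_2}(\xi_\rho) \in \mathcal{M}(h_f \#_\rho h_g; x, y)$; this defines $\#_\rho$.

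For (c), injectivity is automatic from the uniqueness in the contraction mapping principle together with an analysis showing that two distinct broken pairs give pre-glued elements separated by more than the contraction radius. For (d), when $|x|=|y|=|z|$ all moduli spaces in sight are finite, so it suffices to show that every element of $\mathcal{M}(h_f \#_\rho h_g; x, y)$ lies in the image of $\#_\rho$ for $\rho$ large. Suppose not; then there exists a sequence $\rho_n \to \infty$ and $v_n \in \mathcal{M}(h_f \#_{\rho_n} h_g; x, y)$ avoiding the image. By the compactness theorem (Theorem \ref{trasv2hh} applied after suitable reparametrization at both ends), a subsequence converges to a broken trajectory; the degree count $|x|=|y|=|z|$ and the restrictions on where breakings can occur force the limit to be exactly some pair $(u_1, u_2)$, contradicting the uniqueness in (b). I expect the main obstacle to be the uniform right-inverse estimate for the linearized operator on the pre-glued curve, since all dimension counts, injectivity, and surjectivity then follow formally; the subtlety is ensuring the constants in the parametrix construction do not degenerate as $\rho \to \infty$, which is where the stability-at-the-ends and exponential-decay conditions in Definition \ref{1htaxo} and in the definition of $C^\infty_{x,y}(\overline{\mathbb{R}}, M)$ are essential.
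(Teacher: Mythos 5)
Your proposal faithfully reconstructs the standard pre-gluing, uniform-right-inverse, and Newton-iteration argument, followed by injectivity from uniqueness in the contraction principle and surjectivity from compactness — which is exactly the route the paper takes in Sections 7–8 (Proposition \ref{invest}, Theorem \ref{lsurj}, Theorems \ref{glucont}--\ref{thminjsurj}), just carried out in the more general parametrized setting and then cited here for the special case $k=l=1$. In short, your sketch and the paper's proof are the same argument.
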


\begin{proof}
This is a special case of later results, such as  Theorem \ref{gluthm} and Theorem \ref{lsurj}.
\end{proof}

For the proof of Theorem \ref{1gt}, we use the following lemma .

\begin{lemma}\label{lem411}
We have a bijection $\mathcal{M}(\overline{h}^{tg}_f; x,z) \simeq \mathcal{M}({h}_{f}; x,z).$
\end{lemma}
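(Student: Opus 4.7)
The plan is to exhibit a map $\Phi : \mathcal{M}(h_f; x, z) \to \mathcal{M}(\overline{h}^{tg}_f; x, z)$ induced by the inclusion $\iota : M_{tf} \hookrightarrow M_{tg}$, and then verify that it is a bijection. The compatibility built into the exhaustion--extension setup implies $\overline{h}^{tg}_f|_{\mathbb{R} \times M_{tf}} = h_f$ and $\overline{g}^{tg}_f|_{M_{tf}} = g_f$ (the natural time-dependent analogue of $\overline{h}^{b}_{a}|_{M_{a}} = h_{a}$), so the vector field driving the parametrized gradient flow for $(\overline{h}^{tg}_f, \overline{g}^{tg}_f)$ agrees pointwise on $M_{tf}$ with the one for $(h_f, g_f)$. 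Hence any $u \in \mathcal{M}(h_f; x, z)$ gives $\Phi(u) := \iota \circ u \in \mathcal{M}(\overline{h}^{tg}_f; x, z)$, and $\Phi$ is injective because $\iota$ is.

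For surjectivity, I fix $v \in \mathcal{M}(\overline{h}^{tg}_f; x, z)$ and show $v(\overline{\mathbb{R}}) \subset M_{tf}$, after which $v$ factors through $\iota$ to yield a preimage under $\Phi$. Since $\overline{h}^{tg}_f = h_f$ near $\partial M_{tf}$ (with matching metric) and $h_f$ satisfies the inward direction condition along $\partial M_{tf}$ by Definition \ref{1htaxo}, the vector $-\nabla_{\overline{g}^{tg}_f} \overline{h}^{tg}_f(s, p)$ points strictly inward for every $s \in \mathbb{R}$ and every $p \in \partial M_{tf}$; the flow equation then forces $\dot{v}(s)$ to be a positive scalar multiple of this inward vector whenever $v(s) \in \partial M_{tf}$. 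By the exhaustion axiom $M_{sf} \subset \mathrm{int}(M_{tf})$, so $v(-\infty) = x \in \mathrm{int}(M_{tf})$, and continuity gives $v(s) \in \mathrm{int}(M_{tf})$ for all sufficiently negative $s$. If $v$ were to exit $M_{tf}$, set $s_0 := \inf\{s \in \mathbb{R} : v(s) \notin M_{tf}\}$; then $v(s_0) \in \partial M_{tf}$ and $\dot{v}(s_0)$ is inward-pointing, which forces $v(s) \in \mathrm{int}(M_{tf})$ for $s$ slightly greater than $s_0$ and contradicts the definition of $s_0$. Therefore $v(\mathbb{R}) \subset M_{tf}$, completing the argument.

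The main obstacle is securing the inward direction of the extension $\overline{h}^{tg}_f$ along $\partial M_{tf}$, which relies on the restriction identities for the extension data noted above. Once these identities are in place, the remainder is a standard \emph{no-escape} barrier argument against the inward gradient, and the resulting bijection is manifestly compatible with the smooth structures on both moduli spaces.
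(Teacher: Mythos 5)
Your proof is correct and takes the same approach as the paper's, which consists of a single sentence: ``By the inward direction condition, no trajectory of $\overline{h}^{tg}_f$ can escape from $M_{tf}$ in $M_{tg}$.'' You have supplied the full barrier argument that this invokes, and you are also right to flag the hidden premise: the paper never explicitly states that $\overline{h}^{tg}_f|_{\mathbb{R}\times M_{tf}} = h_f$ (the axioms in Definition~\ref{1htaxo} only give (Trivial extensions) $\overline{h}^{tf}_f \equiv h_f$ and the inward condition for $\overline{h}^k_f$ at $\partial M_k$, not at $\partial M_{tf}$), yet both the paper's one-liner and your argument need this compatibility in order to get an inward condition for $\overline{h}^{tg}_f$ along $\partial M_{tf}$ and to identify the restricted trajectories with trajectories of $h_f$; treating it as the natural time-dependent analogue of $\overline{h}^b_a|_{M_a} = h_a$ is the intended reading.
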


\begin{proof}
By the inward direction condition, no trajectory of $\overline{h}^{tg}_f$ can escape from $M_{tf}$ in $M_{tg}.$
\end{proof}

Similarly to how we defined $\varphi_f$ and $\varphi_g,$ the 1-homotopy $h_f \#_{\rho} h_g$ from $sf$ to $tg$ gives rise to a $\mathbb{Z}_2$-linear map for sufficiently large $\rho >0,$ 
\begin{equation}\nonumber
\varphi_{h_f \#_{\rho} h_g} : MC_*(M_{sf}, h_{sf}) \rightarrow MC_*(M_{tg}, h_{tg}).
\end{equation}

\begin{corollary}\label{corgt}
Let $\rho_0$ be as in Theorem \ref{1gt}. Then for any $\rho \geq \rho_0,$ the following diagram commutes.
\begin{equation}\nonumber
\begin{tikzcd}
MC_*(M_{sf}, h_{sf}) \arrow{d}[swap]{\iota_f}  \arrow{rr}{\varphi_{f}} & & MC_*(M_{tf}, h_{tf})  \arrow{d}{\varphi_{g}} \\
MC_*(M_{tg}, \overline{h}_{sf}^{tg}) \arrow{rr}{\overline{\varphi}_{h_f \#_{\rho} h_g}}
              & &MC_*(M_{tg},h_{tg}).
\end{tikzcd}
\end{equation}
Hence we have 
\begin{equation}\nonumber
\varphi_{h_f \#_{\rho} h_g} =  {\varphi}_{g} \circ {\varphi}_{f}.
\end{equation}
\end{corollary}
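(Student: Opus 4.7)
The plan is to reduce the commutativity of the square to a mod-$2$ counting argument backed by the gluing theorem (Theorem~\ref{1gt}). Both compositions $\overline{\varphi}_{h_f\#_\rho h_g}\circ \iota_f$ and $\varphi_g\circ\varphi_f$ are degree-zero $\mathbb{Z}_2$-linear maps out of $MC_*(M_{sf},h_{sf})$, so it suffices to fix $x\in Crit(h_{sf})$ and $y\in Crit(h_{tg})$ with $|y|=|x|$ and match the coefficient of $y$ on the two sides. Since $|x|=|y|$, the corollary following Theorem~\ref{trasv2hh} rules out any broken-trajectory contribution, so the coefficient from the left-then-bottom composition is simply $\#_2\,\mathcal{M}(h_f\#_\rho h_g;x,y)$.

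For the top-then-right composition, I would first compute $\varphi_f(x)=\sum_{z}\,\#_2\,\mathcal{M}(h_f;x,z)\cdot z$, the sum running over $z\in Crit(h_{tf})$ of index $|x|$ and using Lemma~\ref{lem411} to identify trajectories of the extension $\overline{h}^{tf}_{sf}$ with those of $h_f$ inside $M_{tf}$. Applying $\varphi_g$ next, and using Lemma~\ref{lem411} again for the extension $\overline{h}^{tg}_{tf}$, yields
\begin{equation}\nonumber
(\varphi_g\circ\varphi_f)(x) \;=\; \sum_{z,y}\bigl(\#_2\,\mathcal{M}(h_f;x,z)\bigr)\bigl(\#_2\,\mathcal{M}(h_g;z,y)\bigr)\cdot y,
\end{equation}
with $z$ and $y$ running over critical points of $h_{sg}=h_{tf}$ and $h_{tg}$, respectively, of common index $|x|$.

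The final step is to invoke the gluing theorem: for $\rho\geq\rho_0$, Theorem~\ref{1gt} gives a bijection between $\mathcal{M}(h_f\#_\rho h_g;x,y)$ and the disjoint union $\bigsqcup_{z}\mathcal{M}(h_f;x,z)\times\mathcal{M}(h_g;z,y)$. Taking cardinalities modulo $2$ on both sides matches the two expressions for the coefficient of $y$, establishing commutativity of the square. The equation $\varphi_{h_f\#_\rho h_g}=\varphi_g\circ\varphi_f$ then falls out immediately, since by definition $\varphi_{h_f\#_\rho h_g}$ is the composition $\overline{\varphi}_{h_f\#_\rho h_g}\circ\iota_{f\#g}$ and $\iota_{f\#g}$ is precisely the left vertical arrow of the diagram.

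The only genuine difficulty is the gluing theorem itself, which has already been black-boxed as Theorem~\ref{1gt}; the rest of the plan is bookkeeping. The main subtlety is to check that when critical points are carried through the various extensions $\overline{h}^{tf}_{sf}$, $\overline{h}^{tg}_{tf}$, and $\overline{h}^{tg}_{sf}$, the associated moduli spaces are literally the same as moduli spaces in the smaller submanifolds---this identification is guaranteed by the inward-direction condition via Lemma~\ref{lem411}.
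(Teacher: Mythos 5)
Your proposal is correct and follows essentially the same route as the paper: the commutativity of the square reduces to the gluing theorem (Theorem \ref{1gt}) together with the inward-direction identifications of Lemma \ref{lem411}, and your unwinding of both compositions to mod-$2$ coefficient counts is just a more explicit version of the paper's map-level identity $\overline{\varphi}_{h_f \#_{\rho} h_g} = \varphi_g \circ \overline{\varphi}_{\overline{h}^{tg}_f}$ followed by restriction along $\iota_f$. One small imprecision worth noting: your appeal to Lemma \ref{lem411} ``to identify trajectories of the extension $\overline{h}^{tf}_{sf}$ with those of $h_f$'' is not quite what that lemma says---Lemma \ref{lem411} identifies $\mathcal{M}(\overline{h}^{tg}_f;x,z)$ with $\mathcal{M}(h_f;x,z)$, i.e.\ trajectories of the extended $1$-homotopy on $M_{tg}$ versus the original on $M_{tf}$---but this is only a matter of phrasing, and the identification you actually need (that the moduli spaces appearing on either side of the gluing bijection are the same ones that feed into $\varphi_f$ and $\varphi_g$) is indeed supplied by the inward-direction condition.
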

\begin{proof}
By Theorem \ref{1gt}, we have $\overline{\varphi}_{h_f \#_{\rho} h_g} = {\varphi}_{g} \circ \overline{\varphi}_{\overline{h}^{tg}_f}.$ Composition of $\overline{\varphi}_{h_f  \#_{\rho} h_g }$ with $\iota_f$ amounts to restricting its domain to $MC_*(M_{sf}, h_{sf}).$ Since $\overline{\varphi}_{\overline{h}^{tg}_{f}}|_{MC_*(M_{sf}, h_{sf})} = \varphi_{h_f}$ due to (\ref{inwdircon}), which in particular means that its image is in $MC_*(M_{tf},h_{tf}),$ we have:
\begin{equation}\nonumber
\varphi_{h_f \#_{\rho} h_g} = \overline{\varphi}_{h_f \#_{\rho} h_g} \circ \iota_f =  (\overline{\varphi}_{h_g} \circ \overline{\varphi}_{h^{tg}_f}) \circ \iota_{f} = \overline{\varphi}_{g} \circ \varphi_{f} = {\varphi}_{g} \circ {\varphi}_{f}.
\end{equation} 
\end{proof}

\begin{corollary}\label{cordire}
We have $\varphi_{g \circ f *} = \varphi_{g*} \circ \varphi_{f*}.$
\end{corollary}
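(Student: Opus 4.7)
The plan is to derive this corollary from Corollary \ref{corgt} combined with the homotopy invariance of continuation maps on homology. By Corollary \ref{corgt}, at the chain level we have $\varphi_g \circ \varphi_f = \varphi_{h_f \#_\rho h_g}$ for $\rho$ sufficiently large, where the right-hand side is the continuation map induced by the concatenated $1$-homotopy $h_f \#_\rho h_g$ from $sf$ to $tg$. Passing to homology yields $\varphi_{g*} \circ \varphi_{f*} = [\varphi_{h_f \#_\rho h_g}]$. Now $\varphi_{g \circ f}$ is the continuation map induced by the $1$-homotopy $h_{g \circ f}$ prescribed as part of the $1$-homotopy data. Since $g \circ f \in Mor(\mathcal{I})$ is the unique morphism from $sf$ to $tg$, both $h_f \#_\rho h_g$ and $h_{g \circ f}$ are $1$-homotopies between the same pair of Morse data $(M_{sf}, h_{sf}, g_{sf})$ and $(M_{tg}, h_{tg}, g_{tg})$. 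Hence it suffices to show that continuation maps induced by any two such $1$-homotopies agree on homology.

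To establish this, I would choose a generic smooth interpolation $\{H^\tau\}_{\tau \in [0,1]}$ between the two given $1$-homotopies, each $H^\tau$ itself a $1$-homotopy satisfying Definition \ref{1htaxo}. The inward direction condition is open, so it is preserved under small perturbations: one first constructs a continuous interpolation (by convex combination where convexity is available, and by sliding the stability window $R_\bullet$ otherwise), then applies a small generic perturbation away from $\tau = 0, 1$ to achieve transversality. The parametrized moduli space
\begin{equation}\nonumber
\mathcal{M}\bigl(\{H^\tau\}; x, y\bigr) := \bigl\{(\tau, u) \mid \tau \in [0,1], \ u \in \mathcal{M}(H^\tau; x, y)\bigr\}
\end{equation}
is then, by an argument paralleling Theorem \ref{trasv1h} with one extra parameter, a smooth manifold of dimension $|x| - |y| + 1$, with boundary contributed only from $\tau = 0, 1$.

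For $|x| = |y|$ the parametrized space is $1$-dimensional, and by an analogue of Theorem \ref{trasv2hh} it compactifies by adding broken trajectories at interior $\tau$, together with the boundary fibers $\mathcal{M}(H^0; x, y)$ and $\mathcal{M}(H^1; x, y)$. As in Lemma \ref{lem411}, the inward direction condition held along the whole family $\{H^\tau\}$ confines trajectories to the compact pieces, so the classical compactness theory for $1$-homotopies applies uniformly in $\tau$. A standard count of boundary components modulo $2$ then produces a chain homotopy $K$ between $\varphi_{H^0}$ and $\varphi_{H^1}$, giving $[\varphi_{H^0}] = [\varphi_{H^1}]$ and completing the chain of equalities $\varphi_{g \circ f *} = [\varphi_{h_f \#_\rho h_g}] = \varphi_{g*} \circ \varphi_{f*}$.

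The main obstacle I foresee is ensuring that all the axioms of Definition \ref{1htaxo} can be maintained along the interpolation---in particular, producing the auxiliary extensions $\overline{h}^k_\tau$ in a parameter-compatible way. However, each axiom is either open (the inward direction condition) or prescribed explicitly by the stability data, so the perturbations needed to achieve regularity can be localized away from the ends $s = \pm \infty$. A more systematic treatment of such parametrized $1$-homotopies, together with their gluing and transversality, falls out of the higher-coherence framework developed in later sections; the present corollary is the simplest nontrivial instance of that machinery.
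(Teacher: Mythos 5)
Your proof takes essentially the same approach as the paper's: both combine Corollary \ref{corgt} with the standard homotopy-invariance of continuation maps on homology to identify $\varphi_{g\circ f*}$ with $\varphi_{h_f\#_\rho h_g*} = \varphi_{g*}\circ\varphi_{f*}$. The paper simply cites the invariance step as ``standard Morse theory,'' while you spell out the underlying parametrized-moduli-space/chain-homotopy argument; the content and structure are the same.
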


\begin{proof}
We consider a chain homotopy $h_{g \circ f} \sim h_f \#_{\rho} h_g.$ By the standard Morse theory, we have $\varphi_{g \circ f *}= \varphi_{h_f \#_{\rho} h_g*}.$ Then $\varphi_{g \circ f *} = \varphi_{g*} \circ \varphi_{f*}$ follows from Corollary \ref{corgt}.
\end{proof}

\begin{definition}\label{def1htp}
We call the following collection \textit{1-homotopy data} and denote it by $\mathfrak{H}^1$.
\begin{itemize}[label = \textbullet]
\item $\bigl\{ (M_a, h_a, g_a)\bigr\}_{a \in Ob(\mathcal{I})},$ a compact exhaustion $\mathcal{E}$ of $W,$
\item $\bigl\{(h_f, g_f)\bigr\}_{f \in Mor(\mathcal{I})},$ 1-homotopies,
\item $\{\rho_{f,g}>\rho_{f,g,0} \},$ gluing parameters for each $(f,g) \in Mor(\mathcal{I})^{\times 2}$ with $tf=sg$.
\end{itemize}
\begin{definition}\label{strict1h}
We say that 1-homotopy data are {\textit{strict}} if the continuation maps satisfy
\begin{equation}\nonumber
\varphi_{f_k \circ f_{k-1} \circ \cdots \circ f_{1}} = \varphi_{f_{k}} \circ \cdots \circ \varphi_{f_1}.
\end{equation}
for all $(f_1, \cdots, f_{k}) \in N(\mathcal{I})_k$ and $k \geq 1.$
\end{definition}

\end{definition}

Observe that Corollary \ref{cordire} says that the data $ \big( \{MH_*(h_a)\}_{a \in \mathbb{Z}_{\geq 0}}, \{\varphi_{f*}\}_{f \in Mor(\mathcal{I})} \big)$ is a direct system of $\mathbb{Z}_2$-modules. 

\begin{definition}\label{defmh}
We define the \textit{Morse homology} of a pair $(W, \mathfrak{H}^1)$ by the direct limit
\begin{equation}\nonumber
MH_*(W, \mathfrak{H}^1) : = \lim\limits_{\longrightarrow} MH_*(h_a).
\end{equation}
\end{definition}

\subsubsection*{Morphisms of 1-homotopy data} Let $\mathfrak{H}^1, {\mathfrak{H}'}^1 \supset \mathcal{E}$ be two 1-data with the same exhaustion, $\mathcal{E} = \bigl\{(M_a, h_a, g_a)\bigr\}_{a \in \mathbb{Z}_{\geq 0}}.$ By a {\textit{morphism of 1-homotopy data}}, we mean a family of pairs $\bigl\{(\widehat{h}_{a,b}, \widehat{g}_{a,b})\bigr\}_{a \leq b}$ that consists of homotopies connecting $(h_a, g_a)$ and $(h_b, g_b),$ satisfying suitable conditions analogous to the axioms in Definition \ref{1htaxo}. Then, by the same process as for $\{\varphi_f\}_{f \in Mor(\mathcal{I})},$ and $\mathfrak{H}^1$ (or ${\mathfrak{H}'}^1$) described earlier in this subsection, we obtain a family of chain maps.
\begin{equation}\nonumber
\bigl\{ \widehat{\varphi}_{a,b} : MC_*(h_a, g_a) \rightarrow MC_*(h_b, g_b)\bigr\}_{a \leq b},
\end{equation}
and the homology-level maps,
\begin{equation}\nonumber
\bigl\{ \widehat{\varphi}_{a,b*} : MH_*(h_a, g_a) \rightarrow MH_*(h_b, g_b)\bigr\}_{a \leq b}.
\end{equation}

\begin{proposition}\label{1huptoisom}
$MH_*(W, \mathfrak{H}^1)$ is independent of the choice of 1-homotopy data $\mathfrak{H}^1$ up to isomorphism.
\end{proposition}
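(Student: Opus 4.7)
The strategy is to reduce the problem to showing that on homology the continuation map $\varphi_{f*} : MH_*(M_{sf}, h_{sf}) \to MH_*(M_{tf}, h_{tf})$ depends only on the exhaustion $\mathcal{E}$, not on the choices of $(h_f, g_f)$, extensions $(\overline{h}^k_f, \overline{g}^k_f)$, or gluing parameters encoded in $\mathfrak{H}^1$. If so, the two direct systems $\bigl\{MH_*(h_a), \varphi_{f*}\bigr\}$ and $\bigl\{MH_*(h_a), \varphi'_{f*}\bigr\}$ arising from $\mathfrak{H}^1$ and ${\mathfrak{H}'}^1$ coincide on the nose, so that their direct limits $MH_*(W, \mathfrak{H}^1)$ and $MH_*(W, {\mathfrak{H}'}^1)$ agree canonically.

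For each $f \in \text{Mor}(\mathcal{I})$, I would use a morphism of 1-homotopy data to produce a one-parameter family of 1-homotopies $\widetilde{H}_f : [0,1] \times \mathbb{R} \times M_{tf} \to \mathbb{R}$ (together with $\widetilde{G}_f$ and extensions $\overline{\widetilde{H}}^k_f$) with $\widetilde{H}_f(0, \cdot) = h_f$ and $\widetilde{H}_f(1, \cdot) = h'_f$, preserving the stability-at-the-ends condition and the inward direction condition uniformly in the new parameter $t$. By a direct extension of Theorem \ref{trasv1h} to a one-dimensional parameter (which is the $k=2$ case of the transversality result announced in the introduction), a generic choice of $\widetilde{G}_f$ makes the parametrized moduli space
\begin{equation}\nonumber
\mathcal{M}(\widetilde{H}_f; x, y) = \bigl\{(t, u) \mid u \text{ solves the flow equation for } \widetilde{H}_f(t, \cdot),\ u(-\infty)=x,\ u(\infty)=y \bigr\}
\end{equation}
a smooth manifold of dimension $|x| - |y| + 1$. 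For $|x| = |y|$, the compactification $\overline{\mathcal{M}}(\widetilde{H}_f; x, y)$ is a compact one-manifold whose boundary, by a parametrized gluing argument analogous to Theorem \ref{1gt}, decomposes as $\{0\} \times \mathcal{M}(h_f; x,y) \,\cup\, \{1\} \times \mathcal{M}(h'_f; x,y)$ together with broken trajectories at intermediate critical points. Counting boundary points modulo $2$ produces a degree $+1$ map $K_f : MC_*(h_{sf}) \to MC_{*+1}(h_{tf})$ satisfying $\partial \circ K_f + K_f \circ \partial = \varphi_f + \varphi'_f$, which gives a chain homotopy and hence the desired equality $\varphi_{f*} = \varphi'_{f*}$ on homology.

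The main technical obstacle is arranging the interpolating $(\widetilde{H}_f, \widetilde{G}_f)$ and its extensions $\overline{\widetilde{H}}^k_f$ to satisfy the inward direction condition along every $\partial M_k$ \emph{uniformly} in the new parameter $t \in [0,1]$. This condition is what prevents trajectories from escaping compact pieces and ultimately controls the compactness of $\overline{\mathcal{M}}(\widetilde{H}_f; x,y)$; it is achievable by a convex combination in the region where both $(h_f, \overline{h}^k_f)$ and $(h'_f, \overline{h'}^k_f)$ are inward-pointing, since the set of inward-pointing functions is open and convex near the boundary. Independence from the gluing parameter $\rho$ used to compose consecutive continuation maps is automatic on homology, since any two sufficiently large choices are joined by a 1-parameter family of concatenations and the argument above applies to these. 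With $\mathbb{Z}_2$-coefficients, orientation concerns vanish, and the remaining analytic inputs (Fredholm theory, transversality, compactness, gluing) are all special cases of the general results to be established in later sections.
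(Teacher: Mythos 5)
Your proof is correct but takes a genuinely different route from the paper. You prove the stronger statement that $\varphi_{f*}$ and $\varphi'_{f*}$ coincide on homology — i.e., the direct systems arising from $\mathfrak{H}^1$ and ${\mathfrak{H}'}^1$ are literally equal — by a chain homotopy constructed from a ``$k=2$'' parametrized moduli space. The paper does not touch moduli spaces at all in this proof: it interlaces the two direct systems into auxiliary strict functors $\widehat{H\mathscr{F}}$ and $\overset{\circ}{H\mathscr{F}}_{\mathfrak{H}^1}$ over a doubled indexing (even entries for $\mathfrak{H}^1$, odd for ${\mathfrak{H}'}^1$, with the morphism of 1-data $\widehat{\varphi}_{a,b}$ providing cross terms), exhibits a natural isomorphism $\mathcal{T}$ between them using that each $\widehat{\varphi}_{c,c*}$ is an isomorphism by invariance on compact manifolds, and finishes with the universal property of direct limits together with a cofinality observation. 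Both routes rest on the same underlying invariance of continuation maps on compact pieces — the chain homotopy you build is precisely what underwrites naturality of $\mathcal{T}$ — but yours gives the sharper conclusion (the direct systems agree on the nose), while the paper's categorical packaging is chosen because it is what generalizes to the chain-level comparisons of Section 11, where the relationship must be mediated by extensions of homotopy coherent diagrams over $\widehat{\mathcal{I}}$ or $\overline{\mathcal{I}}$.

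One point to tighten: the inward direction condition is not automatically preserved under convex combination of pairs $(h,g)$, since $\nabla_g h = g^{-1}\,dh$ involves the \emph{inverse} metric and the $g$-orthogonal inward normal itself moves with $g$. You can interpolate in two stages (first $h$ with $g$ fixed, where $\nabla_g h_t = (1-t)\nabla_g h + t\,\nabla_g h'$ \emph{is} a convex combination of inward vectors and hence inward; then $g$ with $h'$ fixed, perturbing near $\partial M_k$ if necessary), or simply invoke openness of the condition and choose a generic small modification of any naive interpolation near the boundary — as the paper does when arranging this axiom for its higher data in Section 5.
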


\begin{proof}
Recall that in Section 2 we defined the strict functor $H\mathscr{F}_{\mathfrak{H}^1} : {\mathcal{I}} \rightarrow Ch(\mathbb{Z}_2)$ by $H\mathscr{F}_{\mathfrak{H}^1}(a) = MH(h_a)$ for each $a \in Ob(\mathcal{I}),$ and $H\mathscr{F}_{\mathfrak{H}^1}(a \rightarrow b) = \varphi_{(a \rightarrow b)*}$ for each pair $a \leq b.$ We now consider another pair of strict functors
\begin{equation}\nonumber
\widehat{H\mathscr{F}}, \ \overset{\circ}{H\mathscr{F}}_{\mathfrak{H}^1} : {\mathcal{I}} \rightarrow Ch(\mathbb{Z}_2)
\end{equation}
given by
\begin{equation}\nonumber
\begin{split}
\widehat{H\mathscr{F}}(a) =  
\begin{cases}
MH_*(h_c) \text{ if } a = 2c,\\
MH_*(h'_c) \text{ if } a = 2c + 1,\\
\end{cases}
\overset{\circ}{H\mathscr{F}}_{\mathfrak{H}^1}(a) = 
\begin{cases}
MH_*(h_c) \text{ if } a = 2c,\\
MH_*(h_c) \text{ if } a = 2c + 1,\\
\end{cases}
\end{split}
\end{equation}
and
\begin{equation}\nonumber
\widehat{H\mathscr{F}}(a \rightarrow b) =
\begin{cases}
\varphi_{(a \rightarrow b)*} \text{ if } &(a,b) = (2c, 2c'),\\
\varphi'_{(a \rightarrow b)*} \text{ if } &(a,b) = (2c+1, 2c'+1),\\
\widehat{\varphi}_{a,b*} \ \quad \text{ if } &(a,b) = (2c, 2c'+1) \text{ or } (2c+1, 2c'), \\
\end{cases}
\end{equation}
\begin{equation}\nonumber
\overset{\circ}{H\mathscr{F}}_{\mathfrak{H}^1}(a \rightarrow b) =
\begin{cases}
\varphi_{(c \rightarrow c')*}, \text{ where } (a,b) \text{ is one of the 4 cases: } \\ 
\quad \quad  (2c, 2c'), (2c+1, 2c'+1), (2c, 2c'+1), (2c+1, 2c'),
\end{cases}
\end{equation}
respectively. Then there is a natural transformation $\mathcal{T} : \overset{\circ}{H\mathscr{F}}_{\mathfrak{H}^1} \Rightarrow \widehat{H\mathscr{F}}$ given by
\begin{equation}\nonumber
\mathcal{T}(a) = 
\begin{cases}
id_{MH_*(h_c)} &\text{ if } a = 2c,\\
\widehat{\varphi}_{c,c*} &\text{ if } a = 2c + 1.
\end{cases}
\end{equation}
The fact that $\widehat{\varphi}_{c,c*}$ is an isomorphism implies that $\mathcal{T}$ is a natural isomorphism. Observe that we have the following maps,
\begin{equation}\nonumber
 \lim\limits_{\longrightarrow} \widehat{H\mathscr{F}}  \xrightarrow{\simeq} \lim\limits_{\longrightarrow}\overset{\circ}{H_*\mathscr{F}}_{\mathfrak{H}^1}
\end{equation}
given by the universal properties of direct limits. 

On the other hand, it is easy to check that 
$\lim\limits_{\longrightarrow}H\mathscr{F}_{\mathfrak{H}^1} \simeq \lim\limits_{\longrightarrow}\overset{\circ}{H\mathscr{F}}_{\mathfrak{H}^1},$
which impies
\begin{equation}\nonumber
\lim\limits_{\longrightarrow}H\mathscr{F}_{\mathfrak{H}^1} \simeq \lim\limits_{\longrightarrow}\widehat{{H_*\mathscr{F}}}_{\mathfrak{H}^1},
\end{equation}
Doing the same thing with ${\mathfrak{H}'}^1,$ we conclude that 
\begin{equation}\nonumber
\lim\limits_{\longrightarrow}H\mathscr{F}_{{\mathfrak{H}'}^1} \simeq \lim\limits_{\longrightarrow}\widehat{H\mathscr{F}} \simeq \lim\limits_{\longrightarrow}H\mathscr{F}_{{\mathfrak{H}}^1},
\end{equation}
which finishes the proof.
\end{proof}

\section{Parametrized homotopy data}

In this section, we generalize the notion of 1-homotopies to higher degree levels.

\subsection{Parametrized homotopies}

Suppose that 1-homotopy data $\mathfrak{H}^1$ are given on a noncompact manifold $W.$ To each simplex $\sigma_k \in N(\mathcal{I})_k$ with $k \geq 1,$ we assign higher degree homotopy data.
\begin{definition}\label{khtpy}
For $k \geq 1,$ we consider a pair of maps:
\end{definition}
\begin{equation}\nonumber
\begin{cases}
H_{\sigma_k} : [0,1]^{k-1} \times {\mathbb{R}}  \times M_{t \sigma_k} \rightarrow \mathbb{R},\\
G_{\sigma_k} : [0,1]^{k-1} \times {\mathbb{R}} \rightarrow Met(M_{t \sigma_k}).
\end{cases}
\end{equation}
(As before, $Met(M_{\bullet})$ denotes the set of all Riemannian metrics on $M_{\bullet}$.) We call a family $\bigl\{(H_{\sigma_k}, G_{\sigma_k})\bigr\}_{\sigma_k} \in N(\mathcal{I})_k$ $k$\textit{-homotopies} if they satisfy the following axioms.
\begin{itemize}[label=\textbullet]
\item (Compatibility with 1-homotopy data) If $k=1,$ they coincide with the pairs from $\mathfrak{H}^1.$
\item (Stability at the ends) There exists $R_{\sigma_k} >0$ such that
\begin{equation}\label{hsigmar1}
H_{\sigma_k}(\vec{t}, \cdot ,s) = \begin{cases}
\overline{h}^{t \sigma_k}_{s\sigma_k}(\cdot) \text{ if } s < - R_{\sigma_k},\\
h_{t\sigma_k}(\cdot) \text{ if } s > R_{\sigma_k},
\end{cases}
\end{equation} 
\begin{equation}\label{hsigmar2}
G_{\sigma_k}(\vec{t}, s) = \begin{cases}
\overline{g}^{t \sigma_k}_{s\sigma_k} \text{ if } s < - R_{\sigma_k},\\
g_{t\sigma_k} \text{ if } s > R_{\sigma_k},
\end{cases}
\end{equation}
\item (Extensions) For all $l \geq t \sigma_k,$ we have an \textit{extension} $(\overline{H}^l_{\sigma_k}, \overline{G}^l_{\sigma_k}).$
\item (Trivial extensions) $(\overline{H}^{t \sigma_k}_{\sigma_k}, \overline{G}^{t \sigma_k}_{\sigma_k}) = (H_{\sigma_k}, G_{\sigma_k}).$
\item (Double extensions) $(\overline{\overline{H}^{l}_{\sigma_k}}^{l'}, \overline{\overline{G}^{l}_{\sigma_k}}^{l'}) = (\overline{H}^{l'}_{\sigma_k}, \overline{G}^{l'}_{\sigma_k}), \text{ for each pair } l \leq l'.$
\item (Stability near the faces) $({H}_{\sigma_k},{G}_{\sigma_k})$ is constant sufficiently near the faces of $[0,1]^{k-1}$ along the normal directions.
\item  (Stability for extensions) For the same ${R}_{\sigma_k} >0$ as above, we put analogous conditions to extensions as (\ref{hsigmar1}) and (\ref{hsigmar2}).
\item (Inward direction condition) $- \nabla_{G_{\sigma_k}} H_{\sigma_k}|_{\partial M_{t \sigma_k}}$ and $- \nabla_{\overline{G}^l_{\sigma_k}} \overline{H}^l_{\sigma_k}|_{\partial M_{l}}$ are in the inward direction, for $ (\vec{t}, s) \in  [0,1]^{k-1} \times {\mathbb{R}}.$ and all $l \geq t\sigma_k.$ 
\item (Regularity) $H_{\sigma_k}$ and $\overline{H}^l_{\sigma_k}$ are regular;  $H_{\bullet}$ for any $p$ with $\nabla_{G_{}^{\vec{t}}}H_{\bullet}^{\vec{t}}(s, p) = 0, \text{ for all } s \in \overline{\mathbb{R}},$ then the operator $\frac{\partial}{\partial s} + H^2(H_{\bullet}^{\vec{t}})(p,\cdot):  W^{1,2}(T_pM_{\bullet}) \rightarrow L^2(T_pM_{\bullet})$ is onto, where $ H^2(H_{\bullet}^{\vec{t}})(p,\cdot)$ is the Hessian of $H_{\bullet}^{\vec{t}}$ at $p \in M_{\bullet}.$ (Here $H_{\bullet}$ denotes either $H_{\sigma_k}$ or $\overline{H}^l_{\sigma_k}$.)

\end{itemize}

\begin{notation}
We write $(H^{\vec{t}}_{\sigma_k}, G^{\vec{t}}_{\sigma_k})$ for $\big(H_{\sigma_k}(\vec{t}, \cdot), G_{\sigma_k}(\vec{t}, \cdot)\big).$
\end{notation}

\begin{notation}\label{not1st}
More generally, let us call any pair of $[0,1]^{k-1} \times \mathbb{R}$-family of Morse functions and Riemannian metrics a $k$-homotopy if it is subject to the above axioms, and not necessarily indexed by the simplicial set $N(\mathcal{I})$.  
\end{notation}

\subsubsection*{The moduli space of trajectories} For a given $k$-homotopy $\ (H_{\sigma_k}, G_{\sigma_k}),$ and critical points $x \in Crit(h_{s\sigma_k}),$ $y \in Crit(h_{t \sigma_k}),$ we consider the set
\begin{equation}\nonumber
\begin{split}
\mathcal{M}(H_{\sigma_k}; x,y) := \Bigl\{ (\vec{t}, u) \mid \vec{t} \in & [0,1]^{k-1}, \ u : \overline{\mathbb{R}} \rightarrow M_{t \sigma_k}\\
&\dot{u} + \frac{\nabla_{G_{\sigma_k}^{\vec{t}}}H_{\sigma_k}^{\vec{t}}}{\sqrt{ 1+ |\dot{H}_{\sigma_k}^{\vec{t}}|^2 |\nabla_{G_{\sigma_k}^{\vec{t}}}H_{\sigma_k}^{\vec{t}}|^2 }} \circ u = 0 \Bigr\}.
\end{split}
\end{equation} 
In other words, $\mathcal{M}(H_{\sigma_k}; x,y)$ is the zero set of the following map between Banach manifolds:
\begin{equation}\label{eqnban}
\begin{split}
F_{\sigma_k} : [0,1]^{k-1} \times \mathcal{P}^{1,2}(x,y)  \longrightarrow \bigcup\limits_{\gamma \in \mathcal{P}^{1,2}(x,y)} L^2(\gamma^* TM_{t\sigma_k}),\\
F_{\sigma_k}(\vec{t}, u) := \dot{u} + \frac{\nabla_{{G_{\sigma_k}^{\vec{t}}}} H_{\sigma_k}^{\vec{t}}}{\sqrt{ 1+ |\dot{H}_{\sigma_k}^{\vec{t}}|^2 |{\nabla_{G^{\vec{t}}_{\sigma_k}} H^{\vec{t}}_{\sigma_k}}|^2 }} \circ u.
\end{split}
\end{equation}
We will denote the second term of the right hand side of (\ref{eqnban}) by $\widetilde{H} \circ u.$

\subsubsection*{Local description} Recall that the Banach manifold $\mathcal{P}^{1,2}(x,y)$ can be covered by local coordinates $\bigl\{W^{1,2}(w^* \mathcal{O}), \exp_w\bigr\}_{w \in C^{\infty}_{x,y}({\overline{\mathbb{R}}}, M_{t \sigma_k})}.$ Using this, we can locally describe $F_{\sigma_k}$ as
\begin{equation}\nonumber
\begin{split}
F_{\sigma_k}^{loc}: [0,1]^{k-1} \times W^{1,2}(w^* \mathcal{O}) \longrightarrow L^2(w^* TM_{t \sigma_k}),
\\
(\vec{t}, \xi) \longmapsto \nabla_2 \exp_h \big(F_{\sigma_k}^{\vec{t}}\big(\exp_w(\xi)\big)\big).
\end{split}
\end{equation}

\subsection{Linearized operators}

The linearization of $F_{\sigma_k}$ can be computed in a way similar to that of the unparametrized case. The only difference is that now the operator depends explicitly on the parameter $\vec{t},$ so that the derivatives in the directions of $\frac{\partial}{\partial{t_i}}$, $i = 1, \cdots, k-1$ appear as well.
\begin{equation}\nonumber
\begin{split}
dF_{\sigma_k} : \mathbb{R}^{k-1} \times W^{1,2}(\mathbb{R}, \mathbb{R}^n) \rightarrow L^2(\mathbb{R}, \mathbb{R}^n)\\
(\tau, \xi) \mapsto \sum\limits_{i} dF_{1,i}(\vec{t}, u ) \cdot \tau_i + dF_2(\vec{t}, u )(\vec{\tau}, \xi),
\end{split}
\end{equation}
where 
\begin{equation}\label{df1df2}
\begin{split}
dF_{1,i}(\vec{t}, u  ) & = \frac{\partial}{\partial t_i} \widetilde{H} \circ u,\\
\big( dF_2(\vec{t}, u ) \big) (\vec{\tau}, \xi) & = \frac{\partial}{\partial s} \xi + A(\vec{t})(\xi), \text{ for } i =1, \cdots, k-1.
\end{split}
\end{equation}
Here $dF_2(\vec{t}, u)$ is written similarly to (\ref{aaaaa}). In particular, $A(\vec{t})$ is an analogous operator to $A$ in (\ref{aaaaa}), but with the additional parametrization.

\subsubsection*{Fredholm theory} For a given $k$-homotopy $(H_{\sigma_k}, G_{\sigma_k}),$ with $\sigma_k \in N(\mathcal{I})_k, \ k \geq 2,$  we have $(k-1)$-homotopies when restricted to the faces of the cube $[0,1]^{k-1}.$
\begin{equation}\nonumber
\begin{split}
 F^{i, +}_{\sigma_k} := & F_{\sigma_k}|_{[0,1]^{k-1}|_{t_i=0}},\\  
 F^{i, -}_{\sigma_k} := & F_{\sigma_k}|_{[0,1]^{k-1}|_{t_i=1}}\\
& : [0,1]^{k-2} \times \mathcal{P}^{1,2}(x,y) \rightarrow \bigcup_{\gamma \in \mathcal{P}^{1,2}(x,y)} L^2(\gamma^* TM_{t\sigma_k}), \\
& \text{ for } i = 1, \cdots, k-1.
\end{split}
\end{equation}
\begin{assumption}\label{assp}
The linearizations
\begin{equation}\nonumber
d F^{i, \pm}_{\sigma_k} (\vec{t}, u) : \mathbb{R}^{k-2} \times W^{1,2}(u^*TM_{t \sigma_k}) \rightarrow L^2(u^*TM_{t \sigma_k})
\end{equation}
are surjective Fredholm operators of the same index $|x| - |y| +k -2$ for all $i.$
\end{assumption}

\begin{proposition}
Under Assumption \ref{assp}, $dF_{\sigma_k}(\vec{t}, u)$ is a Fredholm operator of index $|x| - |y| + k-1.$
\end{proposition}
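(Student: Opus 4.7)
The plan is to reduce to the unparametrized Fredholm theorem already established for compact Morse theory, by viewing $dF_{\sigma_k}(\vec{t},u)$ as a finite-dimensional bounded extension of its restriction to the fiber $\{\vec{\tau}=0\}$. First I would decompose
\begin{equation}\nonumber
dF_{\sigma_k}(\vec{t},u)(\vec{\tau},\xi) \;=\; \underbrace{\bigl(\tfrac{\partial}{\partial s}\xi + A(\vec{t})\xi\bigr)}_{=: T\xi} \;+\; \underbrace{\sum_{i=1}^{k-1}\tau_i \cdot dF_{1,i}(\vec{t},u)}_{=:L\vec{\tau}},
\end{equation}
where $T : W^{1,2}(u^*TM_{t\sigma_k}) \to L^2(u^*TM_{t\sigma_k})$ is the fiber linearization and $L : \mathbb{R}^{k-1} \to L^2(u^*TM_{t\sigma_k})$ is the bounded linear map recording the parameter derivatives from (\ref{df1df2}). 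By the stability-at-the-ends axiom of Definition \ref{khtpy}, for $|s|>R_{\sigma_k}$ the homotopy $H^{\vec{t}}_{\sigma_k}$ is constant in $s$ and equal to a genuine Morse function ($\overline{h}^{t\sigma_k}_{s\sigma_k}$ or $h_{t\sigma_k}$), so the asymptotic operators $A(\vec{t})(\pm\infty)$ are conjugate to the Hessians at $x$ and $y$, hence nondegenerate and self-adjoint.

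Next I would invoke the Fredholm theorem for the compact time-dependent case recalled earlier in Section 3 to conclude that $T$ is Fredholm of index $|x|-|y|$. It then remains to observe an elementary functional-analytic lemma: if $T : X \to Y$ is Fredholm of index $d$ between Banach spaces and $L : \mathbb{R}^n \to Y$ is any bounded linear map, then
\begin{equation}\nonumber
T' : \mathbb{R}^n \oplus X \to Y, \qquad (\vec{\tau},\xi)\mapsto T\xi + L\vec{\tau},
\end{equation}
is Fredholm of index $d+n$. This is verified by the short exact sequences
\begin{equation}\nonumber
0 \to \ker T \to \ker T' \to L^{-1}(\operatorname{im} T) \to 0, \qquad 0 \to \operatorname{im} T + L(\mathbb{R}^n) \to Y \to \operatorname{coker} T' \to 0,
\end{equation}
together with the identities $\dim L^{-1}(\operatorname{im} T) = n - \dim \overline{L}(\mathbb{R}^n)$ and $\dim\operatorname{coker} T' = \dim\operatorname{coker} T - \dim \overline{L}(\mathbb{R}^n)$, where $\overline{L}(\mathbb{R}^n)$ is the image of $L$ in $Y/\operatorname{im} T$; closed range of $T'$ follows from closed range of $T$ since adding a finite-dimensional subspace to a closed subspace preserves closedness. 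Applying this with $n=k-1$ yields the desired index $|x|-|y|+k-1$.

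The main conceptual point is that Fredholmness of the parametrized operator is purely local in $\vec{t}$: at each fixed parameter it collapses to the unparametrized Fredholm problem, and only the dimension of the parameter space enters the index. Assumption \ref{assp}, which demands surjectivity on the codimension-one faces, is not actually required for Fredholmness or for the stated index; it is recorded here because it will be needed when this Fredholm statement is upgraded to a transversality/surjectivity result via the Sard--Smale-type argument of Theorem \ref{schtransv} in the inductive step of the next subsection. Therefore I expect no real obstacle in this proposition beyond the careful verification of the extension lemma stated above.
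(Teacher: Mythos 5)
Your proof is correct and takes a genuinely different route from the paper's. The paper establishes the index by induction through the faces of the cube: it invokes continuity of the Fredholm index to move to $t_i = 0$, uses the surjectivity guaranteed by Assumption \ref{assp} to equate the index with $\dim\ker$, and then uses the stability-near-the-face axiom to relate $\dim\ker dF_{\sigma_k}|_{t_i=0}$ to $\dim\ker dF^{i,+}_{\sigma_k}$ (adding $1$ for the free $\tau_i$-direction), and finally deduces Fredholmness of the full operator from that of its face restriction. Your argument instead splits $dF_{\sigma_k}(\vec{t},u)$ as the unparametrized Fredholm operator $T = d_2F_{\sigma_k}$ of index $|x|-|y|$ plus the finite-rank parameter map $L:\mathbb{R}^{k-1}\to L^2$, and then applies the elementary lemma that a finite-dimensional Banach-space extension $T'(\vec{\tau},\xi) = T\xi + L\vec{\tau}$ is Fredholm of index $\operatorname{ind}(T)+(k-1)$. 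Your verification of the lemma via the two short exact sequences and the identity $\dim L^{-1}(\operatorname{im}T) = n - \dim\overline{L}(\mathbb{R}^n)$ is sound, and closedness of $\operatorname{im}T'$ is indeed automatic since $\operatorname{im}T$ is closed of finite codimension and $L(\mathbb{R}^n)$ is finite-dimensional. You are also right on the meta-point: Assumption \ref{assp} is not logically required for this proposition — only the asymptotic nondegeneracy of $A(\vec{t})(\pm\infty)$ (coming from the stability-at-the-ends axiom), which already makes $T$ Fredholm. The paper carries the assumption here because the proposition sits inside an inductive scheme (cf.\ Remark \ref{rmk56}) in which the face-surjectivity serves as the inductive hypothesis needed for the later transversality arguments, and the author chose to phrase the Fredholm statement in that context rather than separate it out. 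Your decomposition has the advantage of being self-contained, not referring forward to the construction of Section 9, and of making the index shift by $k-1$ transparent; the paper's inductive phrasing, on the other hand, is the one that dovetails with the surjectivity statements that actually get used downstream.
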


\begin{proof}
We have for all $i,$
\begin{equation}\nonumber
\begin{split}
\text{ind}\big( dF_{\sigma_k}(\vec{t},u) \big) & = \text{ind} \big( dF_{\sigma_k}\big( (t_i=0), u \big) \big) \text{ (by continuity of the index)}\\
&  = \ker \big( dF_{\sigma_k}\big( (t_i=0), u \big) \big)  \text{ (by surjectivity of } d F^{i, \pm}_{\sigma_k}(\vec{t}', u) )\\
& = 1 + \ker \big( d F^{i, +}_{\sigma_k}(\vec{t}', u) \big) \text{ (by stability near the face) }\\
& = |x| - |y| +k-1.
\end{split}
\end{equation}
On the other hand, $dF_{\sigma_k}\big( (t_i=0), u \big)$ is Fredholm, so $\text{dim} \ \text{coker} \ \big( dF_{\sigma_{k}} (\vec{t}, u) \big)$ is finite. Then it follows that $\text{dim} \ker \big( dF_{\sigma_{k}} (\vec{t}, u) \big) = \text{ind} \big( dF_{\sigma_{k}} (\vec{t}, u) \big) - \ \text{dim} \ \text{coker}\big(  dF_{\sigma_{k}} (\vec{t}, u)   \big)$ is also finite, and the Fredholm property of $dF_{\sigma_k}(\vec{t},u)$ follows.
\end{proof}

\begin{remark}\label{rmk56}
In Section 9, we construct a special class of higher homotopies in an inductive way. This process naturally allows Assumption \ref{assp} to hold, so we obtain the corresponding Fredholm property.
\end{remark}

\subsubsection*{Transversality} We consider the space of maps $Ban \big( [0,1]^{k-1} \times {\mathbb{R}}, C^e(\mathcal{T}_{g_{t \sigma_k}}) \big), $ equipped with the natural Banach structure with respect to the norm $\|\mathcal{A}\|:= \sup\limits_{\|(s, \vec{t})\| = 1} \|\mathcal{A}(s, \vec{t})\|$ for $\mathcal{A} \in Ban \big( [0,1]^{k-1} \times {\mathbb{R}}, C^e(\mathcal{T}_{g_{t \sigma_k}}) \big).$ For a $k$-homotopy $(H_{\sigma_k}, G_{\sigma_k})$ for $\sigma_k \in N(\mathcal{I})_k,$ we define the following map of Banach manifolds:
\begin{equation}\nonumber
\begin{split}
\Phi : Ban \big( [0,1]^{k-1} \times {\mathbb{R}}, & \ C^e(\mathcal{T}_{g_{t \sigma_k}}) \big) \times [0,1]^{k-1}\times \mathcal{P}^{1,2}(x,y)  \longrightarrow  \bigcup\limits_{\gamma \in \mathcal{P}^{1,2}(x,y)} (\gamma^* TM_{t \sigma_k}),\\
(\mathcal{A}, \vec{t}, u) &\longmapsto \dot{u} + \frac{\mathcal{A}(s,\vec{t})\Big( \nabla_{G^{\vec{t}}_{\sigma_k} }H_{\sigma_k}^{\vec{t}} \Big)}{\sqrt{ 1 + |\dot{H}_{\sigma_k}^{\vec{t}}|^2 |\mathcal{A}(s, \vec{t}) \Big(  \nabla_{G^{\vec{t}}_{\sigma_k}}H_{\sigma_k}^{\vec{t}} \Big) |^2}} \circ u.
\end{split}
\end{equation}

We restrict $\Phi$ to the local coordinate of $u \in \mathcal{P}^{1,2}(x,y)$ as before to have

\begin{equation}\nonumber
\Phi^{loc} : Ban \big([0,1]^{k-1} \times {\mathbb{R}}, C^e(\mathcal{T}_{g_{t \sigma_k}}) \big) \times [0,1]^{k-1} \times W^{1,2}(u^* \mathcal{O}) \longrightarrow L^2(u^* TM_{t \sigma_k}).
\end{equation}

\begin{proposition}\label{transprop}
0 is a regular value of $\Phi$ and $\Phi_{\mathcal{A}} := \Phi^{loc}(\mathcal{A},\cdot)$ is a Fredholm operator of index $|x| -|y| + k-1$ for any $\mathcal{A} \in Ban \big({\mathbb{R}} \times [0,1]^{k-1}, C^e(\mathcal{T}_{g_{t \sigma_k}}) \big).$
\end{proposition}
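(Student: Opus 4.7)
\begin{proof-sketch}
The plan is to generalize the Sard--Smale argument used in the proof of Theorem \ref{schtransv} (compare [Sch, Proposition 2.24]) to the parametrized setting, where the cube $[0,1]^{k-1}$ contributes extra parameters alongside the standard Banach parameter $\mathcal{A}$.

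For the Fredholm property of $\Phi_{\mathcal{A}}$: fix $\mathcal{A}$ and a zero $(\vec{t}, u)\in\Phi_{\mathcal{A}}^{-1}(0)$. By (\ref{df1df2}), the linearization at $(\vec{t}, u)$ sends $(\vec{\tau}, \xi) \in \mathbb{R}^{k-1}\times W^{1,2}(u^*TM_{t\sigma_k})$ to $\partial_s \xi + A(\vec{t})\xi + \sum_i dF_{1,i}(\vec{t}, u)\tau_i$. The $\xi$-part alone, at frozen $\vec{t}$, is the standard linearization of the time-dependent Morse trajectory equation, which is Fredholm of index $|x|-|y|$ by the Fredholm theorem recalled in Section 3. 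Attaching the finite-rank operator $\vec{\tau} \mapsto \sum_i dF_{1,i}(\vec{t}, u)\tau_i$ on the $(k-1)$-dimensional factor preserves the Fredholm property and raises the index by exactly $k-1$, yielding the claimed index $|x|-|y|+k-1$.

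For regularity of the value $0$: let $(\mathcal{A}, \vec{t}, u)\in\Phi^{-1}(0)$ and suppose $\eta\in L^2(u^*TM_{t\sigma_k})$ is orthogonal to the image of $d\Phi(\mathcal{A}, \vec{t}, u)$; the goal is to show $\eta=0$. Testing against pure $\xi$-variations places $\eta$ in the $L^2$-cokernel of $\partial_s + A(\vec{t})$, so $\eta$ satisfies a first-order linear ODE along $u$ and is in particular smooth. Testing against pure $\mathcal{A}$-variations yields, for every $\delta\mathcal{A} \in Ban([0,1]^{k-1}\times\mathbb{R}, C^e(E_{g_{t\sigma_k}}))$,
\begin{equation}\nonumber
\int_{\mathbb{R}}\bigl\langle D(\delta\mathcal{A})(s),\ \eta(s)\bigr\rangle_{G_{\sigma_k}^{\vec{t}}(s)}\, ds \ =\ 0,
\end{equation}
where $D(\delta\mathcal{A})(s)$ is the $\epsilon$-derivative at $\epsilon=0$ of the normalized gradient term in $\Phi$, a linear expression in $\delta\mathcal{A}(s,\vec{t})$ applied to $\nabla_{G_{\sigma_k}^{\vec{t}}} H_{\sigma_k}^{\vec{t}}(s, u(s))$. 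Since $u$ is a non-constant trajectory from $x$ to $y$, there exists $s_0\in\mathbb{R}$ with $u(s_0)\notin Crit(H_{\sigma_k}^{\vec{t}})$, so $\nabla H(s_0, u(s_0))\neq 0$. Using cutoff functions concentrated near $(s_0,\vec{t})$ together with the density of $C^e(E_{g_{t\sigma_k}})$ in $L^2(E_{g_{t\sigma_k}})$ from Lemma \ref{l2lem}, one may arrange $\delta\mathcal{A}(s_0,\vec{t})$ to apply any prescribed symmetric endomorphism to $\nabla H(s_0,u(s_0))$, which forces $\eta(s_0)=0$. The same argument applies at every $s$ outside the closed set $\{s:u(s)\in Crit\}$; hence $\eta$ vanishes on a dense open subset of $\mathbb{R}$ and, by continuity of its smooth representative, everywhere.

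The main obstacle is ensuring that the $\mathcal{A}$-variations genuinely sweep out an arbitrary direction in $T_{u(s_0)}M_{t\sigma_k}$. The positive normalization factor $\sqrt{1+|\dot H^{\vec{t}}|^2|\mathcal{A}(\nabla H)|^2}$ and its $\epsilon$-derivative only contribute positive scalar multiples and a vector in the direction already covered by $\delta\mathcal{A}(s_0,\vec{t})(\nabla H)$, so they are absorbed once one allows rescaling and modification of $\delta\mathcal{A}$. Thanks to $\nabla H(s_0, u(s_0))\neq 0$, every vector in $T_{u(s_0)}M_{t\sigma_k}$ can be realized as $\delta\mathcal{A}(s_0,\vec{t})(\nabla H(s_0,u(s_0)))$ for some symmetric endomorphism, which closes the parametrized Sard--Smale step exactly as in [Sch, Proposition 2.24].
\end{proof-sketch}
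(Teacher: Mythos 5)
Your argument follows essentially the same parametrized Sard--Smale outline as the paper's proof of this proposition, but the concluding step of the regularity argument has a small gap, and there is one worthwhile difference in the Fredholm part.

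For the Fredholm index, the paper defers to Assumption \ref{assp} together with the inductive construction alluded to in Remark \ref{rmk56}. Your derivation is more elementary and self-contained: the $\xi$-part is the linearization of the time-dependent Morse equation, Fredholm of index $|x|-|y|$ by the theorem recalled in Section 3, and adding the finite-rank $\vec{\tau}$-term is a compact perturbation of the operator $(\vec{\tau},\xi)\mapsto dF_2(\vec{t},u)(\xi)$, whose kernel contains the full $(k-1)$-dimensional factor; hence the index is $|x|-|y|+k-1$. This is cleaner and does not invoke Assumption \ref{assp}.

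For the regularity of $0$, your test against $\mathcal{A}$-variations and your analysis of the normalization factor are both correct and parallel the paper's appeal to [Sch, Prop.~2.30]. But the final step -- deducing $\eta\equiv 0$ from vanishing on the open set $\{s:\nabla H(s,u(s))\neq 0\}$ plus continuity -- silently assumes that set is dense, which need not hold: a parametrized trajectory can stall on a subinterval $[a,b]$ at a point that is momentarily critical for $H^{\vec{t}}_{\sigma_k}(s,\cdot)$ and then move again, so $\{s:\nabla H(s,u(s))=0\}$ can have nonempty interior. You already observe that $\eta$ lies in the $L^2$-cokernel of $\partial_s + A(\vec{t})$ and therefore satisfies a first-order linear ODE; vanishing at a single $s_0$ plus ODE uniqueness then gives $\eta\equiv 0$ with no density assumption. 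That is exactly how the paper closes the argument, and you should use it. Finally, the degenerate case in which $u$ is constant (so no good $s_0$ exists at all) deserves a word: it is covered by the regularity axiom of Definition \ref{khtpy}, a point that both your sketch and the paper leave implicit.
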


\begin{proof}
Consider the differential:
\begin{equation}\nonumber
\begin{split}
d \Phi(\mathcal{A}, \xi) : T_{\mathcal{A}} Ban \big(&[0,1]^{k-1} \times  \mathbb{R}, C^e(\mathcal{T}_{g_{t \sigma_k}}) \big) \times W^{1,2}(u^*TM_{t \sigma_k}) \rightarrow L^2(u^* TM_{t \sigma_k}),\\
& d\Phi(\mathcal{A}, \xi) (B, \eta) :=  d_1 \Phi(\mathcal{A}, \xi)(B) + d_2 \Phi(\mathcal{A}, \xi)(\eta).
\end{split}
\end{equation}
We already know about the second part $d_2 \Phi(\mathcal{A}, \xi),$ (it is Fredholm, etc.). Since $\text{im}\big( d_2 \Phi(\mathcal{A}, \xi) \big)$ is of finite codimension, so is $d \Phi(\mathcal{A}, \xi).$

Let $c \in \text{im}\big( d_2 \Phi(\mathcal{A}, \xi) \big)^{\perp} \subset L^2(u^* TM_{t \sigma_k})$ be a vector that satisfies 
\begin{equation}\label{dphic}
\langle d \Phi(\mathcal{A}, \xi) (B, \eta), c \rangle_{L^2} = 0
\end{equation}
for \textit{all} $(B, \eta) \in \mathcal{A}^* C^e(E_{g_{t \sigma_k}}) \times W^{1,2}(u^* TM_{t \sigma_k}).$ To have surjectivity of $d \Phi (\mathcal{A}, \xi),$ it suffices to show that $c \equiv 0.$ Notice that (\ref{dphic}) implies that 

\begin{equation}\label{dphic2}
\langle d_1 \Phi(\mathcal{A}, \xi) (B), c \rangle_{L^2} = 0,
\end{equation}
for all $B \in T_{\mathcal{A}} Ban \big( [0,1]^{k-1} \times \mathbb{R}, C^e (\mathcal{T}_{g_{t\sigma_k}}) \big) \simeq \mathcal{A}^* \big( T C^e ( \mathcal{T}_{g_{t\sigma_k}}) \big) \simeq \mathcal{A}^*\big( C^e(E_{g t\sigma_k}) \big).$ In particular, this holds for $B$ such that  
\begin{enumerate}[label=(\roman*)]
\item The fibers $B_{\mathcal{A}(s,\vec{t})}$ are independent of $(s,\vec{t})$; they form a constant section,
\item $B_{\mathcal{A}(s,\vec{t})} \in L^2(E_{g_{t \sigma_k}}),$ which is possible by Lemma \ref{l2lem}.
\item $B_{\mathcal{A}(s,\vec{t})}$ has a sufficiently small support near some $s_0 \in \mathbb{R}$ with $|s_0| < R_{\sigma_k}.$
\end{enumerate}

So (\ref{dphic2}) implies the following local expression at $s \in \mathbb{R},$
\begin{equation}\nonumber
\langle d_1\Phi(\mathcal{A}, \xi) \big(B(s_0, \cdot) \big), c(s_0) \rangle =0
\end{equation}
at $T_pM_{t \sigma_k}$ with $p = \exp_{\gamma}\xi(s_0).$ By the choice of $B$ and $s_0,$ this leads to exactly the same situation of [Sch] Proposition 2.30 in the unparametrized case, hence we have $c(s_0)=0.$ Since $c \in \text{im}\big( d_2 \Phi(\mathcal{A}, \xi) \big)^{\perp}$ and $\langle d_2\Phi(\mathcal{A}, \xi)(\eta), c \rangle = 0,$ we know $c$ satisfies an ODE $\dot{c}(s_0)+ X(s_0) c(s_0) = 0,$ for some trivialization. Then the uniqueness for a solution of an ODE implies that $c(s')=0$ for some $s' \in \mathbb{R}$ if and only if $c \equiv 0.$ Thus, we have $c \equiv 0$ and conclude that $d \Phi(\mathcal{A}, \xi)$ is surjective at $(\mathcal{A}, \xi) \in \Phi^{-1}(0).$
\end{proof}

The following theorem and corollary are due to Theorem \ref{schtransv} and Proposition \ref{transprop}. We make a remark for the corollary. When we try to get a homotopy of Riemannian metrics $G_{\sigma_k},$ for transversality it is possible to choose one, so that the pair $(H_{\sigma_k}, G_{\sigma_k})$ satisfies the axioms of Definition \ref{khtpy}. The stability-at-the-end axiom and the inward direction axiom are to be checked. The rest of them are almost trivial. The former can be achieved by (iii) in the proof of Proposition \ref{transprop}, i.e., the surjectivity of the linearization follows by a local consideration of metrics. The latter is also ensured by a local change of the metric near the boundary, which is always possible.

\begin{theorem}\label{paramtrasv}
There is a generic choice of $\mathcal{A} \in Ban \big([0,1]^{k-1} \times {\mathbb{R}}, C^e(\mathcal{T}_{g,_{t \sigma_k}}) \big)$ such that $\Phi_{\mathcal{A}}^{-1}(0)$ is a smooth manifold (with corners) of dimension $|x| - |y| + k-1.$
\end{theorem}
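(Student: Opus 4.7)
The plan is to apply the parametric Sard-Smale theorem (Theorem \ref{schtransv}) to the Banach bundle map $\Phi$ constructed above. Proposition \ref{transprop} already supplies the two required hypotheses: $0$ is a regular value of $\Phi$, and each $\Phi_{\mathcal{A}}$ is a Fredholm operator of index $|x|-|y|+k-1$. So Theorem \ref{schtransv} will immediately produce a comeagre subset $S \subset Ban\big([0,1]^{k-1}\times\mathbb{R},\,C^e(\mathcal{T}_{g_{t\sigma_k}})\big)$ such that for every $\mathcal{A}\in S$ the preimage $\Phi_{\mathcal{A}}^{-1}(0)$ is a smooth submanifold of $[0,1]^{k-1}\times\mathcal{P}^{1,2}(x,y)$ of the claimed dimension.

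Next, I would upgrade ``smooth manifold'' to ``smooth manifold with corners''. Because the parameter $\vec{t}$ lives in the cube $[0,1]^{k-1}$, the zero set inherits boundary strata from the faces of the cube. I would apply the implicit function theorem stratum-by-stratum: first on the open interior, then on each open codimension-$j$ face, for $j=1,\dots,k-1$, using that the restricted maps $F^{i,\pm}_{\sigma_k}$ are themselves surjective Fredholm for generic $\mathcal{A}$ (this requires running the same Sard-Smale argument on each face simultaneously, then intersecting the countably many comeagre sets). The stability-near-the-faces axiom of Definition \ref{khtpy} guarantees that $(H_{\sigma_k},G_{\sigma_k})$ is independent of the normal coordinate near each face, so the local form of $F_{\sigma_k}$ factors as a product and the strata glue coherently into a manifold with corners of top dimension $|x|-|y|+k-1$.

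Finally, I would check that the abstract perturbation $\mathcal{A}\in S$ can actually be realized as a $G_{\sigma_k}$ satisfying the remaining clauses of Definition \ref{khtpy}, in particular stability at the ends and the inward direction condition along $\partial M_{t\sigma_k}$. For stability at the ends, the proof of Proposition \ref{transprop} used admissible variations $B$ with compact support in $\{|s|<R_{\sigma_k}\}$, so surjectivity of $d\Phi$ is achieved through perturbations that leave the asymptotic metric untouched. For the inward direction condition, the gradient depends continuously on the metric, so an additional small perturbation localized in a collar neighbourhood of $\partial M_{t\sigma_k}$ does not destroy transversality; the intersection of $S$ with these admissibility constraints remains comeagre. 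The main subtlety will be precisely this last bookkeeping step, since the inward direction condition is a nonlinear pointwise constraint on the metric along the boundary and must be maintained uniformly in $(\vec{t},s)$, simultaneously with transversality on all faces.
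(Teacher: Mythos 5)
Your proof takes essentially the same approach as the paper: invoke the parametric Sard--Smale theorem (Theorem~\ref{schtransv}) with the hypotheses supplied by Proposition~\ref{transprop}, and then observe that the generic perturbation can be chosen compatible with the remaining axioms of Definition~\ref{khtpy} (stability at the ends via the compactly supported variations used in the proof of Proposition~\ref{transprop}, and the inward direction condition via a local change of metric near $\partial M_{t\sigma_k}$). Your stratum-by-stratum elaboration of the manifold-with-corners structure is a reasonable unpacking of what the paper leaves implicit, and your closing caveat about maintaining the inward direction and transversality constraints simultaneously correctly identifies the point the paper handles informally.
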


\begin{corollary}\label{givenk}
Given a parametrized Morse function $H_{\sigma_k},$ there is a generic choice of parametrized Riemannian metrics $G_{\sigma_k}$ such that $\mathcal{M}(H_{\sigma_k}; x, y)$ is a smooth manifold of dimension $|x| - |y| +k-1.$\end{corollary}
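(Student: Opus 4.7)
\begin{proof-sketch}
The plan is to deduce Corollary \ref{givenk} directly from the abstract transversality statement Theorem \ref{paramtrasv}. That theorem produces a generic $\mathcal{A}_0 \in Ban\bigl([0,1]^{k-1} \times \mathbb{R}, C^e(\mathcal{T}_{g_{t\sigma_k}})\bigr)$ for which $\Phi_{\mathcal{A}_0}^{-1}(0)$ is a smooth manifold of dimension $|x|-|y|+k-1$. The idea is to repackage such an $\mathcal{A}_0$ as a parametrized metric $G'_{\sigma_k}(\vec t, s)$ on $M_{t\sigma_k}$: since a positive-definite symmetric endomorphism $\mathcal{A}_0(s, \vec t)$ of $T M_{t\sigma_k}$ determines a new inner product whose raising/lowering operator is $\mathcal{A}_0(s,\vec t)$ applied to the original, the defining equation for $\Phi_{\mathcal{A}_0}(\vec t, u) = 0$ becomes exactly the parametrized gradient flow equation of $H_{\sigma_k}$ with respect to $G'_{\sigma_k}$. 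This identifies $\Phi_{\mathcal{A}_0}^{-1}(0)$ with $\mathcal{M}(H_{\sigma_k}; x, y)$, so once the axioms of Definition \ref{khtpy} are checked for $G'_{\sigma_k}$, the corollary follows.

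The next step is to verify those axioms for $G'_{\sigma_k}$. The regularity, extension, double-extension, trivial-extension, and compatibility-with-$\mathfrak{H}^1$ axioms follow formally from the corresponding axioms for the fixed $H_{\sigma_k}$ together with generic choices at each step of an inductive construction. The two substantive axioms are stability at the ends and the inward direction condition. For the former, I would invoke item (iii) in the proof of Proposition \ref{transprop}: surjectivity of the universal linearization was achieved using perturbations $B$ localized near points $s_0$ with $|s_0| < R_{\sigma_k}$ and similarly in the interior of $[0,1]^{k-1}$. Hence one may restrict to the open subset of $Ban$ consisting of those $\mathcal{A}$ that equal the identity outside the region $|s| \leq R_{\sigma_k}$ and outside a neighborhood of the interior of $[0,1]^{k-1}$; genericity persists on this subset, so $G'_{\sigma_k}$ inherits the stability-at-the-ends and stability-near-the-faces axioms.

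For the inward direction condition, note that along $\partial M_{t\sigma_k}$ the Morse function $H_{\sigma_k}$ is fixed and its gradient with respect to the original $G_{\sigma_k}$ points inward by hypothesis. Since the inward direction condition is open in the space of metrics, a sufficiently small further perturbation of $G'_{\sigma_k}$, supported in a collar neighborhood of $\partial M_{t\sigma_k}$ in the interior, can restore the inward condition without disturbing the already-achieved transversality (which is itself open and dense). The same argument applied to each extension $\overline{G}^l_{\sigma_k}$ handles the boundary conditions on the larger pieces $M_l$ for $l \geq t\sigma_k$.

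The main obstacle is bookkeeping: one must perform the local adjustments (near $\partial M_{t\sigma_k}$, near the ends $|s|>R_{\sigma_k}$, and near the faces of $[0,1]^{k-1}$) in a coordinated way that simultaneously respects all axioms and the extension hierarchy from $G_{\sigma_k}$ to $\overline{G}^l_{\sigma_k}$, and that does not destroy the genericity from Theorem \ref{paramtrasv}. This is handled by choosing the supports of the perturbations to be disjoint from the regions exploited in step (iii) of Proposition \ref{transprop}, so that each adjustment takes place in an open set where the surjectivity requirement is already vacuous, and by noting (as in Remark \ref{rmk56}) that the whole construction is compatible with the inductive procedure on simplices used in Section 9.
\end{proof-sketch}
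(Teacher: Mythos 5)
Your proposal matches the paper's argument: the paper treats Corollary \ref{givenk} as a direct consequence of Theorem \ref{paramtrasv} after repackaging the generic $\mathcal{A}$ as a parametrized metric, with the only substantive remaining work being the verification of the stability-at-the-ends and inward-direction axioms of Definition \ref{khtpy}. The paper (like you) handles the former via item (iii) in the proof of Proposition \ref{transprop} (perturbations supported in $|s| < R_{\sigma_k}$ suffice for surjectivity, so genericity persists on the constrained subspace) and the latter by a local change of the metric near the boundary; your write-up is a more explicit spelling out of the same sketch.
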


\begin{remark}
We will see in Section 11 that there exists a special type of parametrized homotopy data $\bigl\{(\mathcal{H}_{\sigma}, \mathcal{G}_{\sigma})\bigr\}$ with respect to which we obtain some algebraic structure.
\end{remark}

For a given pair $(H_{\sigma_k}, G_{\sigma_k})$ with $\sigma_k \in N(\mathcal{I})_k, \ k \geq 1$ which is chosen as in
Corollary \ref{givenk}, we consider the following linear map

\begin{equation}\nonumber
\begin{split}
\overline{\varphi}_{{\sigma_k}} : MC_*(M_{t\sigma_k}, \overline{h}^{t\sigma_k}_{s\sigma_k}) \rightarrow MC_*(M_{t\sigma_k}, h_{t\sigma_k}),\\
x \mapsto \sum\limits_{|x| + |y| +k -1 =0} \#_2 \mathcal{M}(H_{\sigma_k}; x, y) \cdot y,
\end{split}
\end{equation}
which is well-defined again by Corollary \ref{givenk}.

We also consider the obvious inclusion map:
\begin{equation}\nonumber
\iota_{\sigma_k} : MC_*(h_{s \sigma_k}) \hookrightarrow MC_*(\overline{h}^{t\sigma_k}_{s\sigma_k}).
\end{equation}

\begin{lemma}\label{lem2} We have
\begin{enumerate}[label=(\roman*)]
\item $\iota_{\sigma_k} = \iota_{\partial_i \sigma_k}$ for all $i,$
\item $\iota_{\sigma_k}$ is a chain map.
\end{enumerate}
\end{lemma}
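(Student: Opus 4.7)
The plan is to treat part (i) as essentially tautological from the definition of $\partial_i$, and to deduce part (ii) by repeating the argument already used for the corollary establishing that $\iota_f$ is a chain map in the previous section.

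For part (i), I would begin by noting that, in the convention established in the introduction, $\partial_i \sigma_k$ is obtained from $\sigma_k = (f_1, \dots, f_k)$ by composing the $i$-th and $(i+1)$-th morphisms. Such an inner face operation preserves the initial source and the terminal target, so $s(\partial_i \sigma_k) = s\sigma_k$ and $t(\partial_i \sigma_k) = t\sigma_k$. Consequently the extension $\overline{h}^{t \partial_i \sigma_k}_{s \partial_i \sigma_k}$ is literally the same Morse function as $\overline{h}^{t\sigma_k}_{s\sigma_k}$, and hence $\iota_{\sigma_k}$ and $\iota_{\partial_i \sigma_k}$ coincide as the obvious inclusion $MC_*(h_{s\sigma_k}) \hookrightarrow MC_*(\overline{h}^{t\sigma_k}_{s\sigma_k})$ sending $x$ to $x$. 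No analysis is required.

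For part (ii), I would invoke the same reasoning as in the earlier corollary showing that $\iota_f$ is a chain map. Concretely, for $x \in Crit(h_{s\sigma_k})$ one must show that the Morse differential of $\overline{h}^{t\sigma_k}_{s\sigma_k}$ applied to $x$ (regarded as a critical point of the extension) coincides with the image under $\iota_{\sigma_k}$ of the Morse differential of $h_{s\sigma_k}$ applied to $x$. The key point is that $\overline{h}^{t\sigma_k}_{s\sigma_k}|_{M_{s\sigma_k}} \equiv h_{s\sigma_k}$ and the corresponding metrics agree on $M_{s\sigma_k}$, so the gradient vector fields of the two functions coincide there. Since $-\nabla h_{s\sigma_k}$ is inward-pointing along $\partial M_{s\sigma_k}$, any trajectory of $-\nabla \overline{h}^{t\sigma_k}_{s\sigma_k}$ starting at a critical point lying in the interior of $M_{s\sigma_k}$ is trapped in $M_{s\sigma_k}$, and in particular cannot terminate at a critical point of $\overline{h}^{t\sigma_k}_{s\sigma_k}$ sitting outside $M_{s\sigma_k}$. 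Comparing the boundary counts $\#_2 \overline{\mathcal{M}}$ on the two sides then yields $\iota_{\sigma_k} \circ \partial = \partial \circ \iota_{\sigma_k}$.

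I expect no real obstacle: the lemma amounts to a careful unpacking of definitions combined with the inward direction axiom and the compatibility of the extensions already built into the exhaustion data. Its importance is not in depth but in later use, since it will allow the compositions $\overline{\varphi}_{\sigma_k} \circ \iota_{\sigma_k}$ defining $\varphi_{\sigma_k}$ to be manipulated coherently when verifying the higher relations (\ref{hceq}).
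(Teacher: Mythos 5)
Your proposal is correct and follows essentially the same route as the paper: the paper's one-line proof cites precisely "the definition of $\iota_\bullet$" (which gives (i), since the inner faces $\partial_i$ for $1 \le i \le k-1$ preserve $s\sigma_k$ and $t\sigma_k$) and "the inward direction condition" (which gives (ii), by the trajectory-trapping argument you spell out). You have simply unpacked the same two ingredients in more detail, matching both the earlier corollary for $\iota_f$ and the paper's intended argument.
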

\begin{proof}
Both (i) and (ii) follow from the definition of $\iota_{\bullet}$ and the inward direction condition put on $- \nabla_{g_{s\sigma_k}} h_{s\sigma_k}.$
\end{proof}

\begin{definition}

The \textit{higher continuation map}
\begin{equation}\nonumber
\varphi_{\sigma_k} :  MC_*(M_{s\sigma_k}, {h}_{s\sigma_k})  \rightarrow MC_*(M_{t\sigma_k}, h_{t\sigma_k}),
\end{equation}
associated to a pair $({H}_{\sigma_k}, G_{\sigma_k})$ is defined by the composition : $\varphi_{\sigma_k} : = \overline{\varphi}_{\sigma_k} \circ \iota_{\sigma_k}.$
\end{definition}

\section{Parametrized gluing constructions}

In this section, we begin our discussion of gluing constructions with extra parameters. We consider corresponding concatenations, pregluings, and their linear versions.

\subsection{Concatenations of homotopies}

Suppose that we are given two parametrized Morse homotopies $(H_{\sigma_k}, G_{\sigma_k})$ and $(H_{\sigma_l}, G_{\sigma_l})$ with $t \sigma_k= s \sigma_l.$ We define their \textit{concatenation} for $\rho> \max \{ R_{\sigma_k}, R_{\sigma_l} \},$
\begin{equation}\nonumber
\begin{cases}
H_{\sigma_k} \#_{\rho} H_{\sigma_l} : [0,1]^{k+l-2} \times \mathbb{R} \times M_{t\sigma_l} \rightarrow \mathbb{R},\\
G_{\sigma_k} \#_{\rho} G_{\sigma_l} :  [0,1]^{k+l-2} \times \mathbb{R} \rightarrow Met(M_{t \sigma_l}),
\end{cases}
\end{equation}
by
\begin{equation}\nonumber
\begin{split}
H_{\sigma_k} \#_{\rho} H_{\sigma_l}(t_1, \cdots, t_{k-1}, t'_1, & \cdots, t'_{l-1}, s, \cdot)\\ 
&:=
\begin{cases}
\overline{H}^{t \sigma_l}_{\sigma_k} (t_1, \cdots, t_{k-1}, s + \rho, \cdot) & \text{ if } s < -\rho,\\
\overline{h}^{t \sigma_l}_{t \sigma_k}(\cdot) & \text{ if } -\rho \leq s \leq \rho,\\
H_{\sigma_l} (t'_1, \cdots, t'_{l-1}, s -\rho, \cdot) & \text{ if } s > \rho.\\
\end{cases}
\end{split}
\end{equation}
and
\begin{equation}\nonumber
\begin{split}
G_{\sigma_k} \#_{\rho} G_{\sigma_l}(t_1, \cdots, t_{k-1}, t'_1, & \cdots, t'_{l-1}, s)\\ 
&:=
\begin{cases}
\overline{G}^{t \sigma_l}_{\sigma_k} (t_1, \cdots, t_{k-1}, s + \rho) & \text{ if } s < -\rho,\\
\overline{g}^{t \sigma_l}_{t \sigma_k} & \text{ if } -\rho \leq s \leq \rho,\\
G_{\sigma_l} (t'_1, \cdots, t'_{l-1}, s -\rho, ) & \text{ if } s > \rho.\\
\end{cases}
\end{split}
\end{equation}
(Here $Met(M_{t \sigma_l})$ denotes the set of all Riemannian metrics on $M_{t \sigma_l}.$) These are well-defined by the stability axiom of Definition \ref{khtpy}.

For $m \geq t\sigma_l,$ we require the extension to be given as follows:
\begin{equation}\nonumber
\begin{cases}
\overline{(H_{\sigma_k} \#_{\rho} H_{\sigma_l})}^m := \overline{H_{\sigma_k}}^m \#_{\rho} \overline{H_{\sigma_l}}^m,\\
\overline{(G_{\sigma_k} \#_{\rho} G_{\sigma_l})}^m := \overline{G_{\sigma_k}}^m \#_{\rho} \overline{G_{\sigma_l}}^m.
\end{cases}
\end{equation}

Observe that $H_{\sigma_k} \#_{\rho} H_{\sigma_l}$ itself is a parametrized homotopy. That is, it satisfies the axioms of Definition \ref{khtpy}.
\begin{lemma}
$( H_{\sigma_k} \#_{\rho} H_{\sigma_l}, G_{\sigma_k} \#_{\rho} G_{\sigma_l} ) $ is a $(k+l-1)$-homotopy (in the sense of Notation \ref{not1st}).
\end{lemma}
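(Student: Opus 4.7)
The plan is to go through each axiom of Definition \ref{khtpy} one at a time, showing that $(H_{\sigma_k}\#_\rho H_{\sigma_l}, G_{\sigma_k}\#_\rho G_{\sigma_l})$ is a parametrized homotopy over $[0,1]^{k+l-2}\times\mathbb{R}$, with target $M_{t\sigma_l}$. First, smoothness and well-definedness on the overlaps of the three pieces of the piecewise definition are immediate from the stability-at-the-ends axiom for the individual homotopies together with the assumption $\rho>\max\{R_{\sigma_k},R_{\sigma_l}\}$: both $\overline{H}^{t\sigma_l}_{\sigma_k}(\vec t,s+\rho,\cdot)$ (for $s$ near $-\rho$ from the left) and $H_{\sigma_l}(\vec t\,',s-\rho,\cdot)$ (for $s$ near $\rho$ from the right) equal $\overline h^{t\sigma_l}_{t\sigma_k}$, which matches the middle piece, and similarly for the metric.

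Next I verify the remaining axioms in order. For stability at the ends, taking $R_{\sigma_k\#\sigma_l}:=\rho+\max\{R_{\sigma_k},R_{\sigma_l}\}$ works: for $s>R_{\sigma_k\#\sigma_l}$, the concatenation is $H_{\sigma_l}(\vec t\,',s-\rho,\cdot)=h_{t\sigma_l}$ by stability for $H_{\sigma_l}$, and for $s<-R_{\sigma_k\#\sigma_l}$ it is $\overline{H}^{t\sigma_l}_{\sigma_k}(\vec t,s+\rho,\cdot)=\overline{h}^{t\sigma_l}_{s\sigma_k}$ by stability for $\overline H^{t\sigma_l}_{\sigma_k}$; the analogous statement for $G$ is identical. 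The extension, trivial extension, and double extension axioms are then built in by defining $\overline{(H_{\sigma_k}\#_\rho H_{\sigma_l})}^m:=\overline{H}^m_{\sigma_k}\#_\rho \overline{H}^m_{\sigma_l}$ (and similarly for $G$), and observing that these identities propagate directly from the corresponding identities satisfied by $H_{\sigma_k}$ and $H_{\sigma_l}$ separately, with stability for extensions following by the same argument as for stability at the ends.

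For stability near the faces of $[0,1]^{k+l-2}$, note that the parameter cube splits as $[0,1]^{k-1}\times[0,1]^{l-1}$, and each face lies in one of the two factors; near such a face, the corresponding factor is constant along its normal by the stability-near-faces axiom for $H_{\sigma_k}$ or $H_{\sigma_l}$, and this constancy is preserved by concatenation. The inward direction condition splits according to the three regions: on $-\rho\le s\le\rho$ the function equals $\overline h^{t\sigma_l}_{t\sigma_k}$ with metric $\overline g^{t\sigma_l}_{t\sigma_k}$, whose gradient is inward-pointing along $\partial M_{t\sigma_l}$ by the assumption on the exhaustion; on $s>\rho$ and $s<-\rho$ the condition reduces to the inward direction axiom for $H_{\sigma_l}$ and $\overline{H}^{t\sigma_l}_{\sigma_k}$, respectively, and exactly the same argument handles each extension $\overline{H}^m_{\sigma_k}\#_\rho \overline{H}^m_{\sigma_l}$.

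The step I expect to require the most care is regularity. For any $(\vec t,\vec t\,')$ and any $p$ with $\nabla H(\vec t,\vec t\,',s,p)=0$ for all $s$, the critical set lies either entirely in the $s<-\rho$ region (where the local structure is that of $\overline{H}^{t\sigma_l}_{\sigma_k}$ translated), entirely in the $s>\rho$ region (giving $H_{\sigma_l}$ translated), or in the middle plateau where the function is $s$-independent and equal to $\overline h^{t\sigma_l}_{t\sigma_k}$; in each case the surjectivity of $\tfrac{\partial}{\partial s}+H^2H(\cdot,p)$ on $W^{1,2}\to L^2$ is inherited from the regularity hypothesis for the constituent homotopy (in the middle case, from the Morse nondegeneracy of $\overline h^{t\sigma_l}_{t\sigma_k}$, since the asymptotic Hessians coincide and the autonomous operator is surjective when its limits are nondegenerate). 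Combining all the above, every axiom of Definition \ref{khtpy} (in the sense of Notation \ref{not1st}) is satisfied, so $(H_{\sigma_k}\#_\rho H_{\sigma_l},G_{\sigma_k}\#_\rho G_{\sigma_l})$ is a $(k+l-1)$-homotopy.
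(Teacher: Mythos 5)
The paper states this lemma as an observation (``Observe that $H_{\sigma_k} \#_\rho H_{\sigma_l}$ itself is a parametrized homotopy'') without giving a proof, so your task is essentially to supply the routine axiom check. Your treatment of stability at the ends, extensions, stability near the faces, and the inward direction condition matches what the paper implicitly has in mind and is correct, and you correctly identify that the stability axiom is what makes the three regions of the concatenation patch together smoothly (the paper's displayed three-piece formula appears to carry a typo in the $s$-shift; your reading aligns with the two-piece formula used in the proof of Proposition \ref{invest}).

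The regularity step, which you rightly flag as the delicate one, has a genuine gap. First, the sentence ``the critical set lies either entirely in the $s<-\rho$ region, entirely in the $s>\rho$ region, or in the middle plateau'' does not parse: the hypothesis is that a fixed point $p$ satisfies $\nabla H(\vec t,\vec t',s,p)=0$ for \emph{all} $s$, so there is no region-confined critical set to speak of; what varies with $s$ is the Hessian, not the critical point. Second, and more substantively, the regularity axiom asks for surjectivity of the single full-line operator $\tfrac{\partial}{\partial s}+A(s)$ with $A(s)$ the glued Hessian path, and this is a global property that cannot be checked region-by-region. Concluding it from surjectivity of the two constituent operators and nondegeneracy of the middle Hessian requires a linear gluing argument of the type in Theorem \ref{lsurj}, which in turn holds only for $\rho$ sufficiently large. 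Finally, a constant critical point of the concatenation is only known to be critical for $\overline{H}^{t\sigma_l}_{\sigma_k}(\vec t,r,\cdot)$ for $r\le 0$ and $r\ge R_{\sigma_k}$ (and similarly for $H_{\sigma_l}$), not for all $r$, so the regularity hypothesis on the constituents does not directly apply to $p$; one either needs to argue that $p$ is in fact constantly critical for each constituent, or treat the concatenated Hessian path on its own terms. None of these points is fatal to the lemma, but they are the actual content of the regularity verification, and your paragraph as written glosses over all three.
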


\begin{notation}
$\overline{H}^{t\sigma_l}_{\sigma_k} \#_{\rho} H_{\sigma_l}$ may be a more appropriate notation than $H_{\sigma_k} \#_{\rho} H_{\sigma_l},$ but we keep it for simplicity whenever no confusion arises.
\end{notation}

\subsection{Pregluings and its linear versions}

We introduce our notations:

\begin{notation}\label{not1to5}
\begin{enumerate}[label = (\roman*)]

\item We write $u_{\rho}(\cdot)$ for $u(\cdot + \rho) $ and similarly for $v_{-\rho}.$

\item $\alpha_u \in C^{\infty}(\overline{\mathbb{R}}, M_{t \sigma_k}), \ \alpha_v \in C^{\infty}(\overline{\mathbb{R}}, M_{t \sigma_l})$ are smooth maps that satisfy $\alpha_u(-\infty) = x, \ \alpha_u(\infty) = y = \alpha_v(-\infty), \ \alpha_v(\infty) = z,$ and $u$ and $v$ are in the Banach coordinate near $\alpha_u$ and $\alpha_v,$ that is, they are in the images of the exponential maps evaluated at $\alpha_u$ and $\alpha_v,$ respectively, for some vector fields. $\alpha_u$ and $\alpha_v$ can be chosen so that they stabilize near the ends, where $\alpha_u \#_{\rho} \alpha_v$ denotes the concatenation that is well-defined and smooth by this stabilization condition. 

\item  $\beta^{-}, \ \beta^{+} : \mathbb{R} \rightarrow [0,1]$ are smooth cut-off functions given by 
\begin{equation}\nonumber
\begin{split}
\beta^{-}(s) =  
\begin{cases}
1 \text{ if } s < 0,\\
0 \text{ if } s > 1,\\
\end{cases}
\beta^{+}(s) =  
\begin{cases}
0 \text{ if } s < 0,\\
1 \text{ if }s > 1.\\
\end{cases}
\end{split}
\end{equation}
Let $\beta : \mathbb{R} \rightarrow [0,1]$ be a smooth cut-off function given by
\begin{equation}\nonumber
\beta(s) = \begin{cases}
1 &\text{ if } |s| \leq \frac{1}{2},\\
0 &\text{ if } |s| \geq 1.
\end{cases}
\end{equation}
We write $\beta_{\rho}(s)$ for $\beta(s+ \rho).$

\end{enumerate}
\end{notation}

\subsubsection*{Pregluings} For a given $(\vec{t}, u) \in \mathcal{M}(H_{\sigma_k}; x, y), \ (\vec{t}', v) \in \mathcal{M}(H_{\sigma_l}; y, z),$ and for $\rho \geq \max \{ R_{\sigma_k}, R_{\sigma_l} \},$ we define their \textit{pregluing} by
\begin{equation}\nonumber\nonumber
(\vec{t}, u) \#^o_{\rho} (\vec{t}', v) := \big(  \vec{t}, \vec{t'}, u \#_{\rho}^o v \big),
\end{equation}
where $u \#^o_{\rho} v : \overline{\mathbb{R}} \rightarrow M_{t \sigma_l}$ is an element of $\mathcal{P}^{1,2}(x,z)$ for $M_{ t \sigma_l}$ given by
 \begin{equation}\nonumber\nonumber
u \#_{\rho}^o v (s): = 
\begin{cases}
u(s+\rho) & \text{if } s < -\max \{ R_{\sigma_k}, R_{\sigma_l} \},\\
\exp_{\alpha_u \#_{\rho} \alpha_v} \big[\beta^{-}\exp^{-1}_{\alpha_u} (u^{}_{\rho}) + \beta^{+} \exp^{-1}_{\alpha_v}(v_{-\rho})&\big](s) \\ & \text{if } |s| \leq \max \{ R_{\sigma_k}, R_{\sigma_l} \},\\
v (s-\rho) & \text{if } s > \max \{ R_{\sigma_k}, R_{\sigma_l} \}.
\end{cases}
\end{equation}

\subsubsection*{The linear version of pregluings} For $\xi \in W^{1,2}(u^*TM)$ and $ \zeta \in W^{1,2}(v^*TM),$ we define their \textit{linear pregluing} as follows
\begin{equation}\nonumber
(\xi \#_{\rho} \zeta)(s):= \begin{cases}
\xi(s +\rho) &\text{ if } s \leq -1,\\
\nabla_2 \exp_{\exp^{-1}_y(u \#^o_{\rho}v),} [\beta^{-} \nabla_2 \exp_{\exp^{-1}_y(u_{\rho})}^{-1} \xi_{\rho}& \\ \quad \quad + \beta^{+} \nabla_2 \exp_{\exp^{-1}_y(v_{-\rho})}^{-1} \zeta_{-\rho}](s) &\text{ if } -1 \leq s \leq 1,\\
\zeta(s - \rho) &\text{ if } s \geq 1.\\
\end{cases}
\end{equation}
Here $\nabla_2 \exp$ is considered at $\exp^{-1}_y \big(u \#^o_{\rho}v(s)\big),$ $\exp^{-1}_y\big(u(s + \rho)\big),$ and $\exp^{-1}_y\big(v( s -\rho)\big),$ respectively. 

We extend this to the parametrized case:
for $(\vec{\tau}, \xi) \in \mathbb{R}^{k-1} \times W^{1,2}(u^*TM_{t \sigma_k}),$ and $(\vec{\tau}', \zeta) \in \mathbb{R}^{l-1} \times W^{1,2}(v^*TM_{t \sigma_l}),$ we define their \textit{parametrized linear pregluing} as follows:
\begin{equation}\nonumber
(\vec{\tau}, \xi)\#^o_{\rho}(\vec{\tau}', \zeta)(s) := \big(\vec{\tau}, \vec{\tau}',(\xi \#_{\rho} \zeta)(s)\big).
\end{equation}

For $\gamma \in C^{\infty}_{x,z} \subset \mathcal{P}^{1,2}(x,z),$ $(\vec{\tau}_1, \xi_1), (\vec{\tau}_2, \xi_2) \in \mathbb{R}^{k-1} \times W^{1,2}(\gamma^*TM),$ we define the {inner product} between them by
\begin{equation}\nonumber
\langle (\vec{\tau}_1, \xi_1),(\vec{\tau}_2, \xi_2) \rangle^{0,2}_{\gamma} := \langle \xi_1, \xi_2 \rangle_{\gamma}^{0,2} + \vec{\tau}_1 \cdot \vec{\tau}_2,
\end{equation}
where $\langle, \rangle$ is the Riemannian metric on $M.$
Also, we define the {$W^{1,2}$-norm:}
\begin{equation}\nonumber
\langle(\vec{\tau}_1, \xi_1),(\vec{\tau}_2, \xi_2)\rangle_{\gamma}^{1,2} := \langle \nabla_s \xi, \nabla_s \xi \rangle_{\gamma}^{0,2} + \langle (\vec{\tau}_1, \xi_1), (\vec{\tau}_2, \xi_2) \rangle_{\gamma}^{0,2}
\end{equation}
The metrics $\| \cdot \|^{0,2}, \| \cdot \|^{1,2}$ are defined using these norms.

\subsubsection*{Gluing theorem} Let $\sigma_k \in N(\mathcal{I})_k$ and $\sigma_l \in N(\mathcal{I})_l$ be two simplices such that $t \sigma_k = s \sigma_l.$ We assume that we are given $(k-1)$- and $(l-1)$-homotopies, $(H_{\sigma_k}, G_{\sigma_k})$ and $(H_{\sigma_l}, G_{\sigma_l}),$ respectively. For critical points 
\begin{equation}\label{gtst}
x \in Crit(h_{s \sigma_k}) \subset Crit(\overline{h}^{t \sigma_{l}}_{s \sigma_k}), \ \
y \in  Crit(h_{t \sigma_k}) \subset Crit(\overline{h}^{t \sigma_{l}}_{t \sigma_k}), \text{and} \ \
z \in Crit(h_{t \sigma_l})
\end{equation}
with $|x| - |y| +k -1 =0$ and $|y| - |z| + l -1 =0,$ we consider the parametrized moduli space of trajectories $\mathcal{M}(H_{\sigma_k}; x, y) $ and $\mathcal{M}(H_{\sigma_l}; y,z),$ respectively. 

\begin{lemma}
For all $x \in Crit(h_{s\sigma_k}),$ $y \in Crit(h_{t \sigma_k})$, and $l \geq t \sigma_k,$ we have an isomorphism 
\begin{equation}\nonumber
\mathcal{M}(\overline{H}^l_{\sigma_k}; x,y) \simeq \mathcal{M}(H_{\sigma_k}; x, y).
\end{equation}
\end{lemma}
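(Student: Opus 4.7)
The plan is to mirror the argument of Lemma \ref{lem411} in the parametrized setting. The argument relies on two ingredients already encoded in Definition \ref{khtpy}: (i) the extension $\overline{H}^l_{\sigma_k}$ restricts to $H_{\sigma_k}$ on $[0,1]^{k-1}\times\mathbb{R}\times M_{t\sigma_k}$ (and similarly for the metric), which is the defining property of an extension and is consistent with the trivial-extension and double-extension axioms; and (ii) the inward-direction condition for $-\nabla_{G_{\sigma_k}^{\vec{t}}} H_{\sigma_k}^{\vec{t}}$ along $\partial M_{t\sigma_k}$, for every $\vec{t}\in [0,1]^{k-1}$ and every $s\in\mathbb{R}$.

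First, I would define a map
\begin{equation}\nonumber
\Psi: \mathcal{M}(H_{\sigma_k}; x,y) \longrightarrow \mathcal{M}(\overline{H}^l_{\sigma_k}; x,y), \qquad (\vec{t},u) \longmapsto (\vec{t}, \iota\circ u),
\end{equation}
where $\iota: M_{t\sigma_k}\hookrightarrow M_l$ is the inclusion. Well-definedness follows from (i): the parametrized gradient flow equation for $\overline{H}^l_{\sigma_k}$ at points of $M_{t\sigma_k}$ coincides with the one for $H_{\sigma_k}$, so the image of a solution is again a solution. Injectivity is immediate from $\iota$ being an inclusion.

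For surjectivity, take $(\vec{t},u)\in\mathcal{M}(\overline{H}^l_{\sigma_k}; x,y)$; the task is to prove $u(\overline{\mathbb{R}})\subset M_{t\sigma_k}$. By the exhaustion axioms $M_{s\sigma_k}\subset\mathrm{int}(M_{t\sigma_k})$, so $x,y\in\mathrm{int}(M_{t\sigma_k})$. Suppose, for contradiction, that the image of $u$ leaves $M_{t\sigma_k}$, and set
\begin{equation}\nonumber
s_1 := \inf\{ s\in\mathbb{R} : u(s)\notin M_{t\sigma_k}\}.
\end{equation}
Because $u(-\infty)=x$ lies in the interior, $s_1$ is finite; continuity of $u$ forces $u(s_1)\in\partial M_{t\sigma_k}$, and $u(s)\in\mathrm{int}(M_{t\sigma_k})$ for all $s<s_1$ sufficiently close to $s_1$. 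Consequently $\dot u(s_1)$ has non-negative outward component relative to $M_{t\sigma_k}$. However, the flow equation at $s_1$, together with (i), gives $\dot u(s_1)$ as a positive multiple of $-\nabla_{G_{\sigma_k}^{\vec{t}}}H_{\sigma_k}^{\vec{t}}(s_1,u(s_1))$, which by (ii) points \emph{strictly} inward, a contradiction.

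The main obstacle is making the barrier argument above genuinely rigorous: one must rule out a tangential contact of $u$ with $\partial M_{t\sigma_k}$ where the outward component of $\dot u(s_1)$ is merely zero. This is handled by replacing the boundary with the level set of a defining function $\rho$ of $\partial M_{t\sigma_k}$ with $d\rho(n)<0$ for the outward normal $n$; the inward direction condition is the strict inequality $d\rho(-\nabla_{G_{\sigma_k}^{\vec{t}}}H_{\sigma_k}^{\vec{t}})>0$ on $\partial M_{t\sigma_k}$, which by compactness of $\partial M_{t\sigma_k}\times [0,1]^{k-1}\times[-R_{\sigma_k},R_{\sigma_k}]$ and the stability at the ends holds uniformly. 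Then $\tfrac{d}{ds}(\rho\circ u)(s_1)>0$, contradicting the fact that $\rho\circ u$ has a minimum (actually a boundary crossing) at $s_1$. With this in hand, $\Psi$ is bijective, completing the proof.
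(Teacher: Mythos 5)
Your proof is correct and follows the same line of reasoning as the paper, which simply notes (by reference to Lemma \ref{lem411}) that the inward direction condition prevents any trajectory of $\overline{H}^l_{\sigma_k}$ from escaping $M_{t\sigma_k}$; you have merely made the barrier argument explicit via the defining function $\rho$, which is a reasonable thing to do given that the paper's own proof is a one-liner.
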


\begin{proof}
The inward direction condition implies the assertion of the lemma, which is similar to Lemma \ref{lem411} 
\end{proof}

We now state one of our main theorems which we will prove in Sections 7 and 8.
\begin{theorem}[Gluing theorem]\label{gluthm}
For each $\rho> \max \{ R_{\sigma_k}, R_{\sigma_l} \},$ there exist $\rho_{\sigma_k, \sigma_l, 0} > 0$ and a map,
\begin{equation}\nonumber
\chi_{\rho} : \mathcal{M}(H_{\sigma_{k}}; x, y) \times \mathcal{M}(H_{\sigma_l}; y, z) \rightarrow \mathcal{M}(H_{\sigma_{k}} \#_{\rho} H_{\sigma_l} ; x,z)
\end{equation}
such that for every $\rho > \rho_{\sigma_k, \sigma_l, 0},$ $\chi_{\rho}$ is a bijection of finite sets.
\end{theorem}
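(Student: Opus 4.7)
The plan is to follow the standard Morse–theoretic gluing strategy of constructing $\chi_\rho$ by a pregluing-plus-correction procedure, and then verifying bijectivity by compactness. Given $\big((\vec{t}, u), (\vec{t}', v)\big) \in \mathcal{M}(H_{\sigma_k}; x, y) \times \mathcal{M}(H_{\sigma_l}; y, z)$, form the pregluing $(\vec{t}, u) \#^o_\rho (\vec{t}', v) \in [0,1]^{k+l-2} \times \mathcal{P}^{1,2}(x, z)$ as in the definition given in the text. By the stability-at-the-ends axiom of Definition \ref{khtpy} and the inward direction condition, the pregluing coincides with $u(\cdot + \rho)$ on $s < -\rho$, with $\overline{h}_{t\sigma_k}^{t\sigma_l}$-equilibrium behavior on the neck $|s| \leq \rho$, and with $v(\cdot - \rho)$ on $s > \rho$, so the value of the nonlinear map $F_{\sigma_k \#_\rho \sigma_l}$ at the pregluing is supported in two small windows near $s = \pm \rho$ and decays like $e^{-c\rho}$ for some $c > 0$ depending on the spectral gaps of $H^2 h_{\bullet}$ at the critical points.

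I would then invoke the Banach space implicit function theorem in the space $\mathbb{R}^{k+l-2} \times W^{1,2}\big((u \#^o_\rho v)^* TM_{t\sigma_l}\big)$ to correct the pregluing into a true solution. This requires three ingredients: (a) a uniform (in $\rho$) right inverse $Q_\rho$ of the linearized operator $dF_{\sigma_k \#_\rho \sigma_l}$ at the pregluing, constructed by gluing the right inverses of $dF_{\sigma_k}(\vec{t}, u)$ and $dF_{\sigma_l}(\vec{t}', v)$ (which exist and are bounded since the zero-dimensional moduli spaces $\mathcal{M}(H_{\sigma_k};x,y)$ and $\mathcal{M}(H_{\sigma_l};y,z)$ are transversely cut out, hence the Fredholm-index-zero linearizations are bijective) via the linear pregluing operation and cut-off functions $\beta^\pm$; (b) a quadratic estimate of the form $\|F_{\sigma_k \#_\rho \sigma_l}(\eta_1) - F_{\sigma_k \#_\rho \sigma_l}(\eta_2) - dF_{\sigma_k \#_\rho \sigma_l}(\eta_1 - \eta_2)\| \leq C\|\eta_1 - \eta_2\|(\|\eta_1\| + \|\eta_2\|)$, which follows from the smoothness of the flow vector field; (c) the estimate $\|F_{\sigma_k \#_\rho \sigma_l}\big((\vec{t}, \vec{t}', 0)\big)\| = O(e^{-c\rho})$ mentioned above. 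For $\rho$ larger than some $\rho_{\sigma_k, \sigma_l, 0}$, Newton iteration starting at the pregluing converges to a unique small correction $\eta_\rho \in \text{im}(Q_\rho)$, and we define $\chi_\rho\big((\vec{t}, u), (\vec{t}', v)\big)$ to be the resulting element of $\mathcal{M}(H_{\sigma_k} \#_\rho H_{\sigma_l}; x, z)$.

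Injectivity of $\chi_\rho$ is immediate: distinct pregluings lie in a uniformly bounded neighborhood only of themselves (for large $\rho$, the $\vec{t}$-coordinates and the behavior on $s \ll -\rho$ or $s \gg \rho$ already separate them), and the Newton correction is uniquely determined by the pregluing inside a ball whose radius can be taken independent of $\rho$. For surjectivity, I would use compactness up to broken trajectories: any element of $\mathcal{M}(H_{\sigma_k} \#_\rho H_{\sigma_l}; x, z)$ obtained by varying $\rho$ gives, in the limit $\rho \to \infty$, a broken trajectory from $x$ to $z$ via some intermediate critical point in $Crit(\overline{h}^{t\sigma_l}_{t\sigma_k})$; the inward direction condition and the index matching $|x|-|y|+k-1 = 0$ and $|y|-|z|+l-1 = 0$ force the break to occur at $y$ and the pieces to live in $\mathcal{M}(H_{\sigma_k}; x, y)$ and $\mathcal{M}(H_{\sigma_l}; y, z)$. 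Combined with the uniqueness in the implicit function step, this identifies every solution for large $\rho$ with one produced by $\chi_\rho$. Finally, $\mathcal{M}(H_{\sigma_k} \#_\rho H_{\sigma_l}; x, z)$ is finite for large $\rho$ by the same compactness argument applied to the fixed parameter $\rho$.

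The main obstacle will be step (a): constructing the uniform right inverse $Q_\rho$ in the parametrized setting. The extra cube directions $[0,1]^{k-1} \times [0,1]^{l-1}$ mean we must glue not only vector-field right inverses but also keep track of the derivatives $dF_{1,i}(\vec{t}, u)$ from (\ref{df1df2}) that couple to the $\vec{\tau}$-directions. The delicate point is that the $(k+l-2)$-dimensional parameter space of the glued homotopy is a product, so the linearization of $F_{\sigma_k \#_\rho \sigma_l}$ in parameter directions decouples into contributions supported on the two pieces of the neck, and a cut-off construction of $Q_\rho$ using $\beta^{\pm}$ together with the linear pregluing operation will produce an approximate inverse whose error is $O(e^{-c\rho})$ in operator norm; the true inverse is then obtained by a Neumann series. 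This is a direct parametric extension of [Sch, Section 2.5], with the $\vec{t}$-directions treated as additional free parameters whose contributions to the linearization are themselves stabilized near the ends by the stability-near-the-faces axiom.
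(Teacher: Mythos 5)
Your overall architecture — pregluing, exponential decay of the error term, a Banach-space Newton/implicit-function step via a uniform right inverse, and compactness plus uniqueness for bijectivity — is the same as the paper's, and the proposal would go through. The genuine difference is in how you obtain the uniform right inverse of $\mathcal{L}_\rho$. You propose the explicit route: preglue the (bijective, index-zero) inverses of $dF_{\sigma_k}(\vec{t},u)$ and $dF_{\sigma_l}(\vec{t}',v)$ with cut-offs $\beta^\pm$ to get an approximate inverse with error $O(e^{-c\rho})$, then pass to the true right inverse by a Neumann series. The paper instead follows the Schwarz/Audin--Damian route: define $W_\rho$ as the pregluings of kernel elements, prove the uniform lower bound $\|\mathcal{L}_\rho(\vec{\tau},Y)\| \ge C\big(\|Y\|_{W^{1,2}} + \|\vec{\tau}\|\big)$ on $\mathbb{R}^{k+l-2}\times W_\rho^\perp$ by a compactness/contradiction argument (Proposition \ref{invest}), establish surjectivity of $\mathcal{L}_\rho$ and that the preglued projection $\phi_\rho$ is an isomorphism (Theorem \ref{lsurj}), and then take $\mathcal{G}_\rho := \mathcal{L}_\rho|_{\mathbb{R}^{k+l-2}\times W_\rho^\perp}^{-1}$ directly. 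Your approach gives more quantitative control of the right inverse (and is the one more common in the pseudoholomorphic-curve literature), while the paper's avoids tracking the explicit approximate inverse and its error, at the cost of a sequence argument. Note that your attribution of the cut-off-plus-Neumann-series construction to [Sch, Section 2.5] is off: Schwarz's Proposition 2.69 is precisely the contradiction/compactness estimate the paper generalizes, not the explicit construction. Two small imprecisions that do not affect the argument: the error $F(\text{pregluing})$ is supported in the fixed interval $|s| \le \max\{R_{\sigma_k}, R_{\sigma_l}\}$ (the decay in $\rho$ comes from $u(s+\rho), v(s-\rho)$ being exponentially close to $y$ there), not in windows near $s = \pm\rho$; and the localization of the parameter derivatives $\partial \widetilde{H}/\partial t_i$ in $s$ is a consequence of the stability-at-the-ends axiom, not the stability-near-the-faces axiom (the latter concerns the cube boundary).
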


For $(\vec{t}, u) \in \mathcal{M}(H_{\sigma_k}; x, y),$ and $(\vec{t'}, v) \in \mathcal{M}(H_{\sigma_l}; y ,z),$ we consider their pregluing for $\rho > \rho_{\sigma_k, \sigma_l, 0}$ by $(\vec{t}, u) \#^o_{\rho} (\vec{t}', v) : = (\vec{t}, \vec{t}', u \#^o_{\rho}{v}).$ Here we denote $w_{\rho}:= u \#_{\rho}^o v,$ and trivialize the vector bundle $w_{\rho}^*TM_{t \sigma_l}$ over $\mathbb{R}$ as $w_{\rho}^*TM_{t \sigma_l} \simeq \mathbb{R} \times \mathbb{R}^n,$ where the right hand side is the trivial rank $n$ vector bundle over $\mathbb{R}.$ Fixing such a trivialization, we can identify the Banach spaces: 
\begin{equation}\nonumber
W^{1,2}(w_{\rho}^*TM_{t\sigma_l}) \simeq W^{1,2}(\mathbb{R}, \mathbb{R}^n), \ L^{2}(w_{\rho}^*TM_{t \sigma_l}) \simeq L^{2}(\mathbb{R}, \mathbb{R}^n).
\end{equation}  

For a small neighborhood around the zero section
\begin{equation}\nonumber
B \subset W^{1,2}(\mathbb{R}, \mathbb{R}^n) \ \big(\simeq W^{1,2}(w_{\rho}^*TM_{t \sigma_l})\big),
\end{equation}
we define an operator 
\begin{equation}\nonumber
\mathcal{F}_{\rho} : [0,1]^{k+l -2} \times B \rightarrow L^{2}(\mathbb{R}, \mathbb{R}^n) \ \big(\simeq L^2(w_{\rho}^*TM_{t\sigma_l})\big)
\end{equation}
by 
\begin{equation}\nonumber
\mathcal{F}_{\rho}(\vec{t}, \vec{t}', Y) := \Big( \frac{d}{ds} + \nabla_{G^{\vec{t}}_{\sigma_k} \#_{\rho} G^{\vec{t}}_{\sigma_l}(s + \rho)} \big( H^{\vec{t}}_{\sigma_k} \#_{\rho} H^{\vec{t}'}_{\sigma_l}(s+ \rho) \big) \Big)\big(\exp_{w_{\rho}}(Y)\big).
\end{equation}
We write $\mathcal{F}_{\rho}^{(\vec{t}, \vec{t}')}(\cdot) := \mathcal{F}_{\rho}( \vec{t}, \vec{t}', \cdot ),$
\begin{equation}\nonumber
\mathcal{F}_{\rho}^{(\vec{t}, \vec{t}')} : B \subset W^{1,2}(\mathbb{R}, \mathbb{R}^n) \rightarrow L^2(\mathbb{R}, \mathbb{R}^n).
\end{equation}

\subsubsection*{The linearization $d \mathcal{F}_{\rho}$} We consider the linearization of $\mathcal{F}_{\rho}$
\begin{equation}\nonumber
\mathcal{L}_{\rho} : \mathbb{R}^{k+l-2} \times W^{1,2}(\mathbb{R}, \mathbb{R}^n) \rightarrow L^2(\mathbb{R}, \mathbb{R}^n)
\end{equation}
defined by  $\mathcal{L}_{\rho} : = d \mathcal{F}_{\rho}|_{(\vec{t}, \vec{t}', 0)}. $ We also write $\mathcal{L}_{\rho}^{(\vec{t}, \vec{t}')}$ for $ \mathcal{L}_{\rho}(0, \cdot) = d \mathcal{F}_{\rho}^{(\vec{t}, \vec{t}')}|_{0}(\cdot).$
For $\vec{t}'' := (t''_1, \cdots, t''_{k+l-2}), \ \vec{t}''_{\vec{\tau}} := (\tau_1 t_1'', \cdots, \tau_{k+l-2} t''_{k+l-2}),$ we then have
\begin{equation}\nonumber
\begin{split}
\mathcal{L}_{\rho} (\vec{\tau}'', 0) = \sum_{i =1}^{k+l-2} \frac{\partial}{\partial t_i''}\Big|_{\vec{t}''=0} \mathcal{F}_{\rho}&^{(\vec{t}, \vec{t}') + \vec{t}''_{\vec{\tau}}} \big(\exp_{w_{\rho}}(0) \big)\\
 = \sum_{i} \frac{\partial}{\partial t''_i} \Big|_{\vec{t}''=0} \Big( &\frac{\partial w_{\rho}}{\partial s} + \nabla_{...(s+\rho)} \big( H_{\sigma_{k}}^{\vec{t} + (\tau_1 t_1'', \cdots, \tau_{k-1} t''_{k-1} )} \\
 & \quad \quad \#_{\rho} H_{\sigma_l}^{\vec{t}' + (\tau_k t''_k, \cdots, \tau_{k+l-2} t''_{k+l-2}) }(s+\rho) \circ w_{\rho} \big) \Big)\\
 = \sum_{i} \frac{\partial}{\partial t''_i}  \nabla&_{{\partial_{t''_i}} ...(s+\rho)} \big( H_{\sigma_{k}}^{\vec{t} + (\tau_1 t_1'', \cdots, \tau_{k-1} t''_{k-1} )} \\ 
 \quad \quad \#_{\rho} & H_{\sigma_l}^{\vec{t}' + (\tau_k t''_k, \cdots, \tau_{k+l-2} t''_{k+l-2}) }(s+\rho) \circ w_{\rho} \big) \Big|_{\vec{t}''=0}  \\
 + \sum_{i} \nabla_{ ...(s+\rho)} \big( & \frac{\partial}{\partial t''_i} \Big( H_{\sigma_{k}}^{\vec{t} + (\tau_1  t_1'', \cdots, \tau_{k-1} t''_{k-1} )} \\
 \#_{\rho} H_{\sigma_l}&^{\vec{t}' + (\tau_k t''_k, \cdots, \tau_{k+l-2} t''_{k+l-2}) }(s+\rho) \circ w_{\rho} \Big) \big)\Big|_{\vec{t}''=0} = \vec{\tau}'' \cdot {V}_{\rho},
\end{split}
\end{equation}
where $\vec{\tau}'' : = (\vec{\tau}, \vec{\tau}') \in \mathbb{R}^{k+l-2}.$ Applying each $\frac{\partial}{\partial t_i}$ produces a $\tau_i$ factor, yielding the expression $\vec{\tau}'' \cdot V_{\rho}$ for some vector field $V_{\rho},$ which explains the last equality (Notation: we denote $\vec{\tau}'' \cdot V_{\rho} := (\tau_1 \cdot V_{\rho,1} , \cdots,  \tau'_{l-1} \cdot V_{\rho, k+l-2})$). The asymptotic condition at the ends implies that $V_{\rho}(s) = 0$ for $s$ with $s+\rho$ sufficiently small or large.

Since $\mathcal{L}_{\rho}$ is linear, it can be written as follows
\begin{equation}\label{llinear}
\begin{split}
\mathcal{L}_{\rho}(\vec{\tau}'', \cdot) &=  \mathcal{L}_{\rho}(\vec{\tau}'',0)+ \mathcal{L}_{\rho}(0, \cdot)\\
&= \vec{\tau}'' \cdot V_{\rho} +  d \mathcal{F}_{\rho}^{(\vec{t}, \vec{t}')}|_{0}(\cdot) = \vec{\tau}'' \cdot {V}_{\rho} + \mathcal{L}_{\rho}^{(\vec{t}, \vec{t}')}(\cdot).
\end{split}
\end{equation} 

\begin{lemma}\label{lefrop} 
$\mathcal{L}_{\rho}$ is a Fredholm operator of index $|x| - |z| + k+l -2.$
\end{lemma}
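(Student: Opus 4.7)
\begin{proof-sketch}
The plan is to split $\mathcal{L}_\rho$ into a finite-dimensional part and a spatial part, and verify the Fredholm property of each separately. From (\ref{llinear}) we have
\begin{equation}\nonumber
\mathcal{L}_\rho(\vec{\tau}'',\xi) \;=\; \vec{\tau}'' \cdot V_\rho \;+\; \mathcal{L}_\rho^{(\vec{t},\vec{t}')}(\xi),
\end{equation}
so it will suffice to show that the spatial operator $\mathcal{L}_\rho^{(\vec{t},\vec{t}')} : W^{1,2}(\mathbb{R},\mathbb{R}^n) \to L^2(\mathbb{R},\mathbb{R}^n)$ is Fredholm of index $|x|-|z|$. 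Granting this, the general fact that for a Fredholm operator $S:X \to Y$ and a bounded linear map $T:\mathbb{R}^m \to Y$ the map $(\vec{\tau},\xi) \mapsto T(\vec{\tau}) + S(\xi)$ from $\mathbb{R}^m \times X \to Y$ is Fredholm of index $m + \mathrm{ind}(S)$ then yields $\mathrm{ind}(\mathcal{L}_\rho) = (k+l-2) + (|x| - |z|)$.

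To deal with the spatial part, I would mimic the derivation of (\ref{aaaaa}) in Section 3. Using the fiber derivatives of the exponential map, one writes $\mathcal{L}_\rho^{(\vec{t},\vec{t}')}$ locally in the standard form $\partial_s + A_\rho(s)$, where $A_\rho(s)$ is a continuous family of endomorphisms of $\mathbb{R}^n$ determined by the covariant derivative of the vector field driving $\mathcal{F}_\rho^{(\vec{t},\vec{t}')}$ along the preglued curve $w_\rho$, together with lower-order contributions coming from the cut-off functions $\beta^{\pm}$ appearing in the definition of the pregluing.

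Next I would identify the asymptotic limits $A_\rho(\pm\infty)$. By the stability-at-the-ends axiom of Definition \ref{khtpy} together with the definition of the concatenation $H_{\sigma_k}\#_\rho H_{\sigma_l}$, the Morse function used in $\mathcal{F}_\rho^{(\vec{t},\vec{t}')}$ coincides with $\overline{h}^{t\sigma_l}_{s\sigma_k}$ on $\{s < -\rho - R\}$ and with $h_{t\sigma_l}$ on $\{s > \rho + R\}$, where $R := \max\{R_{\sigma_k}, R_{\sigma_l}\}$; moreover $w_\rho$ equals $u_\rho$ and $v_{-\rho}$ on these same regions. It follows that $A_\rho(-\infty) = \mathrm{Hess}\,\overline{h}^{t\sigma_l}_{s\sigma_k}(x)$ and $A_\rho(+\infty) = \mathrm{Hess}\,h_{t\sigma_l}(z)$, both nondegenerate and self-adjoint. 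This places $\mathcal{L}_\rho^{(\vec{t},\vec{t}')}$ in the class of operators to which the standard Fredholm theorem recalled in Section 3 (cf.\ [Sch]) applies, and yields Fredholm index equal to the difference of Morse indices at the asymptotic critical points, namely $|x| - |z|$.

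The main subtlety is the $\rho$-uniform identification of the asymptotic Hessians together with the control of the cut-off contributions to $A_\rho$ in the gluing window $[-\rho,\rho]$; both reduce to a careful bookkeeping of the stabilization regions supplied by Definition \ref{khtpy} and do not introduce new analytical difficulties beyond those already handled in the unparametrized theory. Once this is in place, the splitting above and the additivity of the Fredholm index give the claimed formula $\mathrm{ind}(\mathcal{L}_\rho) = |x| - |z| + k + l - 2$.
\end{proof-sketch}
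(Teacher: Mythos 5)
Your decomposition is exactly the one the paper uses (equation (\ref{llinear})): split $\mathcal{L}_\rho$ into the finite-dimensional part $\vec{\tau}''\cdot V_\rho$ and the spatial operator $\mathcal{L}_\rho^{(\vec{t},\vec{t}')}$. Where you diverge is in how you conclude. You invoke, in one stroke, the general linear-algebraic fact that if $S:X\to Y$ is Fredholm and $T:\mathbb{R}^m\to Y$ is bounded, then $(\tau,\xi)\mapsto T\tau + S\xi$ is Fredholm with $\mathrm{ind}=m+\mathrm{ind}(S)$; this packages the finite-dimensionality of kernel and cokernel and the index computation all at once. The paper instead argues each piece by hand: a two-case analysis (whether $V_\rho$ lies in $\mathrm{im}\,\mathcal{L}_\rho^{(\vec{t},\vec{t}')}$) to show $\ker\mathcal{L}_\rho$ is finite-dimensional, an easy observation for the cokernel, and then a separate homotopy $\mathcal{L}_\rho^c = c(\vec{\tau}''\cdot V_\rho)+\mathcal{L}_\rho^{(\vec{t},\vec{t}')}$ to deduce the index via continuity. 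Both arguments are correct; yours is tidier, the paper's is more elementary and self-contained in that it does not presuppose the general lemma. You also expend more effort than the paper does on verifying that the spatial piece $\mathcal{L}_\rho^{(\vec{t},\vec{t}')}$ is Fredholm of index $|x|-|z|$ by tracking the asymptotic Hessians of $\overline{h}^{t\sigma_l}_{s\sigma_k}$ at $x$ and $h_{t\sigma_l}$ at $z$; the paper simply asserts that this operator is ``of the form that we already know about,'' referring to the unparametrized theory in Section 3. Your extra care here is not wasted, since the gluing window and the $s+\rho$ shift in $\mathcal{F}_\rho$ do require a moment's bookkeeping to confirm the asymptotic identifications.
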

\begin{proof}
In (\ref{llinear}), $\mathcal{L}_{\rho}^{(\vec{t}, \vec{t}')}$ is of the form that we already know about. For example, it is Fredholm and its index is $|x| - |z|$. Since coker$\mathcal{L}_{\rho}^{(\vec{t}, \vec{t}')}$ is finite dimensional, so is coker$\mathcal{L}_{\rho}.$ Consider 
\begin{equation}\nonumber
\ker{\mathcal{L}_{\rho}} := \bigl\{(\vec{\tau}'', \xi) \in \mathbb{R}^{k+l-2} \times W^{1,2} (\mathbb{R}, \mathbb{R}^{n}) \mid \vec{\tau}'' \cdot V_{\rho} + \mathcal{L}_{\rho}^{(\vec{t}, \vec{t}')}(\xi) = 0\bigr\}.
\end{equation}
Then there are two cases:
\begin{enumerate}[label = (\roman*)]
\item If $V_{\rho} \notin \text{im}\mathcal{L}_{\rho}^{(\vec{t}, \vec{t}')}$, then we have $\ker \mathcal{L}_{\rho} = \ker \mathcal{L}_{\rho}^{(\vec{t}, \vec{t}')},$ which is finite dimensional.
\item If $V_{\rho} \in \text{im}\mathcal{L}_{\rho}^{(\vec{t}, \vec{t}')},$ then $V_{\rho} = \mathcal{L}_{\rho}^{(\vec{t}, \vec{t}')}(\xi')$ for some $\xi' \in W^{1,2}(\mathbb{R}, \mathbb{R}^{n}),$ and $\mathcal{L}_{\rho}^{(\vec{t}, \vec{t}')} ( \vec{\tau}'' \cdot \xi' + \xi) = 0;$ in this case, we know
\begin{equation}\nonumber
\ker \mathcal{L}_{\rho} = \text{im} \xi + \ker \mathcal{L}_{\rho}^{(\vec{t}, \vec{t}')},
\end{equation}
which is again finite dimensional.
\end{enumerate}
Consider 
\begin{equation}\nonumber
\mathcal{L}^{c}_{\rho}(\vec{\tau}'', \cdot)  := c( \vec{\tau}'' \cdot {V}_{\rho} )+ \mathcal{L}_{\rho}^{(\vec{t}, \vec{t}')}(\cdot)
\end{equation}
for $c \in [0,1].$ Notice that when $c=1,$ it coincides with $\mathcal{L}_{\rho}.$ Then we have
\begin{equation}\nonumber
\text{ind}\mathcal{L}^{c}_{\rho}(\vec{\tau}'', \cdot)  = \text{ind} \mathcal{L}^{0}_{\rho}(\vec{\tau}'', \cdot)  =  \text{ind} \mathcal{L}_{\rho}^{(\vec{t}, \vec{t}')}(\cdot) + k + l -2 = |x| - |z| + k+l -2,
\end{equation}
where the first equality follows from continuity of the index and the second one from the fact that the dimension of the kernel increases by $k+l-2.$ We conclude that $\mathcal{L}_{\rho}$ is a Fredholm operator of index $|x| - |z| + k+l -2.$
\end{proof} 
 
\section{Proof of the gluing theorem I}

Sections 7 and 8 are devoted to the proof of Theorem \ref{gluthm}. Many statements in these sections can be thought of as generalizations of those in [Sch] and [AD]. For instance, Proposition \ref{invest} and Theorem \ref{lsurj} generalize  Proposition 2.69 and Proposition 2.50 of [Sch], respectively in straightforward ways. We always assume the setting  (\ref{gtst}) of Theorem \ref{gluthm} unless otherwise mentioned. We remark that $ \text{[AD]} $ is written for Floer theory, but most of its results are applicable in our case as well. The basic analysis is essentially the same.

We consider a subspace of $W^{1,2}(\mathbb{R}, \mathbb{R}^n)$
\begin{equation}\nonumber
\begin{split}
W_{\rho} : = \bigl\{Y \#_{\rho} Y'  \in W^{1,2}(\mathbb{R},  \mathbb{R}^n) \mid (\vec{\tau}, Y) \in \ker dF_{\sigma_{k}},  \ & (\vec{\tau}', Y') \in \ker dF_{\sigma_l} \\ 
&\text{ for some } \vec{\tau} \in \mathbb{R}^{k-1}, \vec{\tau}' \in \mathbb{R}^{l-1}\  \bigr\}.
\end{split}
\end{equation}
We denote by $W_{\rho}^{\perp}$ the complement of $W_{\rho}$ in $W^{1,2}(\mathbb{R}, \mathbb{R}^n).$ Namely, we set

\begin{equation}\nonumber
W_{\rho}^{\perp} := \bigl\{ Y \in W^{1,2}(\mathbb{R}, \mathbb{R}^n) \mid \int_{\mathbb{R}} \langle Y, Z \rangle ds = 0 \  
\text{for all } Z \in W_{\rho} \bigr\}.
\end{equation}

\begin{proposition}\label{invest}
There exist $\rho_0 >0$ and $C>0$ such that
\begin{equation}\nonumber
\| \mathcal{L}_{\rho} (\vec{\tau}, Y) \|_{L^2} \geq C \big( \|Y\| _{W^{1,2}} + \|\vec{\tau}\| \big),
\end{equation} 
for all $\rho \geq \rho_0,$ $Y \in W_{\rho}^{\perp},$ and $\vec{\tau} \in \mathbb{R}^{k+l-2}.$
\end{proposition}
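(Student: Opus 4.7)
The plan is to argue by contradiction, following the template of [Sch, Proposition 2.69] while carrying the parameter vector $\vec{\tau}$ through the analysis. Suppose no such $\rho_0, C$ exist; then there are sequences $\rho_n \to \infty$, $\vec{\tau}_n \in \mathbb{R}^{k+l-2}$, and $Y_n \in W_{\rho_n}^{\perp}$ with $\|Y_n\|_{W^{1,2}} + \|\vec{\tau}_n\| = 1$ yet $\|\mathcal{L}_{\rho_n}(\vec{\tau}_n, Y_n)\|_{L^2} \to 0$. Passing to a subsequence, $\vec{\tau}_n \to \vec{\tau}_\infty = (\vec{\tau}_\infty^-, \vec{\tau}_\infty^+)$ under the natural splitting $\mathbb{R}^{k+l-2} = \mathbb{R}^{k-1} \oplus \mathbb{R}^{l-1}$.

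First I would localize $Y_n$ on the two halves by cut-off functions $\chi_n^\pm$ supported away from the middle strip $[-\rho_n + R, \rho_n - R]$ on which $H_{\sigma_k}\#_{\rho_n} H_{\sigma_l}$ reduces to the constant extension $\overline{h}^{t\sigma_l}_{t\sigma_k}$. Set $Y_n^-(\tau) := (\chi_n^- Y_n)(\tau - \rho_n) \in W^{1,2}(u^*TM_{t\sigma_k})$ and $Y_n^+(\tau) := (\chi_n^+ Y_n)(\tau + \rho_n) \in W^{1,2}(v^*TM_{t\sigma_l})$. By the stability-at-the-ends axiom, the field $V_{\rho_n}$ in (\ref{llinear}) splits as $V_{\sigma_k,\rho_n}^- + V_{\sigma_l,\rho_n}^+$ with disjoint supports, one near $s = -\rho_n$ and the other near $s = \rho_n$. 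Combining this decomposition with $\|\mathcal{L}_{\rho_n}(\vec{\tau}_n, Y_n)\|_{L^2}\to 0$, and noting that the commutator errors $[\mathcal{L}_{\rho_n}^{(\vec{t}, \vec{t}')}, \chi_n^\pm] Y_n$ are supported in the middle strip and bounded by the $L^2$-norm of $Y_n$ on that strip (which vanishes in the limit by the constant-homotopy ODE together with exponential decay near $y$), one deduces
\begin{equation}\nonumber
\|dF_{\sigma_k}(\vec{\tau}_n^-, Y_n^-)\|_{L^2} \to 0, \qquad \|dF_{\sigma_l}(\vec{\tau}_n^+, Y_n^+)\|_{L^2} \to 0.
\end{equation}

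Since $dF_{\sigma_k}$ and $dF_{\sigma_l}$ are Fredholm with closed range, one extracts weak $W^{1,2}$-limits $(\vec{\tau}_\infty^-, Y_\infty^-) \in \ker dF_{\sigma_k}$ and $(\vec{\tau}_\infty^+, Y_\infty^+) \in \ker dF_{\sigma_l}$. The orthogonality $Y_n \in W_{\rho_n}^\perp$ passes to the limit after shifting by $\mp\rho_n$: for any fixed $Z^-$ arising as the $Y$-component of a $\ker dF_{\sigma_k}$-element, the identity $\langle Y_n, Z^-\#_{\rho_n} 0\rangle_{L^2} = 0$ becomes $\langle Y_\infty^-, Z^-\rangle_{L^2} = 0$ in the limit, whence $Y_\infty^- \equiv 0$, and symmetrically $Y_\infty^+ \equiv 0$. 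In the index-zero setting of Theorem \ref{gluthm} the transversality in Corollary \ref{givenk} makes $\ker dF_{\sigma_k}$ and $\ker dF_{\sigma_l}$ trivial, so also $\vec{\tau}_\infty^\pm = 0$. Upgrading the weak convergence of $(\vec{\tau}_n^\pm, Y_n^\pm)$ to strong convergence via the standard Fredholm a priori estimate
\begin{equation}\nonumber
\|Y\|_{W^{1,2}} + \|\vec{\tau}\| \leq C\bigl(\|dF_{\sigma_\bullet}(\vec{\tau}, Y)\|_{L^2} + \|Y\|_{L^2(K)} + |\vec{\tau}|\bigr)
\end{equation}
on a large compact $K \subset \mathbb{R}$, together with the compact embedding $W^{1,2}(K) \hookrightarrow L^2(K)$ and the uniform exponential tail bound near the nondegenerate critical points, yields $\|Y_n^\pm\|_{W^{1,2}} \to 0$ and $\vec{\tau}_n \to 0$, contradicting the normalization $\|Y_n\|_{W^{1,2}} + \|\vec{\tau}_n\| = 1$.

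The main obstacle is the second step, namely verifying the clean decoupling of the parameter term $\vec{\tau}\cdot V_\rho$ into two non-interacting pieces and simultaneously controlling the cut-off commutator errors in the middle strip. Both points rest on the stability-at-the-ends axiom, which forces $H_{\sigma_k}\#_{\rho_n}H_{\sigma_l}$ to become the constant Morse function on an arbitrarily long central interval as $\rho_n\to\infty$; once this is in place, the rest of the argument parallels the unparametrized template of [Sch] with the extra bookkeeping for the parameter component $\vec{\tau}$.
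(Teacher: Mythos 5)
Your overall contradiction template and three-piece decomposition (left, middle, right) match the paper's, as does the orthogonality step that kills the kernel limits $Y_\infty^\pm$. But the way you dispose of the middle-strip contribution is the weak link, and the justification you give there does not actually close the argument.

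You write that the $L^2$-norm of $Y_n$ on the middle strip "vanishes in the limit by the constant-homotopy ODE together with exponential decay near $y$." The exponential decay applies to the preglued trajectory $w_{\rho_n}$ — a solution of the flow equation — not to the test field $Y_n$, which is an arbitrary element of $W^{1,2}$ subject only to the normalization and $\mathcal{L}_{\rho_n}(\vec{\tau}_n, Y_n)\to 0$. Nothing in your setup forces $Y_n$ to decay anywhere. What actually makes the middle-strip contribution vanish in the paper's argument is a separate, preliminary step: one introduces a cut-off $\beta^{r_m}$ supported on the growing interval $[-r_m, r_m]$ with $r_m = \rho_m/2$, observes that on this region both the Morse function and the parameter-derivative term $\vec{\tau}\cdot V_\rho$ are frozen (so $\mathcal{L}_{\rho_m}$ there is uniformly close to the asymptotic operator $dF_y = \tfrac{\partial}{\partial s} + A_y$), and then invokes the crucial fact that $A_y$ is nondegenerate and self-adjoint, so $\tfrac{\partial}{\partial s} + A_y : W^{1,2} \to L^2$ is an isomorphism onto its image with a lower bound on the operator norm. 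The chain $\|\mathcal{L}_{\rho_m}(\vec{\tau}_m, Y_m)\|\to 0 \Rightarrow \|dF_y(\beta^{r_m}Y_m)\|\to 0 \Rightarrow \|\beta^{r_m}Y_m\|_{W^{1,2}}\to 0$ is what gives $Y_m \xrightarrow{W^{1,2}_{loc}} 0$, and this single conclusion is then re-used both to control the commutator/transition errors \emph{and} to handle the compactly-supported middle piece in the final contradiction. Your sketch has no analogue of this step, which is why the commutator estimate and the concluding inequality cannot be made rigorous as written. A secondary point: where you deduce $\vec{\tau}_\infty^\pm = 0$ directly from triviality of the kernels of $dF_{\sigma_\bullet}$ in the index-zero setting, you are using a different (and in that special case valid) shortcut from the paper's Lemma after (\ref{l942}); this part of your reasoning is fine, but note that if the kernels were already trivial the orthogonality step $Y_\infty^\pm = 0$ would be redundant, so you should decide which regime you are arguing in. The core fix, however, is to insert the $\beta^{r_m}$-cutoff argument and the invertibility of the asymptotic operator at $y$: that is the engine of the inverse estimate.
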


\begin{proof}
Suppose not, then there exist sequences $\rho_m \rightarrow \infty$ and $(\vec{\tau}_m, Y_m) \in \mathbb{R}^{k+l-2} \times W_{\rho}^{\perp}$ such that
\begin{equation}\nonumber
\begin{split}
& \|\vec{\tau}_m\| + \| Y_m \|_{W^{1,2}} = 1 \ \text{ for all } m,\\ 
& \lim\limits_{m \rightarrow \infty} \| \mathcal{L}_{\rho_{m}} (\vec{\tau}_m, Y_m)\|_{L^2} =0.
\end{split}
\end{equation}
Let $\beta : \mathbb{R} \rightarrow [0,1]$ be a smooth cut-off function of  Notation \ref{not1to5} (iii). We write $r_m $ for $\frac{1}{2}\rho_m$ and $\beta^{r_m}(s)$ for $\beta(\frac{s}{r_m}).$ Recall that we denoted for $y \in Crit(h_{t \sigma_k}),$
\begin{equation}
dF_y : = \frac{\partial}{\partial s} + A_y : W^{1,2}(\mathbb{R}, \mathbb{R}^n) \rightarrow L^2(\mathbb{R}, \mathbb{R}^n).
\end{equation}
Here $A_y$ is the family of endomorphisms $A$ of (\ref{aaaaa}) evaluated at the critical point $y.$ We can trivially extend this to $\mathbb{R}^{k+l-2} \times W^{1,2}(\mathbb{R}, \mathbb{R}^n),$ and still denote by $dF_y$ by abuse of notation. That is, we denote $dF_y(\vec{\tau}, Y) := dF_y(Y)$. We estimate the $L^2$-norm of $dF_y(\vec{\tau}_m, \beta^{r_m} Y_m)$ as follows:
\begin{equation}\nonumber
\begin{split}
\|dF_y&(\vec{\tau}_m, \beta^{r_m} Y_m)\|_{L^2}\\ 
&= \|dF_y(\beta^{r_m} Y_m)\|_{L^2} = \|(\dot{\beta}^{r_m}) Y_m + \beta^{r_m}  \cdot dF_y (Y_m) \|_{L^2}\\
&\leq \| \dot{\beta}^{r_m} Y_m + \beta^{r_m} \cdot dF_y (Y_m)  \|_{L^2} \leq  \| \dot{\beta}^{r_m} (Y_m) \|_{L^2} + \| \beta^{r_m} \cdot dF_y (Y_m)  \|_{L^2}\\
&\leq \frac{1}{r_{m}} \| \beta(\frac{1}{r_m}) Y_m\|_{L^2} + \|dF_y (Y_m)\|_{L^2({[-r_m, r_m]})}\\
& \leq  {}^{\textcircled{1}} \frac{1}{r_m} \cdot C + {}^{\textcircled{2}} \|dF_{\rho_m}(\vec{\tau}_m, Y_m)\|_{L^2} + {}^{\textcircled{3}} \| \big( dF_y - dF_{\rho_m} \big) (\vec{\tau}_m ,Y_m) \|_{L^2 ([-r_m, r_m])}.
\end{split}
\end{equation}
Here we write $C$ for $\max\limits_{s \in [-1,1]} |\dot{\beta}(s)|$.
For the first two terms, when $m$ tends to $\infty,$ we have 
\begin{equation}\nonumber
\textcircled{1} =  \frac{1}{r_m} \cdot C \rightarrow 0,
\end{equation}
and 
\begin{equation}\nonumber
 \textcircled{2} = \|dF_{\rho_m}(\vec{\tau}_m, Y_m)\|_{L^2} \rightarrow 0.
\end{equation}
We estimate the third term of the last line:
\begin{equation}\nonumber
\begin{split}
\textcircled{3}  = \| (dF_y - & dF_{\rho_m})(Y_m) \|_{L^2([-r_m, r_m])} \\ 
&\leq \|dF_y(Y_m) - \sum\limits_{i}dF_{1,i} \cdot \tau_i - dF_2(Y_m)\|_{L^2([-r_m, r_m])}\\
& \leq \sum\limits_i \|dF_{1,i} \cdot \tau_i\|_{L^2([-r_m, r_m])} + \|(dF_y - dF_2)(Y_m)\|_{L^2([-r_m, r_m])}\\
& \leq \sum\limits_{i} \Big|\Big| \frac{\partial \widetilde{H}}{\partial t_i} \Big|\Big|_{L^2([-r_m,r_m])} \cdot |\tau_i| + \sup\limits_{s \in [-r_m, r_m]} \|A_y - A_{\rho_m}\| \cdot \|Y_m\|_{L^2}\\
& \leq  \sum\limits_{i} \Big|\Big| \frac{\partial \widetilde{H}}{\partial t_i} \Big|\Big|_{L^2([-r_m, r_m])} + \sup\limits_{s \in [-r_m, r_m]} \|A_y - A_{\rho_m}\|.
\end{split}
\end{equation}
Here $\widetilde{H}$ and $A_{\bullet}$ are as in (\ref{df1df2}). 

Recall that for $\rho_m > \max \{R_{\sigma_k}, R_{\sigma_l}\},$ we have
\begin{equation}\nonumber
H_{\sigma_k} \#_{\rho_m} H_{\sigma_l}(\vec{t}, s, \cdot):= \begin{cases}
\overline{H}^{t \sigma_l}_{\sigma_k}(\vec{t}, s+\rho_m, \cdot) &\text{ if } s \leq 0,\\
H_{\sigma_l}(\vec{t}, s-\rho_m, \cdot) &\text{ if } s \geq 0.
\end{cases}
\end{equation}

We have 
\begin{equation}\nonumber
\begin{split}
\sum\limits_{i} & \Big|\Big| \frac{\partial \widetilde{H}}{ \partial t_i} \Big|\Big|_{L^2([-r_m, r_m])} \rightarrow 0 \text{ as } m \rightarrow \infty,
\end{split}
\end{equation}
since $H_{\sigma_k}$ and $H_{\sigma_l}$ are constants in $\vec{t}$ near the ends. Similarly, we have
\begin{equation}\nonumber
\begin{split}
 \sup\limits_{s \in [-r_m, r_m]} \|A_y - A_{\rho_m}\| \rightarrow 0 \text{ as } m \rightarrow \infty.
\end{split}
\end{equation}

Thus we conclude that
\begin{equation}\nonumber
\lim\limits_{n \rightarrow \infty} {\| dF_y (\vec{\tau}, \beta^{r_m} Y_m)\|}_{L^2} = 0
\end{equation}

On the other hand, we have
\begin{equation}\label{concl}
\lim\limits_{n \rightarrow \infty} {\| dF_y (\vec{\tau}, \beta^{r_m} Y_m)\|}_{L^2} = \lim\limits_{n \rightarrow \infty} \| \big( \frac{\partial }{\partial s} + A_y \big) ( {\beta}^{r_m}  Y_m) {\|}_{L^2} = \lim\limits_{n \rightarrow \infty} \| \beta^{r_m} {Y_m} {||}_{W^{1,2}},
\end{equation}
where the last equality is due to the fact that $A_y$ is an isometry, which follows from the condition that $A_y$ is nondegenerate (c.f. [Sch]):

\begin{equation}\nonumber
\frac{\partial}{\partial s} + A_y : W^{1,2} (\mathbb{R}, \mathbb{R}^n) \xrightarrow{\simeq} \text{im} \big( \frac{\partial}{\partial s} + A_y \big) \hookrightarrow L^2(\mathbb{R}, \mathbb{R}^n).
\end{equation}

(\ref{concl}) implies that
\begin{equation}\label{w12locconv}
Y_m \xrightarrow{W^{1,2}_{loc}} 0.
\end{equation}

Consider sequences $\vec{\tau}_m = (\vec{\tau}_{1,m}, \vec{\tau}_{2,m}) \in \mathbb{R}^{k-1} \times \mathbb{R}^{l-1}, $ $Y_m \in W^{1,2}(w_{\rho_m}^*TM_{t \sigma_l}),$ and the following estimates:

\begin{equation}\nonumber
\begin{split}
\|dF_{\sigma_k}&(\vec{\tau}_{1,m}, \beta^{-}_{1-\rho_m} Y_{{m,\rho_m}})\|_{L^2} \leq \| \sum\limits_{m} dF_{1,m}(\vec{t})+ dF_2(\vec{t})(\vec{\tau}_{1,m}, \beta^{-}_{1-\rho_m} Y_{m, 1- \rho_m})\|_{L^2}\\
&\leq \big|\big| \sum\limits_{i} \frac{\partial \widetilde{H}}{\partial t_i}  + \frac{\partial}{\partial s} (\beta_{1-\rho_m} Y_{n, 1- \rho_m}) + A(\vec{t})(\beta_{1-\rho_m} Y_{m, -\rho_m})  \big|\big|_{L^2} \\
&\leq \sum\limits_{i} \big|\big| \frac{\partial \widetilde{H}}{\partial t_i} (1- \beta^-_{1-\rho_m}) + \dot{\beta}^{-}_{1-\rho_m} Y_{m, -\rho_m} + \beta_{1-\rho_m}^- dF_2(\vec{t}, u)(Y_{m, -\rho_m})  \big|\big|_{L^2} \\
&\leq \sum\limits_{i} \big|\big| \frac{\partial \widetilde{H}}{\partial t_i} (1- \beta^-_{1-\rho_m}) \big|\big|_{L^{2}} + \| \dot{\beta}^{-}_{1-\rho_m} Y_{m, -\rho_m} \|_{L^2} + \|\beta_{1-\rho_m}^- dF_2(\vec{t}, u)(Y_m)  \big|\big|_{L^2} \\
&\leq \sum\limits_{i} \big|\big| \frac{\partial \widetilde{H}}{\partial t_i} \big|\big|_{L^{2}} + \| Y_{m, -\rho_m} \|_{L^2([-2,1])} + \| dF_2(\vec{t}, \vec{t}', u\#_{\rho_m} v)(\vec{\tau}_m, Y_m)  \big|\big|_{L^2}.\\
\end{split}
\end{equation}
Each term of the last line vanishes as $m$ tends to $\infty.$

As a consequence, we have
\begin{equation}\nonumber
\|dF_{\sigma_k}(\vec{\tau}_{1,m}, \beta^{-}_{1-\rho_m} Y_{m, -\rho_m})\|_{L^2} \rightarrow 0 \text{ as } m \rightarrow \infty,
\end{equation}
which implies that
\begin{equation}\label{l941}
(\vec{\tau}_{1,m}, \beta^{-}_{1-\rho_m} Y_{m, -\rho_m}) \xrightarrow{W^{1,2}}  (\vec{\tau}', Y'),
\end{equation}
for some $ (\vec{\tau}', Y')  \in \ker dF_{\sigma_k}$
by Lemma 9.4.11 of [AD]. Similarly, we have:
\begin{equation}\label{l942}
\|dF_{\sigma_l}(\vec{\tau}'_{2,m}, \beta^{+}_{1+\rho_m} Y_{m, \rho_m})\|_{L^2} \rightarrow 0 \text{ as } m \rightarrow \infty,
\end{equation}
and
\begin{equation}\nonumber
(\vec{\tau}_{2,m}, \beta^{+}_{1+\rho_m} Y_{m, \rho_m})  \xrightarrow{W^{1,2}}  (\vec{\tau}'', Y'')
\end{equation}
for some $(\vec{\tau}'', Y'') \in \ker dF_{\sigma_l}.$

\begin{lemma}
(\ref{l941})  and (\ref{l942}) imply the following:
\begin{equation}\nonumber
\|\beta_1^- Y_m \|_{W^{1,2}}, \ \|\beta_1^+ Y_m \|_{W^{1,2}} \rightarrow 0.
\end{equation}
\end{lemma}
\begin{proof}
The proof is essentially the same as that of Lemma 9.4.12 of [AD], which is for the Floer case. In our situation, (\ref{l941}) says that we have
\begin{equation}\nonumber
\beta^{-}_{1- \rho_m} Y_{m, 1-\rho_m} \xrightarrow{W^{1,2}} Y',
\text{ so }
\beta^{-}_{1} Y_{m} \xrightarrow{W^{1,2}} Y'(\cdot  + \rho_m),
\end{equation}
as $m$ tends to $\infty.$ Also, we have
\begin{equation}\nonumber
\beta^{-}_{1} Y_{m} \xrightarrow{W^{1,2}(-\infty, -1]} Y' \#_{\rho_m} 0,
\end{equation}
since Supp ($\beta^{-}_{1} Y_{m} $) is contained in $(-\infty, 0].$ Furthermore, from (\ref{w12locconv}) we have 
\begin{equation}\nonumber
\beta^{-}_{1} Y_{m} \xrightarrow{W^{1,2}} Y' \#_{\rho_m} 0.
\end{equation}
Observe that $\beta^{-}_{1} Y_{m} - Y' \#_{\rho_m} 0$ and $Y' \#_{\rho_m} 0$ are of the $L^2$-class. Then by H\"{o}lder inequality, we have
\begin{equation}\label{ineqhoe}
\begin{split}
\Big| \int_{\mathbb{R}} \langle \beta^{-}_1 Y_m - Y' \#_{\rho_m} 0, & Y' \#_{\rho_m} 0 \rangle ds \Big| \\
&\leq \| \beta^{-}_{1} Y_{m} - Y' \#_{\rho_m} 0 \|_{L^2} \cdot \| Y' \#_{\rho_m} 0 \|_{L^2} \rightarrow 0.
\end{split}
\end{equation}
On the other hand, we have
\begin{equation}\label{inequ2}
\begin{split}
\int_{\mathbb{R}} \langle \beta^{-}_1 Y_m, & Y' \#_{\rho_m} 0 \rangle ds \\
&= \int_{\mathbb{R}} \langle Y_m, Y' \#_{\rho_m} 0 \rangle ds - \int_{\mathbb{R}} \langle (1- \beta^{-}_1)  Y_m, Y' \#_{\rho_m} 0 \rangle ds \rightarrow 0.
\end{split}
\end{equation}
This requires some explanation: the first term on the right hand side vanishes because $Y' \#_{\rho_n}0 \in W_{\rho_m},$ with $({0}, 0)$ being trivially an element of $\ker dF_{\sigma_l}.$ Similarly to that of (\ref{ineqhoe}), we can show that the second term tends to zero by using the fact that its support is contained in $[-1,0],$ and the H\"{o}lder inequality with $(\ref{w12locconv})$. Comparing (\ref{ineqhoe}) and (\ref{inequ2}), we know
\begin{equation}\nonumber
\int_{\mathbb{R}} \langle Y' \#_{\rho_m} 0, Y' \#_{\rho_m} 0 \rangle ds \rightarrow 0.
\end{equation}
Hence we have
\begin{equation}\nonumber
\int_{-\infty}^{-1} \langle Y'_{\rho_m}, Y'_{\rho_m} \rangle ds = \int_{-\infty}^{-1+\rho_m} \langle Y', Y' \rangle ds \rightarrow 0,
\end{equation}
and $Y' =0.$ Now it follows that $\beta_1^{-} Y_m \xrightarrow{W^{1,2}} 0,$ Similarly, we can show that $\beta_{-1}^{+} Y_m \xrightarrow{W^{1,2}} 0.$
\end{proof}

\begin{lemma}
We have $\vec{\tau}_m \rightarrow 0.$
\end{lemma}
\begin{proof-sketch}
Lemma \ref{lefrop} with the setting of Theorem \ref{gluthm} implies that $\mathcal{L}_{\rho}$ is of index 0. The transversality assumption leads to its surjectivity, so it is also injective. Then similarly to (the earlier part of) the proof of Proposition \ref{invest}, we can show that $\|\mathcal{L}_{\rho}(\vec{\tau}_m, \beta^{-}_{1 - \rho_m} Y_{-\rho_m}) \|$ tends to zero. Since $\mathcal{L}_{\rho}$ is an isomorphism, the assertion of the lemma follows.
\end{proof-sketch}

Now we have a contradiction:
\begin{equation}\nonumber
\begin{split}
1&= \lim\limits_{m \rightarrow \infty} \big( \| \vec{\tau}_m \| + \| Y_m \|_{W^{1,2}} \big) 
= \lim\limits_{m \rightarrow \infty}  \| Y_m \|_{W^{1,2}} \\
& \leq \lim\limits_{m \rightarrow \infty} \big( \| \beta^-_{1} Y_m \|_{W^{1,2}} + \| \beta^+_{-1} Y_m \|_{W^{1,2}} + \| (1- \beta^-_1 - \beta^+_{-1}) Y_m \|_{W^{1,2}}  \big)  = 0,
\end{split}
\end{equation}
where the last equality holds since we know $ \beta^-_{1} Y_m, \ \beta^+_{-1} Y_m \rightarrow 0, \text{ supp}(1- \beta^-_1 - \beta^+_{-1}) \subset [-2,2],$ and (\ref{w12locconv}). This proves Proposition \ref{invest}.
\end{proof}

Proposition \ref{invest} implies that
\begin{equation}\nonumber
\ker \mathcal{L}_{\rho} \cap \big( \mathbb{R}^{k+l-2} \times W_{\rho}^{\perp} \big) = \{ 0 \}.
\end{equation}
Hence we have

\begin{equation}\nonumber 
\mathbb{R}^{k+l-2} \times W^{1,2}(\mathbb{R}, \mathbb{R}^n) \simeq  \ker \mathcal{L}_{\rho} \oplus \big( \mathbb{R}^{k+l-2} \times W_{\rho}^{\perp} \big).
\end{equation}

Let $\text{Proj}^{\mathbb{R}^{k+l-2} \times W^{1,2} (w_{\rho}^{*}TM_{t \sigma_l})}_{\ker \mathcal{L}_{\rho} } : \mathbb{R}^{k+l-2} \times W^{1,2} (w_{\rho}^{*}TM_{t \sigma_l}) \rightarrow {\ker \mathcal{L}_{\rho}}$
be the projection map to the subspace $\ker \mathcal{L}_{\rho}.$ We denote the map obtained by composing the pregluing map $\#_{\rho}$ with this projection by
\begin{equation}\nonumber
\phi_{\rho} := \text{Proj}^{\mathbb{R}^{k+l-2} \times W^{1,2} (w_{\rho}^{*}TM_{t \sigma_l})}_{\ker \mathcal{L}_{\rho} } \circ \#_{\rho} : \ker dF_{\sigma_k} \times \ker dF_{\sigma_l} \rightarrow \ker \mathcal{L}_{\rho}.
\end{equation}
The following theorem holds for general cases, and not under the setting (\ref{gtst}) only.
\begin{theorem}\label{lsurj}
Suppose that $dF_{\sigma_k}$ and  $dF_{\sigma_l}$ are surjective. Then so is $\mathcal{L}_{\rho}$ for sufficiently large $\rho>0.$ Moreover, $\phi_{\rho}$ is an isomorphism.
\end{theorem}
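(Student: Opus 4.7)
The plan is to split the theorem into two pieces: first establish surjectivity of $\mathcal{L}_\rho$ via an approximate right inverse construction, and then deduce that $\phi_\rho$ is an isomorphism by a dimension count combined with an injectivity estimate. Note that by Proposition \ref{invest} the restriction of $\mathcal{L}_\rho$ to $\mathbb{R}^{k+l-2}\times W_\rho^\perp$ is injective with a uniform lower bound, so the direct sum decomposition invoked just before the theorem statement is already in hand; the content of the theorem is that this decomposition identifies $\ker\mathcal{L}_\rho$ exactly with $\ker dF_{\sigma_k}\times\ker dF_{\sigma_l}$ through the pregluing.

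For surjectivity of $\mathcal{L}_\rho$, I would follow the familiar approximate-right-inverse scheme. Pick bounded right inverses $Q_{\sigma_k}$ and $Q_{\sigma_l}$ for the surjective Fredholm operators $dF_{\sigma_k}$ and $dF_{\sigma_l}$. Given $\eta\in L^2(\mathbb{R},\mathbb{R}^n)$, split $\eta=\eta^{-}+\eta^{+}$ with cut-offs concentrated near the two ends of the neck (using the $\beta^{\pm}$ of Notation \ref{not1to5}), apply $Q_{\sigma_k}$ and $Q_{\sigma_l}$ to appropriate translates, and preglue the resulting $W^{1,2}$-sections via $\#_\rho$ to obtain an approximate preimage $R_\rho(\eta)$. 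By the stability-at-the-ends axiom of Definition \ref{khtpy}, $\mathcal{L}_\rho$ coincides with the factor linearizations outside a central neck of bounded length, so the error $\mathcal{L}_\rho R_\rho(\eta)-\eta$ is supported where the cut-off derivatives live and obeys $\|\mathcal{L}_\rho R_\rho(\eta)-\eta\|_{L^2}\leq \varepsilon(\rho)\|\eta\|_{L^2}$ with $\varepsilon(\rho)\to 0$. Composing the (projected) $R_\rho$ with the bounded inverse of $\mathcal{L}_\rho$ restricted to $\mathbb{R}^{k+l-2}\times W_\rho^\perp$ supplied by Proposition \ref{invest}, and iterating as a Neumann series, converts $R_\rho$ into an exact right inverse once $\rho$ is sufficiently large.

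Combining surjectivity with Lemma \ref{lefrop} yields $\dim\ker\mathcal{L}_\rho=|x|-|z|+k+l-2$, while the surjectivity hypotheses on $dF_{\sigma_k}$ and $dF_{\sigma_l}$ give $\dim(\ker dF_{\sigma_k}\times\ker dF_{\sigma_l})=(|x|-|y|+k-1)+(|y|-|z|+l-1)=|x|-|z|+k+l-2$. Both sides of $\phi_\rho$ thus have the same finite dimension, so it remains only to show that $\phi_\rho$ is injective. For this, I expect a direct computation using that $Z\in\ker dF_{\sigma_k}\times\ker dF_{\sigma_l}$ annihilates both factor operators and that $\mathcal{L}_\rho$ agrees with them outside the neck to give $\|\mathcal{L}_\rho(\#_\rho Z)\|_{L^2}\leq C(\rho)\|Z\|$ with $C(\rho)\to 0$. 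Writing $\#_\rho Z=\phi_\rho(Z)+\Delta$ with $\Delta\in\mathbb{R}^{k+l-2}\times W_\rho^\perp$, one has $\mathcal{L}_\rho\Delta=\mathcal{L}_\rho(\#_\rho Z)$, so Proposition \ref{invest} yields $\|\Delta\|\leq C_1 C(\rho)\|Z\|$. If $\phi_\rho(Z)=0$, then $\#_\rho Z=\Delta$, but a standard shift-and-paste estimate shows $\|\#_\rho Z\|\geq c_1\|Z\|$ uniformly in $\rho$ (the pregluing merely translates and glues exponentially decaying kernel elements), forcing $Z=0$ for large $\rho$. Injectivity plus dimension matching completes the proof.

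The main technical obstacle is making the error bounds $\varepsilon(\rho)$ and $C(\rho)$ genuinely decay. In the unparametrized setting of [Sch] this reduces to controlling differences of Hessians and cut-off tails in the neck, but here $\mathcal{L}_\rho$ carries an additional $\vec{\tau}\cdot V_\rho$ piece coming from the parameter directions (cf.\ (\ref{llinear})), and one must verify that this piece behaves compatibly with the pregluing. The stability-at-the-ends axiom of Definition \ref{khtpy} is exactly what is needed: it forces $V_\rho(s)=0$ whenever $s+\rho$ lies outside a bounded window, so the parameter contribution does not obstruct convergence of either the Neumann series or the pregluing estimate.
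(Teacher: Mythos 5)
Your proposal is correct but takes a genuinely different route from the paper, which is worth noting because the paper's approach is considerably softer. The paper never constructs an approximate right inverse: it observes that Proposition \ref{invest} forces $\ker\mathcal{L}_\rho\cap\big(\mathbb{R}^{k+l-2}\times W_\rho^{\perp}\big)=\{0\}$, hence $\dim\ker\mathcal{L}_\rho\leq\dim W_\rho\leq\dim\ker dF_{\sigma_k}+\dim\ker dF_{\sigma_l}$, and then simply chases this inequality against the Fredholm index computed in Lemma \ref{lefrop}. Since $\mathrm{ind}\,\mathcal{L}_\rho=|x|-|z|+k+l-2$ matches the right-hand side and $\mathrm{ind}\,\mathcal{L}_\rho\leq\dim\ker\mathcal{L}_\rho$ trivially, all the inequalities collapse to equalities, which forces $\mathrm{coker}\,\mathcal{L}_\rho=0$ (surjectivity) at no extra analytical cost. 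For the second half, the paper argues the contrapositive of \emph{surjectivity} of $\phi_\rho$: a nonzero element of $\ker\mathcal{L}_\rho$ orthogonal to the image of $\#_\rho$ would land in $\mathbb{R}^{k+l-2}\times W_\rho^\perp$ and hence vanish by Proposition \ref{invest}. You instead construct an approximate right inverse and close it by Neumann series, then prove \emph{injectivity} of $\phi_\rho$ by a shift-and-paste estimate; both arguments lean on Proposition \ref{invest}, and both are finished by the same dimension count, so the two are logically symmetric. Your route is the more standard one in the Floer literature (and closer to [AD]), but it costs more: you must verify the error bound $\|\mathcal{L}_\rho R_\rho-\mathrm{id}\|\to0$, which requires either adiabatic cut-offs or careful exponential-decay estimates on the kernel elements, and for the injectivity estimate you need $\|\mathcal{L}_\rho(\#_\rho Z)\|\leq C(\rho)\|Z\|$ with $C(\rho)\to0$ — true, but relying on the exponential decay of $\ker dF_{\sigma_k},\ker dF_{\sigma_l}$ plus the stability-at-the-ends axiom suppressing the $\vec\tau\cdot V_\rho$ term, which you flag but do not carry out. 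One small correction to your sketch: once $\|\mathcal{L}_\rho R_\rho-\mathrm{id}\|<1$ the Neumann series already inverts $\mathcal{L}_\rho R_\rho$ and yields a right inverse $R_\rho(\mathcal{L}_\rho R_\rho)^{-1}$ for $\mathcal{L}_\rho$ directly; the additional composition with the bounded inverse of $\mathcal{L}_\rho|_{\mathbb{R}^{k+l-2}\times W_\rho^\perp}$ from Proposition \ref{invest} is not needed for surjectivity and in fact cannot be invoked there without already knowing surjectivity of the restriction.
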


\begin{proof}
Recall that $dF_{\sigma_k}$ and  $dF_{\sigma_l}$ are Fredholm, and we have
\begin{equation}\nonumber
\begin{split}
\text{ind} dF_{\sigma_k} &= \text{dim} \ker dF_{\sigma_k},\\
\text{ind} dF_{\sigma_l} &= \text{dim} \ker dF_{\sigma_l}
\end{split}
\end{equation}
from the assumption of surjectivity for the both operators. Then it follows that
\begin{equation}\nonumber
\begin{split}
\text{dim} \ker \mathcal{L}_{\rho} &\leq  \text{dim} \ker dF_{\sigma_k} +  \text{dim} \ker dF_{\sigma_l} =  \text{ind} dF_{\sigma_k} + \text{ind} dF_{\sigma_l}\\
&= |x| - |y| + k-1 + |y| -|z| + l-1 = |x| -|z| +k+l-2\\  &= \text{ind} \mathcal{L}_{\rho} \leq \text{dim} \ker \mathcal{L}_{\rho}.
\end{split}
\end{equation}
Hence we get:
\begin{equation}\nonumber
\text{dim} \ker \mathcal{L}_{\rho} = \text{dim} \ker dF_{\sigma_k} +  \text{dim} \ker dF_{\sigma_l}.
\end{equation}
Suppose that $\phi_{\rho}$ is not surjective. Then there exists nonzero $(\vec{\tau}', Y') \in \text{ker} \mathcal{L}_{\rho}$ with $(\vec{\tau}', Y') \in \text{im}(\#_{\rho})^{\perp},$ so that we have 
\begin{equation}\nonumber
\langle ( \vec{\tau}', Y'),\big((\vec{\tau}_1, \vec{\tau}_2), Y_1 \#_{\rho} Y_2 \big) \rangle =0,
\end{equation}
for any $(\vec{\tau}_1, Y_1) \in \mathbb{R}^{k-1} \times W^{1,2}(u^*TM_{t \sigma_k})$ and $(\vec{\tau}_2, Y_2) \in \mathbb{R}^{l-1} \times W^{1,2}(v^*TM_{t \sigma_l}).$
But we assumed $(\vec{\tau}', Y') \in \ker \mathcal{L}_{\rho},$ and Proposition \ref{invest} implies that $\|\vec{\tau}' \| = \|Y'\|_{L^2} = 0.$ Hence $(\vec{\tau}', Y') =0,$ which is a contradiction. Therefore $\phi_{\rho}$ is surjective.
\end{proof}

\section{Proof of the gluing theorem II}

In this section, we provide the second half of a proof for Theorem \ref{gluthm}. Again, we generalize the discussions in [AD] and [Sch] by adding extra parametrizations by cubes. For example, Lemma 8.2 is a straightforward generalization of Lemma 11.4.54 of [AD].

For $\rho > {\rho_0},$ and 
\begin{equation}\nonumber
\mathcal{L}_{\rho} : \mathbb{R}^{k+l-2} \times  W^{1,2}(\mathbb{R} ,\mathbb{R}^n) \simeq \ker \mathcal{L}_{\rho} \oplus (\mathbb{R}^{k+l-2} \times W_{\rho}^{\perp}) \rightarrow L^2(\mathbb{R}, \mathbb{R}^n)
\end{equation}
surjective, we define 
\begin{equation}\nonumber
\mathcal{G}_{\rho} :L^{2}(\mathbb{R}, \mathbb{R}^n) \rightarrow \mathbb{R}^{k+l-2} \times W_{\rho}^{\perp} \hookrightarrow  \mathbb{R}^{k+l-2} \times W^{1,2}(\mathbb{R}, \mathbb{R}^n)
\end{equation}
by $\mathcal{G}_{\rho}:= \mathcal{L}_{\rho}|_{\mathbb{R}^{k+l-2} \times W_{\rho}^{\perp}}^{-1},$ so that $\mathcal{L}_{\rho} \circ \mathcal{G}_{\rho} = id_{L^2(\mathbb{R}, \mathbb{R}^{n})}.$

We recall the contraction mapping theorem for Banach spaces.
\begin{theorem}\label{ctrct} (Contraction mapping theorem)
Let $F : X_1 \rightarrow X_2$ be a map of Banach spaces which can be written as 
\begin{equation}\nonumber
F = F(0) + dF + N
\end{equation}
for some $N: X_1 \rightarrow X_2.$ Suppose we have $G : X_2 \rightarrow X_1$ such that
\begin{enumerate}[label = (\roman*)]
\item $dF \circ G = id_Y,$
\item $\|G \circ F(0)\| \leq \frac{\epsilon}{2},$ where $\epsilon := \min (r, \frac{C}{5}),$
\item $\|G \circ N (x) - G \circ N(y) \| \leq C \big( \|x \| + \|y\| \big) \|x-y\|$ for some $C>0$ and for all $x, y \in B(0,r)$ for some $r>0.$
\end{enumerate}
Then there exists a unique $z \in B(0, \epsilon) \cap \text{im} G$ such that $F(z) = 0.$  Moreover, we have $\|z\| < 2 \| G \circ F(0)\|.$
\end{theorem}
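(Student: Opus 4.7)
The plan is to reduce $F(z) = 0$ with $z \in \mathrm{im}\, G$ to a fixed-point problem and then invoke the Banach fixed-point theorem. If $z = G(y) \in \mathrm{im}\, G$, then using the decomposition $F = F(0) + dF + N$ together with $dF \circ G = id_{X_2}$ gives
\begin{equation}\nonumber
F(z) = F(0) + dF(G(y)) + N(z) = F(0) + y + N(z),
\end{equation}
so $F(z) = 0$ is equivalent to $y = -F(0) - N(z)$, and applying $G$ turns this into the fixed-point equation $z = T(z)$ with
\begin{equation}\nonumber
T(z) := -G \circ F(0) - G \circ N(z).
\end{equation}
Since both summands lie in $\mathrm{im}\, G$, $T$ automatically preserves $\mathrm{im}\, G$.

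Next I would verify the two hypotheses of the Banach fixed-point theorem on $\overline{B(0,\epsilon)} \cap \mathrm{im}\, G$. For the self-map property, assumption (ii) gives $\|{-G \circ F(0)}\| \leq \epsilon/2$, and assumption (iii) applied with one argument equal to $0$ (in the setting of the paper, the nonlinear remainder satisfies $N(0) = 0$) yields $\|G \circ N(z)\| \leq C \|z\|^2 \leq C \epsilon^2$; combined with the choice $\epsilon \leq \min(r, C/5)$ this gives $\|T(z)\| \leq \epsilon$. For the contraction property, assumption (iii) directly estimates
\begin{equation}\nonumber
\|T(x) - T(y)\| = \|G \circ N(x) - G \circ N(y)\| \leq C(\|x\| + \|y\|)\|x - y\| \leq 2C\epsilon\, \|x - y\|,
\end{equation}
and the choice of $\epsilon$ makes $2C\epsilon$ strictly less than $1$, say bounded by $1/2$.

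The Banach fixed-point theorem then produces a unique $z \in \overline{B(0,\epsilon)} \cap \mathrm{im}\, G$ with $T(z) = z$, equivalently $F(z) = 0$. The sharper norm bound follows from the fixed-point equation itself: since
\begin{equation}\nonumber
\|z\| = \|T(z)\| \leq \|G \circ F(0)\| + \|G \circ N(z) - G \circ N(0)\| \leq \|G \circ F(0)\| + 2C\epsilon\, \|z\|,
\end{equation}
rearranging with $2C\epsilon \leq 1/2$ gives $\|z\| \leq 2\|G \circ F(0)\|$, which also shows $z \in B(0,\epsilon)$ is strict unless $G \circ F(0) = 0$. The only real subtlety, and the step I expect to require the most care, is the bookkeeping around the constants and the implicit role of $N(0)$ in the decomposition of $F$: in the geometric applications of this theorem later in the paper, $N$ will be the quadratic remainder of a Taylor expansion, so $N(0) = 0$ and the estimates close cleanly; the appearance of $C/5$ versus something like $1/(5C)$ in the definition of $\epsilon$ must be read in that context, where $C$ is the small nonlinearity constant rather than a large Lipschitz constant.
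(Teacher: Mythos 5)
The paper states Theorem~\ref{ctrct} without proof, presenting it as a known quantitative contraction-mapping lemma of the sort used in Floer/Morse gluing arguments (cf.\ [AD]); your reduction of $F(z)=0$, $z\in\mathrm{im}\,G$, to the Banach fixed-point equation $z = T(z)$ with $T(z) := -G\circ F(0) - G\circ N(z)$ on $\overline{B(0,\epsilon)}\cap\mathrm{im}\,G$ is exactly the standard argument and is correct. Two small remarks. First, the step $\|G\circ N(z) - G\circ N(0)\| \le 2C\epsilon\|z\|$ carries a spurious factor of $2$: setting $y=0$ in (iii) gives $\|x\|+\|y\| = \|z\|$ (not $2\epsilon$), so the clean bound is $C\|z\|^2 \le C\epsilon\|z\|$; with the correct reading of $\epsilon$ this yields $\|z\| \le \tfrac{5}{4}\|G\circ F(0)\|$, which is sharper than the stated $2\|G\circ F(0)\|$, and the factor-of-$2$ slack does not affect your conclusion. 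Second, you are right that $\epsilon = \min(r, C/5)$ in the statement is a typo for $\min\bigl(r, \tfrac{1}{5C}\bigr)$; this is the constant needed so that both $C\epsilon^2 \le \epsilon/2$ (self-map estimate, using $N(0)=0$, which is forced by evaluating $F = F(0)+dF+N$ at $0$) and $2C\epsilon < 1$ (contraction estimate) close for arbitrary $C>0$, and it matches the source [AD] the paper is generalizing. Your final caveat about the degenerate case $G\circ F(0) = 0$ (where $z=0$ and the strict inequality fails) is also correct.
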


Recall that we defined $\mathcal{F}_{\rho} : [0,1]^{k+l-2} \times W^{1,2}(\mathbb{R}, \mathbb{R}^n) \rightarrow L^2(\mathbb{R}, \mathbb{R}^n)$ by
\begin{equation}\nonumber
\mathcal{F}_{\rho}(\vec{t}, \vec{t}', Y) := \Big( \frac{d}{ds} + \nabla_{G^{\vec{t}}_{\sigma_k} \#_{\rho} G^{\vec{t}}_{\sigma_l}(s + \rho)} \big( H^{\vec{t}}_{\sigma_k} \#_{\rho} H^{\vec{t}'}_{\sigma_l}(s+ \rho) \big) \Big)(\exp_{w_{\rho}}Y).
\end{equation}
 
We consider an extension of $\mathcal{F}_{\rho}$

\begin{equation}\nonumber
\overline{\mathcal{F}}_{\rho} : \mathbb{R}^{k+l-2} \times W^{1,2}(\mathbb{R}, \mathbb{R}^n) \rightarrow L^2(\mathbb{R}, \mathbb{R}^n),
\end{equation}

so that
\begin{equation}\nonumber
\overline{\mathcal{F}}_{\rho} |_{[0,1]^{k+l-2} \times W^{1,2}(\mathbb{R}, \mathbb{R}^n)} = \mathcal{F}_{\rho}.
\end{equation}
By abuse of notation, it will sometimes be denoted by $\mathcal{F}_{\rho}.$

For $(\vec{t}, \vec{t}') \in [0,1]^{k+l-2},$ we write
\begin{equation}\nonumber
\mathcal{F}_{\rho}(\underline{\vec{t}}, \underline{\vec{t}'}, Y) := \overline{\mathcal{F}}_{\rho}(\underline{\vec{t}} + \vec{t}, \underline{\vec{t}'} + \vec{t}', Y).
\end{equation}
We then rewrite $\overline{\mathcal{F}}_{\rho}$ as follows
\begin{equation}\nonumber
\begin{split}
\overline{\mathcal{F}}_{\rho}(\underline{\vec{t}}, \underline{\vec{t}'}, Y) & = \overline{\mathcal{F}}_{\rho} (0,0,0) + d \overline{\mathcal{F}}_{\rho}^{(\vec{t}, \vec{t}')}|_{0}(Y) + \overline{\mathcal{N}}_{\rho}(Y)\\
& = \mathcal{F}_{\rho}(\vec{t}, \vec{t}', 0) + \mathcal{L}_{\rho}(Y) + \overline{\mathcal{N}}_{\rho}(Y),
\end{split}
\end{equation}
for some $\overline{\mathcal{N}}_{\rho}.$

In our case, $\overline{\mathcal{F}}_{\rho}$ corresponds to $F$ and the above $\mathcal{G}_{\rho}$ to $G$ of Theorem \ref{ctrct}. Recall that $\mathcal{L}_{\rho}:= d \overline{\mathcal{F}}_{\rho}|_{(0,0,0)}.$

{\it{Checking the condition (i) of Theorem \ref{ctrct}}}. We have $\mathcal{L}_{\rho} \circ \mathcal{G}_{\rho} = id_{L^2(\mathbb{R}, \mathbb{R}^n)}.$

{\it{Checking the condition (ii) of Theorem \ref{ctrct}}}. For $\rho$ sufficiently large, we have 
\begin{equation}\nonumber
\| \mathcal{G}_{\rho} \circ \overline{\mathcal{F}}_{\rho}(0,0,0) \|_{\mathbb{R}^{k+l-2} \times W^{1,2}(\mathbb{R}, \mathbb{R}^n)} \leq  C^{-1} \|\overline{\mathcal{F}}_{\rho}(0,0,0) \|_{L^2} < \frac{\epsilon}{2},
\end{equation}
where the first inequality (with the constant $C$) is from Proposition \ref{invest}, while the second one is from the following observation:
\begin{equation}\nonumber
\begin{split}
\overline{\mathcal{F}}_{\rho}(0,0,0) & = \mathcal{F}_{\rho}^{(\vec{t}, \vec{t}')}(0) \\ 
& = \frac{d w_{\rho}}{ds} + \nabla_{G_{\sigma_{k}}^{\vec{t}} \#_{\rho} G_{\sigma_l}^{\vec{t}'}}(H_{\sigma_{k}}^{\vec{t}} \#_{\rho} H_{\sigma_l}^{\vec{t}'}) \circ w_{\rho} \rightarrow 0 \text{ as } \rho \rightarrow \infty.
\end{split}
\end{equation}

\begin{lemma}\label{chktwo}
There exist constants $K>0$ and $\rho_0>0$ such that for every $\rho > \rho_0,$ we have
\begin{equation}\nonumber
\|d \mathcal{N}_{\rho}|_{(\vec{t}, \vec{t}', Y(\rho))} \|^{op} \leq K \big( \|Y\|_{W^{1,2}(\mathbb{R}, \mathbb{R}^n)} + \| (\vec{t}, \vec{t}') \| \big),
\end{equation}
where the left hand side is with respect to the operator norm, and $K$ is independent of the choices of $Y, x,$ and $y.$
\end{lemma}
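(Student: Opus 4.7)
The plan is to recognize $\overline{\mathcal{N}}_\rho$ as the higher-order Taylor remainder of $\overline{\mathcal{F}}_\rho$ at the origin, so that its differential vanishes there, and then bound the oscillation of $d\overline{\mathcal{N}}_\rho$ by the second derivative of $\overline{\mathcal{F}}_\rho$. More precisely, since $\overline{\mathcal{N}}_\rho$ differs from $\overline{\mathcal{F}}_\rho$ only by affine terms, we have $d^2\overline{\mathcal{N}}_\rho = d^2\overline{\mathcal{F}}_\rho$ and $d\overline{\mathcal{N}}_\rho|_{(0,0,0)} = 0$. The fundamental theorem of calculus for Banach-valued maps then gives
\[
d\overline{\mathcal{N}}_\rho\bigl|_{(\vec{t},\vec{t}',Y)} \;=\; \int_0^1 d^2\overline{\mathcal{F}}_\rho\bigl|_{s\cdot(\vec{t},\vec{t}',Y)}\bigl((\vec{t},\vec{t}',Y),\,\cdot\,\bigr)\,ds,
\]
so after taking operator norms the lemma reduces to the uniform bound
\[
\sup_{\|z\|\leq r}\,\|d^2\overline{\mathcal{F}}_\rho|_z\|^{op} \;\leq\; K
\]
for some fixed $r>0$ and some $K$ independent of $\rho\geq\rho_0$.

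To prove this bound I would work in the local chart $W^{1,2}(w_\rho^*TM_{t\sigma_l})\simeq W^{1,2}(\mathbb{R},\mathbb{R}^n)$, in which
\[
\mathcal{F}_\rho(\vec{t},\vec{t}',Y)(s) \;=\; \partial_s Y(s) + \Psi_\rho\bigl(s,\vec{t},\vec{t}',Y(s)\bigr),
\]
where $\Psi_\rho$ is the smooth map obtained by precomposing the concatenated gradient vector field of $H_{\sigma_k}\#_\rho H_{\sigma_l}$ with $\exp_{w_\rho(s)}$. The quadratic form $d^2\overline{\mathcal{F}}_\rho|_z$ then splits into three blocks according to whether each slot is a $(\vec{t},\vec{t}')$-direction or a $Y$-direction, and each block can be estimated separately.

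The key estimates are these. By the stability-at-the-ends axiom in Definition~\ref{khtpy}, the parameter-derivatives of $H^{\vec t}_{\sigma_k}$ and $H^{\vec t'}_{\sigma_l}$ vanish outside $|s|\leq R_{\sigma_k}$ and $|s|\leq R_{\sigma_l}$ respectively, so after the gluing the $(\vec{t},\vec{t}')$-derivatives of $\Psi_\rho$ are supported in a compact $s$-region of total length at most $2(R_{\sigma_k}+R_{\sigma_l})$, independent of $\rho$. This controls the $(\vec{t},\vec{t}')$-$(\vec{t},\vec{t}')$ block and the mixed blocks in $L^2$ uniformly. For the pure $Y$--$Y$ block we bound pointwise and apply the Sobolev embedding $W^{1,2}(\mathbb{R})\hookrightarrow L^\infty(\mathbb{R})$:
\[
\|(\partial_Y^2 \Psi_\rho)(\xi_1,\xi_2)\|_{L^2} \leq \|\partial_Y^2 \Psi_\rho\|_{L^\infty}\,\|\xi_1\|_{L^\infty}\|\xi_2\|_{L^2} \leq C_S\,\|\partial_Y^2 \Psi_\rho\|_{L^\infty}\,\|\xi_1\|_{W^{1,2}}\|\xi_2\|_{W^{1,2}}.
\]
Since $\Psi_\rho$ is assembled from $\Psi_{\sigma_k}$ and $\Psi_{\sigma_l}$ via fixed smooth cut-offs and since $M_{t\sigma_l}$ is compact, $\|\Psi_\rho\|_{C^2}$ is bounded by a constant depending only on the two component homotopies and on the cut-off profile. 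Summing over the three blocks gives $\|d^2\overline{\mathcal{F}}_\rho|_z\|^{op}\leq K$ for all $z$ in a bounded neighborhood and all $\rho\geq\rho_0$, which completes the argument.

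The hard part is precisely the uniformity in $\rho$. Without the stability-at-the-ends axiom, the $(\vec t,\vec t')$-derivatives of $\Psi_\rho$ would have support of length $O(\rho)$, and the corresponding operator norm would grow like $\sqrt{\rho}$, spoiling the contraction mapping setup. The concatenation is arranged precisely to prevent this: on the long plateau $-\rho\leq s\leq\rho$ the glued homotopy is literally the $\vec t$-independent function $\overline{h}^{t\sigma_l}_{t\sigma_k}$, so that all parameter derivatives of $\Psi_\rho$ vanish there. Once this observation is in place, the mean-value argument at the start finishes the proof, with $K$ absorbing the Sobolev constant $C_S$ and the uniform $C^2$-norms of $\Psi_{\sigma_k}$ and $\Psi_{\sigma_l}$.
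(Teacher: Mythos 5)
Your proposal is correct and captures the essential analysis, but it reorganizes the argument compared to the paper. The paper proceeds by a two-leg triangle inequality: it writes
\[
\|d\overline{\mathcal{F}}_\rho(\vec{t}'',Y)-d\overline{\mathcal{F}}_\rho(0,0)\|^{op}\leq \|d\overline{\mathcal{F}}_\rho(\vec{t}'',Y)-d\overline{\mathcal{F}}_\rho(\vec{t}'',0)\|^{op}+\|d\overline{\mathcal{F}}_\rho(\vec{t}'',0)-d\overline{\mathcal{F}}_\rho(0,0)\|^{op},
\]
and then bounds the first leg by $(k_1+k_2)\|Y\|_{W^{1,2}}$ (citing the quadratic estimate Lemma~9.4.8 of [AD] for the $Z$-block and Lemma~13.8.1 of [AD] together with the $C^1$-regularity of $V_{\rho,i}$ for the mixed block) and the second leg by $k_6\|\underline{\vec{t}}''\|$ (via a direct estimate on $A(\vec{t}'')-A(0)$ and a similar $V_\rho$ variation). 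Your straight-line mean-value reduction $d\overline{\mathcal{N}}_\rho|_z=\int_0^1 d^2\overline{\mathcal{F}}_\rho|_{sz}(z,\cdot)\,ds$ gives the same quantitative conclusion, and your block decomposition of $d^2\overline{\mathcal{F}}_\rho$ into $(\vec{t},\vec{t}')$--$(\vec{t},\vec{t}')$, mixed, and $Y$--$Y$ pieces recovers exactly the roles of the paper's three displayed estimates. What your version buys is self-containment: instead of deferring the quadratic estimate to [AD], you derive it directly from the Sobolev embedding $W^{1,2}(\mathbb{R})\hookrightarrow L^\infty(\mathbb{R})$ plus the compactness of $M_{t\sigma_l}$; and your observation that the stability-at-the-ends axiom confines all parameter derivatives of $\Psi_\rho$ to an $s$-interval of bounded length is precisely the same fact the paper encodes in the remark that $V_\rho(s)=0$ outside a bounded window. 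What the paper's version buys is that it avoids the need to assert $C^2$-smoothness of $\overline{\mathcal{F}}_\rho$ as a Banach map and tracks exactly which constant ($k_1,\dots,k_6$) comes from which source, which matters downstream when verifying that $K$ is independent of $x,y$. If you were to write your argument up in full you should still record, as the paper implicitly does, that the $C^2$-norm of the local nonlinearity $\Psi_\rho$ depends only on the fixed component homotopies and not on $\rho$, since otherwise the uniformity in $\rho$, which is the whole point of the lemma, would not follow from the compactness observation alone.
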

\begin{proof}

Consider the linearization $d \overline{\mathcal{F}}_{\rho}(\vec{t}, \vec{t}', Y)(\vec{\tau}'', Z).$ We have
\begin{equation}\nonumber
\begin{split}
\|d \overline{\mathcal{F}}_{\rho}(\vec{t}'', Y) - & d \overline{\mathcal{F}}_{\rho}(0,0)\|^{op}\\
& \leq \| d \overline{\mathcal{F}}_{\rho}(\vec{t}'', Y) - d \overline{\mathcal{F}}_{\rho}(\vec{t}'', 0) \|^{op} + \| d \overline{\mathcal{F}}_{\rho}(\vec{t}'', 0) - d \overline{\mathcal{F}}_{\rho}(0, 0) \|^{op}.
\end{split}
\end{equation}

\begin{equation}\nonumber
\big( d \overline{\mathcal{F}}_{\rho}(\vec{t}'', Y) \big)(\vec{\tau}'', Z) = \big( d \overline{\mathcal{F}}_{\rho}(\vec{t}'', Y) \big)(0, Z)  + \big( d \overline{\mathcal{F}}_{\rho}(\vec{t}'', Y) \big)(\vec{\tau}'', 0),
\end{equation}
where
\begin{equation}\nonumber
\begin{split}
& d \overline{\mathcal{F}}_{\rho} (\vec{t}'', Y) =: d \overline{\mathcal{F}}_{\rho}^{\vec{t}''}(Y),\\
& d \overline{\mathcal{F}}_{\rho} (\vec{t}'', Y) = \sum\limits_{i} \vec{\tau}'' \cdot V_{\rho_{n,i}}(\vec{t}'', Y).
\end{split}
\end{equation}
It follows that
\begin{equation}\label{str1}
\begin{split}
 \| d \overline{\mathcal{F}}_{\rho} & (\vec{t}'', Y)(\vec{\tau}'', Z) - d \overline{\mathcal{F}}_{\rho}(\vec{t}, 0)(\vec{\tau}'', Z) \|_{L^2}\\ 
& \leq \| d \overline{\mathcal{F}}_{\rho}^{\vec{t}''}(Y) (Z) - d \overline{\mathcal{F}}_{\rho}^{\vec{t}''}(0) (Z) \|_{L^2} + \sum\limits_i \| \tau''_i \cdot V_{\rho, i} (\vec{t}'', Y) - \tau''_i \cdot V_{\rho, i} (\vec{t}'', 0) \|_{L^2}.
\end{split}
\end{equation}

\begin{lemma}
$\| d \overline{\mathcal{F}}_{\rho}^{\vec{t}''}(Y)(Z) - d \overline{\mathcal{F}}_{\rho}^{\vec{t}''}(0)(Z) \|_{L^2} \leq k_1 \|Y\|_{W^{1,2}} \|Z\|_{W^{1,2}},$
\end{lemma}
\begin{proof}
We can refer to Lemma 9.4.8 in [AD], where the inequality is proved for the Floer case. But the Morse version is essentially the same, since the underlying analysis of Fredholm operators applies to the both cases.
\end{proof}

By Lemma 13.8.1 of [AD], we have
\begin{equation}\label{str3}
\sum\limits_{i} |\tau_i| \cdot \| V_{\rho, i}(\vec{t}'', Y) - V_{\rho,i}(\vec{t}'',0) \|_{L^2} \leq K \|Y\|_{W^{1,2}},
\end{equation}
and this follows from the fact that $V_{\rho, i}(\vec{t}, \cdot)$ is of $C^1$-class. Hence (\ref{str1}) implies that

\begin{equation}\nonumber
\begin{split}
  \| d \overline{\mathcal{F}}_{\rho}&  (\vec{t}'', Y)(\vec{\tau}'', Z) -   d \overline{\mathcal{F}}_{\rho} (\vec{t}'', 0) (\vec{\tau}'', Z)  \|_{L^2}\\
& \leq k_1 \|Y\|_{W^{1,2}}\|Z\|_{W^{1,2}} + k_2 \| Y \|_{W^{1,2}}  \| \vec{\tau}'' \| \\
& \leq (k_1 + k_2) \| Y \|_{W^{1,2}} \sqrt{\|Z\|^2 + \|\vec{\tau}''\|^2}.
\end{split}
\end{equation}
for some $k_1, k_2 >0.$
Hence we have
\begin{equation}\label{str4}
\| d \overline{\mathcal{F}}_{\rho}(\vec{t}'', Y) - d \overline{\mathcal{F}}_{\rho}(\vec{t}'', 0)  \|^{op} \leq (k_1 + k_2) \|Y\|_{W^{1,2}}.
\end{equation}
Recall that 
\begin{equation}\nonumber
d\overline{\mathcal{F}}_{\rho}(\vec{t}'', 0)(\vec{\tau}'', Z) = (d \mathcal{F}_{\rho})_0^{\vec{\underline{t}''} + \vec{t}'' }(Z) + \sum\limits_i \tau_i'' V_{\rho, i}(\underline{\vec{t}}''+ \vec{t}'', 0).
\end{equation}
For $\|d \overline{\mathcal{F}}_{\rho}(\vec{\underline{t}''}, 0) - d \overline{\mathcal{F}}_{\rho}(0,0)\|^{op},$ we have
\begin{equation}\nonumber
\begin{split}
\| d\overline{\mathcal{F}}_{\rho}& (\vec{t}'', 0) (\vec{\tau}'', Z) -  d \overline{\mathcal{F}}_{\rho}(0,0)(\vec{\tau}'', Z)\| \\
& \leq {}^{\textcircled{1}} \| (d\mathcal{F}_{\rho})^{\underline{\vec{t}}'' + \vec{t}''}_0 (Z) -  (d\mathcal{F}_{\rho})^{ \vec{t}''}_0 (Z) \| + {}^{\textcircled{2}} \| \sum\limits_{i} \tau_i'' \cdot \big( V_{\rho, i} (\underline{\vec{t}}'' + \vec{t}'', 0)- V_{\rho, i}(\underline{\vec{t}''}) \big) \|,
\end{split}
\end{equation}
and
\begin{equation}\nonumber
\begin{split}
\textcircled{1} = \| (d \mathcal{F}_{\rho})&^{\underline{\vec{t}}'' + \vec{t}''}_0 (Z) -  (d\mathcal{F}_{\rho})^{ \vec{t}''}_0 (Z) \| \\
&= \| A(\underline{\vec{t}}'')(Z) - A(0)(Z) \|_{L^2} \leq k_4  \| \underline{\vec{t}}''\|  \| Z \|_{L_2} \leq k_4 \|\underline{\vec{t}}''\| \| Z \|_{W^{1,2}},
\end{split}
\end{equation}
where $k_4 = \sup\limits_{\substack{\vec{t} \in [0,1]^{k-1}, \\  i = 1, \cdots, k-1.}} \frac{\partial A}{\partial \underline{t}_i}$
and 
\begin{equation}\nonumber
\textcircled{2} = \| \sum\limits_{i} \tau_i'' \cdot \big( V_{\rho, i} (\underline{\vec{t}}'' + \vec{t}'', 0)- V_{\rho, i}(\vec{t}'') \big) \| \leq \cdots \leq k_5 \|\vec{\tau}''\| \| \underline{\vec{t}}'' \|
\end{equation}
similarly as (\ref{str3}) for some $k_5>0.$ 

Thus we have 
\begin{equation}\nonumber
\begin{split}
\| d \overline{\mathcal{F}}_{\rho} & (\vec{t}'', 0) (\vec{\tau}'', Z) -  d \overline{\mathcal{F}}_{\rho}(0,0)(\vec{\tau}'', Z) \| \\ 
& \leq k_4 \|\underline{\vec{t}}''\| \| Z \|_{W^{1,2}} + k_5 \|\vec{\tau}''\| \| \underline{\vec{t}}'' \| \leq k_6 \| \underline{\vec{t}}'' \| \sqrt{  \|Z\|^2 + \| \vec{\tau}'' \|^2   },
\end{split}
\end{equation}
so that
\begin{equation}\label{str2}
\| d\overline{\mathcal{F}}_{\rho} (\vec{t}'', 0) - d \overline{\mathcal{F}}_{\rho}(0,0)\|^{op} \leq k_6 \| \underline{\vec{t}}'' \|,
\end{equation}
where $k_6:= k_4+k_5.$
From (\ref{str4}) and (\ref{str2}), we now conclude that
\begin{equation}\nonumber
\begin{split}
\| d\overline{\mathcal{F}}_{\rho} (\vec{t}'', Y) - d \overline{\mathcal{F}}_{\rho}(0,0) \|^{op} & \leq (k_1 + k_2) \| Y \|_{W^{1,2}} + k_6 \| \underline{\vec{t}}'' \| \\
& \leq K \big( \| Y\|_{W^{1,2}} + \| \underline{\vec{t}}'' \| \big),
\end{split}
\end{equation}
where $K:= \max (k_1 + k_2, k_6),$ which finishes the proof of Lemma \ref{chktwo}.
\end{proof}

{\it{Checking the condition (iii) of Theorem \ref{ctrct}}}. For $(\vec{t}''_1, Y_1), (\vec{t}''_2, Y_2) \in B(0,r),$ we have
\begin{equation}\nonumber
\begin{split}
\|\mathcal{G}_{\rho} & \circ \mathcal{N}_{\rho}(\vec{t}''_1, Y_1) - \mathcal{G}_{\rho} \circ \mathcal{N}_{\rho}(\vec{t}''_2, Y_2) \|_{W^{1,2}}\\ 
&\leq {C}^{-1} \cdot \|\mathcal{N}_{\rho}(\vec{t}''_1, Y_1) - \mathcal{N}_{\rho}(\vec{t}''_2, Y_2)\|_{L^2} \\ 
& \leq C^{-1} \cdot \| d \mathcal{N}_{\rho} \|^{op} \cdot \|(\vec{t}''_1, Y_1)-(\vec{t}''_2, Y_2)\| \\  
& \leq {C}^{-1} \cdot \sup\limits_{ (\vec{t}'', Y) \in B(0, \epsilon) } K \big(\| Y \|_{W^{1,2}} + \|\vec{t}''\|\big) \cdot \|(\vec{t}''_1, Y_1)-(\vec{t}''_2, Y_2)\| \\ 
& \leq C^{-1} \cdot K \cdot \big(\|(\vec{t}''_1, Y_1)\|  + \|(\vec{t}''_2, Y_2)\|\big) \cdot \|(\vec{t}''_1, Y_1)-(\vec{t}''_2, Y_2)\|.
\end{split}
\end{equation}

Then by Theorem \ref{ctrct}, we have

\begin{theorem}\label{glucont}
There exists unique $ \big(\underline{\vec{t}}_{\rho}, \underline{\vec{t}'}_{\rho}, \Gamma(\rho)\big) \in \text{Im }\mathcal{G}_{\rho} \cap B \big( 0, \epsilon \big)$ such that
\begin{equation}\nonumber
\overline{\mathcal{F}}_{\rho}\big(\Gamma(\rho) \big) = \Big( \frac{d}{ds} + \nabla_{G_{\sigma_k}^{\vec{t} + \underline{\vec{t}}_{\rho}} \#_{\rho} G_{\sigma_l}^{\vec{t} + \underline{\vec{t}}_{\rho}'}}H_{\sigma_k}^{\vec{t} + \underline{\vec{t}}_{\rho}} \#_{\rho} H_{\sigma_l}^{\vec{t} + \underline{\vec{t}}_{\rho}'} \Big) \big(\exp_{w_{\rho}}\big( \Gamma(\rho) \big) \big) =0.
\end{equation}
In other words, we have $\big( \vec{t} + \underline{\vec{t}}_{\rho}, \vec{t}' + \underline{\vec{t'}}_{\rho}, \exp_{w_{\rho}} \big( \Gamma ( \rho ) \big) \big) \in \mathcal{M}( H_{\sigma_{k}} \#_{\rho} H_{\sigma_{l}}; x, z).$
\end{theorem}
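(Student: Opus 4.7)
The plan is to invoke the contraction mapping theorem (Theorem \ref{ctrct}) with $F=\overline{\mathcal{F}}_\rho$ and $G=\mathcal{G}_\rho$, viewed as maps between the Banach spaces $\mathbb{R}^{k+l-2}\times W^{1,2}(\mathbb{R},\mathbb{R}^n)$ and $L^2(\mathbb{R},\mathbb{R}^n)$. The preceding paragraphs have already put us in position to do exactly this: the decomposition
\[ \overline{\mathcal{F}}_\rho(\underline{\vec{t}},\underline{\vec{t}'},Y) = \overline{\mathcal{F}}_\rho(0,0,0) + \mathcal{L}_\rho(\underline{\vec{t}},\underline{\vec{t}'},Y) + \overline{\mathcal{N}}_\rho(\underline{\vec{t}},\underline{\vec{t}'},Y) \]
is already of the form $F(0)+dF+N$ required by Theorem \ref{ctrct}, with $\mathcal{L}_\rho$ as the linear part and $\overline{\mathcal{N}}_\rho$ as the nonlinear remainder. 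The claim will follow by reading off the conclusion of that theorem.

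The three hypotheses of Theorem \ref{ctrct} are precisely what the preceding discussion has been verifying. Condition (i), that $\mathcal{L}_\rho\circ\mathcal{G}_\rho=\mathrm{id}_{L^2}$, holds by the very definition $\mathcal{G}_\rho:=\mathcal{L}_\rho|_{\mathbb{R}^{k+l-2}\times W_\rho^\perp}^{-1}$, whose existence for $\rho$ large is guaranteed by Theorem \ref{lsurj}. Condition (ii), the smallness estimate $\|\mathcal{G}_\rho\circ\overline{\mathcal{F}}_\rho(0,0,0)\|<\epsilon/2$, follows by combining the uniform bound on $\|\mathcal{G}_\rho\|^{op}$ coming from the invertibility estimate of Proposition \ref{invest} with the fact that the pregluing $w_\rho=u\#_\rho^o v$ is an approximate solution, so $\overline{\mathcal{F}}_\rho(0,0,0)\to 0$ in $L^2$ as $\rho\to\infty$ (the error is supported in the middle region, where both $u$ and $v$ are close to the shared asymptote $y$). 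Condition (iii) is exactly the quadratic Lipschitz estimate for $\mathcal{G}_\rho\circ\overline{\mathcal{N}}_\rho$ that was verified in Lemma \ref{chktwo} and the subsequent computation, with the constant again coming from Proposition \ref{invest}.

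Applying Theorem \ref{ctrct} then yields a unique $(\underline{\vec{t}}_\rho,\underline{\vec{t}'}_\rho,\Gamma(\rho))\in B(0,\epsilon)\cap\mathrm{Im}\,\mathcal{G}_\rho$ solving $\overline{\mathcal{F}}_\rho(\underline{\vec{t}}_\rho,\underline{\vec{t}'}_\rho,\Gamma(\rho))=0$, which by construction of $\overline{\mathcal{F}}_\rho$ is precisely the parametrized flow equation for $H_{\sigma_k}\#_\rho H_{\sigma_l}$ at shifted parameter $(\vec{t}+\underline{\vec{t}}_\rho,\vec{t}'+\underline{\vec{t}'}_\rho)$, with asymptotics $x$ and $z$. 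The main subtlety I foresee is ensuring that this shifted parameter actually lies in the cube $[0,1]^{k+l-2}$: this is precisely why the $\mathbb{R}^{k+l-2}$-extension $\overline{\mathcal{F}}_\rho$ of $\mathcal{F}_\rho$ was introduced in advance. For $\rho$ large the bound in (ii) forces $\|\underline{\vec{t}}_\rho\|+\|\underline{\vec{t}'}_\rho\|$ to be small, so whenever $(\vec{t},\vec{t}')$ sits in the interior of $[0,1]^{k+l-2}$ the perturbed parameter remains inside the cube; extension to boundary points then comes from the stability-near-the-faces axiom in Definition \ref{khtpy}, which makes the homotopy locally independent of the normal direction.
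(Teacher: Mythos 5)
Your proposal follows exactly the paper's argument: the three conditions of Theorem~\ref{ctrct} are verified in the text immediately preceding the theorem statement (condition (i) from the definition of $\mathcal{G}_\rho$ via Theorem~\ref{lsurj}, condition (ii) from Proposition~\ref{invest} together with the pregluing being an approximate solution, and condition (iii) from Lemma~\ref{chktwo}), after which the theorem is read off directly. Your closing remark about $\overline{\mathcal{F}}_\rho$ being introduced as an $\mathbb{R}^{k+l-2}$-extension precisely so the perturbed parameter remains meaningful is a correct observation the paper leaves implicit.
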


\begin{definition}
We define the \textit{gluing of the two parametrized trajectories} $(\vec{t}, u)$ and $(\vec{t}', v)$ for the gluing parameter $\rho$ by
\begin{equation}\nonumber
(\vec{t}, u) \#_{\rho} (\vec{t}', v) := \big( \vec{t} + \underline{\vec{t}}_{\rho}, \vec{t}' + \underline{\vec{t}'}_{\rho}, \exp _{w_{\rho}} \big( \Gamma (\rho) \big) \big).
\end{equation}
\end{definition}

\begin{proposition}\label{prp2}
We have
\begin{enumerate}[label = (\roman*)]
\item $d \mathcal{F}_{\rho}|_{\Gamma(\rho)}$ is surjective.
\item $\lim_{\rho \rightarrow \infty} \| \Gamma(\rho) \|_{\mathbb{R}^{k+l-2} \times W^{1,2}(\mathbb{R}, \mathbb{R}^n)}=0.$
\end{enumerate}
\end{proposition}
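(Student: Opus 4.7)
The plan is to address part (ii) first and then deduce part (i) by a Newton-type perturbation of the linearization at zero.

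For (ii), the ``moreover'' clause of the contraction mapping theorem (Theorem \ref{ctrct}) supplies the estimate $\|\Gamma(\rho)\| < 2 \|\mathcal{G}_\rho \circ \overline{\mathcal{F}}_\rho(0,0,0)\|$. The verification of hypothesis (ii) of Theorem \ref{ctrct} carried out just before Theorem \ref{glucont} already combined Proposition \ref{invest} with the exponential decay of $u$ and $v$ towards $y$ to show that $\|\mathcal{G}_\rho \circ \overline{\mathcal{F}}_\rho(0,0,0)\| \to 0$ as $\rho \to \infty$, so (ii) follows immediately.

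For (i), I decompose
\begin{equation}\nonumber
d\mathcal{F}_\rho|_{\Gamma(\rho)} \;=\; \mathcal{L}_\rho \;+\; d\overline{\mathcal{N}}_\rho|_{\Gamma(\rho)}.
\end{equation}
The hypotheses of Theorem \ref{gluthm} place us in the setting of Theorem \ref{lsurj}, so $\mathcal{L}_\rho$ is surjective for $\rho \geq \rho_0$ and admits a bounded right inverse $\mathcal{G}_\rho$ whose operator norm is controlled by $C^{-1}$ via Proposition \ref{invest}. Lemma \ref{chktwo} gives $\|d\overline{\mathcal{N}}_\rho|_{\Gamma(\rho)}\|^{op} \leq K \|\Gamma(\rho)\|$, which vanishes as $\rho \to \infty$ by part (ii) above.

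Composing on the right by $\mathcal{G}_\rho$, I obtain
\begin{equation}\nonumber
d\mathcal{F}_\rho|_{\Gamma(\rho)} \circ \mathcal{G}_\rho \;=\; \mathrm{id}_{L^2} \;+\; d\overline{\mathcal{N}}_\rho|_{\Gamma(\rho)} \circ \mathcal{G}_\rho,
\end{equation}
and the perturbation term has operator norm at most $K C^{-1} \|\Gamma(\rho)\|$, which is strictly less than $1$ for all sufficiently large $\rho$. The right-hand side is then invertible by a Neumann series, so
\begin{equation}\nonumber
\mathcal{G}_\rho \circ \bigl( \mathrm{id} + d\overline{\mathcal{N}}_\rho|_{\Gamma(\rho)} \circ \mathcal{G}_\rho \bigr)^{-1}
\end{equation}
is a bounded right inverse of $d\mathcal{F}_\rho|_{\Gamma(\rho)}$, yielding surjectivity.

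I do not anticipate any serious obstacle: the heavy analysis (invertibility of $\mathcal{L}_\rho$ on $W_\rho^\perp$ and the Lipschitz-type control on $d\overline{\mathcal{N}}_\rho$) has already been packaged into Proposition \ref{invest} and Lemma \ref{chktwo}. The only point requiring care is confirming that $\Gamma(\rho)$ lies in the ball $B(0,\epsilon)$ on which Lemma \ref{chktwo} is valid; this is automatic, since $\Gamma(\rho)$ was produced by Theorem \ref{ctrct} precisely inside that ball, and combining this with (ii) ensures $\|\Gamma(\rho)\|$ shrinks to $0$ uniformly in the relevant data.
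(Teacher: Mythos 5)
Your part (ii) is essentially identical to the paper's: both invoke the ``moreover'' estimate $\|\Gamma(\rho)\| \leq 2\|\mathcal{G}_\rho \circ \overline{\mathcal{F}}_\rho(0,0,0)\|$ from Theorem \ref{ctrct}, bound $\|\mathcal{G}_\rho(\cdot)\|$ by $C^{-1}\|\cdot\|_{L^2}$ via Proposition \ref{invest}, and then use $\|\overline{\mathcal{F}}_\rho(0,0,0)\|_{L^2} = \|\mathcal{F}(w_\rho)\|_{L^2} \to 0$ as $\rho \to \infty$.

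For part (i), however, you take a genuinely different and substantially more complete route. The paper's proof is a one-sentence appeal to the chart identity $d\mathcal{F}_\rho|_{\Gamma(\rho)} = dF \circ (d\exp_{w_\rho})^{-1}$ together with the fact that $d\exp_{w_\rho}$ is an isomorphism; this reduces surjectivity of the linearization in the coordinate chart to surjectivity of the linearization of the Banach-manifold map $F$ at the glued solution $\exp_{w_\rho}(\Gamma(\rho))$, but does not explain why the latter is surjective --- Theorem \ref{lsurj} only established surjectivity of $\mathcal{L}_\rho$, the linearization at the \emph{pre}glued point $(\vec{t},\vec{t}',0)$, not at the genuine solution $\Gamma(\rho)$. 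Your Neumann-series argument addresses precisely this gap: you decompose $d\overline{\mathcal{F}}_\rho|_{\Gamma(\rho)} = \mathcal{L}_\rho + d\overline{\mathcal{N}}_\rho|_{\Gamma(\rho)}$, use the operator bound $\|\mathcal{G}_\rho\|^{op} \leq C^{-1}$ from Proposition \ref{invest}, the Lipschitz control $\|d\overline{\mathcal{N}}_\rho|_{\Gamma(\rho)}\|^{op} \leq K\|\Gamma(\rho)\|$ from Lemma \ref{chktwo}, and part (ii) to make the perturbation term $d\overline{\mathcal{N}}_\rho|_{\Gamma(\rho)} \circ \mathcal{G}_\rho$ strictly contracting for large $\rho$, after which $\mathcal{G}_\rho \circ (\mathrm{id} + d\overline{\mathcal{N}}_\rho|_{\Gamma(\rho)} \circ \mathcal{G}_\rho)^{-1}$ is an explicit bounded right inverse. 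This is the standard way to propagate surjectivity from the linearization at the approximate solution to the one at the genuine solution, and it is exactly the step the paper's cryptic proof of (i) elides. Your argument is correct and supplies the missing content; the only minor point worth double-checking is the compatibility of the norm conventions ($\|Y\|+\|\vec{\tau}\|$ versus the $\ell^2$-combined norm) between Proposition \ref{invest} and Lemma \ref{chktwo}, which costs at worst a harmless constant.
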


\begin{proof}
\begin{enumerate}[label=(\roman*)]
\item It follows from $d \mathcal{F}_{\rho}|_{\Gamma(\rho)} =  d \mathcal{F}_{\rho} \circ (d \exp_{w_{\rho}})^{-1} $ and the fact that $d \exp_{w_{\rho}} $ is an isomorphism.
\item We have
\begin{equation}\nonumber
\begin{split}
\|\Gamma(\rho)\|_{W^{1,2}} &\leq 2 \| \mathcal{G}_{\rho} \circ \mathcal{F}_{\rho} (0) \|_{W^{1,2}} \leq 2 C^{-1} \cdot \| \mathcal{F}_{\rho}(0)  \|_{L^2}\\ 
&= 2 C^{-1} \cdot \| \mathcal{F}(w_{\rho})  \|_{L^2} \rightarrow 0 \text{ as } \rho \rightarrow \infty
\end{split}
\end{equation}
\end{enumerate}
\end{proof}

In the above setting, we define the following map

\begin{equation}\nonumber
\begin{split}
\chi_{\rho} : \bigcup\limits_{\substack{y \in Crit(h_{t \sigma_k}), \\ |y| - |x| = k-1, \\ |z| - |y| = l-1}} \mathcal{M}(H_{\sigma_k} ; x, y) & \times \mathcal{M}(H_{\sigma_l}; y, z) \rightarrow \mathcal{M}(H_{\sigma_k} \#_{\rho} H_{\sigma_l}; x, z),\\
\big( (\vec{t}, u), (\vec{t}', v) \big) & \mapsto \big(  \vec{t} + \underline{\vec{t}}_{\rho}, \vec{t}' + \underline{\vec{t}'}_{\rho}, \exp _{w_{\rho}} \big( \Gamma (\rho) \big)  \big).
\end{split}
\end{equation}

The following can be studied as rigorously as Proposition 11.5.4 of [AD], but we omit its detail in this paper.
\begin{proposition}
\begin{equation}\nonumber
\begin{split}
& \lim_{\rho \rightarrow \infty} \chi_{\rho}\big( (\vec{t}, u), (\vec{t}',v) \big)(s-\rho) = \big( \vec{t} + \underline{\vec{t}}, \vec{t}' + \underline{\vec{t}'}, u(s) \big),\\
& \lim_{\rho \rightarrow \infty} \chi_{\rho}\big( (\vec{t}, u), (\vec{t}',v) \big)(s+\rho) = \big( \vec{t} + \underline{\vec{t}}, \vec{t}' + \underline{\vec{t}'}, v(s) \big) \text{ in } C^{\infty}_{loc},
\end{split}
\end{equation}
where $\underline{\vec{t}} := \lim_{\rho \rightarrow \infty} \underline{\vec{t}}_{\rho}$ and $\underline{\vec{t}'}:= \lim_{\rho \rightarrow \infty} \underline{\vec{t}'}_{\rho}$.
\end{proposition}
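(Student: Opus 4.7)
The plan is to combine the explicit formula for $\chi_\rho$ from Theorem \ref{glucont} with the norm bound from Proposition \ref{prp2} (ii). By construction
\[
\chi_\rho\bigl((\vec{t}, u), (\vec{t}', v)\bigr) = \bigl(\vec{t} + \underline{\vec{t}}_\rho,\ \vec{t}' + \underline{\vec{t}'}_\rho,\ \exp_{w_\rho}(\Gamma(\rho))\bigr),
\]
where $w_\rho = u \#_\rho^o v$ and, by Proposition \ref{prp2} (ii),
\[
\|\underline{\vec{t}}_\rho\| + \|\underline{\vec{t}'}_\rho\| + \|\Gamma(\rho)\|_{W^{1,2}(\mathbb{R}, \mathbb{R}^n)} \xrightarrow{\rho \to \infty} 0.
\]
In particular $\underline{\vec{t}}_\rho, \underline{\vec{t}'}_\rho \to 0$, which takes care of the parameter components appearing in the statement.

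For the path component evaluated at $s - \rho$, I would first observe that for any fixed $s \in \mathbb{R}$ and any $\rho$ with $\rho > |s| + \max\{R_{\sigma_k}, R_{\sigma_l}\}$, the definition of the pregluing forces $w_\rho(s - \rho) = u(s)$. Hence
\[
\exp_{w_\rho}(\Gamma(\rho))(s - \rho) = \exp_{u(s)}\bigl(\Gamma(\rho)(s - \rho)\bigr).
\]
By the Sobolev embedding $W^{1,2}(\mathbb{R}, \mathbb{R}^n) \hookrightarrow C^0(\mathbb{R}, \mathbb{R}^n)$, $\|\Gamma(\rho)\|_{W^{1,2}} \to 0$ gives $\sup_{s}\lvert \Gamma(\rho)(s - \rho) \rvert \to 0$ on every bounded interval of $s$, and continuity of $\exp$ yields $C^0_{\mathrm{loc}}$ convergence to $u$.

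To upgrade to $C^{\infty}_{\mathrm{loc}}$ convergence I would exploit the smoothness of the gradient flow ODE: setting $\tilde u_\rho(s) := \exp_{w_\rho}(\Gamma(\rho))(s - \rho)$ and translating the equation $\mathcal{F}_\rho(\underline{\vec{t}}_\rho, \underline{\vec{t}'}_\rho, \Gamma(\rho)) = 0$ by $-\rho$, one obtains a first-order ODE for $\tilde u_\rho$ whose vector field and parameters $\vec{t} + \underline{\vec{t}}_\rho$ converge smoothly, on any fixed compact $s$-window and for $\rho$ large, to those of the equation defining $u$. Smooth dependence of ODE solutions on parameters and initial data then bootstraps the $C^0_{\mathrm{loc}}$ convergence to $C^{\infty}_{\mathrm{loc}}$ convergence. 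The statement for $\chi_\rho(\cdots)(s + \rho) \to v(s)$ follows by the entirely symmetric argument with the roles of $u$ and $v$ exchanged (using the right-hand side of the pregluing definition).

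The main obstacle is the bootstrap and, before that, the verification that the shifted concatenated data actually converges smoothly. Here one must carefully invoke the stability-at-the-ends axiom of Definition \ref{khtpy}: on any fixed window $[-T, T]$ in $s$, the concatenation $H_{\sigma_k} \#_\rho H_{\sigma_l}(\vec{t}, s - \rho, \cdot)$ coincides with $\overline{H}^{t\sigma_l}_{\sigma_k}(\vec{t}, s, \cdot)$ as soon as $\rho > T + R_{\sigma_k}$, which by the inward direction condition reduces to $H_{\sigma_k}(\vec{t}, s, \cdot)$ on the image of $u$. This is the precise analogue of Proposition 11.5.4 of [AD], and a fully detailed argument may be adapted verbatim from there.
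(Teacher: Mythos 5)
Your proposal correctly assembles the standard argument that the paper delegates entirely to [AD, Prop.~11.5.4]: the contraction-mapping bound $\|(\underline{\vec{t}}_\rho,\underline{\vec{t}'}_\rho,\Gamma(\rho))\|\to 0$ handles the parameter components, the Sobolev embedding $W^{1,2}(\mathbb{R})\hookrightarrow C^0(\mathbb{R})$ combined with $w_\rho(s-\rho)=u(s)$ (for $\rho$ large relative to the window) gives $C^0_{\mathrm{loc}}$ convergence of the curve, and the first-order ODE structure together with the stability-at-the-ends axiom bootstraps this to $C^\infty_{\mathrm{loc}}$. Since the paper gives no proof beyond the reference to [AD], your sketch is precisely the intended argument.
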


\begin{theorem}\label{thminjsurj}
There exists $\rho_{0} > 0$ such that for every $\rho > \rho_0,$ we have
\begin{enumerate}[label = (\roman*)]
\item $\chi_{\rho}$ is injective.
\item $\chi_{\rho}$ is surjective.
\end{enumerate}
\end{theorem}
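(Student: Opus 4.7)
My plan is to prove both parts by contradiction, leveraging the finiteness of the relevant moduli spaces, the $C^{\infty}_{loc}$-convergence stated in the proposition immediately preceding this theorem, and the uniqueness built into Theorem \ref{glucont}. Under the setting (\ref{gtst}), Corollary \ref{givenk} and the parametrized analogue of Theorem \ref{trasv2hh} make $\mathcal{M}(H_{\sigma_k};x,y)$ and $\mathcal{M}(H_{\sigma_l};y,z)$ compact zero-dimensional manifolds, hence finite sets; likewise, since $|x|-|z|+(k+l-2)=0$, the target $\mathcal{M}(H_{\sigma_k}\#_{\rho}H_{\sigma_l};x,z)$ is finite. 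So both the source and the target of $\chi_{\rho}$ are finite, and contradicting either injectivity or surjectivity at arbitrarily large $\rho$ will produce a sequence on which a compactness extraction can be performed.

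For (i), assume injectivity fails for arbitrarily large $\rho$. Then there is a sequence $\rho_n\to\infty$ and distinct pairs $p_n\neq q_n$ in the domain with $\chi_{\rho_n}(p_n)=\chi_{\rho_n}(q_n)$. Finiteness of the domain lets me pass to subsequences on which $p_n\equiv p=((\vec{t},u),(\vec{t'},v))$ and $q_n\equiv q=((\vec{s},a),(\vec{s'},b))$ are constant. The $C^{\infty}_{loc}$-convergence gives
\[
\lim_{n\to\infty}\chi_{\rho_n}(p)(s-\rho_n)=\bigl(\vec{t}+\underline{\vec{t}},\ \vec{t'}+\underline{\vec{t'}},\ u(s)\bigr),
\]
together with the corresponding statement for $q$; since the two sides agree for every $n$, I conclude $u=a$, and analogously $v=b$ from the $+\rho_n$ shift. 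Proposition \ref{prp2}(ii), asserting $\underline{\vec{t}}_{\rho_n},\underline{\vec{t'}}_{\rho_n}\to 0$, then forces $\vec{t}=\vec{s}$ and $\vec{t'}=\vec{s'}$. Hence $p=q$, contradicting $p\neq q$.

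For (ii), suppose instead there is $\rho_n\to\infty$ and $w_n\in\mathcal{M}(H_{\sigma_k}\#_{\rho_n}H_{\sigma_l};x,z)\setminus\mathrm{Im}(\chi_{\rho_n})$. By the parametrized compactification theorem, after passing to a subsequence and suitable reparametrizations, $w_n$ converges to a broken trajectory. The identity $|x|-|z|+k+l-2=0$ forbids any breaking beyond the single neck-breaking forced by the concatenation, so the limit consists of exactly one pair $((\vec{t},u),(\vec{t'},v))\in\mathcal{M}(H_{\sigma_k};x,y')\times\mathcal{M}(H_{\sigma_l};y',z)$ for some $y'\in\mathrm{Crit}(h_{t\sigma_k})$ with $|y'|-|x|=k-1$. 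By finiteness, a further subsequence makes this pair constant in $n$. Applying the gluing construction at parameter $\rho_n$ to this pair yields $\chi_{\rho_n}((\vec{t},u),(\vec{t'},v))\in\mathrm{Im}(\chi_{\rho_n})$; the goal is to show $w_n$ equals this element for $n$ sufficiently large. Read in the Banach coordinate around $w_{\rho_n}=u\#^o_{\rho_n}v$, the convergence of $w_n$ expresses $w_n=\exp_{w_{\rho_n}}(\Gamma_n)$ with $\|\Gamma_n\|_{W^{1,2}}\to 0$; since Proposition \ref{prp2}(ii) gives the same decay for $\Gamma(\rho_n)$, the uniqueness in $B(0,\epsilon)\cap\mathrm{Im}(\mathcal{G}_{\rho_n})$ from Theorem \ref{glucont} forces $w_n=\chi_{\rho_n}((\vec{t},u),(\vec{t'},v))$, contradicting $w_n\notin\mathrm{Im}(\chi_{\rho_n})$.

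The main obstacle lies in the closing step of (ii): upgrading the $C^{\infty}_{loc}$-convergence of $w_n$ to the global $W^{1,2}$-statement that the coordinate representation $\Gamma_n$ is small and actually lies in the slice $\mathrm{Im}(\mathcal{G}_{\rho_n})=\mathbb{R}^{k+l-2}\times W_{\rho_n}^\perp$ where uniqueness applies. The first point is handled by standard exponential-decay estimates for parametrized trajectories near the critical point $y'$, controlling the contribution of the middle region $[-\rho_n,\rho_n]$; the second is achieved by a bounded correction inside $\ker\mathcal{L}_{\rho_n}$, whose smallness is provided by Proposition \ref{invest}. This matches the parametrized analogue of Propositions 11.5.5 and 11.5.6 of [AD].
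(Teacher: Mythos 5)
Your proof follows essentially the same approach as the paper: both parts are argued by contradiction, part (i) via the $C^\infty_{loc}$-limit of the shifted glued trajectory (the proposition immediately preceding the theorem), and part (ii) via the parametrized compactness theorem plus the uniqueness clause of the contraction mapping theorem, together with the exponential-decay estimates from Section~10 to upgrade the convergence to $W^{1,2}$. Your explicit passage to constant subsequences in (i) (using finiteness of the domain) is a minor tidying-up of what the paper does implicitly, and is fine.

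One remark about the ``main obstacle'' you flag at the end of (ii). In the setting (\ref{gtst}), the indices satisfy $|x|-|y|+k-1=0$ and $|y|-|z|+l-1=0$, so $dF_{\sigma_k}$ and $dF_{\sigma_l}$ are surjective Fredholm operators of index $0$; hence $\ker dF_{\sigma_k}=\ker dF_{\sigma_l}=0$, so $W_{\rho}=\{0\}$, $W_{\rho}^{\perp}=W^{1,2}(\mathbb{R},\mathbb{R}^n)$, and by Theorem~\ref{lsurj} also $\ker\mathcal{L}_{\rho}=0$. Thus $\mathrm{Im}\,\mathcal{G}_{\rho}=\mathbb{R}^{k+l-2}\times W^{1,2}(\mathbb{R},\mathbb{R}^n)$ is everything, and the ``slice condition'' you worry about is automatic. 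In particular, your proposed fix --- ``a bounded correction inside $\ker\mathcal{L}_{\rho_n}$ whose smallness is provided by Proposition~\ref{invest}'' --- would not go through as stated, since $\ker\mathcal{L}_{\rho_n}$ is zero and Proposition~\ref{invest} is an injectivity estimate on $\mathbb{R}^{k+l-2}\times W_\rho^{\perp}$ rather than a bound on a kernel component. Fortunately there is nothing to correct: once you know $\|Y_n\|_{W^{1,2}}\to 0$ and $\vec{t}_n\to\vec{t}$, $\vec{t}'_n\to\vec{t}'$, the element $(\vec{t}_n-\vec{t},\vec{t}'_n-\vec{t}',Y_n)$ is a small zero of $\overline{\mathcal{F}}_{\rho_n}$ already lying in $\mathrm{Im}\,\mathcal{G}_{\rho_n}$, so uniqueness from Theorem~\ref{glucont} applies directly. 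The genuine work in the last step is the one you identify first: the global $W^{1,2}$-decay of $Y_n$ via exponential estimates near $y'$, as in the sketch of Theorem~\ref{thm15}.
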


\begin{proof-sketch}
We explain why this is true.

(i) Suppose not, then there exist sequences $\rho_n>0, \ (\vec{t}_n, \vec{t}'_n, u_n, v_n),$ and \\  
$ (\vec{t}''_n, \vec{t}'''_n, u_n, v_n)$ with 

\begin{equation}\nonumber
\begin{cases}
\rho_n \rightarrow \infty \text{ as } n \rightarrow \infty,\\ 
(\vec{t}_n, \vec{t}'_n, u_n, v_n) \neq (\vec{t}''_n, \vec{t}'''_n, u_n, v_n), \text{ for each }n,\\
\chi_{\rho} (\vec{t}_n, u_n, \vec{t}'_n, v_n)  = \chi_{\rho} (\vec{t}''_n, u_n, \vec{t}'''_n, v_n), \text{ for each }n,
\end{cases}
\end{equation}

so that from Proposition \ref{prp2}, we have
\begin{equation}\nonumber
\begin{split}
\big(\vec{t}_n, \vec{t}'_n, u(s) \big) = \lim\limits_{\rho \rightarrow \infty} \chi_{\rho} (\vec{t}_n, \vec{t}'_n, & u_n, v_n)(s-\rho)\\ 
&= \lim\limits_{\rho \rightarrow \infty} \chi_{\rho} (\vec{t}''_n, \vec{t}'''_n, u_n, v_n)(s + \rho) = \big(\vec{t}''_n, \vec{t}'''_n, u'(s) \big).
\end{split}
\end{equation}
It follows that $u = u',$ and similarly $v = v',$ which is a contradiction.

(ii) We claim that $\chi_{\rho_n}$ is surjective for all sufficiently large $n.$ Suppose not, then there exists a sequence $l_n = (\vec{t}_n, \vec{t}'_n, l_n) \notin \text{im}\chi_{\rho_n},$ for all $ n.$ Consider
\begin{equation}\nonumber
\overline{l}_n := (\vec{t}_n, \vec{t}'_n, l_n) \in \mathcal{M}(H_{\sigma_k} \#_{\rho} H_{\sigma_l}; x, z).
\end{equation}
Then there exists a sequence $\rho_n >0$ such that
\begin{equation}\nonumber
\begin{split}
& l_n(s - \rho_n) \xrightarrow{n \rightarrow \infty} u(s) \text{ in } C^{\infty}_{loc} \text{ with } (\vec{t},u) \in \mathcal{M}(H_{\sigma_k}; x, y) \text{ for some } \vec{t} \in [0,1]^{k-1}, \\
& l_n(s + \rho_n) \xrightarrow{n \rightarrow \infty} v(s) \text{ in } C^{\infty}_{loc} \text{ with } (\vec{t}',v) \in \mathcal{M}(H_{\sigma_l}; y, z) \text{ for some } \vec{t}' \in [0,1]^{l-1}.
\end{split}
\end{equation}
Here the convergences are guaranteed by  Lemma \ref{aathm}. Then there exists $Y_n \in  W^{1,2}(w_{\rho_n}^*TM_{t \sigma_l}) $ such that $\exp_{w_{\rho_{n}}}(Y_n) = l_n$ and $\| Y_n \|_{\infty}, \ \|Y_n\|_{W^{1,2}} \rightarrow 0.$ (See the comments in the sketch of proof of Theorem \ref{thm15}.) For $n$ sufficiently large, $K$ in Proposition \ref{chktwo} is independent of $\rho_n,$ and the radius $\epsilon$ in the contraction mapping theorem is independent of $n.$ As $\| Y_n \|_{W^{1,2}} \rightarrow 0,$ we can use the uniqueness argument for the solution (for sufficiently large $n$) to have $Y_n = \gamma(\rho_n).$ Thus we have $l_n = \chi_{\rho_n}\big( (\vec{t}_n - \underline{\vec{t}}_n, u), (\vec{t}'_n - \underline{\vec{t}'}_n,v) \big)$ for all sufficiently large $n$ and $\underline{\vec{t}}_n$ and $\underline{\vec{t}'}_n$ as in Theorem \ref{glucont}, which is a contradiction.
\end{proof-sketch}

We now have a bijection between the finite sets:
\begin{equation}\nonumber
\bigcup\limits_{\substack{y \in Crit(h_{t \sigma_k}), \\ |y| - |x| = k-1, \\ |z| - |y| = l-1}} \mathcal{M}(H_{\sigma_k} ; x, y)  \times \mathcal{M}(H_{\sigma_l}; y, z) \simeq \mathcal{M}(H_{\sigma_k} \#_{\rho} H_{\sigma_l}; x, z).
\end{equation}
That is, we have proved Theorem \ref{gluthm}. \qed

\ \

Recall that we defined the maps $\overline{\varphi}_{{\sigma_k}}: MC_*(\overline{h}^{t \sigma_l}_{s \sigma_k}) \rightarrow MC_*(h_{t \sigma_l}),$ by counting the order of the parametrized moduli space of solutions, from which we obtained the higher continuation maps:
\begin{equation}\nonumber
\varphi_{{\sigma_k}} := \overline{\varphi}_{{\sigma_k}} \circ \iota_{\sigma_k} : MC_*(h_{s \sigma_k}) \rightarrow MC_*(h_{t \sigma_l}).
\end{equation}

\begin{corollary}\label{corlater}
Let $\rho_0$ be as in Theorem \ref{thminjsurj}. Then for each $\rho \geq \rho_0,$ we have
\begin{equation}\nonumber
\overline{\varphi}_{H_{\sigma_k} \#_{\rho} H_{\sigma_l}} = \overline{\varphi}_{{\sigma_l}} \circ \overline{\varphi}_{\overline{H}^{t\sigma_l}_{\sigma_k}},
\end{equation}
and the following diagram commutes.
\begin{equation}\nonumber
\begin{tikzcd}
MC_*(M_{s\sigma_k}, h_{s\sigma_k}) \arrow{d}[swap]{\iota_{\sigma_k}}  \arrow{rr}{\varphi_{{\sigma_k}}} & & MC_*(M_{t\sigma_k}, h_{t\sigma_k})  \arrow{d}{\varphi_{{\sigma_l}}} \\
MC_*(M_{t\sigma_l}, \overline{h}_{s\sigma_k}^{t\sigma_l}) \arrow{rr}{\overline{\varphi}_{H_{\sigma_k} \#_{\rho} H_{\sigma_l}}}
              & &MC_*(M_{t\sigma_l},h_{t\sigma_l})
\end{tikzcd}
\end{equation}
In particular, we have 
\begin{equation}\nonumber
\varphi_{H_{\sigma_k} \#_{\rho} H_{\sigma_l}} =  {\varphi}_{{\sigma_l}} \circ {\varphi}_{{\sigma_k}}.
\end{equation}
\end{corollary}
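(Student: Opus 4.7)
The plan is to imitate the argument of Corollary \ref{corgt} from the 1-homotopy case, using the parametrized gluing Theorem \ref{gluthm} (proved via Theorem \ref{thminjsurj}) in place of Theorem \ref{1gt}. The proof is now essentially a bookkeeping translation of the set-theoretic bijection into an equality of chain maps, once the inward-direction condition is used to navigate between the extended Morse functions $\overline{h}^{t\sigma_l}_{s\sigma_k}$ and their restrictions.

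First, I would apply Theorem \ref{thminjsurj} to obtain, for every $\rho \geq \rho_0$ and every triple of critical points $x \in \mathrm{Crit}(h_{s\sigma_k})$, $y \in \mathrm{Crit}(h_{t\sigma_k})$, $z \in \mathrm{Crit}(h_{t\sigma_l})$ satisfying $|x|-|y|+k-1 = 0$ and $|y|-|z|+l-1 = 0$, the bijection
\[
\mathcal{M}(H_{\sigma_k} \#_{\rho} H_{\sigma_l}; x, z) \;\simeq\; \bigcup_{\substack{y \in \mathrm{Crit}(h_{t\sigma_k}),\\ |x|-|y|+k-1=0}} \mathcal{M}(H_{\sigma_k}; x, y) \times \mathcal{M}(H_{\sigma_l}; y, z),
\]
both sides being finite sets. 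Taking $\#_2$-counts and assembling over $z$, the very definitions of the maps $\overline{\varphi}_{\bullet}$ (as signed counts of parametrized zero-dimensional moduli spaces) immediately yield the first identity
\[
\overline{\varphi}_{H_{\sigma_k} \#_{\rho} H_{\sigma_l}} \;=\; \overline{\varphi}_{\sigma_l} \circ \overline{\varphi}_{\overline{H}^{t\sigma_l}_{\sigma_k}},
\]
viewed as linear maps $MC_*(M_{t\sigma_l}, \overline{h}^{t\sigma_l}_{s\sigma_k}) \to MC_*(M_{t\sigma_l}, h_{t\sigma_l})$.

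Next, to verify commutativity of the square and the final formula, I would precompose the previous identity with $\iota_{\sigma_k}$. By definition $\varphi_{H_{\sigma_k} \#_\rho H_{\sigma_l}} = \overline{\varphi}_{H_{\sigma_k} \#_\rho H_{\sigma_l}} \circ \iota_{\sigma_k}$, so
\[
\varphi_{H_{\sigma_k} \#_\rho H_{\sigma_l}} \;=\; \overline{\varphi}_{\sigma_l} \circ \overline{\varphi}_{\overline{H}^{t\sigma_l}_{\sigma_k}} \circ \iota_{\sigma_k}.
\]
The key observation is that, by the inward-direction condition built into Definition \ref{khtpy} and the lemma preceding Theorem \ref{gluthm} (the analogue of Lemma \ref{lem411}), any trajectory of $\overline{H}^{t\sigma_l}_{\sigma_k}$ emanating from a critical point $x \in \mathrm{Crit}(h_{s\sigma_k}) \subset M_{s\sigma_k}$ cannot leave $M_{t\sigma_k}$. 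Hence $\overline{\varphi}_{\overline{H}^{t\sigma_l}_{\sigma_k}} \circ \iota_{\sigma_k}$ takes values in $MC_*(M_{t\sigma_k}, h_{t\sigma_k}) \subset MC_*(M_{t\sigma_l}, \overline{h}^{t\sigma_l}_{t\sigma_k})$ and coincides there with $\varphi_{\sigma_k}$. Composing with $\overline{\varphi}_{\sigma_l}$ restricted to $MC_*(M_{t\sigma_k}, h_{t\sigma_k})$, which is by definition $\varphi_{\sigma_l}$, yields both the commutativity of the square and the identity $\varphi_{H_{\sigma_k} \#_\rho H_{\sigma_l}} = \varphi_{\sigma_l} \circ \varphi_{\sigma_k}$.

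The genuine analytic content — the parametrized gluing bijection — has already been established in Sections 7 and 8, so the main obstacle has been disposed of. What remains is purely formal; the only subtlety is to keep careful track of how the inclusions $\iota_\bullet$ and the extensions $\overline{h}^{\bullet}_{\bullet}$ interact with the counting maps, which is exactly what the inward-direction condition is designed to control.
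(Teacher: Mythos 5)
Your proposal is correct and follows essentially the same argument as the paper: invoke the gluing bijection of Theorem \ref{gluthm} (established via Theorem \ref{thminjsurj}) to deduce $\overline{\varphi}_{H_{\sigma_k}\#_\rho H_{\sigma_l}} = \overline{\varphi}_{\sigma_l}\circ\overline{\varphi}_{\overline{H}^{t\sigma_l}_{\sigma_k}}$ by counting, then precompose with $\iota_{\sigma_k}$ and use the inward-direction condition to identify $\overline{\varphi}_{\overline{H}^{t\sigma_l}_{\sigma_k}}\circ\iota_{\sigma_k}$ with $\varphi_{\sigma_k}$ (valued in $MC_*(M_{t\sigma_k},h_{t\sigma_k})$), whence $\overline{\varphi}_{\sigma_l}\circ\varphi_{\sigma_k}=\varphi_{\sigma_l}\circ\varphi_{\sigma_k}$. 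The paper's proof is a compressed version of exactly this chain of identities.
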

\begin{proof}
By Theorem \ref{gluthm}, we have $\overline{\varphi}_{H_{\sigma_k} \#_{\rho} H_{\sigma_l}} = \overline{\varphi}_{{\sigma_l}} \circ \overline{\varphi}_{\overline{H}^{t\sigma_l}_{\sigma_k}}$ by counting the orders of both sides of the bijection. The inward direction condition tells us that the composition of $\overline{\varphi}_{H_{\sigma_k} \#_{\rho} H_{\sigma_l} }$ with $\iota_{\sigma_k}$ restricts its domain to $MC_*(M_{s\sigma_k}, h_{s\sigma_k}).$ Then we have
\begin{equation}\nonumber
\overline{\varphi}_{\overline{H}^{t\sigma_l}_{\sigma_k}}|_{MC_*(M_{s\sigma_k}, h_{s\sigma_k})} = \varphi_{H_{\sigma_k}},
\end{equation}
again by the inward direction condition and its image lands in $MC_*(M_{t\sigma_k},h_{t\sigma_k}).$ Thus we have:
$\varphi_{H_{\sigma_k} \#_{\rho} H_{\sigma_l}} = \overline{\varphi}_{H_{\sigma_k} \#_{\rho} H_{\sigma_l}} \circ \iota_{\sigma_{k}} =  (\overline{\varphi}_{{\sigma_l}} \circ \overline{\varphi}_{\overline{H}^{t\sigma_l}_{\sigma_k}} ) \circ \iota_{\sigma_k} = \overline{\varphi}_{\sigma_l} \circ \varphi_{\sigma_k} = {\varphi}_{{\sigma_l}} \circ {\varphi}_{{\sigma_k}}.
$
\end{proof}

\section{Higher Morse homotopy data}

In this section, we construct a special class of parametrized homotopy data. From now on, we distinguish between the notations for homotopies that we use for the case of these special homotopies: $\mathcal{H}_{\bullet},$ and those we use for general case: ${H}_{\bullet}.$ 

\subsection{Transversal higher homotopies}

The finite dimensionality of chain complexes $\bigl\{MC_*(h_a)\bigr\}_{a \in \mathbb{Z}_{\geq 0}}$ and the genericity of transversal homotopy data allow us to have the notion of {\textit{simultaneous}} transversality. That is, we can consider homotopy data that transversality holds at the same time for \textit{all} linearized operators at trajectories connecting pairs of critical points. We call a family of data with this property \textit{transversal homotopy data.}

Suppose that $\{H^t_{\sigma_k}\}_{t \in [0,1]}$ is a smooth 1-parameter family of transversal $k$-homotopies. The existence of such a family (for given homotopies at the ends) is not always guaranteed. Notice that $\{H^t_{\sigma_k}\}_{t \in [0,1]}$ (if it exists) is a \textit{transversal} $(k+1)$-homotopy. 

\begin{lemma}
The $(k+1)$-homotopy $\{H^t_{\sigma_k}\}_{t \in [0,1]}$ is transversal, and for all $x \in Crit(h_{s\sigma_k}),\ y \in Crit(h_{t \sigma_k})$ with $ |y| - |x| = k,$ we have
\begin{equation}\nonumber\nonumber
\mathcal{M}\big(\{H^t_{\sigma_k}\}_{t \in [0,1]}; x , y \big) = \emptyset.
\end{equation}
\end{lemma}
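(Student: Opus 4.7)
\begin{proof-sketch}
The plan is to reduce the statement to the slicewise transversality that is already built into the assumption. First, I would verify that $\{H^t_{\sigma_k}\}_{t \in [0,1]}$ is transversal as a $(k+1)$-homotopy by comparing its linearized operator to the slicewise one. At a point $(t_0, \vec{t}, u)$ of the relevant zero set, the linearization
\[
dF_{\{H^t_{\sigma_k}\}}(t_0,\vec{t},u): \mathbb{R} \times \mathbb{R}^{k-1} \times W^{1,2}(u^*TM_{t\sigma_k}) \longrightarrow L^2(u^*TM_{t\sigma_k})
\]
decomposes (as in equation (\ref{df1df2})) into a $\partial/\partial t_0$-term together with the linearization $dF_{H^{t_0}_{\sigma_k}}(\vec{t},u)$ of the $k$-homotopy at $t=t_0$. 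Since $H^{t_0}_{\sigma_k}$ is transversal by hypothesis, the latter operator is already surjective, and adjoining the extra $\mathbb{R}$-direction keeps it so. Hence $dF_{\{H^t_{\sigma_k}\}}$ is surjective at every zero, which is the transversality condition.

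Next I would compute the expected dimension of the slice moduli spaces. For each fixed $t \in [0,1]$, transversality of $H^t_{\sigma_k}$ together with Corollary \ref{givenk} tells us that $\mathcal{M}(H^t_{\sigma_k}; x,y)$ is a smooth manifold of dimension
\[
|x| - |y| + k - 1 = -k + k - 1 = -1,
\]
under the hypothesis $|y| - |x| = k$. A smooth manifold of negative dimension is empty, so $\mathcal{M}(H^t_{\sigma_k}; x,y) = \emptyset$ for every $t \in [0,1]$.

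Finally, since the parametrized moduli space for the $(k+1)$-homotopy decomposes as
\[
\mathcal{M}\bigl(\{H^t_{\sigma_k}\}_{t\in[0,1]}; x,y\bigr) = \bigsqcup_{t \in [0,1]} \{t\} \times \mathcal{M}(H^t_{\sigma_k}; x,y),
\]
and every fiber is empty, the total space is empty as well. There is no substantial obstacle here: the only subtlety is being precise that the slicewise transversal $k$-homotopy structure is the same notion of transversality used to apply Corollary \ref{givenk} fiberwise, and that the family structure on $\{H^t_{\sigma_k}\}_{t\in[0,1]}$ makes the $(k+1)$-homotopy axioms (stability near faces, inward direction, etc.) automatic from the assumption that each slice satisfies them.
\end{proof-sketch}
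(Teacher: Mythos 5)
Your proposal matches the paper's proof in approach and substance: transversality of the $(k+1)$-homotopy comes from transversality of each slice $H^{t_0}_{\sigma_k}$, since adjoining the extra $\partial/\partial t_0$-direction only enlarges the domain of an already surjective linearization; and emptiness follows because any element $(\vec t,u)$ of the parametrized moduli space would lie in some slice $\mathcal{M}(H^{t'}_{\sigma_k};x,y)$, which is a transversal moduli space of expected dimension $|x|-|y|+k-1=-1$ and hence empty. Your writeup spells out the linearization decomposition and the fibered structure more explicitly, but the logical content is identical to the paper's argument.
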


\begin{proof}
Transversality follows from the fact that $H^t_{\sigma_k}$ for each $t$ is transversal, hence the expected dimension (that is, the dimension when transversality assumed) of the moduli space $\mathcal{M}\big(\{H^t_{\sigma_k}\}_{t \in [0,1]}; x , y \big) $ is $|x| - |y| -k = 0$. Suppose that there exists $ (\vec{t}, u) \in \mathcal{M}\big(\{H^t_{\sigma_k}\}_{t \in [0,1]}; x , y \big).$ Then there exists $t' \in [0,1]$ such that $( \vec{t},u) \in \mathcal{M}(H^{t'}_{\sigma_k}; x , y).$ However, transversality of $H^{t'}_{\sigma_k}$ implies that this is impossible, since the expected dimension of the moduli space is $-1$.
\end{proof}

\begin{notation}
Let $H$ and $H'$ be two $k$-homotopies. In our notation,
\begin{equation}\nonumber
H \leadsto H'
\end{equation}
means a 1-parameter family of transversal homotopies connecting $H$ and $H'.$ It is a transversal $(k+1)$-homotopy at the same time.
\end{notation}

\begin{example}\label{egth}  We give examples of (1-parameter families of) transversal homotopies. In what follows, homotopies of Riemannian metrics are omitted, but they should be regarded as being included implicitly in all cases. Let $H_{\sigma_k},$ $H_{\sigma_{k_1}},$ $H_{\sigma_{k_2}},$ and $H_{\sigma_{k_3}} $ denote transversal $k$-, $k_1$-, $k_2$- and $k_3$-homotopies, respectively.
\begin{enumerate}[label=(\alph*)]
\item \textit{Identity homotopies.} The 1-parameter family $H_{\sigma_k}^t := H_{\sigma_k}, \text{for every } t \in [0,1]$ is transversal.
\begin{equation}\nonumber\nonumber
H_{\sigma_k} \leadsto H_{\sigma_k}.
\end{equation}
\item \textit{Inverse homotopies.} Let $\{H^t_{\sigma_k} \}_{t \in [0,1]}$ be a transversal family, then so is $\{H^{1-t}_{\sigma_k} \}_{t \in [0,1]}.$
\item \textit{Reparametrizations of the cubes.}
Consider a reparametrization map $c: [0,1]^{k-1} \rightarrow \prod^{k-1}_{i=1} [c^0_i, c^1_i], \ \vec{t} \mapsto \big(c_1(t_1), \cdots c_{k-1}(t_{k-1})\big)$ with $\frac{\partial c_i (\vec{t})}{\partial t_i} >0$ for each $i,$ $c_i(0) = c^0_i, $ and $ c_i(1) = c^1_i.$ If $H_{\sigma_k}(\vec{t,} \cdot)$ is transversal, then so is $H_{\sigma_k}\big(c(\vec{t}), \cdot \big).$
\item \textit{Increasing gluing parameters I.} Let 
\begin{equation}\nonumber\nonumber
H_{\sigma_{k_1}} \#_{\rho} H_{\sigma_{k_2}} \leadsto H_{\sigma_{k_1}} \#_{\rho'} H_{\sigma_{k_2}}
\end{equation}
with $\rho' > \rho > \rho_{\sigma_{k_1}, \sigma_{k_2},0}$ be a 1-parameter family of glued $(k_1+k_2-1)$-homotopies which is given by smoothly increasing the gluing parameter from $\rho$ to $\rho'.$ 
\item \textit{Gluing with the identity.}
let $H_{\sigma_k} \leadsto H_{\sigma_k}'$ be a 1-parameter family of transversal $k$-homotopies along which $R_{\sigma_k}$ remains the same. We can consider a family of homotopies
\begin{equation}\nonumber\nonumber
H_{\sigma_k} \#_{\rho} H_{\sigma_{k_1}} \leadsto H_{\sigma_k}' \#_{\rho} H_{\sigma_{k_1}}
\end{equation}
which is a 1-parameter family of transversal (if $\rho$ is sufficiently large) $(k+k_1 -1)$-homotopies naturally obtained from $H_{\sigma_k} \leadsto H_{\sigma_k}'.$
\item \textit{Increasing gluing parameters II.} Let $\rho_1$ and $\rho_1'$ be positive real numbers such that $\rho_1' > \rho_1 > \rho_{1,0}.$
Fix $\rho_2>0$ which is sufficiently large so that Theorem \ref{lsurj} holds for the pair $H_{\sigma_{k_1}} \#_{\rho_1} H_{\sigma_{k_2}}$ and $H_{\sigma_{k_3}}.$
Consider a family of homotopies
\begin{equation}\nonumber\nonumber
(H_{\sigma_{k_1}} \#_{\rho_1} H_{\sigma_{k_2}}) \#_{\rho_2} H_{\sigma_{k_3}} \leadsto (H_{\sigma_{k_1}} \#_{\rho'_1} H_{\sigma_{k_2}}) \#_{\rho_2} H_{\sigma_{k_3}} 
\end{equation}
which is obtained by smoothly increasing the gluing parameter $\rho_1$ to $\rho_1',$ keeping $\rho_2$ the same.
One can further assume that $\rho_2$ is large enough so that $(H_{\sigma_{k_1}} \#_{\rho} H_{\sigma_{k_2}}) \#_{\rho_2} H_{\sigma_{k_3}}$ is transversal for all $\rho \in [\rho_1, \rho_1'].$
\item \textit{Translation homotopies.} 
Let $\tilde{s}(t) \in \mathbb{R}$ be a smooth function with $\tilde{s}(0)=0$ and $\tilde{s}'(t)>0.$ Then $H^t_{\sigma_k}(s, \cdot):=H_{\sigma_k}\big(s+\tilde{s}(t), \cdot \big)$ is a transversal family of homotopies
\begin{equation}\nonumber
H_{\sigma_k}(s, \cdot) \leadsto H_{\sigma_k}\big(s+\tilde{s}(1), \cdot \big),
\end{equation}
since for each $t,$ $H_{\sigma_k} \big(s + \tilde{s}(t) \big)$ is transversal. 
\item \textit{Reordering homotopies.} Reordering concatenations is possible via a 1-parameter family of transversal $(k_1 + k_2 +k_3 -2)$-homotopies
\begin{equation}\nonumber
H_{\sigma_{k_1}} \#_{\rho_1} (H_{\sigma_{k_2}} \#_{\rho_2} H_{\sigma_{k_3}}) \leadsto (H_{\sigma_{k_1}} \#_{\rho'_1} H_{\sigma_{k_2}}) \#_{\rho'_2} H_{\sigma_{k_3}}
\end{equation}
under the condition that the gluing parameters $\rho'_1, \rho_2' >0$ are sufficiently large. In fact, it is an easy exercise to check that any such family can be written as repeated concatenations of (f)- and (g)-type homotopies only.
\end{enumerate}
\end{example}

\subsection{Constructions of higher data}

We construct the data $\bigl\{ (\mathcal{H}_{\sigma}, \mathcal{G}_{\sigma})\bigr\}_{\sigma \in N(\mathcal{I})}.$ For simplicity, we will omit homotopy of Riemannian metrics from our notation when we consider homotopy data. All constructions can apply to Riemannian metrics as well.

Starting from the boundary conditions:
\begin{equation}\nonumber
\begin{cases}
H_{(0,1,2)}|_{t=0} = H_{(0,1)} \#_{\rho_{(0,1),(1,2)}} H_{(1,2),} \ H_{(0,1,2)}|_{t=1}  = H_{(0,2)},\\
G_{(0,1,2)}|_{t=0} = G_{(0,1)} \#_{\rho_{(0,1),(1,2)}} G_{(1,2),} \ G_{(0,1,2)}|_{t=1}  = G_{(0,2)},
\end{cases}
\end{equation}
with $\rho_{(0,1), (1,2)} >\rho_{(0,1), (1,2),0},$ we can construct: 
\begin{equation}\nonumber
\begin{cases}
H_{(0,1,2)} : [0,1] \times \mathbb{R} \times M_2 \rightarrow \mathbb{R},\\
G_{(0,1,2)} : [0,1] \times \mathbb{R} \rightarrow Met(M_2),
\end{cases}
\end{equation}
by filling the inside $\{t \mid 0 < t <1\}.$ (Here $Met(M_2)$ denotes the set of all Riemannian metrics on $M_2.$) It can done in such a way that $\big(H_{(0,1,2)}, G_{(0,1,2)}\big)$ is a transversal pair by Theorem \ref{paramtrasv}.

We have
\begin{equation}\label{12htp}
\begin{cases}
\bigl\{ (\mathcal{H}_{f}, \mathcal{G}_{f}) \bigr\}_{f \in Mor(\mathcal{I})},\\
(\mathcal{H}_{(0,1,2)}, \mathcal{G}_{(0,1,2)}),
\end{cases}
\end{equation}
and these are our initial data. In this section, we construct higher degree data that extend these. However, when we try to do the same (i.e., identifying the corners and filling the inside), for instance, with $H_{(0,1,2,3)} : [0,1]^2 \times \mathbb{R} \times M_3 \rightarrow \mathbb{R}, $ its values at the corner (0,0) when we approach in the two different orders do not match:
\begin{equation}\nonumber 
\begin{split}
\big(H_{(0,1,2,3)}|_{t_1 =0}\big) |_{t_2 = 0} = & (H_{(0,1)} \#_{\rho_{\bullet}} H_{(1,2)}) \#_{\rho_{\bullet}} H_{(2,3)}\\ & \neq H_{(0,1)} \#_{\rho_{\bullet}} (H_{(1,2)} \#_{\rho_{\bullet}} H_{(2,3)}) = \big( H_{(0,1,2,3)}|_{t_2 =0} \big) |_{t_1 = 0}
\end{split}
\end{equation}
for $(t_1, t_2) \in [0,1]^{2}.$ The remaining part of this section will mostly be devoted to resolving this issue.

\begin{notation}
We fix our notations :
\begin{equation}\nonumber\nonumber
\begin{cases}
\widehat{\sigma_{l_1} \sigma_{l_2}} & := (f_{a_0, a_1}, \cdots,f_{a_{l_1+l_2}, a_{l_1+l_2+1}}),\\
 & \quad \text{for } \sigma_{l_1} = (f_{a_0,a_1}, \cdots, f_{a_{l_1-1},a_{l_1}}), \ \sigma_{l_2}= (f_{a_{l_1}, a_{l_1+1}}, \cdots, f_{a_{l_1+l_2}, a_{l_1+l_2+1}}),\\
S_{a,b} &:= \bigl\{\sigma \in N(\mathcal{I}) \mid s\sigma = a, \ t\sigma =b\bigr\}, \\
S_{a,-} &:= \bigl\{\sigma \in N(\mathcal{I}) \mid s\sigma = a \bigr\} \ (\text{similarly for } S_{-,b}),\\
\mathring{S}_{a,b} &:= \bigl\{\sigma \in N(\mathcal{I}) \mid s\sigma \geq a, \ t\sigma \leq b\bigr\}, \\
\mathring{S}_{a,-} &:= \bigl\{\sigma \in N(\mathcal{I}) \mid s\sigma \geq a \bigr\} \ (\text{similarly for } \mathring{S}_{-,b} ),\\
\overline{\sigma_k} &:= (s\sigma_k, s\sigma_k +1, \cdots, t\sigma_k) \text{ for } \sigma_k \in N(\mathcal{I})_k,\\
\langle \sigma_k \rangle &:= \bigl\{\tau \in N(\mathcal{I}) \mid s\tau  = s\sigma_k, t\tau = t\sigma_k\bigr\} \text{ for } \sigma_k \in N(\mathcal{I})_k,\\
\rho_{(\sigma; \sigma')} & := \max_{\substack{\tau_1 \in \langle \sigma \rangle \\ \tau_2 \in \langle \sigma' \rangle \\ t \tau_1 = s\tau_2}} \rho_{\tau_1, \tau_2}.
\end{cases}
\end{equation}
\end{notation}

\subsubsection*{A triple induction} We begin our induction steps to construct higher homotopy data. We assume that we are given the data of 1-homotopies $\mathfrak{H}^1.$ 

\begin{notation}
We denote by $\mathcal{D}_{a,b}$ the following data 
\begin{enumerate}[label = \textbullet]
\item $\mathcal{H}_{\sigma}$ with $\sigma \in \mathring{S}_{a,b},$
\item $\rho_{\sigma, \sigma'} \geq \rho_{\sigma, \sigma',0} $ for all $\sigma, \sigma' \in \mathring{S}_{a,b}$ with $|\sigma|, |\sigma'| \geq 1,$ and $t \sigma = s \sigma'$.
\item $(|\sigma| + |\sigma'| -1)$-homotopy $\mathcal{H}_{\sigma} \widetilde{\#}_{\rho_{(\sigma; \sigma')}} \mathcal{H}_{\sigma'}$ together with a transversal homotopy $$\mathcal{H}_{\sigma} \widetilde{\#}_{\rho_{(\sigma; \sigma')}}\mathcal{H}_{\sigma'} \leadsto \mathcal{H}_{\sigma} {\#}_{\rho_{(\sigma; \sigma')}} \mathcal{H}_{\sigma'}.$$ For $|\sigma| = |\sigma'| =1,$ $\widetilde{\#} \equiv \#$ and the homotopy $\leadsto$ is the identity one.
\end{enumerate}
\end{notation}

Note that we already have $\mathcal{D}_{0,2}$ from (\ref{12htp}). Our goal is to obtain $\coprod_{b=2}^{\infty} \mathcal{D}_{0,b}.$ 

\ \

\fbox{\begin{minipage}{33em}
\subsubsection*{Induction I} \textit{We have $\mathcal{D}_{0,2}.$ Given the data $\mathcal{D}_{0,b},$ we construct $\mathcal{D}_{0,b+1}.$ (Induction on $b$)}
\end{minipage}}

\ \

To construct $\mathcal{D}_{0,b+1},$ we specify the elements in $\mathcal{D}'_{a, b+1}$ for $a = 0, \cdots, b,$ which are given as follows. 

\begin{notation}
$ \mathcal{D}'_{a, b+1}$ consists of the following data
\begin{enumerate}[label = \textbullet]
\item $\mathcal{H}_{\sigma}$ with $\sigma \in S_{a, b+1}.$
\item $\rho_{\sigma, \sigma'} \geq \rho_{\sigma, \sigma', 0}$ for all $\sigma \in \mathring{S}_{a,b}, \sigma' \in S_{-, b+1}$ with $|\sigma|, |\sigma'| \geq 1,$ $t \sigma = s \sigma'.$
\item $(|\sigma| + |\sigma'| -1)$-homotopy $\mathcal{H}_{\sigma} \widetilde{\#}_{\rho_{(\sigma; \sigma')}} \mathcal{H}_{\sigma'}$ for all $\sigma \in \mathring{S}_{a,b}, \sigma' \in S_{-, b+1}$ with $|\sigma|, |\sigma'| \geq 1$ and $t \sigma = s \sigma'$ together with a transversal homotopy $$\mathcal{H}_{\sigma} \widetilde{\#}_{\rho_{(\sigma; \sigma')}}\mathcal{H}_{\sigma'} \leadsto \mathcal{H}_{\sigma} {\#}_{\rho_{(\sigma; \sigma')}} \mathcal{H}_{\sigma'}.$$ For $|\sigma| = |\sigma'| =1,$ we put $\widetilde{\#} \equiv \#$ and the homotopy $\leadsto$ is given by the identity one.
\end{enumerate}
\end{notation}
Note that $\mathcal{D}_{0,b+1} = \coprod_{a = 0}^{b} \mathcal{D}'_{a, b+1},$ and that $\mathcal{D}'_{b, b+1}$ is given by the 1-homotopy data $\mathfrak{H}^1.$

\ \

\fbox{\begin{minipage}{33em}
\subsubsection*{Induction II} \textit{We have $\mathcal{D}'_{b, b+1}.$ Given the data $\mathcal{D}'_{a, b+1},$ and we construct $\mathcal{D}'_{a-1, b+1}.$ (Induction on $a$)}
\end{minipage}}

\ \

To construct $\mathcal{D}'_{a-1, b+1},$ we specify the elements in $\mathcal{D}^{l}_{a-1, b+1}$ for each $l \geq 1,$ which are given as follows. 

\begin{notation}
$\mathcal{D}^{l}_{a-1, b+1}$ consists of the following data
\begin{enumerate}[label = \textbullet]
\item $\mathcal{H}_{\sigma}$ with $\sigma \in S_{a-1, b+1}$ and $|\sigma| = l,$
\item $\rho_{\sigma, \sigma'} \geq \rho_{\sigma, \sigma', 0}$ for all $\sigma \in \mathring{S}_{a-1, b}, \sigma' \in S_{-, b+1},$ $t \sigma = s \sigma', |\sigma|, |\sigma'| \geq 1,$ $|\sigma| + |\sigma'| = l.$
\item $(l-1)$-homotopy $\mathcal{H}_{\sigma} \widetilde{\#}_{\rho_{(\sigma; \sigma')}} \mathcal{H}_{\sigma'}$  for all $\sigma \in \mathring{S}_{a-1, b}, \sigma' \in S_{-, b+1}$ with $t \sigma = s \sigma', |\sigma|, |\sigma'| \geq 1,$ and $|\sigma| + |\sigma'| = l$ endowed with a transversal homotopy 
$$\mathcal{H}_{\sigma} \widetilde{\#}_{\rho_{(\sigma; \sigma')}}\mathcal{H}_{\sigma'} \leadsto \mathcal{H}_{\sigma} {\#}_{\rho_{(\sigma; \sigma')}} \mathcal{H}_{\sigma'}.$$
For $|\sigma| = |\sigma'| =1,$ we put $\widetilde{\#} \equiv \#$ and the homotopy $\leadsto$ is given by the identity one.
\end{enumerate}
So we have $\mathcal{D}'_{a-1, b+1} = \coprod_{l \geq 1} \mathcal{D}^l_{a-1, b+1},$ and $\mathcal{D}^{1}_{a-1, b+1}$ is determined by the given 1-homotopy data $\mathfrak{H}^1.$

\end{notation}

\fbox{\begin{minipage}{33em}
\subsubsection*{Induction III} \textit{We have $\mathcal{D}^1_{a-1, b+1}.$ Given the data $\mathcal{D}^{\leq l}_{a-1, b+1},$ and we construct $\mathcal{D}^{\leq l+1}_{a-1, b+1}.$ (Induction on $l$)}
\end{minipage}}

\ \

For $\sigma_{l+1} \in N(\mathcal{I})_{l+1},$ let $\sigma_{l_1} \in N(\mathcal{I})_{l_1},\sigma_{l_2} \in N(\mathcal{I})_{l_2}$ be simplices such that $t \sigma_{l_1} = s \sigma_{l_2},$ and $\widehat{\sigma_{l_1} \sigma_{l_2}} = \sigma_{l + 1}.$ Then we have

\begin{itemize}
\item $\mathcal{H}_{\sigma_{l_1}}$ (from the hypothesis of Induction I)
\item $\mathcal{H}_{\sigma_{l_2}}$ (from the hypothesis of Induction II)
\item $\rho_{\sigma_{l_1}, \sigma_{l_2}}$ (from the hypothesis of Induction III), so for each such pair $(\sigma_{l_1}, \sigma_{l_2}),$ we can consider $\mathcal{H}_{\sigma_{l_1} \#_{\rho_{\sigma_{l_1}, \sigma_{l_2}}}}\mathcal{H}_{\sigma_{l_2}}.$
\item $\mathcal{H}_{\partial_i \sigma_{l+1}}$ for every $i =1, \cdots, l$ (from the hypothesis of Induction III)
\end{itemize}

\subsubsection*{The faces of cubes} For $\sigma_l \in N(\mathcal{I})_l,$ we consider all the pairs $(\sigma_{l_1}, \sigma_{l_2})$ such that $l_1 + l_2 = l$ and $\sigma_{l_1 + l_2} := \widehat{\sigma_{l_1} \sigma_{l_2}}.$ For each such pair, we think of $\mathcal{H}_{\sigma_{l_1}} \#_{\rho_{(\sigma_{l_1}; \sigma_{l_2})}} \mathcal{H}_{\sigma_{l_2}}$ as an $(l_1 + l_2 - 2)$-homotopy, which are our initial data.

We further consider $(l_1 + l_2 -1)$-homotopies denoted by (a) $\mathcal{H}_{\sigma_{l_1}} \#'_{\rho_{(\sigma_{l_1}; \sigma_{l_2})}} \mathcal{H}_{\sigma_{l_2}}$ and (b) $\mathcal{H}'_{\partial_i \sigma_{l_1+l_2}}$ from the following processes: We fix a small number $0 < \epsilon \ll 1$. 

\subsubsection*{\emph{(a)} $\mathcal{H}_{\sigma_{l_1}} \#'_{\rho_{(\sigma_{l_1}; \sigma_{l_2})}} \mathcal{H}_{\sigma_{l_2}}$}
\begin{enumerate}
\item Contraction along the diagonal of the cube by $1- \epsilon$.
\item Expansion of the half of the faces on the side of the origin $(0, \cdots, 0)$ toward the original ones in the normal directions by the identity homotopies.
\end{enumerate}

\subsubsection*{\emph{(b)} $\mathcal{H}'_{\partial_i \sigma_{l_1+l_2}}$}
\begin{enumerate}
\item The same as (1) of (a).
\item The same as (2) of (a).
\item Filling the gaps between the expanded widths by the identity homotopies. (This is possible since we used the identity homotopies in (2).)
\end{enumerate}

\begin{figure}[h!]
  \includegraphics[width=\linewidth]{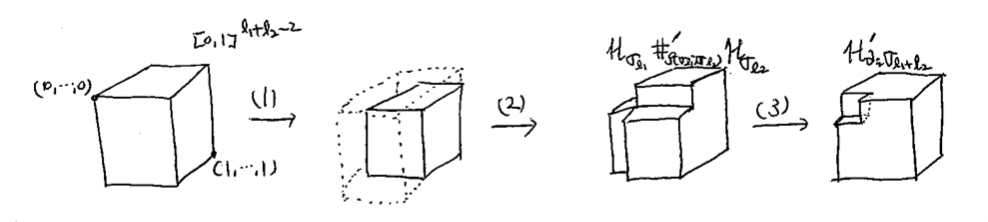}
  \caption{Homotopies of type (a) and (b)}
  \label{}
\end{figure}

\subsubsection*{Identifications of the faces} The next step is to identify faces of the forms $\mathcal{H}_{\bullet} \#_{\rho_{\bullet}}' \mathcal{H}_{\bullet}$ and $\mathcal{H}_{\partial_* \bullet}'.$ There are three kinds of such face identifications. For $\sigma_i \in N(\mathcal{I})_i, \sigma_j \in N(\mathcal{I})_j$ with $i <j$ with $\widehat{\sigma_i \sigma_{l_1 + l_2-i}} = \widehat{\sigma_j \sigma_{l_1 + l_2-j}} = \sigma_{l_1 + l_2} \in N(\mathcal{I})_{l_1 +l_2},$
\begin{enumerate}[label = (\roman*)]
\item $\mathcal{H}_{\sigma_i} \#'_{\rho_{(\sigma_i ; \sigma_{l_1 + l_2-i})}} \mathcal{H}_{\sigma_{l_1 + l_2-i}} |_{t_j = 1} \leadsto \mathcal{H}'_{\partial_j \sigma_{l_1 + l_2}}|_{t_i = 0},$
\item $\mathcal{H}'_{\partial_i \sigma_{l_1 + l_2}}|_{t_j =1} \leadsto \mathcal{H}'_{\partial_j \sigma_{l_1 + l_2}}|_{t_i =1}$
\item $\mathcal{H}_{\sigma_i} \#'_{\rho_{(\sigma_i ; \sigma_{l_1 + l_2-i})}} \mathcal{H}_{\sigma_{l_1 + l_2-i}} |_{t_j = 0} \leadsto \mathcal{H}_{\sigma_j} \#'_{\rho_{(\sigma_j ; \sigma_{l_1 + l_2-j})}} \mathcal{H}_{\sigma_{l_1 + l_2-j}} |_{t_i = 0}$
\end{enumerate}
(i) and (ii) can be done simply by identifying the faces. To deal with the case (iii), we apply the following concatenations of homotopies.
\begin{equation}\nonumber
\begin{split}
 \big( \mathcal{H}_{\sigma_i} & \#'_{\rho_{(\sigma_i ; \sigma_{l_1 + l_1-i})}} \mathcal{H}_{\sigma_{l_1 + l_2-i}} \big) \big((1-\varepsilon) \vec{t}, \cdot \big)|_{t_j = 0}\\ 
 & = \mathcal{H}_{\sigma_i} \#_{\rho_{(\sigma_i ; \sigma_{l_1+ l_2} - j)}} ( \mathcal{H}_{\sigma_{l_{j-i}}} \widetilde{\#}_{\rho_{(\sigma_{j-i}; \sigma_{l_1+ l_2 -j +1})}} \mathcal{H}_{\sigma_{l_{l_1+ l_2 -j +1}}} )(\vec{t}, \cdot)\\ 
 & \leadsto \mathcal{H}_{\sigma_i} \#_{\rho_{(\sigma_i ; \sigma_{l_1+ l_2} - j)}} ( \mathcal{H}_{\sigma_{l_{j-i}}} {\#}_{\rho_{{(\sigma_{j-i}; \sigma_{l_1+ l_2 -j +1})}}} \mathcal{H}_{\sigma_{l_{l_1+ l_2 -j +1}}})(\vec{t}, \cdot)\\ 
 & \text{ (from the hypothesis of Induction II),}\\
 & \leadsto (\mathcal{H}_{\sigma_i} \#_{P_1} \mathcal{H}_{\sigma_{l_{j-i}}}){\#}_{{P_2}} \mathcal{H}_{\sigma_{l_{l_1+ l_2 -j +1}}}(\vec{t}, \cdot)\\ 
 \end{split}
 \end{equation}
 \begin{equation}\nonumber
 \begin{split}
 &\text{ (Reordering homotopy for large $P_1, P_2$), } \\
 & \leadsto (\mathcal{H}_{\sigma_i} \#_{\rho_{(\sigma_i; \sigma_{l_{j-i}})}} \mathcal{H}_{\sigma_{l_{j-i}}}){\#}_{\rho_{(\sigma_{j-1} ; \sigma_{l_1+ l_2 -j +1})}} \mathcal{H}_{\sigma_{l_{l_1+ l_2 -j +1}}}(\vec{t}, \cdot)\\ 
 &\text{  (The inverse of growing parameter homotopy) }\\
 & \leadsto (\mathcal{H}_{\sigma_i} \widetilde{\#}_{\rho_{(\sigma_i; \sigma_{l_a})}} \mathcal{H}_{\sigma_{l_a}}){\#}_{\rho_{(\sigma_{j-1} ; \sigma_{l_1+ l_2 -j +1} )}} \mathcal{H}_{\sigma_{l_{l_1+ l_2 -j +1}}}(\vec{t}, \cdot)\\ 
 & \text{  (The hypothesis Induction I) }\\
  & =  \big( \mathcal{H}_{\sigma_{j-i}}  \#'_{\rho_{(\sigma_{j-1} ; \sigma_{l_1+l_2-j})}} \mathcal{H}_{\sigma_{l_{l_1+ l_2 -j +1}}} \big)\big((1 - \varepsilon)\vec{t}, \cdot \big) |_{t_i = 0}.
\end{split}
\end{equation}

\subsubsection*{Filling the inside} As a result, we obtain a homotopy parametrized by $\partial [0,1]^{l_1+l_2} \setminus \text{int}(V),$ where $V \simeq D^{l_1+l_2-1}$ is a closed contractible space. We need to fill $V$ with homotopies. From now any homotopy for this purpose will always mean a transversal one.

Observe that the homotopies corresponding to the points on $\partial V$ are given by some faces $\{ t_{\bullet}=0 \} $ of those of (i) to (iii). We try to fill the inside of $V$ by repeatedly using the homotopy of type $\widetilde{\#} \leadsto \#$ and expansion at the faces $\{ t_{\bullet'}=0 \}.$ We repeat this process until we end up with a closed subset $V' \subset V$ that is again contractible and the homotopies at each of the points on $\partial V$ is (up to translation homotopies) of the form $ \cdots (h_{f_1} \#_{\rho'_1} (h_{f_2}  \#_{\rho''_2} (h_{f_3}  \cdots ))) \#_{\rho'_{l_1 +l_2 - 2}} h_{h_{l_1 + l_2 -2}},$ which is independent of the variables of the cube. Here all the homotopies $h_{\bullet}$ are 1-homotopies, i.e., $f_{\bullet} \in Mor(\mathcal{I}),$ and their gluing is in an {\textit{arbitrary}} order, which varies, depending on where in $V'$ the resulting homotopy corresponds to. We can further fill the inside of $V'$ by reordering homotopies, which can be chosen as a combination of translations and increasing gluing parameters. (See Example \ref{egth} (h).) Note that such choices can be made smoothly. Finally, we obtain a homotopy of the form $( \cdots (h_{f_1} \#_{\rho''_1} h_{f_2})  \#_{\rho''_2} h_{f_3} ) \cdots )  \#_{\rho''_{l_1 +l_2 - 1}} h_{h_{l_1 + l_2 -2}}$ with a {\textit{fixed}} order (starting from the left most one and ending at the right most one) and sufficiently large gluing parameters $\rho''_1, \cdots, \rho''_{l_1 + l_2 -1}$. It is independent of the variables from the cube $[0,1]^{l_1 + l_2 -1}.$ Now we fill the empty ball with this constant homotopy.

Thus we obtain a homotopy parametrized by $ [0,1]^{l_1 + l_2 -1}$ with a generic choice of homotopies for transversality. We denote this transversal $(l_1 + l_2 -1)$-homotopy pair by $(\mathcal{H}_{\sigma_{l_1 + l_2}}, \mathcal{G}_{\sigma_{l_1 + l_2}}),$ where $\sigma_{l_1 + l_2}:= \widehat{\sigma_{l_1} \sigma_{l_2}}.$ We denote 
\begin{equation}\nonumber
\mathcal{H}_{\sigma_{l_1}} \widetilde{\#}_{\rho_{(\sigma_{l_1} ; \sigma_{l_2})}} \mathcal{H}_{\sigma_{l_2}} := \mathcal{H}_{\sigma_{l_1 + l_2}} |_{t_{l_1}= 0}.
\end{equation}
 Also, we observe that there is a homotopy $\mathcal{H}_{\sigma_{l_1}} \widetilde{\#}_{\rho_{(\sigma_{l_1} ; \sigma_{l_2})}} \mathcal{H}_{\sigma_{l_2}} \leadsto \mathcal{H}_{\sigma_{l_1}} {\#}_{\rho_{(\sigma_{l_1} ; \sigma_{l_2})}} \mathcal{H}_{\sigma_{l_2}}$ from the expansion as is depicted in Figure 4.
\begin{figure}[h!]
  \includegraphics[height=2cm]{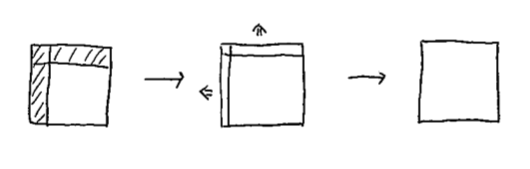}
  \caption{The homotopy $\widetilde{\#} \leadsto \#$ by expansion}
  \label{}
\end{figure}
The shaded region of the cube is where no solution is allowed for the corresponding parameters. By extending this cube and cutting out the shaded region we get the whole cube that gives rise to $\mathcal{H}_{\bullet} \#_{\rho_{\bullet}} \mathcal{H}_{\bullet}$. It is clear that each homotopy along this process is transversal, hence the homotopy $\mathcal{H}_{\sigma_{l_1}} \widetilde{\#}_{\rho_{(\sigma_{l_1} ; \sigma_{l_2})}} \mathcal{H}_{\sigma_{l_2}} \leadsto \mathcal{H}_{\sigma_{l_1}} {\#}_{\rho_{(\sigma_{l_1} ; \sigma_{l_2})}} \mathcal{H}_{\sigma_{l_2}}$ itself is transversal. Similarly, by construction of $\partial \mathcal{H}'_{\partial_i \sigma_{k}}$ we also have expansions that give rise to a transversal homotopy
\begin{equation}
\mathcal{H}'_{\partial_i \sigma_{k}} \leadsto \mathcal{H}_{\partial_i \sigma_{k}}.
\end{equation}
The following lemma immediately follows. 
\begin{lemma}\label{isotil}
We have bijections among finite sets:
\begin{equation}\nonumber
\begin{split}
\mathcal{M}(\mathcal{H}_{\sigma_k}|_{t_i=0};x,y) = \mathcal{M}(\mathcal{H}_{(\sigma_k)^1_i} & \widetilde{\#}_{\rho_{((\sigma_{k})^1_i; (\sigma_k)^2_{k-i})}} \mathcal{H}_{(\sigma_k)^2_{k-i}}; x, y) \\ 
& \simeq \mathcal{M}(\mathcal{H}_{(\sigma_k)^1_i} \#_{\rho_{((\sigma_{k})^1_i; (\sigma_k)^2_{k-i})}} \mathcal{H}_{(\sigma_k)^2_{k-i}}; x, y),\\
\mathcal{M}(\mathcal{H}_{\sigma_k}|_{t_i=1};x,y) = \mathcal{M}(\mathcal{H}'_{\partial_i \sigma_k};&x,y) \simeq \mathcal{M}(\mathcal{H}_{\partial_i \sigma_k};x,y),
\end{split}
\end{equation} 
for all $i = 1, \cdots, k-1, \ x \in Crit(\overline{h}^{t \sigma_k}_{s \sigma_k}), \ y \in Crit(h_{t\sigma_k})$ with $|y|-|x|+k-2=0.$ 
\end{lemma}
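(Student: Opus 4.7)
The equalities in both lines of the display are definitional: during Induction~III we literally \emph{set} $\mathcal{H}_{\sigma_k}|_{t_i=0} := \mathcal{H}_{(\sigma_k)^1_i} \widetilde{\#}_{\rho_{((\sigma_k)^1_i;(\sigma_k)^2_{k-i})}} \mathcal{H}_{(\sigma_k)^2_{k-i}}$ (this is how the symbol $\widetilde{\#}$ was introduced, right after Figure~4), and $\mathcal{H}_{\sigma_k}|_{t_i=1}$ was identified with $\mathcal{H}'_{\partial_i\sigma_k}$ during the face-identification step (items (i)--(iii) of the construction). So only the two ``$\simeq$'' statements require actual argument.

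For the two bijections, the plan is to invoke the transversal $1$-parameter families of homotopies built at the end of Induction~III, namely the expansion homotopy
\[
\mathcal{H}_{(\sigma_k)^1_i} \widetilde{\#}_{\rho} \mathcal{H}_{(\sigma_k)^2_{k-i}} \leadsto \mathcal{H}_{(\sigma_k)^1_i} \#_{\rho} \mathcal{H}_{(\sigma_k)^2_{k-i}}
\]
depicted in Figure~4, together with the analogous expansion $\mathcal{H}'_{\partial_i\sigma_k} \leadsto \mathcal{H}_{\partial_i\sigma_k}$. Both families were explicitly constructed to be transversal at every $t\in[0,1]$, so each is a transversal $k$-homotopy in the sense of Section~10. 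Under the index hypothesis $|y|-|x|+k-2=0$, every time-slice has a $0$-dimensional moduli space (by Corollary~\ref{givenk}), while the total parametrized moduli space over the extra interval is a smooth $1$-manifold. I would compactify this $1$-manifold and argue that its boundary is precisely the disjoint union of the two endpoint moduli spaces, which at once gives the required bijection of finite sets.

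The main obstacle is of course the standard one: checking that no broken trajectories contribute to the boundary of the compactified $1$-parameter moduli space. A breaking at an intermediate critical point would split off two factors whose expected dimensions sum to $0$, forcing at least one factor to have strictly negative expected dimension. Slicewise transversality of the family rules this out by exactly the argument of Lemma~10.1 (any such breaking would sit inside some fixed-time slice on which transversality already forbids the relevant moduli space from being nonempty). In our present setup this is particularly transparent, since the expansion is implemented by identity homotopies on a contractible region of the cube, so no ``new'' trajectories can appear off the original cube as $t$ varies. With breaking thus excluded, the compactified $1$-parameter moduli space yields the two bijections $\mathcal{M}(\widetilde{\#};x,y)\simeq\mathcal{M}(\#;x,y)$ and $\mathcal{M}(\mathcal{H}'_{\partial_i\sigma_k};x,y)\simeq\mathcal{M}(\mathcal{H}_{\partial_i\sigma_k};x,y)$, completing the argument.
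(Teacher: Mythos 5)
Your reduction to the two bijections is right, and your choice of tool---the transversal families $\widetilde{\#}\leadsto\#$ and $\mathcal{H}'_{\partial_i\sigma_k}\leadsto\mathcal{H}_{\partial_i\sigma_k}$ built at the end of Induction~III---is exactly what the paper uses. The gap is in the step ``its boundary is precisely the disjoint union of the two endpoint moduli spaces, which at once gives the required bijection of finite sets.'' That inference is false in general: a compact $1$-manifold whose boundary is $A\sqcup B$ only yields $\#A\equiv \#B \pmod 2$, since an interval component may have both endpoints in $A$ (or both in $B$). Ruling out breaking controls \emph{what} the boundary is, not \emph{how} the interval endpoints are distributed between $A$ and $B$, so on its own it cannot produce the asserted bijection of sets.

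The missing ingredient is already present in your own text, just deployed for the wrong purpose. Because $\leadsto$ is by construction a \emph{slicewise} transversal family, every fiber of the projection from the parametrized moduli space $\widetilde{\mathcal{M}}$ to $[0,1]$ is a transverse $0$-manifold; hence the projection has surjective differential along $\widetilde{\mathcal{M}}$, and once compactness (no breaking) is in hand it is a covering map over $[0,1]$, so every interval runs from $t=0$ to $t=1$ and the bijection follows. The paper's own reasoning is even more direct and skips the cobordism entirely: the expansion of the cube from $\widetilde{\#}$ to $\#$ only adjoins a region (the ``shaded region'' of Figure~4) on which no solutions exist, so the moduli space is literally unchanged as the cube is expanded and the two endpoint moduli spaces are identified by a tautological inclusion of parameter spaces. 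That is precisely what your observation that ``no new trajectories can appear off the original cube as $t$ varies'' says---it should be promoted from a remark used to exclude breaking into the actual mechanism of the bijection, since a boundary count can only ever deliver mod-$2$ information.
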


As a result, we can complete Induction III, hence Inductions II and I as well, and we obtain a collection $\bigl\{(\mathcal{H}_{\sigma}, \mathcal{G}_{\sigma})\bigr\}_{\sigma \in N(\mathcal{I})}.$
\begin{definition}
We call the following collection of data \textit{Morse higher homotopy data,} and denote by $\mathfrak{H}$:
\begin{itemize}[label = \textbullet]
\item $\bigl\{(M_a, h_a, g_a)\bigr\}_{a \in N(\mathcal{I})_0},$ an exhaustion $\mathcal{E}$ of $W$,
\item $\bigl\{(\mathcal{H}_{\sigma_k}, \mathcal{G}_{\sigma_k})\bigr\}_{\sigma_k \in N(\mathcal{I})_k},$ all $k$-homotopies and $k \geq 1,$
\item $\{\rho_{\sigma_k,\sigma_l}>\rho_{\sigma_k,\sigma_l,0} \},$ gluing parameters for all $(\sigma_k,\sigma_l) \in N(\mathcal{I})_k \times N(\mathcal{I})_l$ with $t \sigma_k=s \sigma_l$ and for all $k, l \geq 1.$
\end{itemize}
\end{definition}

\begin{definition}
We say that higher data are {\textit{strict}} if the resulting continuation maps satisfy
\begin{equation}\nonumber
\varphi_{\sigma} =0, \text{ for all } \sigma \in N(\mathcal{I}) \text{ with } |\sigma| \geq 2.
\end{equation}
\end{definition}

\begin{remark}
(i) Any higher data $\mathfrak{H}$ contain some 1-data $\mathfrak{H}^1$ as a subset. (ii) Strictness of $\mathfrak{H}$ implies that of $\mathfrak{H}^1$ (c.f. Definition \ref{strict1h}).
\end{remark}

\section{Compactification of the moduli spaces}

In this section, we fix Morse higher homotopy data $\mathfrak{H}$ and study a compactification of the corresponding parametrized moduli space of trajectories, say $\mathcal{M}(\mathcal{H}_{\sigma_k}; x,y)$ for some critical points $x, y$ and a simplex $\sigma_k \in N(\mathcal{I})_k,$ with $k \geq 1.$

\subsection{Weak convergences}

We first recall the Arzel\'{a}-Ascoli theorem:
\begin{theorem}[Arzel\'{a}-Ascoli]\label{aathm}
Let $X$ and $Y$ be metric spaces with $X$ being compact. Let $S \subset C^{\infty}(X,Y)$ be a countable family of smooth maps. Then $S$ is equicontinuous and pointwisely relatively compact if and only if it is relatively compact.
\end{theorem}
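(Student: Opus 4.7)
The plan is to prove the two directions separately, noting that this is the classical Arzel\`{a}--Ascoli statement restricted to smooth maps (smoothness plays no role in the argument; only continuity is used).

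For the forward direction, I would argue that relative compactness in $C^{\infty}(X,Y)$ (equipped with the topology of uniform convergence on compacts, which for compact $X$ is just the uniform topology) immediately yields pointwise relative compactness, since evaluation at a point $x \in X$ is a continuous map to $Y$ and continuous images of relatively compact sets are relatively compact. For equicontinuity, given $\varepsilon > 0$, cover the compact closure $\overline{S}$ by finitely many $\varepsilon/3$-balls centered at $f_1, \dots, f_N \in C(X,Y)$, then use uniform continuity of each $f_i$ on the compact space $X$ to choose a single $\delta > 0$ that works simultaneously; the triangle inequality then gives equicontinuity of $S$.

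For the reverse direction, which is the substantive half, I would use a diagonal subsequence argument. Fix a countable family $S = \{f_n\}_{n \in \mathbb{N}}$ and choose a countable dense subset $\{x_k\}_{k \in \mathbb{N}} \subset X$, which exists because a compact metric space is separable. Using pointwise relative compactness at $x_1$, extract a subsequence of $\{f_n\}$ that converges at $x_1$; from that subsequence, extract another converging at $x_2$; iterate, and take the diagonal subsequence $\{g_n\}$, which converges at every $x_k$. The main step will be to promote pointwise convergence on the dense set to uniform convergence on $X$ via equicontinuity: given $\varepsilon > 0$, use equicontinuity to obtain $\delta > 0$ such that $d_X(x,x') < \delta$ implies $d_Y(g_n(x), g_n(x')) < \varepsilon/3$ uniformly in $n$; cover $X$ by finitely many $\delta$-balls around $x_{k_1}, \dots, x_{k_M}$ (using compactness of $X$); then choose $N$ large enough that for $m,n \geq N$ and all $j = 1, \dots, M$, $d_Y(g_m(x_{k_j}), g_n(x_{k_j})) < \varepsilon/3$; conclude by the $\varepsilon/3$-triangle inequality that $\{g_n\}$ is uniformly Cauchy on $X$, hence converges uniformly to some $g \in C(X,Y)$.

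The main obstacle, if any, is checking that the limit $g$ actually lies in the target space we care about and that the diagonal extraction is valid when $Y$ is only assumed to be a metric space rather than complete; in fact completeness of $Y$ is implicitly needed for the limit to exist, but one can circumvent this by working in the completion or by noting that pointwise relative compactness forces the relevant sequences to converge in $Y$ itself. Since this theorem is entirely standard, in the paper I would simply cite a reference (e.g., Munkres or Kelley) rather than reproducing the argument in full.
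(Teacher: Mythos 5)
The paper states this theorem as a recall and gives no proof at all; it is a classical result and is used as a black box (in the proof of Theorem~\ref{cptwc}). Your proposal is exactly the standard Arzel\'{a}--Ascoli argument (evaluation map and finite $\varepsilon/3$-cover for the easy direction; dense countable subset, diagonal extraction, and $\varepsilon/3$-promotion to uniform Cauchy for the substantive direction), it is correct, and you rightly flag that smoothness is irrelevant and that completeness of $Y$ at the relevant points is supplied by pointwise relative compactness (closures of orbits $\{f_n(x)\}$ are compact metric, hence complete). Since you yourself conclude that one should simply cite a reference rather than reproduce the argument, your approach in practice coincides with what the paper actually does.
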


\begin{assumption}\label{assm}
We consider $X = \overline{\mathbb{R}}, \ Y = [0,1]^{k-1} \times M_{t \sigma_k}.$ Our subset of interests  $\{u_n\} \subset C^{\infty}(\overline{\mathbb{R}}, [0,1]^{k-1} \times M_{t \sigma_k})$ consists of maps that satisfy
$$\text{ $pr_1 \circ u_n(s) \in C^{\infty}(\overline{\mathbb{R}}, [0,1]^{k-1})$ stays constant along the trajectory,}$$ 
where $pr_1$ is the projection to the first component.
\end{assumption}

Let $\bigl\{( \vec{t}_n, u_n)\bigr\}_n \subset \mathcal{M}(\mathcal{H}_{\sigma_k}; x, y)$ be a subset that satisfies Assumption \ref{assm}. 

\begin{proposition}\label{equicont}
Then $\bigl\{( \vec{t}_n, u_n)\bigr\}_n$ is equicontinuous in $C^0(\overline{\mathbb{R}}, M_{t \sigma_k}).$ In other words, for any $\epsilon >0,$ there exists $\delta>0$ such that $| s-s' | < \delta$ implies \\ $d\big( \big(\vec{t}_n, u_n(s)\big), \big(\vec{t}_n, u_n(s')\big)\big) < \epsilon$ for every $ n.$
\end{proposition}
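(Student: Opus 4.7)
\begin{proof-sketch}
The plan is to derive the claimed equicontinuity directly from a uniform Lipschitz bound on the trajectories $u_n$. Two preliminary simplifications apply: first, Assumption \ref{assm} keeps $\vec{t}_n$ constant in $s$ along each trajectory, so the product-metric distance reduces to $d_{M_{t\sigma_k}}\big(u_n(s), u_n(s')\big)$; second, each $u_n$ then satisfies the $\vec{t}_n$-fixed flow equation (\ref{parameqn2}), i.e.\ $\dot u_n(s) = -\widetilde H(\vec{t}_n, s, u_n(s))$, where
\begin{equation*}
\widetilde H(\vec{t},s,p) \;:=\; \frac{\nabla_{G_{\sigma_k}^{\vec{t}}(s)} H_{\sigma_k}^{\vec{t}}(s,p)}{\sqrt{1 + |\dot H_{\sigma_k}^{\vec{t}}(s,p)|^2 \, |\nabla_{G_{\sigma_k}^{\vec{t}}(s)} H_{\sigma_k}^{\vec{t}}(s,p)|^2}}.
\end{equation*}
It therefore suffices to exhibit a constant $C>0$, independent of $n$, with $|\dot u_n(s)|_{g_{t\sigma_k}} \leq C$ for all $s \in \mathbb{R}$.

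The crucial observation is that the stability-at-the-ends axiom of Definition \ref{khtpy} forces $(H_{\sigma_k}^{\vec{t}}, G_{\sigma_k}^{\vec{t}})$, and hence $\widetilde H$, to be independent of $s$ whenever $|s| > R_{\sigma_k}$. Consequently $\widetilde H$ extends continuously to the compact product $[0,1]^{k-1} \times \overline{\mathbb{R}} \times M_{t\sigma_k}$ (continuity at $s = \pm\infty$ being immediate from the explicit asymptotic values $\nabla_{\overline g^{t\sigma_k}_{s\sigma_k}} \overline h^{t\sigma_k}_{s\sigma_k}$ and $\nabla_{g_{t\sigma_k}} h_{t\sigma_k}$), and its norm admits a finite supremum $C := \sup |\widetilde H|$. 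Since $(\vec{t}_n, u_n)$ solves (\ref{parameqn2}), we immediately obtain $|\dot u_n(s)| \leq C$ for every $n$ and every $s \in \mathbb{R}$.

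Finally, given $\epsilon > 0$, take $\delta := \epsilon/C$. Then for any $s, s' \in \mathbb{R}$ with $|s - s'| < \delta$, integrating $|\dot u_n|$ along the curve yields
\begin{equation*}
d\big((\vec{t}_n, u_n(s)), (\vec{t}_n, u_n(s'))\big) \;=\; d_{M_{t\sigma_k}}\big(u_n(s), u_n(s')\big) \;\leq\; \int_{\min(s,s')}^{\max(s,s')} |\dot u_n(r)|\, dr \;\leq\; C|s-s'| < \epsilon,
\end{equation*}
uniformly in $n$. There is no real obstacle here: the entire content of the proposition is the observation that the normalizing denominator in (\ref{parameqn2}), together with stability at the ends, keeps $\widetilde H$ bounded on the compactified parameter space. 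The genuinely subtle parts of the Arzel\`a--Ascoli package---pointwise relative compactness and the analysis at $s = \pm\infty$ using exponential decay to the critical points $x, y$---will presumably be handled in the subsequent statements of this subsection.
\end{proof-sketch}
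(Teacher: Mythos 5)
Your argument is correct, and it is a genuinely different (and in fact simpler) route than the paper's. The paper does not use a pointwise $C^0$ bound on $\dot u_n$: instead it applies the H\"older inequality to get $d(u_n(s), u_n(s')) \leq \sqrt{|s-s'|}\cdot\sqrt{\int_{\mathbb{R}} |\dot u_n|^2\,ds}$, and then bounds the total $L^2$-energy $\int_{\mathbb{R}} |\dot u_n|^2\,ds$ by rewriting $|\widetilde H|^2 \leq -\nabla H_{\sigma_k}\cdot\dot u_n = -\tfrac{d}{ds}\big(H_{\sigma_k}(\vec t,s,u_n(s))\big) + \dot H_{\sigma_k}$, integrating, and invoking stability at the ends to kill the $\dot H_{\sigma_k}$ contribution outside $[-R_{\sigma_k}, R_{\sigma_k}]$. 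This produces the $n$-independent bound $-h_{t\sigma_k}(y) + \overline h^{t\sigma_k}_{s\sigma_k}(x) + 2CR_{\sigma_k}$, hence $\tfrac12$-H\"older equicontinuity. Your observation that $|\widetilde H| \leq |\nabla_{G_{\sigma_k}^{\vec t}} H_{\sigma_k}^{\vec t}|$ is uniformly bounded on the compact parameter space (compactness of $M_{t\sigma_k}$ and $[0,1]^{k-1}$, plus stability at the ends to compactify the $s$-direction) yields a uniform Lipschitz bound $|\dot u_n| \leq C$, which is stronger and shorter. The trade-off is that the paper's detour establishes, in passing, the uniform $L^2$-energy bound $\int|\dot u_n|^2 \leq E$, which is the standard a priori estimate in Morse/Floer compactness arguments and is implicitly reused in the compactness theorem of the same section; your approach proves the proposition as stated but does not record that auxiliary estimate. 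Both are valid proofs of the claimed equicontinuity, and neither addresses equicontinuity at $\pm\infty$ in the compactified metric on $\overline{\mathbb{R}}$ --- but the paper only invokes the proposition for $C^0_{loc}$ convergence, so that gap is shared and harmless.
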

\begin{proof}
We have
\begin{equation}\label{enesti}
\begin{split}
d\big(&(\vec{t}_n, u_n(s)), (\vec{t}_n, u_n(s'))\big) = d\big(u_n(s), u_n(s')\big)\\ 
& := \inf\limits_{\substack{l : [s, s'] \rightarrow M_{t\sigma_k},\\ l(s) = u^{\vec{t}_n}_n(s),\\ l(s')= u^{\vec{t}_n}_n(s')}} \big( \int^{s'}_{s}|\dot{l}(s)|ds) \big) \leq \int^{s'}_{s}|\dot{u}_n(s)|ds \\
& \stackrel{\text{(H\"{o}lder inequality)}}{\leq} \sqrt{|s' - s|} \cdot \sqrt{\int^{s'}_{s}|\dot{u}_n(s)|^2ds} \leq \sqrt{|s' - s|} \cdot \sqrt{\int^{\infty}_{-\infty}|\dot{u}_n(s)|^2ds}\\
\end{split}
\end{equation}
\begin{equation}\nonumber
\begin{split}
\quad \quad &\leq \sqrt{|s'-s|} \cdot \sqrt{\int^{\infty}_{-\infty} \frac{|\nabla h_{\sigma_k}|^2}{{1 + |\dot{h}_{\sigma_k}|^2 |\nabla h_{\sigma_k}|^2}} \circ u ds}\\
& \leq \sqrt{|s' - s|} \cdot \sqrt{\int^{\infty}_{-\infty} \frac{|\nabla h_{\sigma_k}|^2}{\sqrt{1 + |\dot{h}_{\sigma_k}|^2 |\nabla h_{\sigma_k}|^2}} \circ u ds} =  \sqrt{|s' - s|} \cdot \sqrt{\int^{\infty}_{-\infty} - \nabla h_{\sigma_k} \cdot \dot{u} ds}\\
& {=} \sqrt{|s' - s|} \cdot \sqrt{ \int^{\infty}_{-\infty} - \frac{d}{ds} h_{\sigma_k}(\vec{t}, s, u(s)) ds + \int^{\infty}_{-\infty} \dot{h}_{\sigma_k}(\vec{t}, s, u(s)) ds}\\
&  = \sqrt{|s' - s|} \cdot \sqrt{ - h_{t \sigma_k}(y) + \overline{h}_{s \sigma_k}^{t \sigma_k}(x) +  \int^{R_{\sigma_k}}_{-R_{\sigma_k}} \dot{h}_{\sigma_k}(\vec{t}, s, u(s)) ds}\\
& \leq \sqrt{|s' - s|} \cdot \sqrt{- h_{t \sigma_k}(y) + \overline{h}_{s \sigma_k}^{t \sigma_k}(x) + 2 C \cdot R_{\sigma_k}}.
\end{split}
\end{equation}
for some $C>0.$ (Here $\dot{h}_{\bullet}$ means $\frac{\partial h_{\bullet}}{\partial s}$.)
\end{proof}

We can apply Theorem \ref{aathm} to our situation.
\begin{theorem}\label{cptwc}
$\bigl\{(\vec{t}_n, u_n)\bigr\}_n \subset \mathcal{M}({H}_{\sigma_k}; x, y)$ is relatively compact, and it has a weakly converging subsequence:
\begin{equation}\nonumber\nonumber
(\vec{t}_{n_m},u_{n_m}) \xrightarrow{C^{\infty}_{loc}} (\vec{t}_v, v), \text{ for some } \vec{t}_v \in [0,1]^{k-1} \text{ and } v \in C^{\infty}(\overline{\mathbb{R}}, M_{t \sigma_k}) \text{ as } m \rightarrow \infty.
\end{equation}
Hence, there exists $v \in C^{\infty}(\overline{\mathbb{R}}, M_{t \sigma_k})$ such that 
\begin{equation}\nonumber
u_{n_m}|_{[-R, R]} \xrightarrow{C^l([-R,R])} v|_{[-R, R]}, \text{ for all } R >0, \ l \in \mathbb{N}.
\end{equation}
\end{theorem}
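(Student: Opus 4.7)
\begin{proof-sketch}
My plan is to combine a compactness argument on the parameter cube with an Arzel\`a--Ascoli application on the path space, and then upgrade the resulting $C^0_{loc}$-convergence to $C^\infty_{loc}$ by bootstrapping through the flow equation (\ref{parameqn2}).

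First, since $[0,1]^{k-1}$ is compact, I can pass to a subsequence (still denoted $\{\vec{t}_n\}$) with $\vec{t}_n \to \vec{t}_v \in [0,1]^{k-1}$. By Assumption \ref{assm} the first component of each $u_n$ is the constant $\vec{t}_n$ along $\overline{\mathbb{R}}$, so the limit first component is the constant $\vec{t}_v$, and it suffices to produce a subsequential limit $v \in C^\infty(\overline{\mathbb{R}}, M_{t\sigma_k})$ for the second component. Next, I restrict the sequence to an arbitrary compact interval $[-R,R] \subset \overline{\mathbb{R}}$. Proposition \ref{equicont} supplies equicontinuity of $\{u_n|_{[-R,R]}\}$ in $C^0([-R,R], M_{t\sigma_k})$, with a modulus of continuity independent of $n$ coming from the energy estimate $\int |\dot{u}_n|^2 \, ds \le \overline{h}_{s\sigma_k}^{t\sigma_k}(x) - h_{t\sigma_k}(y) + 2CR_{\sigma_k}$. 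Since $M_{t\sigma_k}$ is compact, pointwise relative compactness is automatic. Theorem \ref{aathm} (Arzel\`a--Ascoli) then extracts a subsequence converging uniformly on $[-R,R]$ to some continuous map $v_R$. A standard diagonal argument over $R = 1, 2, 3, \dots$ yields a single subsequence converging in $C^0_{loc}(\overline{\mathbb{R}}, M_{t\sigma_k})$ to a continuous map $v$.

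To upgrade this to $C^l_{loc}$-convergence for every $l$, I will bootstrap using the fact that each $u_n$ satisfies
\begin{equation}\nonumber
\dot{u}_n(s) = -\widetilde{H}_{\sigma_k}(\vec{t}_n, s, u_n(s)),
\end{equation}
where $\widetilde{H}_{\sigma_k}$ is the smooth vector field appearing in (\ref{parameqn2}). Since $\widetilde{H}_{\sigma_k}$ is smooth and $(\vec{t}_n, u_n)$ ranges in a fixed compact set, $\dot{u}_n$ is uniformly bounded on $[-R,R]$, and $C^0$-convergence of $u_n$ together with continuity of $\widetilde{H}_{\sigma_k}$ gives $C^0_{loc}$-convergence of $\dot{u}_n$ to $-\widetilde{H}_{\sigma_k}(\vec{t}_v, s, v(s))$. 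Differentiating the flow equation in $s$ expresses $\ddot{u}_n$ as a smooth function of $(\vec{t}_n, s, u_n, \dot{u}_n)$, so another application of the previous step promotes convergence to $C^1_{loc}$. Iterating gives $C^l_{loc}$-convergence for every $l$; in particular $v$ is smooth and satisfies the parametrized flow equation at parameter $\vec{t}_v$.

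The main obstacle is packaging the bootstrap cleanly in our parametrized setting: one must verify that every derivative $\partial_s^j u_n$ is expressible as $\Phi_j(\vec{t}_n, s, u_n, \dots, \partial_s^{j-1}u_n)$ with $\Phi_j$ smooth and independent of $n$, which follows inductively from the smoothness of $\widetilde{H}_{\sigma_k}$ and the chain rule applied to the flow equation. Once this is established, the inductive step from $C^{l-1}_{loc}$ to $C^l_{loc}$ is immediate from the continuity of $\Phi_l$ and the fact that the parameter and spatial domains stay inside a fixed compact set. The improvement from $C^\infty_{loc}$ to $W^{1,2}$-convergence in the case $v \in \mathcal{M}(\mathcal{H}_{\sigma_k}; x, y)$ (the ``Moreover'' part of the analogous time-dependent statement from Section 3) will not be needed here; only the $C^\infty_{loc}$-convergence is asserted in Theorem \ref{cptwc}.
\end{proof-sketch}
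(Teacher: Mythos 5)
Your proposal follows essentially the same route as the paper: extract pointwise relative compactness from compactness of $[0,1]^{k-1} \times M_{t\sigma_k}$, combine with the equicontinuity from Proposition \ref{equicont} to apply Arzel\`a--Ascoli (Theorem \ref{aathm}) for $C^0_{loc}$-convergence, and then bootstrap through the flow equation to get $C^l_{loc}$-convergence for all $l$. Your version spells out the parameter-component extraction and the inductive bootstrap a bit more explicitly, but this is the paper's own argument.
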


\begin{proof}
Since $[0,1]^{k-1} \times M_{t\sigma_k}$ is compact, we know $\bigl\{ \big( \vec{t}_n, u_n(s) \big)\bigr\}_n$ is relatively compact pointwisely at each $s \in {\mathbb{R}}$. Then from Theorem \ref{aathm} and Proposition \ref{equicont}, it follows that $\bigl\{ \big( \vec{t}_n, u_n(s) \big)\bigr\}_n$ is relatively compact, and we have
\begin{equation}\nonumber
u_{n_m} \xrightarrow{C^0_{loc}} v \in C^0(\overline{\mathbb{R}}, M_{t \sigma_k}).
\end{equation}
That is,
\begin{equation}\nonumber
u_{n_m}|_{[-R, R]} \xrightarrow{C^0([-R,R])} v|_{[-R, R]} \text{ for all } R >0.
\end{equation}
But since $u_{n_m}$ is a solution of the first order ordinary differential equation $\dot{u}_{n_m} = - X \circ u_{n_m}$ for some vector field $X,$ it follows that
\begin{equation}\nonumber
u_{n_m} \xrightarrow{C^{l}_{loc}} v \in C^{\infty}(\overline{\mathbb{R}}, M_{t \sigma_k}),
\end{equation}
and so on for each $l \geq 1.$ Hence by Assumption \ref{assm} the assertion of the theorem follows.
\end{proof}

\begin{theorem}\label{thm15}
In the setting of Theorem \ref{cptwc}, if $(\vec{t}_v, v)$ is an element of the space of parametrized trajectories $\mathcal{M}(\mathcal{H}_{\sigma_k}; x, y),$ then we have a convergence in $W^{1,2}:$
\begin{equation}\nonumber\nonumber
(\vec{t}_{n_m}, u_{n_m}) \xrightarrow{W^{1,2}} (\vec{t}_{v}, v) \in \mathcal{M}(\mathcal{H}_{\sigma_k}; x, y).
\end{equation}
\end{theorem}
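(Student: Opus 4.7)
The strategy is to upgrade the $C^{\infty}_{\text{loc}}$-convergence of Theorem \ref{cptwc} to $W^{1,2}$-convergence by controlling the two tails $|s| \gg 0$ via uniform exponential decay at the asymptotic critical points. Convergence of the parameter component $\vec{t}_{n_m} \to \vec{t}_v$ is immediate: the sequence lies in the compact cube $[0,1]^{k-1}$, and under Assumption \ref{assm} each $\vec{t}_{n_m}$ is constant along its trajectory, so the $C^{\infty}_{\text{loc}}$-limit forces any accumulation point of $\{\vec{t}_{n_m}\}$ to equal $\vec{t}_v$.

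For $m$ large, we may write $u_{n_m} = \exp_v(\xi_{n_m})$ for some $\xi_{n_m} \in W^{1,2}(v^* TM_{t\sigma_k})$ in a local chart of $\mathcal{P}^{1,2}(x,y)$ around $v$, and the goal becomes to show $\|\xi_{n_m}\|_{W^{1,2}} \to 0$. Splitting $\mathbb{R} = [-R,R] \cup (\mathbb{R} \setminus [-R,R])$, the $C^{\infty}_{\text{loc}}$-convergence yields $\|\xi_{n_m}\|_{W^{1,2}([-R,R])} \to 0$ for every fixed $R$, so the entire task reduces to a uniform tail estimate: given $\epsilon > 0$, produce $R_\epsilon > 0$ with $\|\xi_{n_m}\|_{W^{1,2}(\mathbb{R} \setminus [-R_\epsilon, R_\epsilon])} < \epsilon$ for all sufficiently large $m$.

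The uniform tail estimate is the analytic core. By the stability-at-the-ends axiom (\ref{hsigmar1})--(\ref{hsigmar2}) in Definition \ref{khtpy}, whenever $|s| > R_{\sigma_k}$ the equation (\ref{parameqn2}) coincides with the unparametrized time-independent negative gradient equation for either $\overline{h}^{t\sigma_k}_{s\sigma_k}$ (at $-\infty$) or $h_{t\sigma_k}$ (at $+\infty$), and in particular is independent of $\vec{t}$. Nondegeneracy of the Hessians at $x$ and $y$ then furnishes the classical exponential-decay estimate: there exist $C > 0$ and $\lambda > 0$, depending only on the fixed critical points and their ambient Morse data, such that every solution $w$ of the asymptotic gradient equation with $w(-\infty) = x$ and $w(+\infty) = y$ satisfies
\begin{equation}\nonumber
d(w(s), y) \leq C e^{-\lambda(s - R_{\sigma_k})} \text{ for } s > R_{\sigma_k}, \quad d(w(s), x) \leq C e^{\lambda(s + R_{\sigma_k})} \text{ for } s < -R_{\sigma_k}.
\end{equation}
Applied simultaneously to $v$ and to every $u_{n_m}$ (the asymptotic equation being the same for all of them), this yields uniform pointwise exponential decay of $d(u_{n_m}(s), y)$ and, via $\dot u_{n_m} = -\nabla h_{t\sigma_k}(u_{n_m})$ on the tail combined with Lipschitz control of $\nabla h_{t\sigma_k}$ near $y$, of $|\dot u_{n_m}(s)|$, and hence a uniform $W^{1,2}$-bound on the tails.

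The main obstacle is precisely the uniformity of the decay in $\vec{t}_{n_m}$ and $m$; this is exactly what the stability-at-the-ends axiom secures, since outside $[-R_{\sigma_k}, R_{\sigma_k}]$ the analysis reduces to a single unparametrized equation for which the standard estimates of [Sch] and [AD] apply verbatim. Choosing $R_\epsilon$ large enough that the tail bound above is smaller than $\epsilon$ uniformly in $m$, and then $m$ large enough that $\|\xi_{n_m}\|_{W^{1,2}([-R_\epsilon, R_\epsilon])} < \epsilon$, we obtain $\|\xi_{n_m}\|_{W^{1,2}(\mathbb{R})} < 2\epsilon$. Combined with the parameter convergence of the first paragraph, this yields $(\vec{t}_{n_m}, u_{n_m}) \to (\vec{t}_v, v)$ in $W^{1,2}$, completing the proof.
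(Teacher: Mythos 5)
Your approach — upgrade $C^\infty_{\mathrm{loc}}$-convergence to $W^{1,2}$ via a local chart and a uniform exponential tail estimate — is the same one the paper takes (the paper defers the details to [Sch] Lemma 2.39, exactly the source you invoke). The parameter-convergence observation and the split of $\mathbb{R}$ into a compact core and two tails are the right bookkeeping; the choice of a fixed base point $v$ in the Banach chart is a cleaner variant of the paper's $\exp_{v_n}$.

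There is, however, one step stated too strongly: you claim constants $C, \lambda$ such that \emph{every} solution $w$ of the asymptotic gradient equation from $x$ to $y$ decays from the uniform starting time $R_{\sigma_k}$. This does not follow from nondegeneracy of the Hessians alone. The classical estimate is local: it produces decay once a trajectory has entered a small fixed neighborhood of the critical point, and that entry time is not a priori uniform over the family $\{u_{n_m}\}$. A sequence in $\mathcal{M}(\mathcal{H}_{\sigma_k};x,y)$ converging to a broken trajectory would satisfy the asymptotic equation on the tails but have arbitrarily late entry times and no uniform $W^{1,2}$ tail bound — and for such a sequence the theorem is false. The hypothesis you never explicitly use, namely that the $C^{\infty}_{\mathrm{loc}}$-limit $(\vec{t}_v,v)$ lies in $\mathcal{M}(\mathcal{H}_{\sigma_k};x,y)$ rather than being a broken limit, is exactly what makes the entry time (and hence the decay) uniform in $m$. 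To close the gap, you need to argue (as in [Sch] Lemma 2.39, via energy/action monotonicity on the tails) that $C^\infty_{\mathrm{loc}}$-convergence to the unbroken $v$ forces, for every $\varepsilon>0$, a single $s_\varepsilon$ with $d(u_{n_m}(s),y)<\varepsilon$ for all $s\geq s_\varepsilon$ and all large $m$; only then does the local exponential estimate apply with uniform constants.
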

\begin{proof-sketch}
The proof is essentially the same as that of Lemma 2.39 of [Sch]. We can show uniform asymptotic exponential decay property: $ | u_{local, n_m}(s)|  \leq  c \cdot e^{- \lambda |s|}$ for some $c, \lambda >0$ and for all $s$ with $|s| \geq R_{\sigma_k}.$ In the local coordinate of the Banach manifold, we write $u_{n} = \exp_{v_n}(\xi_{n}), \ \xi_{n} \in W^{1,2}\big(v_n^*(TM_{t \sigma_k})\big),$ and then the weak convergence $\xi_n \xrightarrow{C^{\infty}_{loc}} 0$ implies the desired $W^{1,2}$-convergence.
\end{proof-sketch}

\subsection{Compactness theorem and higher continuation maps}

Now we state our main theorem in this section.

\begin{theorem}\label{cptthm}
Let $\bigl\{ (\vec{t}_n, u_n) \bigr\}_n \subset \mathcal{M}(\mathcal{H}_{\sigma_k}; x, y)$ be a sequence of parametrized solutions for $x \in Crit(\overline{h}^{t \sigma_k}_{s \sigma_k}),$ $y \in Crit(h_{t \sigma_k}),$ $\sigma_k \in N(\mathcal{I})_k,$ and  $k \geq 1.$ Then there exist: 
\begin{enumerate}[label = (\roman*)]
\item $x_0 (= x), \cdots, x_l \in Crit(\overline{h}^{t \sigma_k}_{s \sigma_k})$ and $y_0, \cdots, y_{l'} (= y) \in Crit(h_{t \sigma_k}),$
\item \begin{equation}\nonumber
\begin{split}
&u^i_{s\sigma_k} \in \mathcal{M}(\overline{h}_{s \sigma_k}; x_i, x_{i+1}), \ i = 0, \cdots, l-1,\\
&(\vec{t}_v, v) \in \mathcal{M}(\mathcal{H}_{\sigma_k}; x_l, y_{0}),\\
&u^j_{t\sigma_k} \in \mathcal{M}({h}_{t \sigma_k}; y_j, y_{j+1}), \ j = 0, \cdots, l'-1,\\
\end{split}
\end{equation}
and
\item $\{\tau^i_{s \sigma_k, n}\}_n, \ \{\tau^j_{t \sigma_k, n} \}_n \subset \mathbb{R}$
\end{enumerate} 
such that the following hold.
\begin{enumerate}[label = (\alph*)]
\item 
\begin{equation}\nonumber
\begin{split}
&u_n(\cdot + \tau^i_{s \sigma_k,n}, \cdot) \xrightarrow{C^{\infty}_{loc}}u^i_{s \sigma_k}, \ i = 0, \cdots, l-1,\\
&u_n \xrightarrow{C^{\infty}_{loc}}v,\\
&u_n(\cdot + \tau^j_{t \sigma_k,n}, \cdot) \xrightarrow{C^{\infty}_{loc}}u^j_{t \sigma_k}, \ j = 0, \cdots, l'-1,\\
\end{split}
\end{equation}
\item 
\begin{equation}\nonumber
\begin{split}
&|x_0| > |x_1| > \cdots > |x_l|,\\
&|x_l| > |y_0| - k + 1,\\
&|y_0| > |y_1| > \cdots > |y_{l'}|.
\end{split}
\end{equation}
\end{enumerate}
\end{theorem}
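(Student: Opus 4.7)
\begin{proof-sketch}
The plan is to combine the weak compactness already established (Theorem \ref{cptwc} and Theorem \ref{thm15}) with a classical bubbling-off argument at both ends of $\overline{\mathbb{R}}$, modeled on the compactness theorem in the unparametrized setting (cf.\ [Sch], [AD]). First I would pass to a subsequence so that $\vec{t}_n \to \vec{t}_v \in [0,1]^{k-1}$, using compactness of the cube. Then Theorem \ref{cptwc} yields $u_n \xrightarrow{C^\infty_{loc}} v$ for some $v \in C^\infty(\overline{\mathbb{R}}, M_{t\sigma_k})$, and since each $u_n$ solves the parametrized gradient equation for $\mathcal{H}^{\vec{t}_n}_{\sigma_k}$, the limit $v$ solves the same equation with parameter $\vec{t}_v$. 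Let $x_l := \lim_{s \to -\infty} v(s)$ and $y_0 := \lim_{s \to \infty} v(s)$; both are critical points because the energy of $v$ is bounded by the same uniform estimate as in (\ref{enesti}). By construction $(\vec{t}_v, v) \in \mathcal{M}(\mathcal{H}_{\sigma_k}; x_l, y_0)$, and if $(x_l, y_0) = (x,y)$ then Theorem \ref{thm15} upgrades the convergence to $W^{1,2}$ and the statement holds with $l = l' = 0$.

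If $x_l \neq x$ (resp.\ $y_0 \neq y$), I would produce the intermediate critical points and time reparametrizations by the standard bubbling procedure. For the negative end: on the region $s < -R_{\sigma_k}$, the equation reduces to the time-independent gradient flow of $\overline{h}^{t\sigma_k}_{s\sigma_k}$ (by the stability-at-the-ends axiom), so the difference of action values $\overline{h}^{t\sigma_k}_{s\sigma_k}(x) - \overline{h}^{t\sigma_k}_{s\sigma_k}(x_l)$ is a strictly positive quantity equal to a sum of positive energies that must have escaped to $s = -\infty$. I would extract sequences $\tau^i_{s\sigma_k,n} \to -\infty$ characterizing where these energy quanta live, for instance by setting $\tau^i_{s\sigma_k,n}$ to be the largest $s$ at which $u_n(s)$ lies within a fixed small distance of $x_{i+1}$, and then applying Theorem \ref{cptwc} again to the translated sequence $u_n(\cdot + \tau^i_{s\sigma_k,n})$. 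Since the shifted equation is eventually the time-independent one, the limit is a genuine $\overline{h}^{t\sigma_k}_{s\sigma_k}$-trajectory $u^i_{s\sigma_k}$ from $x_i$ to $x_{i+1}$. The same construction on the positive end, using the stability condition $\mathcal{H}^{\vec{t}_v}_{\sigma_k}(s,\cdot) = h_{t\sigma_k}$ for $s > R_{\sigma_k}$, yields the trajectories $u^j_{t\sigma_k}$ from $y_j$ to $y_{j+1}$.

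The process terminates after finitely many steps because each extraction strictly decreases the Morse index of the relevant critical point (the index of $x_{i+1}$ is strictly less than that of $x_i$, and similarly for the $y_j$), giving the chain of inequalities in (b). The expected-dimension inequality $|x_l| > |y_0| - k + 1$ comes for free: under the transversal choices of $\mathcal{H}_{\sigma_k}$ (from Section 9), the space $\mathcal{M}(\mathcal{H}_{\sigma_k}; x_l, y_0)$ has dimension $|x_l| - |y_0| + k - 1$, which must be $\geq 0$ since it contains the element $(\vec{t}_v, v)$. The main technical obstacle is the bookkeeping of the reparametrization sequences: one must verify that the $\tau^i_{s\sigma_k,n}$ can be chosen so that the shifted solutions converge modulo translation, and that the energy accounting closes up (no energy is lost in the middle region, which is controlled by the $C^\infty_{loc}$-convergence to $v$ on compact sets). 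This is a direct adaptation of [Sch, \S2.4] and [AD, Ch.\ 9], with the only new ingredient being the passive behavior of the parameter $\vec{t}_n \to \vec{t}_v$, which, since the stability-at-the-ends axioms are $\vec{t}$-independent, does not affect the asymptotic analysis.
\end{proof-sketch}
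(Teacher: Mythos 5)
Your proposal follows essentially the same structure as the paper's own proof: extract the central limit via the weak compactness of Theorems~\ref{cptwc} and~\ref{thm15}, show its ends are critical points using the finite-energy estimate as in (\ref{enesti}), bubble off time-independent trajectories of $\overline{h}^{t\sigma_k}_{s\sigma_k}$ and $h_{t\sigma_k}$ at the two ends via reparametrizations (exploiting the stability-at-the-ends axiom), and conclude finiteness from transversality forcing nonnegative expected dimension for each piece. The only cosmetic difference is in how the reparametrization sequences are produced --- the paper fixes an intermediate action level $c$ of $\overline{h}^{t\sigma_k}_{s\sigma_k}$ and sets $\tau_{n_m}$ to be the crossing time, whereas you propose using a small metric ball around the intermediate critical point --- but these are standard and interchangeable devices for the same bubbling argument.
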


\begin{proof}
Theorem \ref{thm15} says that we have a subsequence $\bigl\{(\vec{t}_{n_m}, u_{n_m})\bigr\}_m$ weakly converging to $(\vec{t}_{u'}, u').$ Then we can check that $(\vec{t}_{u'}, u')$ is an element of $\mathcal{M}(\mathcal{H}_{\sigma_k}; x', y')$ for some $x'$ and $y'.$
In the same way as (\ref{enesti}), we can show $\int^{\infty}_{\infty} | \dot{u}'|^2 ds \leq - h_{t \sigma_k}(y') + \overline{h}_{s \sigma_k}^{t \sigma_k}(x') + \int^{R_{\sigma_k}}_{\sigma_k}\dot{h}_{\sigma_k}\big(\vec{t}, s, u'(s)\big) ds < \infty$ Hence we know $\dot{u}'(\pm \infty)= 0 = \frac{-\nabla h_{\sigma_k} \circ u'}{(\cdots)},$ which implies that $x' = u'(-\infty), \ y' =u'(\infty)$ are critical points of $\overline{h}^{t \sigma_k}_{s \sigma_k}$ and $h_{\sigma_k},$ respectively. If $x = x'$ and $y =y',$ then we are done. 

In general, we have $|x| \geq |x'|$ and $|y'| \geq |y|.$ Suppose that $|x| > |x'|.$ Then for $c \in [\overline{h}^{t \sigma_k}_{s \sigma_k}(x'), \overline{h}^{t \sigma_k}_{s \sigma_k}(x)], $ we consider the reparametrization $\mathcal{H}_{\sigma_k}(\cdot, \cdot + \tau_{n_m}),$ where $\tau_{n_m}< - R_{\sigma_k},$ so that we have $\mathcal{H}_{\sigma_k}(\vec{t}, \tau_{n_m}) = h_{s \sigma_k} (\tau_{n_m}) = c.$ We let $\tau_{n_m} := \tau^{\bullet}_{\sigma_k, m}.$ Then by Theorem \ref{aathm}, we have a weak convergence $\bigl\{(\vec{t}_{n_m}, u_{n_m})\bigr\}_m \xrightarrow{C^{\infty}_{loc}} (\vec{t}_{u''}, u'') \in \mathcal{M}(\overline{h}^{t \sigma_k}_{s \sigma_k}; x'', x''')$ with $|x'| \leq |x''| \leq |x'''| \leq |x|.$ We can repeat this process (and also for $y$). It must end in finite steps because of transversality of $\mathcal{H}_{\sigma_k}$ and Corollary \ref{givenk}. Otherwise, at some point the expected dimension becomes negative, which is impossible.
\end{proof}

\subsubsection*{0- and 1-dimensional cases}
\begin{corollary}\label{finsedi0}
If $|y|-|x| = k-1,$ then $\mathcal{M}(\mathcal{H}_{\sigma_k};x, y)$ is a finite set.
\end{corollary}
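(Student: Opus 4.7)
The plan is to combine the dimension formula (Corollary \ref{givenk}) with the compactness theorem (Theorem \ref{cptthm}) and show, by a dimension-counting argument on the possible broken configurations, that no breaking can occur, so the moduli space is compact. Being a compact $0$-dimensional smooth manifold (with corners), it is then automatically a finite set.

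More precisely, first I would observe that under the hypothesis $|y|-|x|=k-1$, Corollary \ref{givenk} gives $\dim \mathcal{M}(\mathcal{H}_{\sigma_k}; x,y)=|x|-|y|+k-1=0$, so the space is a $0$-dimensional manifold with corners. It therefore suffices to prove sequential compactness. Let $\{(\vec{t}_n, u_n)\} \subset \mathcal{M}(\mathcal{H}_{\sigma_k}; x,y)$ and apply Theorem \ref{cptthm} to extract the limiting configuration: critical points $x=x_0,\dots,x_l$ of $\overline{h}^{t\sigma_k}_{s\sigma_k}$, critical points $y_0,\dots,y_{l'}=y$ of $h_{t\sigma_k}$, trajectories $u^i_{s\sigma_k}\in \mathcal{M}(\overline{h}_{s\sigma_k}; x_i,x_{i+1})$ and $u^j_{t\sigma_k}\in \mathcal{M}(h_{t\sigma_k}; y_j,y_{j+1})$, and a parametrized trajectory $(\vec{t}_v,v)\in\mathcal{M}(\mathcal{H}_{\sigma_k}; x_l, y_0)$.

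Next I would read off the index constraints on this configuration. Each time-independent Morse trajectory space $\mathcal{M}(\overline{h}_{s\sigma_k}; x_i,x_{i+1})$ is non-empty only when $|x_i|-|x_{i+1}|\geq 1$, so $|x|-|x_l|\geq l$. Symmetrically, $|y_0|-|y|\geq l'$. Finally, for $(\vec{t}_v,v)$ to exist the parametrized space $\mathcal{M}(\mathcal{H}_{\sigma_k}; x_l,y_0)$ must have non-negative expected dimension, which by Corollary \ref{givenk} is $|x_l|-|y_0|+k-1\geq 0$. Adding the three inequalities,
\begin{equation}\nonumber
|x|-|y|+k-1 \;\geq\; l + l',
\end{equation}
and the hypothesis forces the left-hand side to be $0$, yielding $l=l'=0$. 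Hence no breaking occurs, $x_l=x$ and $y_0=y$, and the $C^\infty_{loc}$-limit $(\vec{t}_v,v)$ already lies in $\mathcal{M}(\mathcal{H}_{\sigma_k};x,y)$. Theorem \ref{thm15} then upgrades this to $W^{1,2}$-convergence, so $\mathcal{M}(\mathcal{H}_{\sigma_k};x,y)$ is sequentially compact. Combined with its being a $0$-dimensional manifold with corners, it must be finite.

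There is no serious obstacle here since the heavy lifting has been done in Theorems \ref{cptthm} and \ref{thm15}; the only point to be careful about is the bookkeeping of indices at the two ends (time-independent on the $s\sigma_k$ and $t\sigma_k$ sides, parametrized time-dependent in the middle) and the fact that the $\mathbb{R}$-translation quotient is built into the index count for the end pieces but not for the middle one, which is exactly what makes the numerology $|x|-|y|+k-1\geq l+l'$ tight when $l=l'=0$.
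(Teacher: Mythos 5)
Your proof is correct and follows essentially the same route as the paper: use the dimension formula to see the moduli space is $0$-dimensional, then apply the compactness theorem and argue by index counting that no breaking can occur, yielding sequential compactness and hence finiteness. The paper compresses your explicit inequality bookkeeping into the phrase "no breaking can take place for reasons of degree," but the substance is identical; your remark about the $\mathbb{R}$-quotient being built into the end pieces but not the middle is a correct and useful clarification of why the count $|x|-|y|+k-1\geq l+l'$ comes out tight.
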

\begin{proof}
By construction of the homotopy data $\bigl\{ (\mathcal{H}_{\sigma}, \mathcal{G}_{\sigma})\bigr\}_{\sigma \in N(\mathcal{I})}$, we know the space of parametrized trajectories $\mathcal{M}(\mathcal{H}_{\sigma_k};x, y)$ is a manifold of dimension $|x|-|y| + k -1 =0.$ Also, Theorem \ref{cptthm} says that any sequence in $\mathcal{M}(\mathcal{H}_{\sigma_k}; x, y)$ has a subsequence that weakly converges to one of its elements, since no breaking can take place for reasons of degree. For metric spaces, sequential compactness implies compactness, and we conclude that $\mathcal{M}(\mathcal{H}_{\sigma_k};x,y)$ is compact, hence a finite set.
\end{proof}

We now apply this analytic method that we have developed until now to obtain algebraic structures. We remark that the Fredholm properties for the linearized operators are ensured to hold for the homotopy data $\bigl\{ (\mathcal{H}_{\sigma}, \mathcal{G}_{\sigma})\bigr\}_{\sigma \in N(\mathcal{I})},$ which is due to our inductive construction (c.f. Remark \ref{rmk56}).

Recall that higher continuation maps
\begin{equation}\nonumber
\varphi_{\sigma_k} : MC_*(h_{s \sigma_k}) \rightarrow MC_*(h_{t \sigma_k})
\end{equation}
are defined by the composition $\varphi_{\sigma_k} : = \overline{\varphi}_{\sigma_k} \circ {\iota}_{\sigma_k}$ of linear extensions of the generator-level assignment:
\begin{equation}\nonumber
\overline{\varphi}_{\sigma_k}(x) := \sum\limits_{\substack{y \in Crit(h_{t \sigma_k}), \\ |y| - |x| - k+1 =0 }}\#_2 \mathcal{M}({\mathcal{H}}_{\sigma_k}; x, y) \cdot y \text{ for } x \in Crit(\overline{h}^{t \sigma_j}_{s \sigma_k}),
\end{equation}
and the inclusion map:
\begin{equation}\nonumber
\iota_{\sigma_k} : MC_*(h_{s \sigma_k}) \hookrightarrow MC_*(\overline{h}^{t \sigma_k}_{s \sigma_k}).
\end{equation}
Lemma \ref{lem2} implies that it also satisfies the chain map condition:
\begin{equation}\nonumber
\partial \circ \iota_{\sigma_k} + \iota_{\sigma_k} \circ \partial =0.
\end{equation}
Then we have
\begin{proposition}
$\varphi_{\sigma}$ vanishes for all $\sigma \in N(\mathcal{I})_k$ with $k > 1 + \dim W.$
\end{proposition}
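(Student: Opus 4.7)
\begin{proof-sketch}
The plan is to argue purely by a degree (index) count: the higher continuation map $\varphi_{\sigma_k}$ shifts the Morse grading by $k-1$, and since Morse indices are bounded by the dimension of the ambient manifold, the shift becomes too large to be realized once $k > 1 + \dim W$.

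First I would recall the definition of $\varphi_{\sigma_k} = \overline{\varphi}_{\sigma_k} \circ \iota_{\sigma_k}$, and in particular the generator-level formula
\begin{equation}\nonumber
\overline{\varphi}_{\sigma_k}(x) = \sum_{\substack{y \in Crit(h_{t\sigma_k}), \\ |y| - |x| - k + 1 = 0}} \#_2 \mathcal{M}(\mathcal{H}_{\sigma_k}; x, y)\cdot y.
\end{equation}
Thus a nonzero contribution to $\varphi_{\sigma_k}(x)$ requires the existence of some $y \in Crit(h_{t\sigma_k})$ with $|y| = |x| + k - 1$.

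Next I would note that for any Morse function on a manifold $M_a \subset W$, the Morse index of any critical point satisfies $0 \leq |x| \leq \dim M_a \leq \dim W$. Assuming $k > 1 + \dim W$, i.e.\ $k - 1 > \dim W$, we then have $|y| = |x| + k - 1 \geq k - 1 > \dim W$, which is impossible. Hence the indexing set in the sum above is empty for every choice of $x \in Crit(\overline{h}^{t\sigma_k}_{s\sigma_k})$, so $\overline{\varphi}_{\sigma_k} \equiv 0$ and therefore $\varphi_{\sigma_k} = \overline{\varphi}_{\sigma_k} \circ \iota_{\sigma_k} \equiv 0$.

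There is no real obstacle here; the statement is a direct consequence of the degree shift built into the definition of $\varphi_{\sigma_k}$ together with the elementary upper bound $|x| \leq \dim W$ for Morse indices. One may note in passing that this makes the sum defining the differential on $MC_*(W, \mathfrak{H})$ locally finite in $k$, so no convergence issues arise at the chain level.
\end{proof-sketch}
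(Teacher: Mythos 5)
Your argument is correct and is essentially the paper's own proof: both reduce to the observation that a nonzero output of $\varphi_{\sigma_k}$ would have Morse index $|x|+k-1$, and since $|x|\geq 0$ and $k-1>\dim W$ this would exceed $\dim W$, which is impossible. The paper phrases the contradiction as a chain of inequalities $\dim W \geq |\varphi_\sigma(x)| = |x|+k-1 > |x|+\dim W \geq \dim W$, but the content is identical.
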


\begin{proof}
Suppose that $\varphi_{\sigma}(x) \neq 0,$ for some $x \in Crit(h_{s \sigma_k}).$ Then if $k > 1 + \dim W,$ the following inequality holds
\begin{equation}\nonumber
\dim W \geq | \varphi_{\sigma}(x) | = |x| + k-1 > |x| + \dim W \geq \dim W,
\end{equation}
which is a contradiction.
\end{proof}

We now focus on the case of $|y| - |x| = k$ when $\mathcal{M}(\mathcal{H}_{\sigma_k}; x, y)$ is a 1-dimensional manifold. We consider the following space:
\begin{equation}\nonumber
\begin{split}
\mathcal{L}(\mathcal{H}_{\sigma_k};x, y)&:= \bigcup\limits_{\substack{y' \in Crit(\overline{h}^{t \sigma_k}_{s \sigma_k}),\\ |y'| = |x| - 1 }} \mathcal{M}(\overline{h}^{t \sigma_k}_{s \sigma_k}; x, y') \times \mathcal{M}(\mathcal{H}_{\sigma_k}; y', y)\\
&\cup \bigcup\limits_{\substack{y'' \in Crit(h_{t \sigma_k}),\\ |y''| = |y| +1}} \mathcal{M}(\mathcal{H}_{\sigma_k}; x, y'') \times \mathcal{M}(h_{t\sigma_k}; y'',y),
\end{split}
\end{equation}
which is a finite set by Corollary \ref{finsedi0}. For reasons of degree, breaking must take place exactly once. Hence in Theorem \ref{cptthm} for any sequence of elements in $\mathcal{M}(\mathcal{H}_{\sigma_k}; x,y),$ it is either of the type:
\begin{enumerate}[label = (\roman*)]
\item \begin{equation}\nonumber
(\vec{t}_n, u_n) \rightarrow \big(v, (\vec{t},u) \big) \in \mathcal{M}(\overline{h}^{t\sigma_k}_{s \sigma_k}; x, y') \times \mathcal{M}(\mathcal{H}_{\sigma_k}; y', y),
\end{equation}
or of the type:
\item \begin{equation}\nonumber
(\vec{t}_n, u_n) \rightarrow \big( (\vec{t'},u'), v \big) \in \mathcal{M}(\mathcal{H}_{\sigma_k}; x, y'') \times \mathcal{M}({h}_{t \sigma_k}; y'', y)
\end{equation}
\end{enumerate}
for some critical points $y'$ and $y''.$ By identifying these limits with the elements in $\mathcal{L}(\mathcal{H}_{\sigma_k};x, y),$ we can compactify $\mathcal{M}(\mathcal{H}_{\sigma_k}; x, y),$ which is just a standard process in Morse theory. 

\begin{theorem}
If $|y|-|x| + k = 0,$ then the union of moduli spaces
\begin{equation}\nonumber
\begin{split}
\overline{\mathcal{M}}(\mathcal{H}_{\sigma_k}; x, y) : = \mathcal{M}(\mathcal{H}_{\sigma_k};x, y) \cup '  \mathcal{L}(\mathcal{H}_{\sigma_k};x, y).
\end{split}
\end{equation}
is a compact 1-dimensional manifold whose boundary is given by
\begin{equation}\nonumber
\partial\overline{\mathcal{M}}(\mathcal{H}_{\sigma_k}; x, y) : = \bigcup\limits_{i=1}^{k-1} \mathcal{M}(\mathcal{H}_{\sigma_k}|_{t_i=0};x, y) \cup \bigcup\limits_{i=1}^{k-1}\mathcal{M}(\mathcal{H}_{\sigma_k}|_{t_i =1};x,y) \cup \mathcal{L}(\mathcal{H}_{\sigma_k};x, y),
\end{equation}
which  is a finite set. (Here we used the notation $\cup'$ to describe the compactification process mentioned above.)
\end{theorem}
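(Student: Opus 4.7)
The plan is to adapt the classical one-dimensional compactification argument of Section 3 to the parametrized setting, using the tools developed in Sections 4--10. By Corollary \ref{givenk} together with the transversal construction of $\mathcal{H}_{\sigma_k}$ in Section 9, the interior $\mathcal{M}(\mathcal{H}_{\sigma_k}; x, y)$ is already a smooth $1$-manifold under the hypothesis. First I would take an arbitrary sequence $\{(\vec{t}_n, u_n)\}$ in the interior with no interior subsequential limit, and invoke Theorem \ref{cptthm} to produce a broken configuration. For reasons of degree, condition (b) of Theorem \ref{cptthm} combined with Corollary \ref{finsedi0} and the transversality of $\overline{h}^{t\sigma_k}_{s\sigma_k}$ and $h_{t\sigma_k}$ forces at most one breaking; that breaking occurs either at $-\infty$ through some $y'$ with $|y'| = |x|-1$, or at $+\infty$ through some $y''$ with $|y''| = |y|+1$. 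The resulting limits are exactly the elements of $\mathcal{L}(\mathcal{H}_{\sigma_k}; x, y)$, and each factor in its defining union is $0$-dimensional, hence finite by Corollary \ref{finsedi0} and by the time-independent theory of Section 3.

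Next I would produce local charts near every proposed boundary point. Near a broken pair in $\mathcal{L}(\mathcal{H}_{\sigma_k}; x, y)$, I would apply the gluing construction of Sections 7--8 (or, for the broken end where one factor is unparametrized, its classical analog summarized in Theorem \ref{1gt}) to exhibit a half-open chart $(\rho_0, \infty) \hookrightarrow \mathcal{M}(\mathcal{H}_{\sigma_k}; x, y)$ whose gluing-parameter $\rho \to \infty$ limit is the broken configuration; Theorem \ref{thminjsurj} guarantees this chart is a local bijection for large $\rho$. Near a cube-face point with $t_i \in \{0,1\}$, I would use the stability-near-the-faces axiom of Definition \ref{khtpy} to obtain a collar normal to the face, and then apply Lemma \ref{isotil} to identify the stratum at $t_i = 0$ with the finite $0$-dimensional moduli space of $\mathcal{H}_{(\sigma_k)^1_i} \#_{\rho_\bullet} \mathcal{H}_{(\sigma_k)^2_{k-i}}$ and the stratum at $t_i = 1$ with that of $\mathcal{H}_{\partial_i \sigma_k}$.

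Finally, I would assemble these three types of local models into a global smooth structure. Compactness of $\overline{\mathcal{M}}(\mathcal{H}_{\sigma_k}; x, y)$ is then immediate from Theorem \ref{cptthm}: any sequence either subconverges in the interior (producing a point of $\mathcal{M}(\mathcal{H}_{\sigma_k};x,y)$), hits the cube boundary (producing a point of one of the face strata identified via Lemma \ref{isotil}), or degenerates by breaking (producing a point of $\mathcal{L}(\mathcal{H}_{\sigma_k}; x, y)$). The boundary description then follows from the classification of compact $1$-manifolds applied locally.

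The main obstacle will be verifying that the parametrized gluing near a broken trajectory furnishes a genuine half-open collar chart and not merely a set-theoretic bijection, and that this collar is compatible at corners with the cube-face collar supplied by Lemma \ref{isotil} (since a broken configuration that sits over a cube face $t_i \in \{0,1\}$ would otherwise be counted twice). I would resolve this by appealing to the contraction-mapping estimates of Section 8, which show that the glued solution depends smoothly on $\rho$ and on the parameter $\vec{t}$; the inductive construction of $\mathfrak{H}$ in Section 9 (in particular, the identifications $\mathcal{H}_{\sigma_k}|_{t_i=0} = \mathcal{H}_{(\sigma_k)^1_i} \widetilde{\#}_{\rho_\bullet} \mathcal{H}_{(\sigma_k)^2_{k-i}}$ and the transversal homotopy $\widetilde{\#} \leadsto \#$) ensures that no broken limit can coincide with a cube-face limit for the relevant index range, so the strata are disjoint.
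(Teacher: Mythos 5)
Your proposal is correct and follows essentially the same approach as the paper: the paper's argument (given as a short paragraph immediately preceding the theorem statement, without a formal proof environment) likewise invokes Theorem \ref{cptthm} plus degree reasons to show breaking happens exactly once, identifies the broken limits with $\mathcal{L}(\mathcal{H}_{\sigma_k};x,y)$ (finite by Corollary \ref{finsedi0}), and declares the compactification to be the standard process. Your version is a legitimate amplification of the paper's terse sketch, adding the discussion of collar charts via the contraction-mapping/gluing estimates, the cube-face strata via the stability-near-the-faces axiom and Lemma \ref{isotil}, and the disjointness of the boundary strata, none of which departs from the underlying strategy.
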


\begin{corollary}\label{correl}
The higher continuation maps $\{ \varphi_{\sigma_k} \}$ satisfy:
\begin{equation}\label{coreqn}
\partial \circ \varphi_{\sigma_k} + \varphi_{\sigma_k} \circ \partial + \sum_{i=1}^{k-1} \varphi_{\partial_i \sigma_k} + \sum_{i=1}^{k-1} \varphi_{(\sigma_k)_i^2} \circ \varphi_{(\sigma_k)_{k-i}^1} =0.
\end{equation}
\end{corollary}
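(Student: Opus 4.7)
\begin{proof-sketch}
The plan is the standard Morse-theoretic argument: show that for each pair $(x,y)$ of critical points with $|y| - |x| + k = 0$, the identity (\ref{coreqn}) is precisely the statement that the total number of boundary points of the compact $1$-manifold $\overline{\mathcal{M}}(\mathcal{H}_{\sigma_k}; x, y)$ is zero modulo $2$. Since each term on the left-hand side of (\ref{coreqn}) is $\mathbb{Z}_2$-linear, it suffices to compare coefficients at each generator $y$ after evaluating at a generator $x \in \mathrm{Crit}(h_{s\sigma_k})$.

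First I would identify each piece of the boundary
\[
\partial \overline{\mathcal{M}}(\mathcal{H}_{\sigma_k}; x, y) = \bigcup_{i=1}^{k-1} \mathcal{M}(\mathcal{H}_{\sigma_k}|_{t_i=0}; x, y) \cup \bigcup_{i=1}^{k-1} \mathcal{M}(\mathcal{H}_{\sigma_k}|_{t_i=1}; x, y) \cup \mathcal{L}(\mathcal{H}_{\sigma_k}; x, y)
\]
with the corresponding algebraic operator. The broken trajectory piece $\mathcal{L}$ splits into two pieces: breakings at the incoming end contribute $\#_2$-counts of $\mathcal{M}(\overline{h}^{t\sigma_k}_{s\sigma_k}; x, y') \times \mathcal{M}(\mathcal{H}_{\sigma_k}; y', y)$, which together with the inclusion $\iota_{\sigma_k}$ and the inward-direction condition gives the coefficient of $y$ in $\varphi_{\sigma_k} \circ \partial(x)$; similarly, breakings at the outgoing end yield $\partial \circ \varphi_{\sigma_k}(x)$.

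Next, the face $t_i = 1$ is by construction $\mathcal{H}'_{\partial_i \sigma_k}$, and Lemma \ref{isotil} gives a bijection $\mathcal{M}(\mathcal{H}'_{\partial_i \sigma_k}; x, y) \simeq \mathcal{M}(\mathcal{H}_{\partial_i \sigma_k}; x, y)$; counting these $\#_2$ produces exactly $\varphi_{\partial_i \sigma_k}(x)$. The face $t_i = 0$ equals $\mathcal{H}_{(\sigma_k)^1_i} \widetilde{\#}_{\rho_{(\cdot;\cdot)}} \mathcal{H}_{(\sigma_k)^2_{k-i}}$, and Lemma \ref{isotil} identifies its moduli space with that of the honest concatenation $\mathcal{H}_{(\sigma_k)^1_i} \#_{\rho} \mathcal{H}_{(\sigma_k)^2_{k-i}}$. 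By the gluing theorem (Theorem \ref{gluthm}) together with Corollary \ref{corlater}, the $\#_2$-count of this concatenated moduli space factors as the composition $\varphi_{(\sigma_k)^2_{k-i}} \circ \varphi_{(\sigma_k)^1_i}(x)$.

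Summing and invoking the fact that $\overline{\mathcal{M}}(\mathcal{H}_{\sigma_k}; x, y)$ is a compact $1$-manifold (hence its boundary has even cardinality), we conclude that the sum of the four types of contributions is zero in $\mathbb{Z}_2$, which is precisely (\ref{coreqn}). The main subtlety, which is already handled by the construction of Section 9 and Lemma \ref{isotil}, is ensuring that the two kinds of codimension-$1$ faces of the parametrizing cube $[0,1]^{k-1}$ are really modelled by $\mathcal{H}_{\partial_i \sigma_k}$ and by the concatenated homotopies up to transversal homotopy (so that the moduli-space counts agree); given this, everything else is a direct boundary count.
\end{proof-sketch}
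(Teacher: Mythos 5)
Your proposal is correct and follows essentially the same argument as the paper: establish the relation by counting the boundary of the compact one-manifold $\overline{\mathcal{M}}(\mathcal{H}_{\sigma_k};x,y)$, identifying the broken-trajectory pieces with the two differential terms, the faces $t_i=1$ with $\varphi_{\partial_i\sigma_k}$ via Lemma \ref{isotil}, and the faces $t_i=0$ with the compositions via Lemma \ref{isotil} and the gluing theorem. The one point the paper spells out more carefully than you do is that the boundary count a priori gives the identity for the barred maps $\overline{\varphi}_{\bullet}$ on $MC_*(\overline{h}^{t\sigma_k}_{s\sigma_k})$, and one must then post-compose with $\iota_{\sigma_k}$ and use the three identities from Lemma \ref{lem2} and Corollary \ref{corlater} to descend to the unbarred $\varphi_{\bullet}$; you gesture at this via the inward-direction condition, which is the right idea, but in a fully written-out version you would want to make this $\overline{\varphi}\to\varphi$ step explicit.
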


\begin{proof}
By counting the order of the moduli space, we have
\begin{enumerate}[label= \textbullet]
\item $\partial \circ \overline{\varphi}_{\sigma_k} + \overline{\varphi}_{\sigma_k} \circ \partial$ from $\#_2 \mathcal{L}(\mathcal{H}_{\sigma_k};x, y),$
\item $\overline{\varphi}_{(\sigma_k)_i^2} \circ \overline{\varphi}_{(\sigma_k)_{k-i}^1},$ from $\#_2 \mathcal{M}(\mathcal{H}_{\sigma_k}|_{t_i=0};x,y)$ and Lemma \ref{isotil},
\item $\overline{\varphi}_{\partial_i \sigma_k}$ from $\#_2 \mathcal{M}(\mathcal{H}_{\sigma_k}|_{t_i=1};x,y)$ and Lemma \ref{isotil},
\end{enumerate}
for all $x \in Crit(\overline{h}^{t \sigma_k}_{s \sigma_k}),$ $y \in Crit(h_{t \sigma_k})$ with $|y|-|x| + k = 0,$ and $i = 1, \cdots k-1.$ By construction of higher data and from the fact that the number of boundary components of $\partial \overline{\mathcal{M}} (\mathcal{H}_{\sigma_k}; x, y)$ is 0 modulo 2, we have:
\begin{equation}\nonumber 
\partial \circ \overline{\varphi}_{\sigma_k} + \overline{\varphi}_{\sigma_k} \circ \partial + \sum_{i=1}^{k-1} \overline{\varphi}_{\partial_i \sigma_k} + \sum_{i=1}^{k-1} \overline{\varphi}_{(\sigma_k)_i^2} \circ \overline{\varphi}_{(\sigma_k)_{k-i}^1} = 0.
\end{equation}
By applying $\iota_{\sigma_k}$ from the right and using the relations
\begin{enumerate}[label= (\roman*)]
\item $ \partial \circ \iota_{\sigma_k} + \iota_{\sigma_k} \circ \partial = 0,$
\item $  \iota_{\sigma_k} = \iota_{\partial_i \sigma_k},$
\item $ (\overline{\varphi}_{(\sigma_k)_i^2}  \circ \overline{\varphi}_{(\sigma_k)_{k-i}^1}) \circ \iota_{\sigma_k} = {\varphi}_{(\sigma_k)_i^2} \circ {\varphi}_{(\sigma_k)_{k-i}^1},$
\end{enumerate}
for every $i = 1, \cdots, k-1,$ we obtain the relation (\ref{coreqn}). Here (i), (ii) follow from Lemma \ref{lem2}, and (iii) from the proof of Corollary \ref{corlater}.
\end{proof}

\section{Homotopy colimit as a chain model for Morse homology}

In this section, we introduce a chain model for the Morse homology of a pair $(W, \mathfrak{H}^1).$

\subsection{Morse homotopy coherent diagrams}

\begin{definition} 
We fix Morse homotopy data $\mathfrak{H}$ and the associated higher continuation maps $\{\varphi_{\sigma_k}\}.$ By a {\it{Morse homotopy coherent diagram,}} we mean a homotopy coherent diagram

\begin{equation}\nonumber
\mathfrak{M} : N(\mathcal{I}) \rightarrow N_{dg}\big(Ch(\mathbb{Z}_2)\big)
\end{equation}
that is given by the following assignment
\begin{enumerate}[label = \textbullet ]
\item To each object $a$ of $\mathcal{I},$ we associate a chain complex 

\begin{equation}\nonumber
\mathfrak{M}(a) := \big(MC_*(h_a), \partial \big).
\end{equation}

\item To each sequence of composable morphisms $(f_1, \cdots f_{k}),$ we associate a degree $k-1$ linear map 
\begin{equation}\nonumber
\mathfrak{M}(f_1, \cdots f_{k}) : MC_* (h_{sf_1}) \rightarrow MC_{*}(h_{tf_{k}})
\end{equation} 
given by $\mathfrak{M}(f_1, \cdots, f_{k}) := \varphi_{(f_1, \cdots, f_{k})}.$ By Corollary \ref{correl}, we know the collection $\bigl\{\mathfrak{M}(f_1, \cdots, f_k)\bigr\}_{(f_1, \cdots, f_k)}$ is subject to the relation (\ref{coreqn}).
\end{enumerate}
\end{definition}

\subsection{Morse chain complexes}

We propose a chain model for the Morse homology. 

\begin{definition}\label{mccdef}
For given Morse homotopy data $\mathfrak{H}$ and the associated Morse homotopy coherent diagram $\mathfrak{M},$ we define the {\textit{Morse chain complex}} (over $\mathbb{Z}_2$) of a pair $(W, \mathfrak{H})$ by
\begin{equation}\nonumber
MC_*({W},\mathfrak{H}) := \bigoplus\limits_{k \geq 0} \bigoplus\limits_{\substack{\sigma_k \in N(\mathcal{I})_k\\ \text{nondeg.} }} \mathbb{Z}_2 \langle \sigma_k \rangle \otimes MC_*(h_{s\sigma_k})
\end{equation}
with the differential given by
\begin{equation}\nonumber
\begin{split}
\partial : (f_1, \cdots, f_k; x) \mapsto \sum\limits_{i=1}^k \big(\partial_i&(f_1, \cdots, f_k); x\big) + \sum\limits_{i=1}^k\big(f_{i+1}, \cdots, f_k ; \varphi_{f_1, \cdots, f_i}(x)\big)\\ &+ (f_1, \cdots, f_k; \partial x),
\end{split}
\end{equation}
and the grading by
\begin{equation}\nonumber
|(\sigma_k;x)| := k+ |x|,
\end{equation}
for $\sigma_k = (f_1, \cdots, f_k) \in N(\mathcal{I})_k$ and $x \in Crit (h_{sf_1}).$
\end{definition}

\begin{theorem}
$\big( MC_*({W}, \mathfrak{H}), \partial\big)$ is a chain complex, the homology of which is isomorphic to $MH_*({W}, \mathfrak{H}^1).$
\end{theorem}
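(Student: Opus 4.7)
\begin{proof-sketch}
The plan is to realize $(MC_*(W,\mathfrak{H}),\partial)$ as the chain-level colimit $\overline{\mathfrak{M}}(*)$ of the Morse homotopy coherent diagram from Proposition \ref{prcol}, and then read off the homology isomorphism from Theorem \ref{ftiso}. This reduces the statement to three bookkeeping steps plus invocation of the already-established analytic input.

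First I would confirm that $\mathfrak{M} : N(\mathcal{I}) \to N_{dg}\bigl(Ch(\mathbb{Z}_2)\bigr)$ is indeed a homotopy coherent diagram in the sense of Section 2. On vertices, $\mathfrak{M}(a) = (MC_*(h_a), \partial)$ is the standard compact-piece Morse complex of Section 3. On a nondegenerate $k$-simplex $\sigma_k$, the degree $(k-1)$ higher continuation map $\varphi_{\sigma_k}$ of Section 4 satisfies the relation (\ref{coreqn}) of Corollary \ref{correl}, which is precisely (\ref{hceq}) after the identifications $(\sigma_k)^1_i = (f_1,\ldots,f_i)$ and $(\sigma_k)^2_{k-i} = (f_{i+1},\ldots,f_k)$; extending by zero on degenerate simplices (as in Section 2.1) produces a valid $\infty$-functor.

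Next I would identify $MC_*(W, \mathfrak{H})$ of Definition \ref{mccdef} with the model for the colimit $\overline{\mathfrak{M}}(*)$ described in Proposition \ref{prcol}. The underlying graded $\mathbb{Z}_2$-modules coincide term-by-term, being the direct sum $\bigoplus_{k \geq 0}\bigoplus_{\sigma_k\text{ nondeg.}} \mathbb{Z}_2\langle \sigma_k\rangle \otimes MC_*(h_{s\sigma_k})$ with grading $|(\sigma_k;x)| = k+|x|$. Writing $\sigma_k = (f_1,\ldots,f_k)$ and substituting $j = k-i$ in the second sum of (\ref{diffctild}), the formula in Proposition \ref{prcol} becomes literally the differential of Definition \ref{mccdef}. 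Hence $(MC_*(W,\mathfrak{H}),\partial) = \overline{\mathfrak{M}}(*)$ as chain complexes; in particular $\partial^2 = 0$.

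Finally, Theorem \ref{ftiso} yields
\begin{equation}\nonumber
H_*\bigl(MC_*(W,\mathfrak{H}),\partial\bigr) \;\simeq\; \lim_{\longrightarrow} H\mathfrak{M}.
\end{equation}
The strict diagram $H\mathfrak{M} : \mathcal{I} \to Ch(\mathbb{Z}_2)$ sends each object $a$ to $MH_*(h_a)$ and each morphism $f$ to the homology-level continuation $\varphi_{f*}$ of Section 4; that these compose strictly, i.e.\ $\varphi_{g\circ f *} = \varphi_{g*}\circ\varphi_{f*}$, is Corollary \ref{cordire}. This is exactly the direct system appearing in Definition \ref{defmh}, so $\lim_{\longrightarrow} H\mathfrak{M} \simeq MH_*(W,\mathfrak{H}^1)$, completing the proof. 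The genuinely hard content, namely the coherency relation (\ref{coreqn}) and the construction of transversal higher data $\mathcal{H}_{\sigma_k}$ realizing it, has already been accomplished in Sections 9 and 10; here the only novelty is the dictionary between the Morse-theoretic formula of Definition \ref{mccdef} and the purely $\infty$-categorical model of Proposition \ref{prcol}.
\end{proof-sketch}
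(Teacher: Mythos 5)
Your proof is correct and follows essentially the same route as the paper, which simply cites Corollary \ref{lastcor} (of which Theorem \ref{ftiso} is the Section~2 restatement). You unpack the bookkeeping—checking that $\mathfrak{M}$ satisfies (\ref{hceq}) via Corollary \ref{correl}, matching Definition \ref{mccdef} to the colimit model of Proposition \ref{prcol}, and identifying $\lim_{\longrightarrow} H\mathfrak{M}$ with $MH_*(W,\mathfrak{H}^1)$ via Corollary \ref{cordire}—more explicitly than the paper's one-line proof, but the underlying argument is identical.
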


\begin{proof}
This is an application of Corollary \ref{lastcor}.
\end{proof}

\subsubsection*{A filtration} For $N \geq 1$ we denote
\begin{equation}\nonumber
\begin{split}
MC_*({W}, \mathfrak{H})^{N} := \bigl\{(\sigma_k; x) \in MC_*({W}, \mathfrak{H}) & \mid \sigma_k \in N(\mathcal{I})_k \\ & \text{ with } k \leq N \text{ and } x \in Crit(h_{t \sigma_k}) \bigr\}.
\end{split}
\end{equation}
$MC_*(W, \mathfrak{H})^N$ defines a chain complex with the induced differential because the differential does not increase the number of morphisms in the nerve part. In other words, they form a filtration on the chain complex $MC_*({W},\mathfrak{H})$:
\begin{equation}\nonumber 
\bigoplus\limits_{a \in \mathbb{Z}_{\geq 0}} MC_*(h_a) \subseteq MC_*({W}, \mathfrak{H})^{1} \subseteq MC_*({W}, \mathfrak{H})^{2} \subseteq \cdots \subseteq MC_*({W}, \mathfrak{H}).
\end{equation}
Moreover, we have the following maps that are induced from the inclusions:
\begin{equation}\nonumber 
H_*\big(MC_*({W}, \mathfrak{H})^{1}\big) \xrightarrow{} H_*\big(MC_*({W}, \mathfrak{H})^{2}\big)  \xrightarrow{} \cdots  \xrightarrow{} H_*\big(MC_*({W}, \mathfrak{H})\big).
\end{equation}

\subsection{Comparisons between two chain complexes}

Fixing an exhaustion $\mathcal{E}$ on $W,$ we compare two chain complexes $MC_*(W, \mathfrak{H})$ and $MC_*(W, \mathfrak{H}')$ for two higher data $\mathfrak{H}$ and $\mathfrak{H}'$ with $\mathcal{E} \subset \mathfrak{H}, \mathfrak{H}'.$ As we introduced in Section 2, there are two models for this purpose: the models with $\widehat{\mathcal{I}}$ and $\widehat{\mathcal{I}}.$

\subsubsection*{The model with $\widehat{\mathcal{I}}$}
\begin{condition}\label{extfuncchn}
For two given homotopy coherent diagrams 
\begin{equation}\nonumber
\mathscr{F}_{\mathfrak{H}}, \mathscr{F}_{\mathfrak{H}'} : N(\mathcal{I}) \rightarrow N_{dg}\big(Ch(\mathbb{Z}_2)\big),
\end{equation}
there is another
\begin{equation}\nonumber
\widehat{\mathscr{F}} : N(\widehat{\mathcal{I}}) \rightarrow N_{dg}\big(Ch(\mathbb{Z}_2)\big)
\end{equation} 
which extends $\mathscr{F}_{\mathfrak{H}}$ and $\mathscr{F}_{\mathfrak{H}'},$ i.e., $\widehat{\mathscr{F}}|_{N(\mathcal{I}) \times \{0\}}= \mathscr{F}_{\mathfrak{H}}$ and $\widehat{\mathscr{F}}|_{N(\mathcal{I}) \times \{1\}}= \mathscr{F}_{\mathfrak{H}'}$ 
\end{condition}

\begin{proposition} 
Suppose that Condition \ref{extfuncchn} holds for $\mathfrak{H}$ and $\mathfrak{H}'$. Then there exist quasi-isomorphisms of chain complexes
\begin{equation}\label{qrf}
\begin{tikzcd}
{} & colim {\widehat{\mathscr{F}}}(*)  & {}\\
MC_*(W, \mathfrak{H}) \arrow{ru}{\simeq}[swap]{\text{q-isom.}} & {} & MC_*(W, \mathfrak{H}') \arrow{lu}{\text{q-isom.}}[swap]{\simeq}
\end{tikzcd}
\end{equation}
\end{proposition}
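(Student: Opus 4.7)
\begin{proof-sketch}
The plan is to reduce the proposition to the abstract lemma already established in Section~2 (the model with $\widehat{\mathcal{I}}$) by identifying the Morse chain complexes $MC_*(W,\mathfrak{H})$ and $MC_*(W,\mathfrak{H}')$ with the cone-point values of the colimit diagrams $\overline{\mathscr{F}}_{\mathfrak{H}}$ and $\overline{\mathscr{F}}_{\mathfrak{H}'}$, respectively. First, I would invoke the main theorem of the introduction (part (i)) together with Proposition~\ref{prcol}: the homotopy coherent diagram $\mathscr{F}_{\mathfrak{H}}$ built out of the higher continuation maps $\{\varphi_{\sigma}\}$ from Section~10 extends to a colimit diagram $\overline{\mathscr{F}}_{\mathfrak{H}}$ whose evaluation at the cone point is precisely the chain complex $(MC_*(W,\mathfrak{H}),\partial)$ of Definition~\ref{mccdef}, and the analogous statement for $\mathfrak{H}'$.

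Next, I would apply the Lemma of Section~2.2 attached to the pair of functors $\Phi:\mathcal{I}\rightleftarrows\widehat{\mathcal{I}}:\Psi$. The hypothesis of the present proposition is exactly Condition~\ref{extfuncchn}, namely that there exists a homotopy coherent diagram $\widehat{\mathscr{F}}:N(\widehat{\mathcal{I}})\to N_{dg}(Ch(\mathbb{Z}_2))$ restricting to $\mathscr{F}_{\mathfrak{H}}$ and $\mathscr{F}_{\mathfrak{H}'}$ along the two embeddings $N(\mathcal{I})\hookrightarrow N(\widehat{\mathcal{I}})$. Since $\Phi$ is an equivalence of categories, $\Phi_*:N(\mathcal{I})\to N(\widehat{\mathcal{I}})$ is a categorical equivalence and hence cofinal in the sense of Definition~\ref{coflur}; cofinality preserves colimits in the $\infty$-category $N_{dg}(Ch(\mathbb{Z}_2))$, giving the equivalence
\begin{equation}\nonumber
\mathrm{colim}\,\mathscr{F}_{\mathfrak{H}} \;=\; \mathrm{colim}(\widehat{\mathscr{F}}\circ\Phi_*) \;\xrightarrow{\simeq}\; \mathrm{colim}\,\widehat{\mathscr{F}},
\end{equation}
which, read off at the cone point, becomes a quasi-isomorphism $\overline{\mathscr{F}}_{\mathfrak{H}}(*)\xrightarrow{\simeq}\mathrm{colim}\,\widehat{\mathscr{F}}(*)$ of chain complexes. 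The same argument with $\mathscr{F}_{\mathfrak{H}'}$ gives the opposite quasi-isomorphism. Combining with the identifications of the previous paragraph yields exactly the triangle \eqref{qrf}.

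The bulk of the work therefore lives not in the proof of this proposition itself but in justifying the two inputs it relies on: (a)~that evaluation at the cone point of the colimit diagram recovers the explicit chain complex $MC_*(W,\mathfrak{H})$, which is the content of the main theorem together with Proposition~\ref{prcol}, and (b)~that categorical equivalences of nerves give cofinal maps inducing equivalences of colimits in the dg-nerve $N_{dg}(Ch(\mathbb{Z}_2))$. The step I expect to be the most delicate is this second one: one has to ensure that the $\infty$-categorical cofinality statement from Lurie/Riehl transports cleanly through the dg-nerve construction so that the equivalence of $\infty$-colimits really is realized by a genuine chain-level quasi-isomorphism on cone-point values. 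Once these two inputs are in hand, the proposition is essentially formal, and the diagram \eqref{qrf} follows by concatenating the two quasi-isomorphisms.
\end{proof-sketch}
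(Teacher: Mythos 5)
Your argument matches the paper's approach exactly: the proposition is a direct application of the unnamed Lemma in Section 2 for the $\widehat{\mathcal{I}}$-model (cofinality of $\Phi_*$ via categorical equivalence, per Definition \ref{coflur}) together with the identification $\overline{\mathscr{F}}_{\mathfrak{H}}(*) \simeq MC_*(W,\mathfrak{H})$ from Proposition \ref{prcol}, which is why the paper gives no separate proof. Your concern about transporting cofinality through the dg-nerve to get a genuine chain-level quasi-isomorphism is precisely what the paper handles in Appendix B via the equivalence $hN_{dg}\big(Ch(\mathbb{Z}_2)\big) \simeq hCh(\mathbb{Z}_2)$.
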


Recall that in Example \ref{egstrdia}, for strict diagrams with the fixed chain complexes at the vertices, we checked that such an extension exists. It can apply to our situation, so we have the following corollaries.

\begin{corollary}
Let $\mathfrak{H}_s, \mathfrak{H}_s'$ be strict higher data with the same exhaustion $\mathcal{E}.$ Then the Morse chain complexes $MC_*(W, \mathfrak{H})$ and $MC_*(W, \mathfrak{H}')$ are related as in (\ref{qrf}) for some $\widehat{\mathscr{F}}.$
\end{corollary}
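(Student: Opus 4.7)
\begin{proof-sketch}
The plan is to reduce the corollary to a direct application of the Proposition together with Example \ref{egstrdia}. First I would verify that the two Morse homotopy coherent diagrams $\mathscr{F}_{\mathfrak{H}_s}, \mathscr{F}_{\mathfrak{H}_s'}: N(\mathcal{I}) \to N_{dg}(Ch(\mathbb{Z}_2))$ associated to the strict higher data are themselves strict in the sense of Section 2: since strictness of the higher data means $\varphi_\sigma = 0$ for every $\sigma \in N(\mathcal{I})$ with $|\sigma| \geq 2$, and by construction $\mathscr{F}_{\mathfrak{H}_s}(\sigma) = \varphi_\sigma$, the functors vanish on all simplices of dimension $\geq 2$, which is exactly strictness.

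Next I would observe that the hypothesis of a common exhaustion $\mathcal{E} = \{(M_a, h_a, g_a)\}$ forces the two diagrams to agree on vertices, namely
\begin{equation}\nonumber
\mathscr{F}_{\mathfrak{H}_s}(a) = \big(MC_*(M_a, h_a, g_a), \partial\big) = \mathscr{F}_{\mathfrak{H}_s'}(a)
\end{equation}
for every $a \in \mathrm{Ob}(\mathcal{I})$. With these two properties in hand, Example \ref{egstrdia} explicitly produces a homotopy coherent diagram
\begin{equation}\nonumber
\widehat{\mathscr{F}} : N(\widehat{\mathcal{I}}) \to N_{dg}\big(Ch(\mathbb{Z}_2)\big)
\end{equation}
extending both $\mathscr{F}_{\mathfrak{H}_s}$ and $\mathscr{F}_{\mathfrak{H}_s'}$; that is, Condition \ref{extfuncchn} is verified for the pair $(\mathfrak{H}_s, \mathfrak{H}_s')$.

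Having exhibited $\widehat{\mathscr{F}}$, the preceding Proposition applies verbatim and yields the diagram of quasi-isomorphisms (\ref{qrf}), completing the proof. No genuine obstacle appears: the only subtlety is the bookkeeping verification that the strictness condition of Definition \ref{strict1h} (and its higher-data analogue) translates into the simplicial-set-level strictness used in Example \ref{egstrdia}, and that the identification of chain complexes at vertices is preserved under the construction of $\widehat{\mathscr{F}}$ from that example. Both are immediate from the definitions once unwound, so the proof is essentially a concatenation of the Proposition and Example \ref{egstrdia} applied to the present geometric setting.
\end{proof-sketch}
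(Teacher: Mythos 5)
Your proof is correct and follows exactly the route the paper intends: translate strictness of higher data into strictness of the induced diagrams, observe that the shared exhaustion forces agreement of $\mathscr{F}_{\mathfrak{H}_s}$ and $\mathscr{F}_{\mathfrak{H}_s'}$ on vertices, invoke Example \ref{egstrdia} to produce $\widehat{\mathscr{F}}$, and then apply the preceding Proposition. This matches the paper's one-line justification pointing back to Example \ref{egstrdia}.
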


\subsubsection*{The model with $\overline{\mathcal{I}}$}
\begin{condition}\label{condlast}
There exist higher data $\overline{\mathfrak{H}}, \ {\mathfrak{H}}^{\overline{\mathcal{I}}}$ and homotopy coherent diagrams
\begin{equation}\nonumber
\begin{split}
\mathscr{F}_{\overline{\mathfrak{H}}} &: N(\mathcal{I}) \rightarrow N_{dg}\big(Ch(\mathbb{Z}_{2})\big),\\
\mathscr{F}_{{\mathfrak{H}}^{\overline{\mathcal{I}}}} &: N(\overline{\mathcal{I}}) \rightarrow N_{dg}\big(Ch(\mathbb{Z}_{2})\big)
\end{split}
\end{equation}
that extend $\mathscr{F}_{\mathfrak{H}}$ and $\mathscr{F}_{\mathfrak{H}'},$ i.e., we have
\begin{equation}\nonumber
\begin{split}
\mathscr{F}_{\overline{\mathfrak{H}}} \circ \Phi_{0*}' = \mathscr{F}_{\mathfrak{H}},& \ \mathscr{F}_{\overline{\mathfrak{H}}} \circ \Phi_{1*}' = \mathscr{F}_{\mathfrak{H}'},\\
\mathscr{F}_{{\mathfrak{H}}^{\overline{\mathcal{I}}}} \circ \Phi''_{0*} = \mathscr{F}_{\mathfrak{H}},& \ \mathscr{F}_{{\mathfrak{H}}^{\overline{\mathcal{I}}}} \circ \Phi''_{1*} = \mathscr{F}_{\mathfrak{H}'},\\
\end{split}
\end{equation}
that satisfy
\begin{equation}\nonumber
 \mathscr{F}_{{\mathfrak{H}}^{\overline{\mathcal{I}}}} \circ \Psi'_{*} = \mathscr{F}_{\overline{\mathfrak{H}}}.
\end{equation}
(See Section 2 for the notations.)
\end{condition}

According to our discussion in Section 2 and the fact that $\Phi_{0,*}''$ and $\Phi_{1,*}''$ are categorical equivalences, we can conclude:

\begin{proposition} 
Suppose that Condition \ref{condlast} holds for $\mathfrak{H}$ and $\mathfrak{H}'$. Then there exist quasi-isomorphisms of chain complexes
\begin{equation}\nonumber
\begin{tikzcd}
{} & colim \mathscr{F}_{{\mathfrak{H}}^{\overline{\mathcal{I}}}}(*)  & {}\\
MC_*(W, \mathfrak{H}) \arrow{ru}{\simeq}[swap]{\text{q-isom.}} & {} & MC_*(W, \mathfrak{H}') \arrow{lu}{\text{q-isom.}}[swap]{\simeq}
\end{tikzcd}
\end{equation}
\end{proposition}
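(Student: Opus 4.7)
The strategy is to reduce the statement to the abstract formal result for homotopy coherent diagrams that was established in Section 2 under the ``model with $\overline{\mathcal{I}}$'', and then invoke Proposition \ref{prcol} to identify the chain complexes with colimits.

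First, I would apply Proposition \ref{prcol} and the cone-point identification in the main Theorem of the introduction to obtain the equalities
\[
MC_*(W,\mathfrak{H}) \;=\; \overline{\mathscr{F}}_{\mathfrak{H}}(*),\qquad MC_*(W,\mathfrak{H}') \;=\; \overline{\mathscr{F}}_{\mathfrak{H}'}(*),
\]
so that both Morse chain complexes are already realized as (models for) colimits of their respective homotopy coherent diagrams $\mathscr{F}_{\mathfrak{H}}, \mathscr{F}_{\mathfrak{H}'} : N(\mathcal{I}) \to N_{dg}(Ch(\mathbb{Z}_2))$.

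Next, I would invoke Condition \ref{condlast}, which furnishes the extension $\mathscr{F}_{\mathfrak{H}^{\overline{\mathcal{I}}}} : N(\overline{\mathcal{I}}) \to N_{dg}(Ch(\mathbb{Z}_2))$ together with the restriction relations producing $\mathscr{F}_{\mathfrak{H}}$ and $\mathscr{F}_{\mathfrak{H}'}$ after precomposition with the two obvious inclusions of $N(\mathcal{I})$ into $N(\overline{\mathcal{I}})$ along $\Phi'_0$ and $\Phi'_1$. The key observation, already recorded in Section 2, is that $\Phi'_0,\Phi'_1$ are \emph{equivalences of categories}: every object $2c+1 \in \overline{\mathcal{I}}$ is isomorphic to $2c$ via $f_{2c+1,2c}$, so the images of $\Phi'_0$ and $\Phi'_1$ hit a skeleton of $\overline{\mathcal{I}}$, and full faithfulness is visible from the definition. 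Consequently the induced maps of nerves $\Phi'_{0*},\Phi'_{1*}$ are categorical equivalences of simplicial sets.

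Now I would appeal to the same principle used in the derivation of (\ref{ceco12}): a categorical equivalence is cofinal in the sense of Definition \ref{coflur} (cf.\ [Lur1], [Rie]), and cofinal maps induce quasi-isomorphisms on colimits of homotopy coherent diagrams valued in $N_{dg}(Ch(\mathbb{Z}_2))$. Applied twice, this yields the commutative diagram of quasi-isomorphisms
\[
\overline{\mathscr{F}}_{\mathfrak{H}}(*) \;=\; colim(\mathscr{F}_{\mathfrak{H}^{\overline{\mathcal{I}}}} \circ \Phi'_{0*})(*) \xrightarrow{\ \simeq\ } colim\, \mathscr{F}_{\mathfrak{H}^{\overline{\mathcal{I}}}}(*) \xleftarrow{\ \simeq\ } colim(\mathscr{F}_{\mathfrak{H}^{\overline{\mathcal{I}}}} \circ \Phi'_{1*})(*) \;=\; \overline{\mathscr{F}}_{\mathfrak{H}'}(*),
\]
and combining with the first step gives the diagram in the statement.

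There is no hard obstacle here; the point, rather than an obstacle, is that the categorical equivalence of $\Phi'_{0*},\Phi'_{1*}$ was precisely the reason for enlarging $\mathcal{I}$ to $\overline{\mathcal{I}}$, as it rescues the argument that fails for the non-equivalent endofunctors $\Phi''_{0*},\Phi''_{1*}$ on $N(\mathcal{I})$ itself. All substantive work is already encoded in the preceding sections: the analytic/geometric content sits in the construction of higher homotopy data realizing $\mathscr{F}_{\mathfrak{H}^{\overline{\mathcal{I}}}}$ (which is what Condition \ref{condlast} asserts), while this proposition is a purely formal consequence at the level of $\infty$-categorical colimits.
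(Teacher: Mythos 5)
Your proof is correct and follows the paper's own (very brief) argument: identify the Morse chain complexes with $\overline{\mathscr{F}}_{\mathfrak{H}}(*)$ and $\overline{\mathscr{F}}_{\mathfrak{H}'}(*)$ via Proposition \ref{prcol}, then use that the categorical equivalences induce cofinal maps of nerves and hence quasi-isomorphisms on colimits. One small but worthwhile point: you were right to work with $\Phi'_{0*}, \Phi'_{1*}$ (the maps into $N(\overline{\mathcal{I}})$, which Section 2 identifies as the categorical equivalences and which typecheck against $\mathscr{F}_{\mathfrak{H}^{\overline{\mathcal{I}}}}$), whereas the sentence preceding the proposition in the paper and parts of Condition \ref{condlast} have the primes swapped to $\Phi''_{i*}$ — the endofunctors of $\mathcal{I}$ that the paper explicitly states are \emph{not} equivalences; your composition $\Phi'_i = \Psi' \circ \Phi''_i$ reconciles the two sets of relations and is the intended reading.
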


\begin{remark}
(i) Contrary to Condition \ref{extfuncchn}, Condition \ref{condlast} is \textit{geometric} in that it requires a construction of geometric homotopy data not just a diagram that algebraically extends the given ones. (ii) We expect explicit constructions of such higher data $\overline{\mathfrak{H}}$ and ${\mathfrak{H}}^{\overline{\mathcal{I}}}$ to be possible in our future work, by a similar type of considerations as in Section 9.
\end{remark}

\newpage

\appendix

Most of the contents in the appendices are excerpts from [Kim] for the reader's convenience.

\section{The diagram $\mathscr{F}$}

In this section, we consider a diagram of the following form
$$\mathscr{F} : N(\mathcal{\mathcal{I}}) \rightarrow N_{dg}\big(Ch(\mathbb{Z}_2)\big)$$ and an extension $\overline{\mathscr{F}}$ to $N(\mathcal{I})^{\triangleright}$ such that its value at the cone point is a preferred chain complex $\widetilde{C}$ of (\ref{prfchncpx}). We show that $\overline{\mathscr{F}}$ is an initial object in the overcategory $N_{dg}\big(Ch(\mathbb{Z}_2)\big)_{\mathscr{F}/},$ hence a colimit of the diagram $\mathscr{F}.$

\subsection{$\infty$-categories}

In this subsection, we review the basics of $\infty$-category theory.

\subsubsection*{Simplicial sets and $\infty$-categories} Denote by $[n] :=  \{0,1, \cdots, n\}$ the ordered set of cardinality $n$ for $n \geq 0.$

Let $\Delta$ be a category which consists of the following data:

\begin{enumerate}[label= \textbullet]
\item Ob($\Delta$) $= \bigl\{ [n] \mid n \geq 0\bigr\},$
\item Mor($\Delta$) = $\{$order-preserving maps between objects$\}$, (i.e., a map $\alpha : [m] \rightarrow [n]$ is a morphism if it satisfies: $i \leq j \Rightarrow \alpha(i) \leq \alpha(j)$).
\end{enumerate}

\ \

Notice that the order-preserving injective (and surjective) maps are defined only if $m \leq n$ (and $m \geq n,$ respectively). We define the following family of injective maps ({\it{face}} maps),
\begin{equation}\nonumber
d_i : [n] \rightarrow [n+1], \ (i=0, \cdots, n+1),
\end{equation}
by
\begin{equation}\nonumber
d_i(j) = \begin{cases}
j &\text{ if } j \leq i\\
j + 1 &\text{ if } j \geq i+1,
\end{cases}
\end{equation}\nonumber
and surjective maps ({\it{degeneracy}} maps),
\begin{equation}
s_i : [n] \rightarrow [n-1], \ (i=0, \cdots, n),
\end{equation}
by
\begin{equation}\nonumber
s_i(j) = \begin{cases}
j &\text{ if } j \leq i\\
j - 1 &\text{ if } j \geq i+1.
\end{cases}
\end{equation}
These maps characterize the morphisms of the category $\Delta.$

\begin{definition}
A \textit{simplicial set} $X$ is defined to be a contravariant functor
\begin{equation}\nonumber
X : \Delta^{op} \rightarrow Set.
\end{equation}
\end{definition}

Denote $X_n := X\big([n]\big),$ and consider the images of the maps $d_i$ and $s_i$ denoted by $\partial_i := X(d_i), \text{ and } \sigma_i := X(s_i)$ respectively for each $n$ and $i = 0, \cdots n$:
\begin{equation}\nonumber
\begin{split}
\partial_i &: X_{n} \rightarrow X_{n-1},\\
\sigma_i &: X_{n} \rightarrow X_{n+1}.
\end{split}
\end{equation}
We call $\partial_i$ and $\sigma_i$ the $face$ and $degeneracy$ maps, respectively.
\begin{definition}
A \textit{morphism} of simplicial sets $X, Y \in Fun(\Delta^{op}, Set)$ is a natural transformation between the functors. We denote by $sSet$ the \textit{category of simplicial sets.}
\end{definition}

\begin{definition}
For $m$ and $k$ with $1 \leq k \leq m,$ we define a \textit{horn} $\Lambda^{m}_k$ to be the simplicial set which is obtained from $\Delta^m$ by removing its $k$-th face and its interior.
\end{definition}

Let $X$ be a simplicial set and $\Lambda^m_k \rightarrow X$ a morphism of simplicial sets. If this extends to a morphisms $\Delta^m \rightarrow X$ for any $0 \leq k \leq m$ and for all $m,$ we say $X$ satisfies the {\textit{Kan condition}}. If this condition holds for $0 < k < m,$ we say $X$ satisfies the {\textit{inner Kan condition.}}

\begin{definition}
An {\textit{$\infty$-category}} is defined to be a simplicial set that satisfies the inner Kan condition. A {\textit{morphism between $\infty$-categories}}, or an {\textit{$\infty$-functor}} is defined to be a morphism of the underlying simplicial sets, i.e., a natural transformation between functors.
\end{definition}

\begin{definition}
Let $K$ be a simplicial set and $\mathcal{D}$ an $\infty$-category. A \textit{homotopy coherent diagram} $K \rightarrow \mathcal{D}$ is defined to be the morphism of simplicial sets.
\end{definition}

\begin{definition}
The {\it{homotopy category}} $h\mathcal{C}$ of an $\infty$-category $\mathcal{C}$ is an ordinary category that is defined as follows. The objects are given by the vertices $\mathcal{C}_0$ and the morphisms are given by 1-simplices $\mathcal{C}_1$ up to the relations generated by: (i) $id_x \simeq \sigma_x(x)$ for all $x \in \mathcal{C}_0$ (ii) $\partial_1(y) \simeq \partial_0(y) \cdot \partial_1(y)$ for all $y \in \mathcal{C}_2$. 
\end{definition}

\subsection{Nerves and dg-nerves}

In this subsection, we study the nerve of a category as examples of $\infty$-categories.

\subsubsection*{The nerve of indexing category $\mathcal{I}$} Let $\mathcal{I}$ be the poset category. For each $k \geq 0,$ we consider the following sets:
\begin{equation}\nonumber
\begin{split}
& N(\mathcal{I})_{0} := \text{Ob}(\mathcal{I}).\\
& N(\mathcal{I})_{k} := \{ \text{sequences of $k$ composable morphisms of }\mathcal{I}\}, \ k \geq 1,\\
\end{split}
\end{equation}

For example, any $\sigma_k \in N(\mathcal{I})_k$ with $k \geq 1$ is of the form:
\begin{equation}\nonumber
\sigma_k = (f_{a_1,a_2}, f_{a_2,a_3} \cdots, f_{a_k, a_{k+1}}),
\end{equation}
where $f_{ab}$ denotes the unique morphism from $a$ to $b.$
(See subsection 2.1 for our convention for the morphisms of the poset category $\mathcal{I}.$)
In this paper, let us call $k$ the \textit{length} of $\sigma_k$ and denote $|\sigma_k| := k.$ 
We denote 
\begin{equation}\nonumber
N(\mathcal{I}) := \coprod\limits_{k \geq 0} N(\mathcal{I})_k
\end{equation}
and call it the \textit{nerve} of $\mathcal{I}.$

\begin{notation}\label{not12p}
For $\sigma_k \in N(\mathcal{I})_k$ with $k \geq 1$ and $1 \leq i \leq {k-1},$  we denote
\begin{equation}\label{not12}
\begin{split}
&(\sigma_k)^1_i := (f_{a_1,a_2}, \cdots f_{a_i,a_{i+1}}), \\
&(\sigma_k)^2_{k-i} := (f_{a_{i+1},a_{i+2}}, \cdots f_{a_{k-1},a_k}).
\end{split}
\end{equation}
\end{notation}

In fact, $N(\mathcal{I})$ is a simplicial set, and the face and degeneracy maps are defined as follows.
\begin{equation}\nonumber
\begin{split}
\partial_i &: N(\mathcal{I})_k \rightarrow N(\mathcal{I})_{k-1}, (i = 0, \cdots, k)\\
\sigma_i &: N(\mathcal{I})_k \rightarrow N(\mathcal{I})_{k+1}, (i = 0, \cdots, k-1)\\
\end{split}
\end{equation}

\begin{equation}\nonumber
\partial_i (f_{a_1,a_2}, \cdots, f_{a_k, a_{k+1}}) =
\begin{cases}
(f_{a_2,a_3}, \cdots, f_{a_k, a_{k+1}}) & \text{ if } i = 0,\\
(f_{a_1,a_2}, \cdots, f_{a_{i-1}, a_{i}}, f_{a_{i}, a_{i+2}},f_{a_{i+2},a_{i+3}},& \cdots , f_{a_k, a_{k+1}}) \\ & \text{ if } 1 \leq i \leq k-1,\\
(f_{a_1,a_2}, \cdots, f_{a_{k-1}, a_{k}}) & \text{ if } i =k,
\end{cases}
\end{equation}

\begin{equation}\nonumber
\sigma_i (f_{a_1,a_2}, \cdots, f_{a_k, a_{k+1}}) =
\begin{cases}
(id_{a_1}, f_{a_1,a_2}, \cdots, f_{a_k, a_{k+1}}) & \text{ if } i = 0,\\
(f_{a_1,a_2}, \cdots, f_{a_{i}, a_{i+1}}, id_{a_{i+1}}, f_{a_{i+1},a_{i+2}},& \cdots , f_{a_k, a_{k+1}}) \\ & \text{ if } 1 \leq i \leq k-1,\\
(f_{a_1,a_2}, \cdots, f_{a_{k}, a_{k+1}}, id_{a_k}) & \text{ if } i =k.
\end{cases}
\end{equation}

\begin{lemma}([Lur1] Example 1.1.2.6)
$N(\mathcal{\mathcal{I}})$ is an $\infty$-category.
\end{lemma}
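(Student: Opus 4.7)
The plan is to verify the inner Kan condition directly: for every inner horn $\sigma : \Lambda^m_k \to N(\mathcal{I})$ with $0 < k < m$, I must exhibit an extension $\tilde{\sigma} : \Delta^m \to N(\mathcal{I})$. The crucial feature of $\mathcal{I}$ is that between any two objects $a \leq b$ there is exactly one morphism $f_{a,b}$; hence an $m$-simplex of $N(\mathcal{I})$ is the same data as a chain $a_0 \leq a_1 \leq \cdots \leq a_m$ in $\mathbb{Z}_{\geq 0}$, and such a chain is uniquely determined by its sequence of vertices.

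First I would extract the vertex sequence $a_0, \ldots, a_m$ from $\sigma$, which is legitimate because $\Lambda^m_k$ contains the full $0$-skeleton of $\Delta^m$. Next I would check that $a_i \leq a_{i+1}$ for each $i$, i.e., that each spine edge $[i, i+1]$ lies in the horn: for $m \geq 3$ and any $i$, the set $\{0, \ldots, m\} \setminus \{i, i+1, k\}$ contains at least $m - 2 \geq 1$ elements, so some face $d_l$ with $l \neq k$ contains $[i, i+1]$ and therefore sits inside $\Lambda^m_k$; for $m = 2$ (and hence $k = 1$), the two faces $d_0$ and $d_2$ of $\Lambda^2_1$ directly supply the edges $[1,2]$ and $[0,1]$. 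With the spine in hand, I define $\tilde{\sigma}$ to be the unique $m$-simplex $(f_{a_0, a_1}, f_{a_1, a_2}, \ldots, f_{a_{m-1}, a_m})$ in $N(\mathcal{I})_m$.

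It remains to check $\tilde{\sigma}|_{\Lambda^m_k} = \sigma$, i.e., that the faces $d_l \tilde{\sigma}$ agree with $\sigma$ on the corresponding face of $\Lambda^m_k$ for every $l \neq k$. But both are $(m-1)$-simplices in $N(\mathcal{I})$ with the same ordered vertex sequence $(a_0, \ldots, \widehat{a_l}, \ldots, a_m)$, and by the poset rigidity observed above they must coincide. There is no genuine obstacle; the only subtle case is the base $m = 2$, where the missing edge $[0,2]$ is absent from $\Lambda^2_1$ and must be produced from the poset structure itself rather than read off the horn. This recovers [Lur1] Example 1.1.2.6 applied to $\mathcal{I}$, and in fact shows that inner horn fillers in $N(\mathcal{I})$ are unique.
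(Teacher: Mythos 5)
Your proof is correct, and since the paper itself gives no argument (it only cites [Lur1] Example 1.1.2.6), you have actually produced a self-contained verification where the paper defers to a reference. Your argument specializes Lurie's general result to the poset case, where it becomes particularly transparent: the key simplifications you exploit are (1) a simplex of $N(\mathcal{I})$ is nothing more than its ordered vertex chain, because between any pair $a \leq b$ there is at most one morphism, and (2) the horn $\Lambda^m_k$ always contains the spine $\{[i,i+1]\}$ (with the small counting argument handling $m\geq 3$, and $m=2$ treated directly), so the vertex chain can be reassembled. Your compatibility check by ``poset rigidity'' is the right thing to say: faces are again determined by vertices, so there is nothing further to match up, and you get uniqueness of fillers for free — which is exactly the extra property Lurie highlights as distinguishing nerves of (ordinary) categories among quasi-categories. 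Lurie's proof for general categories has to also recover the $1$- and $2$-dimensional data (morphisms and their composites) from the horn rather than just the vertices, so your argument is genuinely shorter, at the cost of only applying to posets — which is all the paper needs.
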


\subsubsection*{The dg-Nerve of a dg category} We introduce the dg-nerve of a dg-category. Let $\mathcal{A}$ be a differential graded category or dg category over some base ring, say $R$. 
\begin{definition}
The \textit{dg-nerve} of $\mathcal{A}$ is defined by the following data:
for an integer $k \geq 1,$ consider a subset $I \subset [k],$ say $I =\{ j_1 < \cdots < j_l \}$ (with $|I|>1)$ and a pair
\begin{equation}\nonumber
N_{dg}(\mathcal{A})_k := \big(\{X_i\}_{i \in [k]}, \{\psi_{I}\}_{I \subset [k]}\big),
\end{equation}
where each $X_i$ is an object of $\mathcal{C}$ and $\psi_{I} : X_{j_1} \rightarrow X_{j_l}$ a morphism of degree $l-1,$ satisfying
\begin{equation}\label{diadgn}
\partial \circ \psi_I + \psi_I \circ \partial + \sum\limits_i \psi_{I \setminus \{i\}} + \sum\limits_i \psi_{\{j_{i+1} , \cdots,  j_l \}} \circ \psi_{\{j_1, \cdots, j_i \} } = 0.
\end{equation}
Denote by $N_{dg}(\mathcal{A})_k$ the set of all such data for a given $k,$ and the disjoint union:
\begin{equation}\nonumber
N_{dg}(\mathcal{A}) : = \coprod_{k \geq 0} N_{dg}(\mathcal{A})_k
\end{equation}
\end{definition}

Let $\alpha$ be a morphism of the {\textit{simplicial}} category $\Delta,$ that is, an order-preserving map $\alpha : [k] \rightarrow [m].$ Then there is an induced map:
\begin{equation}\nonumber
\begin{split}
\Phi^{\alpha} : N_{dg}(\mathcal{A})_m &\rightarrow N_{dg}(\mathcal{A})_k,\\
\big(\{X_i\}_{i \in [m]}, \{\psi_{I}\}_{I \subset [m]}\big) &\mapsto \big(\{X_{\alpha(j)}\}_{j \in [k]}, \{\psi^{\alpha}_{J}\}_{J \subset [k]}\big),
\end{split}
\end{equation}
where $\psi^{\alpha}_J : X_{\text{the smallest element of } \alpha(J) } \rightarrow X_{\text{the largest element of } \alpha(J) }$ for $J \subset [k]$ is a linear map given by
\begin{equation}\nonumber
\psi^{\alpha}_J :=
\begin{cases}
\psi_{\alpha(J)} & \text{ if } \alpha|_{J} \text{ is injective,}\\
id_{\alpha(j)} & \text{ if } J = \{j, j'\} \text{ with } \alpha(j) = \alpha(j'),\\
0 & \text{ otherwise.}
\end{cases}
\end{equation}

From a straightforward consideration, we have
\begin{lemma}
We have $\Phi^{\alpha'} \circ \Phi^{\alpha} = \Phi^{\alpha \circ \alpha'}.$
\end{lemma}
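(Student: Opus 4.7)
The plan is to verify the identity pointwise on an arbitrary datum $(\{X_i\}_{i \in [m]}, \{\psi_I\}_{I \subset [m]}) \in N_{dg}(\mathcal{A})_m$, splitting into agreement on objects and agreement on morphism components. For objects the check is immediate: both sides assign $X_{\alpha(\alpha'(j))} = X_{(\alpha \circ \alpha')(j)}$ to each $j \in [k']$, so nothing nontrivial is at stake there. The content is entirely in matching $(\psi^{\alpha})^{\alpha'}_J$ with $\psi^{\alpha \circ \alpha'}_J$ for every subset $J \subset [k']$, and this will be a routine case analysis driven by the three-way definition of $\Phi^{\bullet}$.

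I would carry out the verification by trichotomy on how $\alpha \circ \alpha'$ behaves on $J$. First, if $(\alpha \circ \alpha')|_J$ is injective, then both $\alpha'|_J$ and $\alpha|_{\alpha'(J)}$ are injective, so the outer rule gives $(\psi^{\alpha})^{\alpha'}_J = \psi^{\alpha}_{\alpha'(J)} = \psi_{\alpha(\alpha'(J))} = \psi^{\alpha \circ \alpha'}_J$. Second, if $J = \{j,j'\}$ with $(\alpha \circ \alpha')(j) = (\alpha \circ \alpha')(j')$, then $\psi^{\alpha \circ \alpha'}_J$ is the identity on $X_{(\alpha \circ \alpha')(j)}$; on the other side I split according to whether $\alpha'(j) = \alpha'(j')$ (giving the identity directly from the $\alpha'$-step) or $\alpha'(j) \neq \alpha'(j')$ (so $\alpha'|_J$ is injective and one invokes the identity-case of the $\alpha$-step on $\alpha'(J) = \{\alpha'(j),\alpha'(j')\}$), and in both sub-cases one lands on the same identity morphism.

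The remaining case is when $(\alpha \circ \alpha')|_J$ is neither injective nor a two-element collapse, which forces $|J| \geq 3$ with some repetition, so $\psi^{\alpha \circ \alpha'}_J = 0$. I would then split into whether $\alpha'|_J$ is injective or not. If $\alpha'|_J$ is not injective, then since $|J| \geq 3$ we are in the "otherwise" branch of the $\alpha'$-step and $(\psi^{\alpha})^{\alpha'}_J = 0$. If $\alpha'|_J$ is injective, then $\alpha|_{\alpha'(J)}$ must fail to be injective (because the composition does), and $|\alpha'(J)| = |J| \geq 3$ again lands us in the "otherwise" branch of the $\alpha$-step, so $\psi^{\alpha}_{\alpha'(J)} = 0$. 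Assembling the three cases gives the asserted equality of morphism families, hence $\Phi^{\alpha'} \circ \Phi^{\alpha} = \Phi^{\alpha \circ \alpha'}$.

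There is no genuine obstacle here; the only thing one has to be careful about is the branch where $\alpha'$ collapses a pair while $\alpha \circ \alpha'$ also collapses that pair, since the intermediate datum $\Phi^{\alpha}(\{X_i\},\{\psi_I\})$ only records an object at $\alpha(\alpha'(j))$ and the identity is forced by the second clause of the definition of $\Phi^{\bullet}$. Once the bookkeeping is done honestly, the lemma is formal.
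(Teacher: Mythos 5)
Your proposal is correct: the paper offers no written proof (it dismisses the lemma as following ``from a straightforward consideration''), and your trichotomy on how $(\alpha\circ\alpha')|_J$ behaves --- injective, a two-element collapse, or neither --- together with the observation that injectivity of the composite forces injectivity of $\alpha'|_J$ and of $\alpha|_{\alpha'(J)}$, is exactly the routine verification being alluded to. The one delicate point, that the identity clause in the intermediate datum $\Phi^{\alpha}(\{X_i\},\{\psi_I\})$ produces the identity of $X_{\alpha(\alpha'(j))}$ so the two sides agree object-by-object, is handled correctly in your closing paragraph.
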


Consider $\Phi : \Delta^{op} \rightarrow Set$ a contravariant functor with $\Phi \big({[n]} \big) := N_{dg}(\mathcal{A})_n$ and $\Phi(\alpha) = \Phi^{\alpha}.$ Then $\Phi$ defines a simplicial set with the face and degeneracy maps given by $\Phi^{d_i}$ and $\Phi^{s_i}$ maps, respectively. In fact we have:

\begin{theorem}([Lur2] Proposition 1.3.1.10)
$N_{dg}(\mathcal{A})$ is an $\infty$-category.
\end{theorem}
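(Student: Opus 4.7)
The plan is to verify the inner Kan condition for $N_{dg}(\mathcal{A})$ directly. Fix $0 < k < n$ and a morphism of simplicial sets $\sigma \colon \Lambda^n_k \to N_{dg}(\mathcal{A})$; the goal is to extend $\sigma$ to $\Delta^n$. Unwinding the definition of the dg-nerve, $\sigma$ amounts to objects $\{X_i\}_{i\in[n]}$ of $\mathcal{A}$ together with morphisms $\psi_I \colon X_{\min I} \to X_{\max I}$ of degree $|I|-1$ for every subset $\emptyset \neq I \subsetneq [n]$ with $I \neq [n]\setminus\{k\}$, each subject to the coherence relation (\ref{diadgn}). An extension to $\Delta^n$ is precisely the data of two further morphisms $\psi_{[n]\setminus\{k\}}$ (degree $n-1$) and $\psi_{[n]}$ (degree $n$), both in $\mathrm{Hom}_{\mathcal{A}}(X_0,X_n)^{\bullet}$, such that (\ref{diadgn}) holds also for $I = [n]\setminus\{k\}$ and $I = [n]$.

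My first move would be to set $\psi_{[n]} := 0$ and then read off the missing face directly from the $I=[n]$ relation, defining
\begin{equation*}
\psi_{[n]\setminus\{k\}} \;:=\; -\!\!\sum_{\substack{i \in [n]\\ i\neq k}}\psi_{[n]\setminus\{i\}} \;-\; \sum_{i=1}^{n}\psi_{\{i,\ldots,n\}}\circ \psi_{\{0,\ldots,i-1\}}.
\end{equation*}
The hypothesis $0 < k < n$ is essential here: it guarantees that $[n]\setminus\{k\}$ does not coincide with any $[n]\setminus\{i\}$ occurring in the first sum, nor with either factor $\{0,\ldots,i-1\}$ or $\{i,\ldots,n\}$ of any composition in the second sum (each such factor contains exactly one endpoint of $[n]$). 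Hence every symbol on the right-hand side is genuinely part of the horn data, and by construction the relation (\ref{diadgn}) for $I=[n]$ is satisfied.

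The genuine obstacle, and the step I expect to carry the main technical content, is verifying that the coherence relation (\ref{diadgn}) for $I=[n]\setminus\{k\}$ then holds \emph{automatically}. This is a purely algebraic identity: one applies $[\partial,-]$ to the defining formula for $\psi_{[n]\setminus\{k\}}$, expands via the Leibniz rule on the composition terms, and substitutes the (\ref{diadgn}) relations already known for every strictly smaller subset of $[n]$. The claim is that the resulting expression collapses to precisely $-\sum_{j}\psi_{([n]\setminus\{k\})\setminus\{j\}}-\sum_{j}(\text{compositions indexed by }[n]\setminus\{k\})$, exactly matching the non-commutator part of the $I=[n]\setminus\{k\}$ relation. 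The cancellation has the same associahedral combinatorial structure that governs the proof of $d^2 = 0$ in the bar construction of an $A_\infty$-algebra: each unwanted term of the form $\psi_{J'\setminus\{i'\}}$ or triple composition $\psi\circ\psi\circ\psi$ is produced in two ways that are opposite in sign (or equal mod 2) and so cancels. It is precisely the innerness assumption $0 < k < n$ that blocks any stray uncancelled term from needing to invoke the missing face itself; this is also the combinatorial reason why $N_{dg}(\mathcal{A})$ is only an inner Kan complex and not a Kan complex in general. Once this identity is confirmed, the extension exists and $N_{dg}(\mathcal{A})$ satisfies the inner Kan condition.
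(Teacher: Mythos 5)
The paper does not prove this theorem; it cites [Lur2] Proposition 1.3.1.10, and your outline faithfully reconstructs Lurie's argument: fill the inner horn by declaring $\psi_{[n]}=0$, solve the $I=[n]$ coherence relation for $\psi_{[n]\setminus\{k\}}$, use innerness to ensure the right-hand side only mentions faces already in the horn, and then verify the $I=[n]\setminus\{k\}$ relation. Your observation that each composition factor $\{0,\dots,i\}$ or $\{i,\dots,n\}$ contains exactly one endpoint while $[n]\setminus\{k\}$ contains both (because $0<k<n$) is exactly the correct reason innerness matters, and it does correctly explain why $N_{dg}$ is only an \emph{inner} Kan complex.

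Two things keep this from being a proof. First, the defining formula for $\psi_{[n]\setminus\{k\}}$ is not quite right: the sum $\sum_i\psi_{[n]\setminus\{i\}}$ must run over \emph{interior} indices $1\le i\le n-1$ only, since removing an endpoint changes the source or target and $\psi_{[n]\setminus\{0\}},\psi_{[n]\setminus\{n\}}$ are not maps $X_0\to X_n$; likewise the composition sum must run over $1\le i\le n-1$ with the two pieces overlapping at $i$ (i.e.\ $\psi_{\{i,\dots,n\}}\circ\psi_{\{0,\dots,i\}}$ rather than $\psi_{\{i,\dots,n\}}\circ\psi_{\{0,\dots,i-1\}}$), as otherwise the factors do not compose and the boundary terms $\psi_{\{0\}},\psi_{\{n\}}$ are undefined. (The display of relation (\ref{diadgn}) in the paper is itself slightly loose on this point; the intended relation is Lurie's, with interior $i$ and overlapping factors.) Second, and more seriously, the step you yourself flag as carrying ``the main technical content''---that the $I=[n]\setminus\{k\}$ relation then holds automatically---is asserted by analogy with $d^2=0$ in the bar construction but not carried out. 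That cancellation is the entire substance of the theorem; an appeal to its ``associahedral structure'' names the phenomenon but does not exhibit the pairing of terms. Without performing that expansion and showing every term cancels (and, crucially, that no uncancelled term requires the missing face), the argument is an accurate plan of attack rather than a proof.
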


\subsection{Colimit of a diagram}

In this subsection, we introduce the notion of a colimit of a diagram.

\subsubsection*{Join of $\infty$-categories} Let $K_1$ and $K_2$ be simplicial sets. We define the \textit{join} of $K_1$ and $K_2$ to be the simplicial set $K_1 \star K_2,$ where
\begin{equation}
(K_1 \star K_2)_m = (K_1)_m \cup (K_2)_m \cup \bigcup\limits_{i+j = m-1} (K_1)_i \times (K_2)_j.
\end{equation} 

The face and degeneracy maps are induced by those of $K_1$ and $K_2.$

\begin{theorem}([Lur1] Proposition 1.2.8.3)(Joyal)
If $K_1$ and $K_2$ are $\infty$-categories, then so is $K_1 \star K_2.$
\end{theorem}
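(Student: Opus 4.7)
The plan is to verify the inner Kan condition for $K_1 \star K_2$. Given an inner horn $h : \Lambda^m_k \to K_1 \star K_2$ with $0 < k < m$, I must produce a filling $\Delta^m \to K_1 \star K_2$. First I would recall the explicit description of face maps on the join: an $m$-simplex in the mixed piece $(K_1)_i \times (K_2)_j$ with $i + j + 1 = m$ has its vertices partitioned into the first $i+1$ (from $K_1$) and the last $j+1$ (from $K_2$); the face $d_l$ shrinks the $K_1$-component when $l \leq i$, producing a pure $K_2$-simplex when $i = 0$, and shrinks the $K_2$-component when $l > i$, producing a pure $K_1$-simplex when $j = 0$. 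Because $0 < k < m$ every vertex of $\Delta^m$ appears in some face of the horn, so the partition of the vertex set of $h$ between $K_1$ and $K_2$ is well-defined; I write it as $p+1$ in $K_1$ and $q+1$ in $K_2$, with the convention that $p = -1$ or $q = -1$ encodes a pure horn.

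In the pure case, every face of the horn lies in one factor (since a face's vertices are a subset of the total vertex set), so $h$ factors through that factor and I apply its inner Kan property directly, which is part of the hypothesis. The substantive case is the mixed case $p, q \geq 0$. Here the claim is that the horn data determines a unique pair $(\sigma, \tau) \in (K_1)_p \times (K_2)_q$, and the associated $m$-simplex $(\sigma, \tau) \in K_1 \star K_2$ fills the horn. The key readoff is: any face $d_l h$ with $l \leq p$ and $l \neq k$ has $K_2$-component equal to $\tau$, while any $d_l h$ with $l > p$ and $l \neq k$ has $K_1$-component equal to $\sigma$. A short counting argument using $0 < k < m$ shows that at least one readable face of each type always exists, even in the degenerate subcases $p = 0$ or $q = 0$; otherwise $k$ would have to be $0$ or $m$.

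The remaining faces of the candidate filling $(\sigma, \tau)$ then automatically match $h$ on all indices other than $k$: this follows from the simplicial identities $d_i d_j = d_{j-1} d_i$ for $i < j$, applied to the simplicial-set map $h$, which translate directly into the required compatibility between the $\sigma$ read off from one side and the $d_l \sigma$ read off from the other, and symmetrically for $\tau$. The main obstacle will be the careful bookkeeping when $p$ or $q$ equals $0$: in those degenerate configurations the $K_1$- or $K_2$-component of a mixed simplex collapses to a single vertex, so certain faces of the filling lie in a pure piece rather than a mixed one, and one must verify that the readoff and the vertex-matching conditions of the join simplicial set are still respected. Apart from this combinatorial care, no input beyond the definition of the join and the simplicial identities is required; in particular the mixed case needs no filling property of $K_1$ or $K_2$, and the inner Kan hypothesis enters only through the pure cases.
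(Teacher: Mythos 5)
The paper does not prove this statement; it cites it as Proposition 1.2.8.3 of [Lur1] (Joyal's theorem). Your argument is correct and is essentially Lurie's proof: split into the pure case (where the horn factors through one factor and its inner Kan property applies) and the mixed case (where the two components $\sigma$ and $\tau$ are read off from faces of the horn, the filling is the unique join simplex $(\sigma,\tau)$, and agreement with the horn is a consequence of the simplicial identities rather than any lifting hypothesis on $K_1$ or $K_2$).
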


\begin{definition}
In particular, {\it{cones}} are simplicial sets and their $k$-simplices are given as follows:
\begin{equation}
\begin{split}
K^{\triangleright} &: = K \star \Delta^0 \text{ (the {\it{right cone}}), }\\
K^{\triangleleft} &: = \Delta^0 \star K  \text{ (the {\it{left cone}}), }
\end{split}
\end{equation}
and
\begin{equation}
(K^{\triangleright})_k =
\begin{cases}K_1 \cup \Delta^0 & \text{ if } k = 0,\\
K_k \cup (K_{k-1} \times \Delta^0) & \text{ if } k \geq 1,
\end{cases}
\end{equation} 

\begin{equation}
(K^{\triangleleft})_k =
\begin{cases}\Delta^0 \cup K_0 & \text{ if } k = 0,\\
(\Delta^0 \times K_k) \cup K_{k-1} & \text{ if } k \geq 1.
\end{cases}
\end{equation} 
The face and degeneracy maps are naturally induced from those of $K.$ The 0-simplices from $\Delta^0$ in $K^{\triangleright}$ are called the {\it{cone points.}}

\end{definition}

\subsubsection*{Over/undercategories} Let $K$ be a simplicial set and $\mathcal{D}$ an $\infty$-category. 

\ \

\begin{definition}[Over/undercategories]
For a diagram  $p : K \rightarrow \mathcal{D},$ we consider the simplicial set $\mathcal{D}_{/p}$ with
\begin{equation}
(\mathcal{D}_{/p})_k := \text{Hom}_{p}(\Delta^k \star K, \mathcal{D}),
\end{equation}
where $\text{Hom}_p(\cdots)$ means that its elements are extensions $\overline{p} : \Delta^k \star K \rightarrow \mathcal{D}$ such that $\overline{p}|_{K} = p.$ When $K$ is an $\infty$-category, we call $\mathcal{D}_{/p}$ the \textit{overcategory} of $\mathcal{D}.$

Similarly, we define the \textit{undercategory} $\mathcal{D}_{p/}$ of $\mathcal{D}$ by 
\begin{equation}
(\mathcal{D}_{p/})_k := \text{Hom}_{p}(K \star \Delta^k, \mathcal{D}).
\end{equation}
\end{definition}

\ \

We do not use the following proposition later, but we provide it to mention the well-behavedness of over and under categories. 

\begin{proposition}\label{underoverinf}([Lur1] Proposition 1.2.9.3, Corollary 2.1.2.2) The simplicial sets $\mathcal{D}_{/p}$ and $\mathcal{D}_{p/}$ are $\infty$-categories. Moreover, if $\mathcal{D}_1 \rightarrow \mathcal{D}_2$ is a categorical equivalence (see Definition \ref{catequiv}) of $\infty$-categories, then so are the induced maps on over/undercategories.
\end{proposition}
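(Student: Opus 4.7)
The plan is to prove both claims by exploiting the defining adjunction between the join and slice constructions. By definition, for any simplicial set $K$, a map $K \to \mathcal{D}_{/p}$ corresponds bijectively to a map $K \star L \to \mathcal{D}$ whose restriction to $L$ equals $p$, where $L$ is the source of $p$. This translates every lifting problem inside $\mathcal{D}_{/p}$ into a corresponding lifting problem in $\mathcal{D}$ itself, reducing the question to understanding which inclusions $A \star L \hookrightarrow B \star L$ are well-behaved.

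For the first assertion, I would fix an inner horn inclusion $\Lambda^k_i \hookrightarrow \Delta^k$ with $0 < i < k$ together with a map $\Lambda^k_i \to \mathcal{D}_{/p}$. Via the adjunction this becomes a map $\Lambda^k_i \star L \to \mathcal{D}$ extending $p$, and I need to produce an extension to $\Delta^k \star L \to \mathcal{D}$. The key combinatorial input is that the inclusion $\Lambda^k_i \star L \hookrightarrow \Delta^k \star L$ is inner anodyne for $0 < i < k$. I would establish this by a skeletal induction on $L$: building $L$ up by attaching non-degenerate simplices, one reduces to the pushout-product of $\Lambda^k_i \hookrightarrow \Delta^k$ with the boundary inclusion $\partial \Delta^n \hookrightarrow \Delta^n$, which is known to be inner anodyne by Joyal's lemma (proved via an explicit filtration of $\Delta^k \star \Delta^n \simeq \Delta^{k+n+1}$ by sub-horns). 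Since $\mathcal{D}$ is an $\infty$-category, the map $\mathcal{D} \to \Delta^0$ is an inner fibration and thus has the right lifting property against all inner anodyne maps, yielding the desired filler. The argument for $\mathcal{D}_{p/}$ is symmetric, using $L \star \Lambda^k_i \hookrightarrow L \star \Delta^k$ instead.

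For the second assertion, given a categorical equivalence $F : \mathcal{D}_1 \to \mathcal{D}_2$ and a diagram $p : L \to \mathcal{D}_1$, I want to show the induced functor $F_{/p} : (\mathcal{D}_1)_{/p} \to (\mathcal{D}_2)_{/F \circ p}$ is again a categorical equivalence. The plan is to factor $F$ in the Joyal model structure as a composition of an inner anodyne inclusion followed by a trivial categorical fibration, and check that each factor induces a categorical equivalence on slices. For the inner anodyne part, the pushout-product argument from the first step shows that it induces an inner anodyne (hence trivial) cofibration on slices. For the trivial fibration part, one checks directly that the induced map on slices satisfies the right lifting property against $\partial \Delta^n \hookrightarrow \Delta^n$ for all $n$, using the adjunction to rephrase each such lifting problem as a lifting problem in $\mathcal{D}_2$ against $\partial \Delta^n \star L \hookrightarrow \Delta^n \star L$, which again reduces by a skeletal filtration to the original lifting property of $F$.

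The main obstacle is the combinatorial lemma that $A \star L \hookrightarrow B \star L$ is inner anodyne whenever $A \hookrightarrow B$ is, because $L$ is an arbitrary simplicial set and the join introduces simplices in every positive degree. The standard treatment filters the pushout-product $(A \star L') \cup_{(A \star \partial L')} (B \star \partial L') \hookrightarrow B \star L'$ for $L' = \Delta^n$, identifies each attaching cell with a shuffle in $\Delta^{k+n+1}$, and verifies that the relevant horns are inner; extending to general $L$ is then a transfinite induction on skeleta. Once this lemma is in hand, both assertions follow formally from the join-slice adjunction and the standard characterization of categorical equivalences in the Joyal model structure.
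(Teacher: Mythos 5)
The paper does not prove this statement at all: it is quoted verbatim from [Lur1] (HTT 1.2.9.3 and 2.1.2.2), so there is no in-paper argument to compare against. Your treatment of the first assertion is the standard one and is essentially correct: the join--slice adjunction turns an inner horn problem in $\mathcal{D}_{/p}$ into an extension problem along $\Lambda^k_i \star L \hookrightarrow \Delta^k \star L$, and the skeletal induction on $L$ together with the Joyal pushout--join lemma shows this inclusion is inner anodyne, so the inner fibration $\mathcal{D} \to \Delta^0$ supplies the filler; likewise the trivial-fibration half of your second argument is fine, since lifting $\partial\Delta^n \hookrightarrow \Delta^n$ in the slices unwinds to lifting the monomorphism $\partial\Delta^n \star L \hookrightarrow \Delta^n \star L$ against a map with the right lifting property against all monomorphisms.

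The genuine gap is in the other half of the second assertion. First, the factorization you invoke need not exist: the small object argument applied to inner horns factors $F$ as an inner anodyne map followed by an \emph{inner} fibration, and an inner fibration between $\infty$-categories that is a categorical equivalence is not in general a trivial fibration (it need not even be surjective on vertices); conversely, the Joyal factorization gives a trivial cofibration that need not be inner anodyne. Second, and more seriously, even if you had an inner anodyne first factor $j : \mathcal{D}_1 \to \mathcal{E}$, the claim that it induces an inner anodyne map $(\mathcal{D}_1)_{/p} \to \mathcal{E}_{/jp}$ is false and does not follow from the pushout--join lemma: that lemma controls maps \emph{into} a fixed slice as the cone shape varies, whereas here you are varying the ambient $\infty$-category, and the slice is only functorial by postcomposition. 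Concretely, inner anodyne maps are bijective on $0$-simplices, but pushing out along $\Lambda^2_1 \subset \Delta^2$ creates a new edge with target $x$, hence a new vertex of $\mathcal{E}_{/x}$ that is not in the image of $(\mathcal{D}_1)_{/x}$; so the induced map on slices is not inner anodyne (its being a categorical equivalence is exactly the nontrivial content you are trying to prove). This case genuinely requires a different argument --- e.g.\ Lurie's, which identifies the mapping spaces of slices (or compares $\mathcal{C}_{/p}$ with the alternative ``fat'' slice, HTT \S 4.2.1) and checks fully faithfulness and essential surjectivity, or an argument using that the projections $\mathcal{C}_{/p} \to \mathcal{C}$ are right fibrations together with invariance results for right fibrations. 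As it stands, your proposal proves the first assertion but not the ``moreover'' part.
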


When we consider $p : \Delta^0 \rightarrow \mathcal{D},$ that is, when $k=0,$ we sometimes denote $\mathcal{D}_{/X}$ (and  $\mathcal{D}_{X/}$ ) instead of $\mathcal{D}_{/p}$ (and $\mathcal{D}_{p/},$ respectively).

\begin{definition}
Let $\mathcal{D}$ be an $\infty$-category. For $X, Y \in \mathcal{D}_0,$  we define the \textit{right morphism space} $\text{Hom}_{\mathcal{D}}^R(X,Y)$ to be the simplicial set whose $k$-simplices are given by maps $\Delta^{k+1} \rightarrow \mathcal{D}$ such that
\begin{equation}
\begin{split}
\Delta^{k+1}&|_{[0, \cdots, k]} \text{ maps to the constant simplex }  X\\
\Delta^{k+1}&|_{k+1}  \text{ maps to } Y. 
\end{split}
\end{equation}
Here, the constant simplex $Y$ is the one that consists of one element $Y$ in each degree simplex. Similarly, the {\it{left}} morphism space $\text{Hom}_{\mathcal{D}}^L(X,Y)$ is defined to be the simplicial set whose $k$-simplices are given by maps $\Delta^{k+1} \rightarrow \mathcal{D}$ such that
\begin{equation}
\begin{split}
\Delta^{k+1}&|_{0} \text{ maps to }  X\\
\Delta^{k+1}&|_{[1, \cdots, k]} \text{ maps to the constant simplex } Y. 
\end{split}
\end{equation}

\end{definition}

Hom$_{\mathcal{D}}^R(X,Y)$ (and similarly Hom$_{\mathcal{D}}^L(X,Y)$) can be interpreted as a topological space, and it is a Kan complex. (See [Lur1] Proposition 1.2.2.3 for the details.) We say that it is \textit{contractible} if any map $\partial \Delta^k \rightarrow Hom^R_{\mathcal{D}}(p,q)$ can be filled, thus yielding an extension $\Delta^k \rightarrow Hom^R_{\mathcal{D}}(p,q).$ 

\ \

It is a fact that when $\mathcal{D}$ is an $\infty$-category, Hom$_{\mathcal{D}}^R(X,Y)$ and Hom$_{\mathcal{D}}^L(X,Y)$ are homotopy equivalent. (c.f. [Lur1] Remark 1.2.2.5.) Hence the contractibility of one implies the other, and vice versa.

\begin{definition}\label{finin}
Let $\mathcal{D}$ be an $\infty$-category. A vertex $X$ of $\mathcal{D}$ is said to be \textit{final (initial)} if the Kan complex ${\color{black}{\text{Hom}_{\mathcal{D}}^R(Y,X)}}$ (${\color{black}{\text{Hom}_{\mathcal{D}}^R(X,Y)}},$ respectively) is contractible for any object $Y$ of $\mathcal{D}.$  
\end{definition}

\begin{remark}
In [Lur1], this notion is called {\it{strong finality.}} (More precisely, it is presented as an equivalent condition in Proposition 1.2.12.4.) There, an object is said to be {\it{final}}, if it is final (in the usual sense) in the homotopy category. Strong finality implies finality for simplicial sets in general, and they coincide for $\infty$-categories. (See Corollary 1.2.12.5.)
\end{remark}

\subsubsection*{The colimit of a diagram} One can also define an $\infty$-categorical analogue of limits and colimits.

\begin{definition}
Let $p : K \rightarrow \mathcal{D}$ be a diagram from a simplicial set $K$ to an $\infty$-category $\mathcal{D}.$ We define a \textit{limit} of $p$ to be a \textit{final} object of $\mathcal{D}_{/p}$. Similarly, we define a \textit{colimit} of $p$ to be a \textit{initial} object of $\mathcal{D}_{p/}$. We denote them by $\lim\limits_{\longleftarrow}p$ and $\lim\limits_{\longrightarrow}p,$ respectively.
\end{definition}

By the definition of  $\mathcal{D}_{/p}$ ($\mathcal{D}_{p/}$), an initial (final) object is an element of $\text{Hom}_{p}(K^{\triangleleft}, \mathcal{D})$ ($\text{Hom}_{p}(K^{\triangleright}, \mathcal{D})$, respectively). In this case, by abuse of notation, we sometimes call the image of the cone point for the extension $\overline{p}(*)$ a limit (colimit).

\begin{remark}
For an ordinary category, the colimit (if it exists) is unique up to unique isomorphism. The corresponding notion of uniqueness in $\infty$-category theory is by the contractibility of right morphism spaces.
\end{remark}

\subsubsection*{The diagram $\mathscr{F}$ and the $\infty$-categorical setting} A $k$-simplex $\sigma_k = (f_{a_1a_2}, \cdots,$ $f_{a_ka_{k+1}})$ $\in N(\mathcal{I})_k$ is determined by a sequence of integers $a_1, a_2, \cdots, $ $ a_{k+1}.$ For the moment, we assume only nondegenerate simplices, that is, $a_1 < a_2 < \cdots < a_{k+1},$ and discuss later the degenerate case. Observe that each subset $I: = \{ b_1, \cdots, b_l \} \subset \{a_1 < a_2 < \cdots < a_{k+1}\}$ determines an $l$-simplex $\sigma_I \in N(\mathcal{I})_l.$ Then an $\infty$-functor
\begin{equation}\nonumber
\mathscr{F}:N(\mathcal{I}) \rightarrow N_{dg}\big(Ch(\mathbb{Z}_2)\big)
\end{equation}
is given as follows.

$\mathscr{F}$ maps each $k$-simplex in the following way.
\begin{equation}\nonumber
\mathscr{F}\big([k]\big) : \{a_1, a_2, \cdots, a_{k+1}\} \mapsto \big(\{X_i\}_{i \in [k]}, \{F_{\sigma_I}\}_{I \subset [k]}\big), \text{ for } [k] \in \text{Ob}(\Delta),
\end{equation}
where its ingredients are given by
\begin{equation}\nonumber
\begin{split}
X_i &: = \mathscr{F}\big([k]\big)(a_{i+1}),\\
\psi_I &:=  \mathscr{F}\big([k]\big)({\sigma_I}).
\end{split}
\end{equation}

For degenerate simplices, $\sigma_I \in N(\mathcal{I})_k$ with $k \geq 2,$ we let:
\begin{equation}\nonumber
\psi_I :=  \begin{cases}
id &\text{ if } k = 1,\\
0 &\text{ if } k \geq 2,
\end{cases}
\end{equation}
It is a simple exercise to check that the identity and the zero maps satisfy (\ref{diadgn}).

By construction, $\mathscr{F}$ respects simplicial structure on both sides. Hence it is an $\infty$-functor.

\subsection{A model for a colimit}

Our focus in this subsection is on the undercategory $N_{dg}\big(Ch(\mathbb{Z}_2)\big)_{\mathscr{F}/}$ defined for the $\infty$-functor
\begin{equation}\nonumber
\mathscr{F} : N(\mathcal{I}) \rightarrow N_{dg}\big(Ch(\mathbb{Z}_2)\big),
\end{equation}
and we compute its colimit.

Recall that the undercategory $N_{dg}\big(Ch(\mathbb{Z}_2)\big)_{\mathscr{F}/}$ is a simplicial set whose set of $k$-simplices is given by $Hom_{\mathscr{F}}\big(N(\mathcal{I} \star \Delta^k), N_{dg}\big(Ch(\mathbb{Z}_2)\big)\big),$ that is, the set of simplicial set morphisms $\bigl\{\phi: N(\mathcal{I} \star \Delta^k) \rightarrow N_{dg}\big(Ch(\mathbb{Z}_2)\big)\bigr\},$ satisfying $\phi|_{N(\mathcal{I})} \equiv \mathscr{F}.$ The simplicial set structure is naturally induced from that of $\Delta^k.$ Namely, for each $i,$ we have 
\begin{equation}\nonumber
\begin{cases}
\partial_i : \phi \mapsto \phi \circ d_i,\\
\sigma_i : \phi \mapsto \phi \circ s_i.
\end{cases}
\end{equation}

The face and degeneracy maps $d_i : \Delta^{k} \rightarrow \Delta^{k+1}$ and $s_i : \Delta^k \rightarrow \Delta^{k-1}$ induce maps among the nerves $N(\mathcal{I} \star \Delta^k).$ We still denote them by $d_i$ and $s_i$;

\begin{equation}\nonumber
\begin{cases}
\partial_i : N(\mathcal{I} \star \Delta^k) \rightarrow N(\mathcal{I} \star \Delta^{k+1}),\\
\sigma_i : N(\mathcal{I} \star \Delta^k) \rightarrow N(\mathcal{I} \star \Delta^{k-1}).
\end{cases}
\end{equation} 

We write $\mathcal{D}$ for this undercategory, and in fact $\mathcal{D}$ is an $\infty$-category. (See [Lur2].) colim$\mathscr{F}$ is defined to be an initial object of $\mathcal{D}$. In other words, it is a vertex $p \in \big(N_{dg}\big(Ch(\mathbb{Z}_2)\big)_{\mathscr{F}/}\big)_0,$ i.e., $p \in Hom_{\mathscr{F}}\big(N(\mathcal{I}) \star \Delta^0, N_{dg}\big(Ch(\mathbb{Z}_2)\big)\big),$ such that: 
\begin{equation}\nonumber
Hom^R_{\mathcal{D}}(p,q)\text{ is a contractible Kan complex for any } q \in \mathcal{D}_0. 
\end{equation}

Consider the following chain complex $\widetilde{C}$
\begin{equation}\label{prfchncpx}
\widetilde{C} := \bigoplus_{k \geq 0} \bigoplus_{\substack{\sigma_k \in N(\mathcal{I}),\\ \text{nondeg.}}} \mathbb{Z}_2 \langle \sigma_k \rangle \otimes C_{s\sigma_k}, 
\end{equation}
with the differential $\partial$ given by
\begin{equation}\nonumber
\partial : (\sigma_k ; x) \mapsto \sum\limits_{i=0}^{k-1} (\partial_i \sigma_k ; x)+ \sum\limits_{i=0}^{k-1} \big((\sigma_k)_{i}^2 ; \mathscr{F}((\sigma_k)_{k-i}^1)(x)\big) + (\sigma_k ; \partial x)
\end{equation}

We put a grading on $\widetilde{C}$ by
\begin{equation}\nonumber
|(\sigma_k;x)| := k+ |x|,
\end{equation}
where $|x|$ on the right hand side is that of $C_{s\sigma_k}.$ One can check that the degree of $\partial$ is equal to $-1.$ 

We claim that $\widetilde{C}$ can be written as a colimit of $\mathscr{F}.$ More precisely, we will show that there is an extension $\overline{\mathscr{F}}$ such that $\overline{\mathscr{F}}(*) = \widetilde{C},$ satisfying some relation we will discuss later.

\subsubsection*{An extension of $\mathscr{F}$} We define the extension $\overline{\mathscr{F}}: N(\mathcal{I})^{\triangleright} \rightarrow N_{dg}\big(Ch(\mathbb{Z}_2)\big)$ of $\mathscr{F}$ by

\begin{enumerate}[label = \textbullet]
\item $\overline{\mathscr{F}}(*) = \widetilde{C}.$ \\

Let $\sigma_k \in N(\mathcal{I})^{\triangleright}_k$ be a $k$-simplex.
\item If $\sigma_k \in N(\mathcal{I})_k, \ (k \geq 0)$, then we let 
\begin{equation}\nonumber
\overline{\mathscr{F}}(\sigma_k) := \mathscr{F}(\sigma_k) : C_{s \sigma_k} \rightarrow C_{t \sigma_k},
\end{equation}
(In particular, if $\sigma_k$ is degenerate we have $\overline{\mathscr{F}}(\sigma_k) = 0.$)
\item If $\sigma_k \notin N(\mathcal{I})_k, \ (k > 0),$ i.e., $t \sigma_k =*$, then $\sigma_k$ is necessarily of the form $(f_1, \cdots, f_{k-1}, *),$ where $f_i \in Mor(\mathcal{I})$ such that $t f_i = s f_{i+1}.$ (Recall that $s$ and $t$ mean `source' and `target', respectively.) For nondegenerate $\sigma_k,$ we put
\begin{equation}\nonumber
\begin{split}
\overline{\mathscr{F}}(\sigma_k) : C_{s \sigma_k} &\rightarrow C_{t \sigma_k} = \widetilde{C},\\
x &\mapsto (f_1, \cdots, f_{k-1} ; x).
\end{split}
\end{equation}
For degenerate $\sigma_k,$ we simply put $\overline{\mathscr{F}}(\sigma_k) =0.$
\end{enumerate} 

\ \

Observe that for any $\sigma_k \in N(\mathcal{I})^{\triangleright}_k, k \geq 2,$we have
\begin{enumerate}[label = \textbullet]
\item For nondegenerate $\sigma_k,$ $\overline{\mathscr{F}}(\sigma_k)$ is a degree $k-1$ linear map,
\item For degenerate $\sigma_k,$ $\overline{\mathscr{F}}(\sigma_k)$ is the zero map,
\item $\overline{\mathscr{F}}(\sigma_k)$ satisfies
\begin{equation}\nonumber
\partial \circ \overline{\mathscr{F}}(\sigma_k) + \overline{\mathscr{F}}(\sigma_k) \circ \partial + \sum^k_{i=1} \overline{\mathscr{F}}(\partial_i \sigma_k) + \sum^k_{i=1} \overline{\mathscr{F}}\big((\sigma_k)^2_{k-i}) \circ \overline{\mathscr{F}}((\sigma_k)^1_i \big) = 0.
\end{equation}
\end{enumerate}

The first and second items are clear by construction. The third one can be checked to be equivalent to the expression for $\partial (f_1, \cdots, f_k ;x)$ in $\widetilde{C}$. 
\ \

We will check that the extension $\overline{\mathscr{F}}$ is an initial object of $N_{dg}\big(Ch(\mathbb{Z}_2)\big)_{\mathscr{F}/}$ with $\overline{\mathscr{F}}(*) = \widetilde{C}.$ Before that, we need to investigate the undercategory $\mathcal{D}$ more by explicitly describing its simplices of higher degree with $k \geq 1.$

\begin{figure}[h]
\centering
\includegraphics[width=0.4\textwidth]{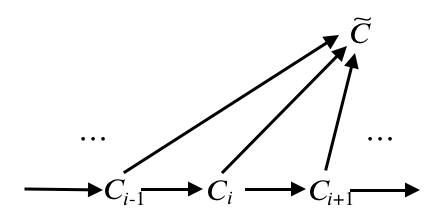}
\caption{The (homotopy coherent) diagram for the extension $\overline{\mathscr{F}}$}
\end{figure}

\subsubsection*{Edges of $\mathcal{D}$} We first explain the edges of $\mathcal{D},$ or the elements in $\mathcal{D}_1 = Hom_{\mathscr{F}}\big(N(\mathcal{I}) \star \Delta^1, N_{dg}\big(Ch(\mathbb{Z}_2)\big)\big).$ We denote $\Delta^1 = (*_0, *_1).$\begin{enumerate}[label = \textbullet]
\item To a vertex $a \in (N(\mathcal{I}) \star \Delta^1)_0,$ 
we assign a chain complex $C_a,$ thus implying $a \in N(\mathcal{I})_0,$ so that $C_a = \mathscr{F}(a).$
\item To a $k$-simplex $\sigma_k \in  (N(\mathcal{I}) \star \Delta^1)_k,$ we assign a degree $(k-1)$ map, ${\varphi}_{\sigma_k} : C_{s \sigma_k} \rightarrow C_{t \sigma_k},$ satisfying   
\begin{equation}\nonumber
\partial \circ \mathscr{F}({\sigma_k}) + \mathscr{F}({\sigma_k}) \circ \partial + \sum^k_{i=1} \mathscr{F}({\partial_i \sigma_k}) + \sum^k_{i=1} \mathscr{F}\big(({\sigma_k})^2_{k-i}) \circ \mathscr{F}(({\sigma_k})^1_i\big) = 0.
\end{equation}
\end{enumerate} 

\subsubsection*{Identity edges of $\mathcal{D}$} In particular, the degenerate edge $id_q$ for $q \in \mathcal{D}_0,$ that is, the image of the degeneracy map $\sigma : \mathcal{D}_0 \rightarrow \mathcal{D}_1$ of $q$ (induced from $s_0 : \Delta^1 \rightarrow \Delta^0$) is given by
\begin{enumerate}[label = \textbullet]
\item To $a \in (N(\mathcal{I}) \star \Delta^1)_0,$ we assign the chain complex $C_a := q(a).$ In particular, we have $q(*_0) = q(*_1).$
\item To $\sigma_k \in (N(\mathcal{I}) \star \Delta^1)_k$ with $k \geq 1,$ we assign a degree $(k-1)$-map, $\varphi_{\sigma_k} : C_{s \sigma_k} \rightarrow C_{t \sigma_k},$ satisfying the relation (\ref{diadgn}) and
\begin{enumerate}[label = (\roman*)]
\item $\varphi_{(*_0, *_1)} = id_{q(*_0)}$ (when $k=1$),
\item $\varphi_{(\cdots, \widehat{*_0}, *_1)} = \varphi_{(\cdots, *_0)} = q(\cdots, *_0) = q(\cdots, \widehat{*_0}, *_1),$
\item $\varphi_{(\cdots, *_0, *_1)} = 0,$
\item $\varphi_{(\cdots, \widehat{*_0}, \widehat{*_1})} = F_{(\cdots)}.$
\end{enumerate}
These are easily checked if we just look at how the degeneracy map $\sigma : \mathcal{D}_0 \rightarrow \mathcal{D}_1$ acts. (It is induced from the (standard) degeneracy map on $s : \Delta_0 \rightarrow \Delta_1$.)

\end{enumerate}

\begin{remark}
From the assumption that $q \in \mathcal{D}_0,$ it follows that $\sigma_k$ satisfies the relation (\ref{diadgn}) for $\varphi_{\sigma_k}.$ Hence it suffices to check the relation (\ref{diadgn}) for the maps of the form $\varphi_{(\cdots, *_0, *_1)}$: 
\begin{equation}\nonumber
\partial \circ \varphi_{(\cdots, *_0, *_1)} + \varphi_{(\cdots, *_0, *_1)} \circ \partial + \sum^k_{i=1} \varphi_{\partial_i (\cdots, *_0, *_1)}+ \sum^k_{i=1} \varphi_{(...)^2_{k-i}} \circ \varphi_{(...)^1_i} = 0,
\end{equation}
which obviously holds.
\end{remark}

\subsubsection*{$k$-simplices of $\mathcal{D}$ with $k \geq 2$} Similarly, $k$-simplices of $\mathcal{D},$ with $k \geq 2$ that is,  the elements in $\mathcal{D}_k = Hom_{\mathscr{F}}\big(N(\mathcal{I})\star \Delta^k, N_{dg}\big(Ch(\mathbb{Z}_2)\big)\big),$ can be described as follows. We denote $\Delta^k = (*_0, \cdots, *_k).$
\begin{enumerate}[label = \textbullet]
\item To an $m$-simplex $\sigma_m \in (N(\mathcal{I}) \star \Delta^k)_m,$ with $m \geq 1,$ we assign a degree $m-1$ linear map $\varphi_{\sigma_m} : C_{s \sigma_m} \rightarrow C_{t \sigma_m},$ satisfying (\ref{diadgn}) for $ f_{\sigma_m}.$
In particular, if $\sigma_m$ contains no vertices from $\Delta^k,$ we assign ${F}_{\sigma_m}.$
\end{enumerate}

\subsubsection*{Contractibility of the left morphism space} Now we discuss the contractibility for the colimit. Fix $q \in \mathcal{D}_0$ and suppose that we are given  $X_0, \cdots, X_k \in Hom^L_{\mathcal{D}}(\overline{\mathscr{F}}, q)_{k-1}$ that coincide with each other at the boundaries. By this, we mean that $X_0, \cdots, X_k$ are elements in $\mathcal{D}_{k} := Hom_{\mathscr{F}}\big(N(\mathcal{I})\star \Delta^{k}, N_{dg}\big(Ch(\mathbb{Z}_2)\big)\big)$ given by
\begin{equation}\nonumber
X_i : N(\mathcal{I}) \star (*_{d_i(0)}, \cdots, *_{d_{i}(k)}) \rightarrow N_{dg}\big(Ch(\mathbb{Z}_2)\big), \ (i = 0, \cdots, k),
\end{equation}
where $d_i : \Delta^{k}  = (*_0, \cdots *_{k}) \hookrightarrow \Delta^{k+1} \ (i= 0, \cdots, k+1)$ denotes the face map between the standard simplices, so that they satisfy
\begin{equation}\label{intsec}
X_i|_{N(\mathcal{I}) \star (d_i(\Delta^{k}) \cap d_j(\Delta^{k}))} = X_j|_{N(\mathcal{I}) \star (d_i(\Delta^{k}) \cap d_j(\Delta^{k}))}, \text{ for all } i, j.
\end{equation}

\ \

We observe that $X_0, \cdots, X_k \in \mathcal{D}_{k}$ satisfy:
\begin{enumerate}[label=\textbullet]
\item $X_{i} |_{N(\mathcal{I})} \equiv \mathscr{F}$ (from the definition of $\mathcal{D}_k$),
\item $X_{i} |_{N(\mathcal{I}) \star \{*_0\}} \equiv \overline{\mathscr{F}}, \ X_{i} |_{N(\mathcal{I}) \star \{*_j\}} = q \in \mathcal{D}_0 \ (j \neq 0)$ (from the definition of $Hom^L_{\mathcal{D}}(\overline{\mathscr{F}}, q)$),
\item $X_{i} |_{*_0} = \widetilde{C},$ $X_i|_{*_j} = C' \ (j =1, \cdots, k)$ for a fixed chain complex $C'$ (from definition of $\mathcal{D}_0$),
\item $X_i|_{(*_j, *_k)} = id_{C'} $ for all $1 \leq j \leq k$ (from the definition of the identity edges),
\item $X_i|_{(*_{j_1}, \cdots, *_{j_l})} = 0$ for all $l \geq 3$ (from the definition of degenerate simplices).
\end{enumerate}

\ \

Contractibility of the left morphism space $Hom^L_{\mathcal{D}}(\overline{\mathscr{F}}, q)$ is now rephrased as follows.

\begin{claim}
Given the above $X_0, \cdots, X_k \in \mathcal{D}_{k},$ there exists $\widetilde{X} \in \mathcal{D}_{k+1}$ such that
\begin{equation}\nonumber
\widetilde{X}|_{N(\mathcal{I}) \star d_i(\Delta^{k})} = X_i, \ i=0, \cdots, k+1.
\end{equation}
\end{claim}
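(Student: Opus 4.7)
\textit{Proof sketch.} The plan is to construct $\widetilde{X}$ simplex-by-simplex. On any simplex of $N(\mathcal{I}) \star \partial \Delta^{k+1}$ the value is forced by the $X_i$'s, and (\ref{intsec}) makes this well-defined. The remaining ``interior'' simplices, those containing every cone vertex $*_0, \ldots, *_{k+1}$, have the form $\tau_\sigma := \sigma \star (*_0, \ldots, *_{k+1})$ where $\sigma$ is empty or a nondegenerate simplex of $N(\mathcal{I})$.

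The plan on the interior is to set $\widetilde{X}(\tau_\sigma) := 0$ for every nonempty nondegenerate $\sigma$, and then to define $\widetilde{X}(*_0, \ldots, *_{k+1}) : \widetilde{C} \to C'$ (a degree-$k$ map) summand-by-summand on $\widetilde{C} = \bigoplus \mathbb{Z}_2\langle \sigma_m \rangle \otimes C_{s\sigma_m}$, choosing the $\sigma_m$-component so as to satisfy the dg-nerve relation (\ref{hceq}) at $\tau_{\sigma_m}$. I would verify (\ref{hceq}) on $\tau_\sigma$ by cataloguing which composition and face terms survive: composition splits strictly inside $\sigma$ produce interior-type suffixes (zero by the first definition); composition splits strictly inside $(*_1, \ldots, *_{k+1})$ produce constant $C'$-subsimplices with at least three vertices (zero by the identity/degenerate structure of the $X_i$'s along $\Delta^{k+1}$); and face terms that remove vertices interior to $\sigma$ again give interior simplices (also zero). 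What remains is the face data $X_j(\sigma, *_0, \ldots, \widehat{*_j}, \ldots, *_{k+1})$ for $j = 0, \ldots, k+1$, the composition $\mathrm{id}_{C'} \circ X_{k+1}(\sigma, *_0, \ldots, *_k)$ from the identity edge $(*_k, *_{k+1})$, and the composition $\widetilde{X}(*_0, \ldots, *_{k+1}) \circ \overline{\mathscr{F}}(\sigma \star *_0)$ from the split at $*_0$, where $\overline{\mathscr{F}}(\sigma \star *_0)$ is the canonical map $x \mapsto (\sigma; x)$. Setting this sum to zero gives a single linear equation that prescribes the $\sigma$-component of $\widetilde{X}(*_0, \ldots, *_{k+1})$ in terms of the $X_j$-data.

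The main obstacle is the consistency of this inductive prescription (induction on $m = |\sigma|$). At each stage I must verify that the obstruction to extending $\widetilde{X}(*_0, \ldots, *_{k+1})$ to the next summand vanishes --- a telescoping sum of $X_j$-evaluations on smaller top-interior sub-simplices. The cancellations come from applying (\ref{hceq}) (which each $X_j$ already satisfies) to those sub-simplices and combining the identities using the identity/zero structure of the $(*_j)$-subsimplices. Crucially, the form of the differential (\ref{diffctild}) on $\widetilde{C}$ is tailored so that the per-summand components assemble into a well-defined degree-$k$ chain-level map $\widetilde{X}(*_0, \ldots, *_{k+1})$, whose existence is precisely the chain-level incarnation of the contractibility of $\mathrm{Hom}^L_{\mathcal{D}}(\overline{\mathscr{F}}, q)$ --- the hard bookkeeping is checking that the required telescoping signs (trivially, over $\mathbb{Z}_2$) and index shifts line up so that the equation determining each new component is solvable given the previous choices.
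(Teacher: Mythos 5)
Your proposal is correct and takes essentially the same approach as the paper: zero out all "interior" simplices $\tau_\sigma = \sigma \star (*_0,\ldots,*_{k+1})$ with $\sigma$ nonempty, then specify $\widetilde{X}(*_0,\ldots,*_{k+1})$ on each summand of $\widetilde{C}$, then verify the remaining dg-nerve relation. The only cosmetic difference is that you extract the formula for the $\sigma$-component from the relation at $\tau_\sigma$ (which makes the paper's case (iii) verification automatic) rather than positing it directly as $\sum_i \varphi_{(\sigma, d_i(\Delta^k))}$; these coincide because the $d_0$-face term, living on the constant-$q$ side, vanishes. One small caution: the phrase "obstruction to extending to the next summand" is slightly misleading — each summand is determined independently by its own $\tau_\sigma$ relation, and what genuinely remains to be checked (your "telescoping") is the single dg-nerve relation at $(*_0,\ldots,*_{k+1})$ itself, whose $\sigma_l$-component couples the already-chosen components through $\partial_{\widetilde{C}}$; this is precisely the paper's case (ii) computation.
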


To determine a map
\begin{equation}\nonumber
\widetilde{X} : N(\mathcal{I}) \star \Delta^{k+1} \rightarrow N_{dg}\big(Ch(\mathbb{Z}_2)\big),
\end{equation}
we need to know which linear map of degree $m-1$ to assign to each $m$-simplex ($m \geq 1$) of $N(\mathcal{I}) \star \Delta^{k+1}.$ There are three cases:
\begin{enumerate}[label = (\roman*)] 
\item If $m < k+1,$ we assign the maps that are already given by the $X_i$ data. By construction these maps are of degree $m-1$ and satisfy (\ref{diadgn}).
For example, if $(*_{i_1}, \cdots *_{i_m}) \subset d_l(\Delta^k),$ for some $l,$ then we let 
\begin{equation}\label{degenassgn0}
(*_{i_1}, \cdots *_{i_m}) \mapsto X_{l}(*_{i_1}, \cdots *_{i_m}) =: \varphi_{(*_{i_1}, \cdots *_{i_m})}=0,
\end{equation} 
and this is independent of choice of $l$ by (\ref{intsec}).
\item If $m=k+1,$ we assign:
\begin{equation}\nonumber
\varphi_{\Delta^{k+1}} = \varphi_{(*_0, \cdots, *_{k+1})} : \widetilde{C} \rightarrow C',
\end{equation} 
given by
\begin{equation}\nonumber
\begin{split}
\varphi_{\Delta^{k+1}}(\sigma_l; x) &:= \sum^{k+1}_{i=0} \varphi_{(\sigma_l, d_i(\Delta^{k}))}(x),\\
&= \varphi_{(\sigma_l, \widehat{*_0}, {*_1}, \cdots, *_{k+1})}(x) + \cdots + \varphi_{(\sigma_l, *_0, \cdots, \widehat{*_{k+1}})}(x).
\end{split}
\end{equation}
where $\sigma_l \in N(\mathcal{I})_l$ with $l \geq 0, x\in C_{s \sigma_l},$ and $\Delta^k = (*_0, \cdots, *_k).$ We see that the degree of $\varphi_{\Delta^{k+1}}$ is equal to $|\varphi_{(\sigma_l, d_i(\Delta^{k}))}| - l= (l+ k+1)-1 -l =k =m-1.$ Also, $\varphi_{\Delta^k}$ can be checked to satisfy (\ref{diadgn}).

\item If $m > k+1$, we assign a map
\begin{equation}\nonumber
\varphi_{(a_1, \cdots, a_{m-k}; *_0, \cdots *_k)} : C_{a_1} \rightarrow C_{*_k},
\end{equation}
and all the other assignments can be set to be those provided by the data $\{X_0, \cdots, X_k\}.$ In this case, we let 
\begin{equation}\nonumber
\varphi_{(a_1, \cdots, a_{m-k}; *_0, \cdots *_k)}(x) = 0, \ \text{ for each } (a_1, \cdots, a_{m-k}),
\end{equation}
or by denoting $\vec{a} = (a_1, \cdots, a_{m-k}),$
\begin{equation}\nonumber
\varphi_{(\vec{a};\Delta^k)} =0,  \ \text{ for each } \vec{a}.
\end{equation} 
\end{enumerate}

\begin{lemma}
In the cases $(ii)$ and $(iii),$ we have :
\begin{equation}\nonumber
\begin{cases}
\partial \circ \varphi_{{\Delta}^{k+1}} + \varphi_{{\Delta}^{k+1}} \circ \partial + \sum\limits_{j=1}^k \varphi_{({\Delta}^{k+1})^2_{k+1-j}} \circ \varphi_{({\Delta}^{k+1})^1_j} + \sum\limits_{j=1}^k \varphi_{\partial_j {\Delta}^{k+1}} =0,\\
\partial \circ \varphi_{(\vec{a};\Delta^k)} + \varphi_{(\vec{a};\Delta^k)} \circ \partial + \sum\limits_{j=1}^m \varphi_{(\vec{a};\Delta^k)^2_{m+1-j}} \circ \varphi_{(\vec{a};\Delta^k)^1_j}  + \sum\limits_{j=1}^m \varphi_{\partial_j (\vec{a};\Delta^k)} =0.
\end{cases}
\end{equation}

\end{lemma}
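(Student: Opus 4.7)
The lemma is a combinatorial verification that the extensions of $\varphi$ defined in cases (ii) and (iii) satisfy the dg-nerve relation. The plan is to expand each identity, apply the dg-nerve coherence already known for the subsidiary data (namely $\mathscr{F}$ together with the $X_0,\ldots, X_{k+1}$), and match the terms.

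For case (iii), the definition $\varphi_{(\vec a;\Delta^k)} = 0$ kills the first two terms, so the identity reduces to the vanishing of the face and composition sums. I would classify each term of those sums by the location of the face or split. If the operation lies entirely within $\vec a$, the resulting simplex has the form $(\vec a';\Delta^k)$ with $|\vec a'|<|\vec a|$; when $|\vec a'|$ is still large enough to keep us in case (iii), the corresponding $\varphi$ is again zero, and otherwise the term is absorbed by an $X_i$-coherence relation or by $\mathscr{F}$'s own dg-nerve coherence. If the operation is internal to $\Delta^k$ or at the interface between $\vec a$ and $\Delta^k$, the relevant $\varphi$ is provided by one of the $X_i$, and the resulting terms participate in the dg-nerve relations that each $X_i$ already satisfies. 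After pairing like terms, the agreements (\ref{intsec}) among the $X_i$ on overlapping faces, together with $\mathscr{F}$'s coherence, force total cancellation modulo $2$.

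For case (ii), the identity is more delicate because $\varphi_{\Delta^{k+1}}$ is a genuine sum $\sum_{i=0}^{k+1}\varphi_{(\sigma_l, d_i(\Delta^{k}))}$. I would substitute this sum into both $\partial\circ\varphi_{\Delta^{k+1}}$ and $\varphi_{\Delta^{k+1}}\circ\partial$, then invoke the dg-nerve relation for each mixed simplex $(\sigma_l,d_i(\Delta^{k}))$, which is supplied by $X_i$. Each such relation expands into face and split contributions. The key combinatorial observations are: (a) contributions corresponding to removing two $*$-vertices appear twice by the simplicial identity $d_id_j=d_{j-1}d_i$ for $i<j$ and cancel modulo $2$; (b) contributions in which exactly one $*$-vertex is removed reassemble into $\sum_j \varphi_{\partial_j\Delta^{k+1}}$; (c) split contributions whose cut lies purely within $\Delta^{k+1}$ reassemble into $\sum_j \varphi_{(\Delta^{k+1})^2_{k+1-j}}\circ\varphi_{(\Delta^{k+1})^1_j}$; and (d) the remaining terms involve faces and splits of $\sigma_l$, which by the definition of $\partial$ on $\widetilde{C}$ and the sum defining $\varphi_{\Delta^{k+1}}$ combine exactly to $\varphi_{\Delta^{k+1}}\circ\partial\,(\sigma_l;x)$.

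The main obstacle is the careful accounting: ensuring that every term produced by the expansion matches either a target term in the claim or a cancelling partner. In particular, the interplay between a split of $\sigma_l$ (which crosses the interface and produces $((\sigma_l)^2_i;\varphi_{(\sigma_l)^1_{l-i}}(x))$-type contributions inside $\partial(\sigma_l;x)$) and a split within $(\sigma_l,d_i(\Delta^{k}))$ that crosses the same interface requires attention; but because both sides ultimately invoke $\mathscr{F}$'s coherence, they align. Once this organization is set up, the verification is mechanical and follows the standard pattern of coherence checks in dg-nerve theory.
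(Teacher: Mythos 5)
Your proposal follows the paper's strategy: substitute the defining sum $\varphi_{\Delta^{k+1}}(\sigma_l;x)=\sum_{i=0}^{k+1}\varphi_{(\sigma_l,\,d_i(\Delta^{k}))}(x)$, invoke the dg-nerve coherence supplied by the $X_i$ for each summand, and cancel terms modulo $2$ (your observation (a) about paired double-$*$-removals is exactly the implicit mod-$2$ cancellation in the paper's $\textcircled{1}$-expansion). One bookkeeping slip in your item (b): the $\sum_j\varphi_{\partial_j\Delta^{k+1}}$ piece does not come from the single-$*$-removal face contributions (those are the defining summands of $\varphi_{\Delta^{k+1}}$ itself and cancel against $\varphi_{\Delta^{k+1}}\circ\partial$); it comes instead from the composition terms split at the cone vertex $*_0$, which rewrite as $\varphi_{(*_0,\ldots,\widehat{*_j},\ldots,*_{k+1})}\circ\varphi_{(\sigma_l;*_0)}=\varphi_{\partial_j\Delta^{k+1}}(\sigma_l;\cdot)$, matching the paper's $\textcircled{3}$ and $\textcircled{4}$ terms.
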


\begin{proof}
\begin{enumerate}[label = (\roman*)]
\item  For $(\sigma_l; x) \in \widetilde{C},$ we denote
\begin{equation}\nonumber
\begin{split}
\textcircled{1} &= \partial \circ \varphi_{{\Delta}^{k+1}}(\sigma_l; x), \ \textcircled{2} = \varphi_{{\Delta}^{k+1}} \circ \partial (\sigma_l; x), \ \textcircled{3} = \sum\limits_{j=1}^k \varphi_{\partial_j {\Delta}^{k+1}}(\sigma_l; x),\\
\textcircled{4} &= \sum\limits_{j=1}^k \varphi_{({\Delta}^{k+1})^2_{k+1-j}} \circ \varphi_{({\Delta}^{k+1})^1_j}(\sigma_l; x). 
\end{split}
\end{equation}
First of all, we have
\begin{equation}\nonumber
\textcircled{1} = \partial \circ \sum\limits_{i=0}^{k+1} \varphi_{(\sigma_l; *_{0}, \cdots, \widehat{*_{i}}, \cdots,*_{k+1})}(x) = \sum\limits_{i = ^{k+1}} \textcircled{1}_i,
\end{equation}
where
\begin{equation}\nonumber
\begin{split}
& \textcircled{1}_i = \partial \circ \varphi_{ (\sigma_l; *_{0}, \cdots, \widehat{*_i}, \cdots,*_{k+1}) }(x)\\
&= \varphi_{(\sigma_l; *_{0}, \cdots, \widehat{*_i}, \cdots,*_{k+1})}( \partial x) 
+ \sum\limits_{j=1}^l \varphi_{(\partial_j \sigma_l; *_{0}, \cdots, \widehat{*_i}, \cdots,*_{k+1})}(x)\\
&+ \sum\limits_{j = i+1}^{i-1} \varphi_{(\sigma_l; *_{0}, \cdots, \widehat{*_j}, \cdots, \widehat{*_i}, \cdots,*_{k+1})}(x) + \sum\limits_{j = i +1}^{k} \varphi_{(\sigma_l; *_{0}, \cdots, \widehat{*_i}, \cdots, \widehat{*_j} \cdots,*_{k+1})}(x)\\
&+ \sum\limits_{j=1}^l \varphi_{((\sigma_l)^2_{l-j}; *_{0}, \cdots, \widehat{*_i}, \cdots,*_{k+1})} \circ \varphi_{(\sigma_l)^1_j}(x)\\
&+(1- \delta_{0i} ) \ \varphi_{(*_{0}, \cdots, \widehat{*_i}, \cdots,*_{k+1})} \circ \varphi_{(\sigma_l, *_0)}(x)\\
&+ \sum\limits_{j=1}^{i-1}\varphi_{(*_j, \cdots, \widehat{*_i}, \cdots, *_{k+1})} \circ \varphi_{(\sigma_l, *_0, \cdots, *_j)}(x) + \sum\limits_{j = i+1}^{k-1} \varphi_{(*_j, \cdots, *_{k+1})} \circ \varphi_{(\sigma_l, *_0, \cdots, \widehat{*_j}, \cdots, *_j)}(x).
\end{split}
\end{equation}

\begin{equation}\nonumber
\begin{split}
\textcircled{1} & = \sum\limits_{i=0}^{k+1} \varphi_{(\sigma_l ; *_0, \cdots, \widehat{*_i}, \cdots, *_{k+1})}(\partial x) + \sum\limits_{i=0}^{k+1} \sum\limits_{j=1}^{l} \varphi_{(\partial_j \sigma_l ; *_0, \cdots, \widehat{*_i}, \cdots, *_{k+1})}(x) \\
&+ \sum\limits_{i=0}^{k+1} \sum\limits_{j=1}^l \varphi_{((\sigma_l)^2_{l-j} ; *_0, \cdots, \widehat{*_i}, \cdots, *_{k+1})} \circ \varphi_{(\sigma_l)^1_j}(x) + \sum\limits_{i=1}^{k+1}\varphi_{(*_0, \cdots, \widehat{*_{i}}, \cdots, *_{k+1})} \circ \varphi_{(\sigma_l; *_0)}(x)
\end{split}
\end{equation}
(Here the last two terms vanish by \ref{degenassgn0}).

$\textcircled{2}, \textcircled{3},$ and $\textcircled{4}$ are
\begin{equation}\nonumber
\begin{split}
\textcircled{2} &= \varphi_{\Delta^{k+1}} \Big( \sum\limits_{j=1}^l (\partial_j \sigma_l; x) + \sum\limits_{j=1}^l \big((\sigma_l)^2_{l-j}; \varphi_{(\sigma_l)^1_j}(x)\big) + (\sigma_l; \partial x) \Big) \\
&= \sum\limits_{j=1}^l \sum\limits_{i=0}^{k+1} \varphi_{(\partial_j \sigma_l ;*_{0}, \cdots, \widehat{*_{i}}, \cdots,*_{k+1})}(x)+ \sum\limits_{j=1}^l \sum\limits_{i=0}^{k+1} \varphi_{((\sigma_l)^2_{l-j} ;*_{0}, \cdots, \widehat{*_{i}}, \cdots,*_{k+1})} \circ \varphi_{(\sigma_l)^1_j}(x)\\
&+ \sum\limits_{i=0}^{k+1} \varphi_{(\sigma_l ;*_{0}, \cdots, \widehat{*_{i}}, \cdots,*_{k+1})} (\partial x),
\end{split}
\end{equation}
\begin{equation}\nonumber
\textcircled{3} = \sum\limits_{j=1}^k \varphi_{(*_0, \cdots, \widehat{*_j}, \cdots, *_{k+1})}(\sigma_l; x) =  \sum\limits_{j=1}^k \varphi_{(*_0, \cdots, \widehat{*_j}, \cdots, *_{k+1})} \circ \varphi_{(\sigma_l; *_0)}(x), 
\end{equation}
and
\begin{equation}\nonumber
\begin{split}
\textcircled{4} &= \sum\limits_{j=1}^k \varphi_{(*_0, \cdots, \widehat{*_j}, \cdots, *_{k+1})}(\sigma_l; x) =  \sum\limits_{j=1}^k \varphi_{(*_j, \cdots, *_{k+1})} \circ \varphi_{(*_0, \cdots, *_{j})} (\sigma_l; x) \\
&= \varphi_{(*_k, *_{k+1})} \circ \varphi_{(*_0, \cdots, *_{k})} (\sigma_l; x) = \varphi_{(*_0, \cdots, *_{k})} \circ \varphi_{(\sigma_l; *_0)}(x).
\end{split}
\end{equation}
One can check that $\textcircled{1} + \textcircled{2} +\textcircled{3} +\textcircled{4} = 0$ immediately follows.

\item For $x \in C_{a_1},$ we check 
\begin{equation}\nonumber
\begin{split}
0 &= \partial \circ \varphi_{(a_1, \cdots, a_{m-k}, *_0, \cdots, *_k)}(x) + \varphi_{(a_1, \cdots, a_{m-k}, *_0, \cdots, *_k)}(\partial x)\\
&+ \sum\limits_{i=1}^{m-k} \varphi_{(a_1, \cdots, \widehat{a_i}, \cdots, a_{m-k}, *_0, \cdots, *_k)}(x) + \sum\limits_{i=2}^{m-k} \varphi_{(a_i, \cdots, a_{m-k}, *_0, \cdots, *_k)} \circ \varphi_{(a_1, \cdots, a_{i})} (x)  \\
&+ \sum\limits_{i=0}^{k-1} \varphi_{(a_1, \cdots, a_{m-k}, *_0, \cdots, \widehat{*_i}, \cdots, *_k)}(x)\\
&+ \varphi_{( *_0, \cdots, *_k)} \circ \varphi_{(a_1, \cdots, a_{m-k}, *_0)}(x) + \sum\limits_{i=1}^{k-1} \varphi_{(*_i, \cdots, *_k)} \circ \varphi_{(a_1, \cdots, a_{m-k}, *_0, \cdots, *_i)}(x).
\end{split}
\end{equation}
The terms in the first row vanish by the way we defined $\varphi_{(a_1, \cdots, a_{m-k}, *_0, \cdots, *_k)}$ and those in the second row vanish for the same reason. The terms in the third and forth rows cancel by the way we defined the $\varphi_{(*_i, \cdots, *_k)}$'s $(i \geq 0).$ 

\end{enumerate}

\end{proof}

Hence by the homotopy equivalence of the left and right morphism spaces, we have shown the following:
\begin{proposition}
The right morphism space $Hom^R_{\mathcal{D}}(\overline{\mathscr{F}}, q)$ is contractible for any $q : N(\mathcal{I})^{\triangleright} \rightarrow N_{dg}\big(Ch(\mathbb{Z}_2)\big),$ so that the extension $\overline{\mathscr{F}}$ is an initial object of $N_{dg}\big(Ch(\mathbb{Z}_2)\big)_{\mathscr{F}/}.$\end{proposition}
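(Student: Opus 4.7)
The plan is to reduce contractibility of the right morphism space $\mathrm{Hom}^R_{\mathcal{D}}(\overline{\mathscr{F}}, q)$ to that of the left morphism space $\mathrm{Hom}^L_{\mathcal{D}}(\overline{\mathscr{F}}, q)$, which is homotopy equivalent to it in any $\infty$-category (c.f.\ [Lur1] Remark 1.2.2.5). It then suffices to show that every map $\partial\Delta^k \to \mathrm{Hom}^L_{\mathcal{D}}(\overline{\mathscr{F}}, q)$ extends to $\Delta^k$. Unpacking through the definitions of $\mathcal{D}$, the undercategory, and the left morphism space, such boundary data amounts to a tuple $X_0, \dots, X_k \in \mathcal{D}_k$ that agree on common faces, restrict to $\overline{\mathscr{F}}$ at $\ast_0 \in \Delta^k$ and to the constant vertex $q$ at each $\ast_j$, $j \geq 1$. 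The task is to build a filler $\widetilde X \in \mathcal{D}_{k+1}$ with $\widetilde X |_{N(\mathcal{I})\star d_i(\Delta^k)} = X_i$ for $i = 0,\dots,k+1$.

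Next I would describe $\widetilde X$ simplex by simplex on $N(\mathcal{I}) \star \Delta^{k+1}$. On any simplex whose cone-part lies in some face $d_i(\Delta^k)$, set $\widetilde X = X_i$; the choice is unambiguous by the agreement of the $X_i$ on intersections. The only genuinely new simplices are those whose cone-part is the full $\Delta^{k+1}$, and for these the natural guess dictated by the structure of $\overline{\mathscr{F}}$ is
\begin{equation}\nonumber
\varphi_{\Delta^{k+1}}(\sigma_l; x) \;:=\; \sum_{i=0}^{k+1} \varphi_{(\sigma_l,\, d_i\Delta^{k})}(x), \qquad \varphi_{(\vec a;\, \Delta^{k+1})} \;:=\; 0 \text{ if } |\vec a|\geq 1,
\end{equation}
where on the right one uses the already-defined faces. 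Degree counting gives $|\varphi_{\Delta^{k+1}}| = k$, the required degree for a $(k{+}1)$-simplex.

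The crux, and what I expect to be the main obstacle, is verifying the dg-nerve coherence (\ref{diadgn}) for $\varphi_{\Delta^{k+1}}$ (and, less delicately, for the zero maps assigned to larger simplices). Expanding $\partial \circ \varphi_{\Delta^{k+1}}(\sigma_l;x) + \varphi_{\Delta^{k+1}}(\partial(\sigma_l;x))$ using the differential of $\widetilde C$ from Proposition \ref{prcol} produces a large sum of contributions indexed by $(i, \text{face of }\sigma_l)$ and by $(i, \text{splitting of }\sigma_l)$. The coherence (\ref{diadgn}) already holds for each $X_i$ when restricted to $d_i(\Delta^k)$, so these contributions should reorganize into the sums $\sum_j \varphi_{\partial_j \Delta^{k+1}}$ and $\sum_j \varphi_{(\Delta^{k+1})^2_{k+1-j}} \circ \varphi_{(\Delta^{k+1})^1_j}$ demanded by (\ref{diadgn}), with a careful bookkeeping of the doubly-indexed $(i,j)$ terms—which is where the calculation gets combinatorially dense.

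Once the filler $\widetilde X$ is produced, contractibility of $\mathrm{Hom}^L_{\mathcal{D}}(\overline{\mathscr{F}}, q)$ follows, hence of $\mathrm{Hom}^R_{\mathcal{D}}(\overline{\mathscr{F}}, q)$; by Definition \ref{finin} this exhibits $\overline{\mathscr{F}}$ as an initial object of $N_{dg}(Ch(\mathbb{Z}_2))_{\mathscr{F}/}$, i.e.\ as a colimit of $\mathscr{F}$, completing the proof.
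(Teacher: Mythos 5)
Your proposal follows the paper's proof essentially step for step: reduce to the left morphism space via the homotopy equivalence, unwind the boundary data $X_0,\dots,X_k$, define the filler $\widetilde{X}$ using the $X_i$ on faces, take $\varphi_{\Delta^{k+1}}(\sigma_l;x) := \sum_{i=0}^{k+1}\varphi_{(\sigma_l,\,d_i\Delta^k)}(x)$ on the top cell and zero on everything larger. The only thing you have not done is the actual verification that $\varphi_{\Delta^{k+1}}$ and the zero assignments satisfy (\ref{diadgn}); the paper devotes a full lemma with an explicit multi-term cancellation to this, and it is the mathematical heart of the argument, so "should reorganize" needs to be replaced by the bookkeeping you correctly anticipate.
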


\section{Mapping telescope construction}

In this section, starting from the algebraic mapping telescopes, we study a family of some simplicial sets. From this, we show that the homology of the chain complex $\widetilde{C}$ is isomorphic to the direct limit of the family of homologies.

\subsection{The algebraic mapping telescope}
Let $\{C_i\}_i$ be a family of chain complexes (over $\mathbb{Z}_2$) together with a family of chain maps $\{\psi_i : C_i \rightarrow C_{i+1}\}_i.$ We consider the chain complex
\begin{equation}\nonumber
\widetilde{C}^T := \bigoplus^{\infty}_{i=0} (C_i \oplus \overline{C}_i),
\end{equation}
where $\overline{C}_i$ is another copy of the chain complex $C_i.$ To distinguish its elements from those of $C_i,$ let us use the notation : $C_i \ni x \leftrightarrow \overline{x} \in \overline{C}_i.$ The differential $\partial^T$ of $\widetilde{C}^T$ is given by:
\begin{equation}\nonumber
C_i \oplus \overline{C}_i \ni (y, \overline{x}) \mapsto \big(\partial y + x + \psi_i (x), \overline{\partial x}\big).
\end{equation}
It immediately follows that $\partial ^2 =0.$

\begin{figure}[h!]
\centering
\includegraphics[width=0.5\textwidth]{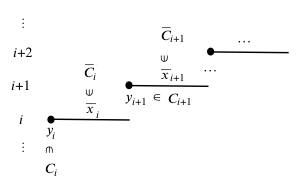}
\caption{Graphical description of the chain complex $\widetilde{C}^T$}
\end{figure}

Observe that $\widetilde{C}^T$ has a subspace $C^o := \bigoplus\limits_{i = 0}^{\infty} C_i$ which can be checked to form a chain complex with the restricted differential $\partial^o : = \partial|_{C^o}.$

\begin{lemma}\label{2lmt} We have
\begin{enumerate}[label = (\roman*)]
\item $\ker \partial^T = \ker \partial^{o} + \textup{im}\partial^T,$
\item $\frac{\ker \partial^o \cap \textup{im}\partial^T}{\textup{im} \partial^o} \simeq  \langle  x + {\psi_{i}}_*x \rangle _{x \in H_*(C_i), i \geq 0} .$
\end{enumerate}
\end{lemma}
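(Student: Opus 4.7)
My plan is to prove (i) and (ii) by directly exploiting the structure of $\partial^T,$ without invoking the long exact sequence of a mapping cone (though the statements are essentially equivalent to it). The engine of the argument is an observation I would first establish as a lemma: \emph{for any cycle $(y, \overline{x}) \in \ker \partial^T,$ every component $x_i \in C_i$ of $x$ is automatically a boundary in $C_i$.} This will follow by induction on $i$ from the $C$-component of the cycle condition $\partial y_j + x_j + \psi_{j-1}(x_{j-1}) = 0$: starting from $x_0 = \partial y_0$ and setting $z_j := y_j + \psi_{j-1}(z_{j-1}),$ one verifies $x_j = \partial z_j.$

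For (i), the inclusion $\ker \partial^o + \textup{im}\,\partial^T \subseteq \ker \partial^T$ is immediate, since elements of $\ker \partial^o$ give cycles in $\widetilde{C}^T$ under the inclusion $C^o \hookrightarrow \widetilde{C}^T.$ For the reverse, I would proceed by downward induction on the largest index $N$ with $x_N \neq 0$ (equivalently, $\overline{x}_N \neq 0$). Using the lemma, choose $b_N \in C_N$ with $\partial b_N = x_N.$ Then the cohomologous cycle
\begin{equation}\nonumber
(y, \overline{x}) + \partial^T(0, \overline{b}_N) = \big(y + b_N + \psi_N(b_N),\ \overline{x} + \overline{x}_N\big)
\end{equation}
has strictly smaller $\overline{x}$-support, while its components $y_0, \dots, y_{N-1}$ are unchanged, so the lemma continues to apply. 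Iterating yields a cohomologous representative in $C^o \cap \ker \partial^T = \ker \partial^o.$

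For (ii), the containment $\supseteq$ is direct: for any cycle $b \in C_i,$ the element $\big(b + \psi_i(b), 0\big) = \partial^T(0, \overline{b})$ lies in $\ker \partial^o \cap \textup{im}\,\partial^T$ and represents the class $[b] + {\psi_i}_*[b]$ in $H_*(C^o) = \bigoplus_i H_*(C_i).$ Conversely, given $c = \partial^T(a, \overline{b}) \in C^o,$ the vanishing of the $\overline{C}$-part forces $\partial b = 0,$ so $c = \partial a + b + \psi(b),$ giving $[c] = \sum_i \big([b_i] + {\psi_i}_*[b_i]\big)$ modulo $\textup{im}\,\partial^o,$ which lands in the claimed subspace.

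The principal technical content --- indeed almost the only work, once framed this way --- is the boundary lemma above; conceptually it encodes the injectivity of $1 + \psi_*$ on $\bigoplus_i H_*(C_i),$ arising from the fact that this operator is the identity on each $H_*(C_i)$ plus a shift into $H_*(C_{i+1})$ (so any nonzero minimal-index component survives). Together (i) and (ii) will identify $H_*(\widetilde{C}^T) \simeq \textup{coker}(1 + \psi_*) = \lim\limits_{\longrightarrow} H_*(C_i),$ matching the expected mapping-telescope computation.
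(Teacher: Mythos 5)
Your proof is correct, and it takes a slight variation on the paper's route. The paper peels the $\overline{x}$-support from the bottom: letting $i'$ be the \emph{smallest} index with $x_{i'}\neq 0,$ the $C_{i'}$-component of the cycle condition gives $\partial y_{i'}=x_{i'}$ directly (no induction needed, just minimality), and one subtracts $\partial^T\overline{y_{i'}}$ to remove the term $\overline{x_{i'}},$ then repeats. You instead first establish a standalone ``boundary lemma'' (every $x_j$ is a boundary, via the forward recursion $z_j := y_j + \psi_{j-1}(z_{j-1})$) and then peel from the \emph{top}, subtracting $\partial^T(0,\overline{b}_N)$ with $\partial b_N = x_N.$ Both are sound: your version front-loads the work into the lemma, which neatly packages the injectivity of $1+\psi_*$ on $\bigoplus_i H_*(C_i)$; the paper's version avoids that lemma entirely because at the minimal index the preimage of $x_{i'}$ is already sitting there as $y_{i'},$ so no auxiliary construction is required. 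Your part (ii) matches the paper's argument essentially verbatim.

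One small point of bookkeeping worth making explicit: after adding $\partial^T(0,\overline{b}_N),$ the $C_N$- and $C_{N+1}$-components of $y$ do change (they become $y_N + b_N$ and $y_{N+1}+\psi_N(b_N)$). This is harmless for your induction, since the boundary lemma is applied afresh to the new cycle and only uses the components $y_0,\dots,y_{N-1},$ which are untouched --- but a reader might stumble without this remark, since your phrasing ``so the lemma continues to apply'' leaves it implicit that you are re-running the lemma rather than reusing the old $z_j$'s.
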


\begin{proof}
\begin{enumerate}[label = (\roman*)]
\item $\ker \partial^T \supset \ker \partial^{o} + \text{im} \partial^T$ is clear. We show the opposite direction: $\ker \partial^T \subset \ker \partial^{o} + \text{im} \partial^T.$
Let $\sum\limits_{i \in I} \overline{x_i}+ \sum\limits_{j \in J} y_j$ be an element in $\ker \partial^T,$ where $I$ and $J$ are some finite subsets of $\mathbb{Z}_{\geq 0}$ with $\overline{x_i} \in \overline{C}_{i}$ and $y_j \in C_j$ for all $i$ and $j.$ We remark that if $\sum\limits_{i \in I} z_i = 0,$ then $z_i =0,$ for all $i.$ This is because for different $i$'s, the $z_i'$s lie in different $C_i'$s. Observe that $\partial^o x_i =0$ for all $i,$ which follows from the fact that the term $\overline{\partial^o x_{i}}$ that appears in $\partial^T \overline{x_{i}} = x_{i} + \psi_{i*} x_{i} +  \overline{\partial^o x_{i}}$ must vanish. 
\ \
Now suppose that $i'$ is the smallest element in $I.$ Then we observe that $\partial^o y_j = 0$ if $j < i'.$ Also, (up to addition of an element in $\ker \partial^o$) we have $\partial^o y_{i'} = x_{i'},$ so that the minimality of $i'$ forces the sum $\partial^o x_{i'} + y_{i'}$ in $\partial^T (\sum\limits_{i \in I} \overline{x_i}+ \sum\limits_{j \in J} y_j)$ to vanish. (Note that $\psi_{i*}$ sends elements in $C_i$ to those in $C_{i+1}.)$
\ \
Then $\sum\limits_{i \in I} \overline{x_i}+ \sum\limits_{j \in J} y_j$ can be rewritten as $\sum\limits_{i \in I \setminus {i'}} \overline{x_i}+ \sum\limits_{j \in J \setminus {i'}} y_j + x_{i'} + y_{i'} = \sum\limits_{i \in I \setminus {i'}} \overline{x_i}+ \sum\limits_{j \in J \setminus {i'}} y_j + \overline{\partial y_{i'}} + (\overline{y_{i'}} + \psi_{i'*}y_{i'} +  \partial^T \overline{y_{i'}}),$ and finally as 
\begin{equation}\nonumber
\sum\limits_{i \in I \setminus {i'}} \overline{x_i}+ \sum\limits_{j \in J \setminus {i'}} y'_j + \partial^T \overline{y_{i'}},
\end{equation}
where
\begin{equation}\nonumber
y'_j=
\begin{cases}
y_j + \psi_{i'*}y_{i'} & \text{ if } j = i' +1\\
y_j & \text{ if } j \neq i' +1.
\end{cases}
\end{equation}
Repeating this process we can remove all the terms of $\sum\limits_{i \in I} \overline{x_i}$ and end up with the sum of the form
$ \sum\limits_{j \in J} z_j + \partial^T( \cdots ), $ where $z_j \in \ker \partial^o$ for all $j,$ which finishes the proof of (i). 
\item Elements in $\ker \partial^o \cap \text{im} \ \partial^T$ are generated by those of the form $\partial^T{(\overline{x_i} + y_i)}$ with $\partial^o x_i =0.$ By sending each class $[\partial^T{(\overline{x_i} + y_i)}]=[x_i + \psi_{i*} x_i + \partial^o y_i]$ in $\frac{\ker \partial^o \cap \text{im} \partial^T}{\text{im} \partial^o}$ to $[x_i + \psi_{i*} x_i],$ we determine a well-defined map, which is clearly an isomorphism.

\end{enumerate}
\end{proof}

From this lemma, we have

\ \

\begin{proposition}
The family $\big\{H_*(C_i), \{\psi_{i*}\} \big\}_i$ forms a direct system and we have
\begin{equation}\nonumber
H_*(\widetilde{C}^T) \simeq \lim\limits_{\longrightarrow} H_*(C_i).
\end{equation}
\end{proposition}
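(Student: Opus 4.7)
The plan is to use Lemma \ref{2lmt} directly, combined with standard isomorphism theorems, to identify $H_*(\widetilde{C}^T)$ with a presentation of the direct limit. First, I would observe that since each $\psi_i$ is a chain map, it induces $\psi_{i*}$ on homology, giving the direct system $\{H_*(C_i),\psi_{i*}\}_i$ indexed by $\mathbb{Z}_{\geq 0}$.

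Next, Lemma \ref{2lmt}(i) gives $\ker \partial^T = \ker \partial^o + \textup{im}\,\partial^T$, so by the second isomorphism theorem
\begin{equation}\nonumber
H_*(\widetilde{C}^T) \;=\; \frac{\ker \partial^T}{\textup{im}\,\partial^T} \;\simeq\; \frac{\ker \partial^o}{\ker \partial^o \cap \textup{im}\,\partial^T}.
\end{equation}
Since $\textup{im}\,\partial^o \subset \ker \partial^o \cap \textup{im}\,\partial^T$, the third isomorphism theorem yields
\begin{equation}\nonumber
\frac{\ker \partial^o}{\ker \partial^o \cap \textup{im}\,\partial^T} \;\simeq\; \frac{\ker \partial^o / \textup{im}\,\partial^o}{(\ker \partial^o \cap \textup{im}\,\partial^T)/\textup{im}\,\partial^o} \;=\; \frac{H_*(C^o)}{(\ker \partial^o \cap \textup{im}\,\partial^T)/\textup{im}\,\partial^o}.
\end{equation}
Noting $H_*(C^o) = \bigoplus_{i \geq 0} H_*(C_i)$ (as $\partial^o$ acts diagonally on the summands), and applying Lemma \ref{2lmt}(ii) to identify the denominator with the subgroup generated by elements $x + \psi_{i*}x$ for $x \in H_*(C_i)$, $i \geq 0$, we obtain
\begin{equation}\nonumber
H_*(\widetilde{C}^T) \;\simeq\; \bigoplus_{i \geq 0} H_*(C_i) \,\Big/\, \langle\, x + \psi_{i*}x \,\rangle_{x \in H_*(C_i),\, i \geq 0}.
\end{equation}

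Finally, I would recognize the right-hand side as the standard presentation of the direct limit of the system $\{H_*(C_i), \psi_{i*}\}_i$ over $\mathbb{Z}_2$: the colimit in the category of $\mathbb{Z}_2$-modules is computed precisely by taking the direct sum and imposing the relations identifying each $x$ with its image $\psi_{i*}x$ (recall that over $\mathbb{Z}_2$, $+$ coincides with $-$). Thus $H_*(\widetilde{C}^T) \simeq \lim_{\longrightarrow} H_*(C_i)$. The argument is essentially formal given Lemma \ref{2lmt}; the only mildly subtle step is checking the well-definedness of the isomorphism in Lemma \ref{2lmt}(ii) is compatible with the quotient by $\textup{im}\,\partial^o$, but this is built into the formulation of the lemma. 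No genuine obstacle arises — the hard work was done in Lemma \ref{2lmt}, and this proposition is its direct corollary.
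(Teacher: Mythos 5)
Your proposal is correct and follows essentially the same chain of isomorphisms as the paper: apply Lemma \ref{2lmt}(i) with the second isomorphism theorem, then the third isomorphism theorem, then Lemma \ref{2lmt}(ii), and finally recognize the quotient $\bigoplus_i H_*(C_i)/\langle x + \psi_{i*}x\rangle$ as the standard presentation of the direct limit. The only cosmetic difference is that the paper spells out the equivalence relation generated by these relations before identifying it with $\lim_{\longrightarrow}H_*(C_i)$, whereas you invoke the standard colimit presentation directly.
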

\begin{proof}

Consider the subchain complex $(C^{o} := \bigoplus\limits_{i=0}^{\infty}C_i, \partial^o) \subset (\widetilde{C}^T, \partial^T).$

\begin{equation}\nonumber
\begin{split}
\frac{\ker \partial^T}{\text{im} \partial^T} &\simeq \frac{\ker \partial^o  + \text{im} \partial^T}{\text{im} \partial^T} \simeq \frac{\ker \partial^o  }{\text{ker} \partial^o   \cap \text{im} \partial^T},
\\
&\simeq \frac{\frac{\ker \partial^o  }{\text{im} \partial^o  }}{\frac{\text{ker} \partial^o   \cap \text{im} \partial^T }{\text{im} \partial^o  }}\simeq \frac{\ker \partial^o  }{\text{im} \partial^o}/ \langle \{ x + {\psi_{i}}_*x \}_{x \in H_*(C_i), i} \rangle,
\end{split}
\end{equation}
We use the second isomorphism theorem to obtain the second isomorphism. The third isomorphism follows from the third isomorphism theorem. Let $\sim$ denote the equivalence relation $x \sim y \Longleftrightarrow \text{ there exists } k > i, j \text{ such that } \psi_{k-1_*} \cdots \psi_{i*}(x) = \psi_{k-1_*} \cdots \psi_{j*} (y).$ It is not difficult to check that taking the quotient by elements in $ \{ x + {\psi_{i}}_*x \}_{x \in H_*(C_i), i}$ generates this equivalence relation, so that the resulting quotient module is isomorphic to $ \bigoplus\limits_{i=0}^{\infty} H_*(C_i) / \sim,$ which is isomorphic to the direct limit, $\lim\limits_{\longrightarrow}H_{*}(C_i).$

\end{proof}

\subsubsection*{The colimit diagram $\overline{\mathscr{F}}^T$} We consider the simplicial set $K^T$ that consists of:
\begin{enumerate}[label = \textbullet]
\item $K^T_0 := N(\mathcal{I})_0.$
\item $K^T_1 := \bigl\{f \in N(\mathcal{I})_1 \mid f \neq id \Rightarrow tf =sf +1 \bigr\}.$
\item $K^T_{k(\geq 2)}  := \big\{ (f_1, \cdots, f_k) \in N(\mathcal{I})_k \mid \#\{i \mid f_i \neq id\} \leq 1 \big\}.$ (I.e., its elements are of the form $(id, \cdots, id, f, id, \cdots, id)$ with $f \in K^T_1.$)
\item The face and degeneracy maps are induced from those of $N(\mathcal{I}).$
\end{enumerate}

Notice that the inclusion $\iota^T : K^T \hookrightarrow N(\mathcal{I})$ is a morphism of simplicial sets. Define 
\begin{equation}\nonumber
\mathscr{F}^T: K^T \rightarrow N_{dg}\big(Ch(\mathbb{Z}_2)\big)
\end{equation}
to be the diagram given by $\mathscr{F}^T := \mathscr{F} \circ \iota^T.$

Observe that we have:
\begin{enumerate}[label=(\roman*)]
\item $\mathscr{F}^T(a) = C_a,$
\item $\mathscr{F}^T(f_a) = \mathscr{F}(f_a),$ for $f_a \in K^T_1,$ (in particular, $\mathscr{F}^T(id_a) =id_{C_a}$),
\item $\mathscr{F}^T(\sigma_k) = 0$ for $\sigma_k \in K^T_k$ with $k \geq 2.$
\end{enumerate}

It is possible to extend $\mathscr{F}^T$ to 

\begin{equation}\nonumber
\overline{\mathscr{F}}^T : (K^T)^{\triangleright} \rightarrow N_{dg}\big(Ch(\mathbb{Z}_2)\big),
\end{equation}

so that $\overline{\mathscr{F}}^T(*) = \widetilde{C}^T$ as follows.
\begin{enumerate}[label= \textbullet]
\item If $\sigma_k \in K^T_k,$ $\overline{\mathscr{F}}^T(\sigma_k) := \mathscr{F}^T(\sigma_k).$
\item If $k=0,$ and $\sigma_0 \notin K^T_0,$ $\overline{\mathscr{F}}^T(*) := \widetilde{C}^T,$
\item If $k=1,$ and $\sigma_1 \notin K^T_1,$ $\overline{\mathscr{F}}^T(\sigma_1) : C_{s \sigma_1} \rightarrow C_{t \sigma_1} = C_* = \widetilde{C}^T,$ takes $x$ to $x,$
\item If $k=2,$ and $\sigma_2 \notin K^T_2,$ \\
\quad \quad \quad $ \overline{\mathscr{F}}^T(\sigma_2) : C_{s \sigma_2} \rightarrow C_{t \sigma_2} = C_* = \widetilde{C}^T,$ takes $x$ to $(f_{s \sigma_2}, x),$
\item If $\sigma_k \notin K^T_k$ and $k>2,$ $\overline{\mathscr{F}}^T(\sigma_k) := 0.$
\end{enumerate}

\begin{proposition}
$\overline{\mathscr{F}}^T$ is a colimit diagram.
\end{proposition}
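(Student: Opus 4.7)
\begin{proof-sketch}
The plan is to mirror the argument for Proposition of Appendix A, which established that $\overline{\mathscr{F}}$ is initial in $N_{dg}\big(Ch(\mathbb{Z}_2)\big)_{\mathscr{F}/}$. Set $\mathcal{D}^T := N_{dg}\big(Ch(\mathbb{Z}_2)\big)_{\mathscr{F}^T/}$ and note that $\overline{\mathscr{F}}^T$ is the $0$-simplex of $\mathcal{D}^T$ corresponding to an extension of $\mathscr{F}^T$ along the inclusion $K^T \hookrightarrow (K^T)^{\triangleright}$. By the homotopy equivalence between left and right morphism spaces in an $\infty$-category, it suffices to show that for every $q \in \mathcal{D}^T_0$ the left morphism space $\mathrm{Hom}^L_{\mathcal{D}^T}(\overline{\mathscr{F}}^T, q)$ is a contractible Kan complex. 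Concretely, given any compatible family $X_0, \ldots, X_{k+1} \in \mathcal{D}^T_k$ (i.e.\ agreeing on common faces and restricting to $\overline{\mathscr{F}}^T$ at the initial vertex and to $q$ at the other vertices) we must fill with a $\widetilde{X} \in \mathcal{D}^T_{k+1}$.

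First I would describe $\mathcal{D}^T_m$ explicitly as the set of assignments that send each $m$-simplex $\sigma_m$ of $K^T \star \Delta^m$ to a degree $m-1$ map $\varphi_{\sigma_m} : C_{s\sigma_m} \to C_{t \sigma_m}$ satisfying the dg-nerve relation (\ref{diadgn}), and which restrict to $\mathscr{F}^T$ on $K^T$. The crucial simplification relative to Appendix A is that simplices $\sigma_k \in K^T_k$ with $k \geq 2$ automatically have at most one non-identity morphism, so $\mathscr{F}^T(\sigma_k) = 0$ and many cross-terms in the coherence relation drop out automatically. To construct the filler $\widetilde{X}$, I would mimic the three-case recipe of Appendix A: on simplices contained in $K^T \star \partial \Delta^{k+1}$ use the given $X_i$ data; on the ``diagonal'' simplex $(\sigma_l ; *_0, \ldots, *_{k+1})$ (for $\sigma_l \in K^T$) use the formula $\varphi_{(\sigma_l; *_0, \ldots, *_{k+1})}(x) = \sum_{i=0}^{k+1} \varphi_{(\sigma_l; *_0, \ldots, \widehat{*_i}, \ldots, *_{k+1})}(x)$; on higher simplices of the form $(\vec a; *_0, \ldots, *_k)$ with $|\vec a| > 1$ declare the map to be zero.

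Next I would verify the dg-nerve relation (\ref{diadgn}) for these newly assigned maps. For the diagonal case the verification is a direct transcription of the $\textcircled{1}+\textcircled{2}+\textcircled{3}+\textcircled{4}=0$ calculation already carried out in Appendix A, and the restricted shape of $K^T$ only kills terms, not creates them. For the zero assignments, the relation reduces to the statement that the remaining boundary/composition terms (which come from the $X_i$ data and hence already satisfy (\ref{diagdn})) sum to zero; here one uses again that $K^T$ simplices of length $\geq 2$ are sent to zero by $\mathscr{F}^T$, together with the explicit formulas $\overline{\mathscr{F}}^T(\sigma_1)(x) = x$ and $\overline{\mathscr{F}}^T(\sigma_2)(x) = \overline x \in \overline C_{s\sigma_2}$ whose boundary is $x + \psi_{s\sigma_2}(x) = \overline{\mathscr{F}}^T(\partial_1 \sigma_2)(x) + \overline{\mathscr{F}}^T(\partial_0 \sigma_2) \circ \overline{\mathscr{F}}^T((\sigma_2)^1_1)(x)$, i.e.\ the compatibility that \emph{already} drove the definition of the telescope differential $\partial^T$.

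The main obstacle, as in Appendix A, is the bookkeeping in the diagonal case: one must show that the natural ``sum over missed vertices'' formula $\varphi_{\Delta^{k+1}} = \sum_i \varphi_{\partial_i \Delta^{k+1}}$ is indeed a degree $k$ chain-complex map of the right type, and that its formal boundary expansion telescopes with the composition terms supplied by the $X_i$'s. Because $\widetilde{C}^T$ is built precisely so that the degree-$1$ maps $\overline{\mathscr{F}}^T(\sigma_2)$ realize the relation $\partial^T \overline x = x + \psi(x) + \overline{\partial x}$, this bookkeeping goes through for the same reason it did for $\overline{\mathscr{F}}$. Once contractibility of $\mathrm{Hom}^L_{\mathcal{D}^T}(\overline{\mathscr{F}}^T, q)$ is established for all $q$, initiality of $\overline{\mathscr{F}}^T$ in $\mathcal{D}^T$ follows, and hence $\overline{\mathscr{F}}^T$ is a colimit diagram for $\mathscr{F}^T$.
\end{proof-sketch}
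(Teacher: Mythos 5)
Your proposal takes a genuinely different route from the paper. The paper's proof of this proposition is by forward reference to Corollary~\ref{colimtbar}, which deduces the colimit property from the cofinality of the inclusion $\iota^T : K^T \hookrightarrow N(\mathcal{I})$ (Proposition~\ref{cofit}) together with the cofinality criterion of Proposition~\ref{cofprop}(iii): precomposing the colimit diagram $\overline{\mathscr{F}}$ established in Appendix A with the induced cone map $\overline{\iota}^T$ automatically yields a colimit diagram for $\mathscr{F}^T$. You instead re-run the explicit contractibility argument of Appendix A inside the undercategory $N_{dg}\big(Ch(\mathbb{Z}_2)\big)_{\mathscr{F}^T/}$, building fillers by the three-case recipe (inherited faces, diagonal sum, zero on the rest) and checking the dg-nerve relation directly. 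The paper's route is shorter once the cofinality machinery is in place, but it strictly establishes that $\overline{\mathscr{F}} \circ \overline{\iota}^T$, whose cone-point value is $\widetilde{C}$, is a colimit; connecting this with the explicitly constructed $\overline{\mathscr{F}}^T$, whose cone-point value is the mapping telescope $\widetilde{C}^T$ as defined immediately before the proposition, requires an extra identification of initial objects in the undercategory that the paper leaves implicit. Your direct verification sidesteps that and proves initiality for the $\widetilde{C}^T$-model exactly as stated, which is a real advantage. Two small things to tighten in your sketch: the zero assignment in case~(iii) should be phrased as applying to simplices of $K^T \star \Delta^{k+1}$ that contain all the cone vertices and whose $K^T$-part has length at least~$2$, rather than in terms of $\vec a$; and the assertion that passing from $N(\mathcal{I})$ to $K^T$ ``only kills terms, not creates them'' deserves its one-line justification, namely that $\mathscr{F}^T$ vanishes on all simplices of $K^T$ of length at least~$2$ and $\varphi_{(*_{j_1}, \ldots, *_{j_l})} = 0$ for $l \geq 3$, so no new cross-terms can enter the coherence relation beyond those already handled in Appendix A.
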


\begin{proof}
See Corollary \ref{colimtbar}.
\end{proof}

\subsubsection*{A sequence of simplicial subsets of $N(\mathcal{I})$} For $a \geq 0,$ we denote by $K^a = \{K^a_k\}_{k \geq 0}$ the simplicial set that consists of:
\begin{enumerate}[label = \textbullet]
\item If $k \leq a, \ K^a_k := N(\mathcal{I})_k.$ 
\item If $k > a, \ K^a_k := \bigl\{ (f_1, \cdots, f_k) \in N(\mathcal{I})_k | \# \{j \mid f_j \neq id\} \leq a \bigr\}.$
\item The face and degeneracy maps are naturally induced from those of $N(\mathcal{I}).$
\end{enumerate}

\begin{figure}[h!]
\centering
\includegraphics[width=0.4\textwidth]{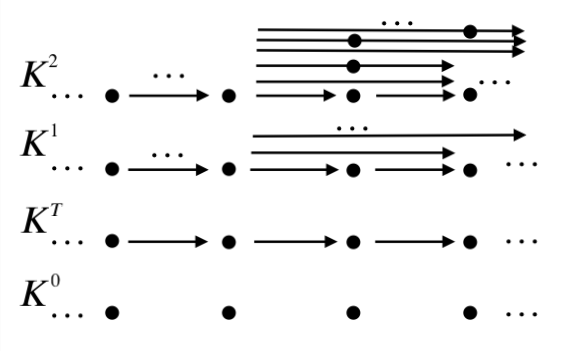}
\caption{Graphical description of the simplicial sets $K^{\bullet}$}
\end{figure}

We also denote the inclusions of simplicial sets by $\iota^a : K^a \hookrightarrow N(\mathcal{I}) \ (a = 0,1, \cdots).$
Notice that we have a sequence of inclusions of simplicial subsets of $N(\mathcal{I})$:
\begin{equation}\nonumber
K^0 \subset K^T \subset K^1 \subset K^2 \subset \cdots.
\end{equation}

They can be depicted as Figure 7, where dots, arrows, and arrows with dots mean vertices, 1-simplices, and higher simplices, respectively.

\subsection{Cofinal maps}

There is a functor (c.f. [Lur1] Definition 1.1.5.1),
\begin{equation}\nonumber
\mathfrak{C} : sSet \rightarrow Cat_{\Delta}
\end{equation}
from the category of simplicial sets to the category of simplicially enriched categories. We explain how $\mathfrak{C}$ is defined. First of all, the simplicial set $\Delta^I$ associated to a finite (totally) ordered set $I$ is mapped to $\mathfrak{C}(\Delta^I)$ which is a simplicially enriched category given by the following data:
\begin{enumerate}[label = \textbullet]
\item $Ob\big(\mathfrak{C}(\Delta^I)\big) = I$
\item $Mor_{\mathfrak{C}(\Delta^I)}(i,j)$ is a simplicial set given by
\begin{equation}\nonumber
Mor_{\mathfrak{C}(\Delta^I)}(i,j) = \begin{cases}
\emptyset & \text{ if } i >j,\\
N P_{ij} & \text{ if } i \leq j,
\end{cases}
\end{equation}
where $P_{ij}$ is the collection of subsets $J$ of $I$ that satisfy: (i) $i,j \in J,$ (ii) $k \in J \Longrightarrow i \leq k \leq j.$ Note that $P_{ij}$ is a partially ordered set whose order is given by inclusions of subsets. $NP_{ij}$ is the simplicial nerve of the this ordered set. For vertices and edges, one can refer to the following.
\begin{equation}\nonumber
\begin{cases}
\begin{split}
&(NP_{ij})_0 \ni J' = \{i_0 = i, i_1, \cdots, i_{l-1}, i_l = j\} \xLeftrightarrow{notation} P_{i,i_{1}}  \circ \cdots \circ P_{i_{l-1}, j}\\
&(NP_{ij})_1 \ni (J'' \subset J'), \text{ where } J'' \text{ is a subset of } J' \text{ with } i,j \in J''.\\
& \ \ \ \ \ \ \ \ \ \vdots
\end{split}
\end{cases}
\end{equation}

The face and degeneracy maps applied to a $k$-simplex $J^{(k+1)} \subset J^{(k)} \subset \cdots \subset J^{''} \subset J^{'}$ are given in the following patterns:
\begin{equation}\nonumber
\begin{split}
\cdots \subset J^{(i+1)} \subset J^{(i)} \subset J^{(i-1)} \subset \cdots &\mapsto \cdots \subset J^{(i+1)} \subset J^{(i-1)} \subset \cdots,\\
\cdots \subset J^{(i+1)} \subset J^{(i)} \subset J^{(i-1)} \subset \cdots &\mapsto \cdots \subset J^{(i+1)} \subset J^{(i)} \subset J^{(i)} \subset J^{(i-1)} \subset \cdots,
\end{split}
\end{equation}
respectively.

The composition of $Mor_{\mathfrak{C}(\Delta^I)}$ is given as follows:
\begin{equation}\nonumber
\begin{split}
Mor_{\mathfrak{C}(\Delta^I)}(i,j) \times Mor_{\mathfrak{C}(\Delta^I)}(j,k) &\rightarrow Mor_{\mathfrak{C}(\Delta^I)}(i,k)\\
\big((J_1^{i,j} \subset \cdots \subset J^{i,j}_{l}),(J^{j,k}_1 \subset \cdots \subset J^{j,k}_{l})\big) &\mapsto ( J_1^{i,j} \cup J^{j,k}_1 \subset \cdots \subset J^{i,j}_{l} \cup J^{j,k}_{l}) \text{ on } (NP_*)_l, \\
&\text{ for all }l \text{ and } (J_1^{*,*} \subset \cdots \subset J^{*,*}_{l}) \in (NP_{*,*})_l.
\end{split}
\end{equation}

\end{enumerate}

\begin{lemma}
When $i \leq j,$ we have $Mor_{\mathfrak{C}(\Delta^I)}(i,j) \simeq [0,1]^{\#\{k \in I \mid i < k < j\}}.$
\end{lemma}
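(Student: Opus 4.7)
\begin{proof-sketch}
The plan is to identify $P_{ij}$ with a Boolean lattice and then use the compatibility of the nerve functor with products. Set $S := \{k \in I \mid i < k < j\}$ and let $n := \#S$.

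First I would exhibit an isomorphism of posets $\Phi : P_{ij} \xrightarrow{\simeq} 2^{S}$ given by $\Phi(J) := J \setminus \{i,j\}$, with inverse $T \mapsto T \cup \{i,j\}$. Both maps are manifestly order-preserving since every element of $P_{ij}$ contains $\{i,j\}$ by definition, and ordering on both sides is given by inclusion. Next I would identify $2^{S}$ with the product poset $\prod_{k \in S} \{0<1\}$ via the indicator function (for each $k \in S$, record $1$ if $k \in T$ and $0$ otherwise); this is a standard poset isomorphism.

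Since the nerve functor $N$ on (small) categories is a right adjoint to $\mathfrak{C}$-style constructions, or more elementarily since $N$ is built as $\mathrm{Hom}_{\mathrm{Cat}}([\bullet], -)$, it preserves products: $N\bigl(\prod_{k \in S}\{0<1\}\bigr) \simeq \prod_{k \in S} N(\{0<1\}) = \prod_{k \in S}\Delta^{1}.$ Composing with $N(\Phi)$ gives a canonical isomorphism of simplicial sets
\begin{equation}\nonumber
Mor_{\mathfrak{C}(\Delta^{I})}(i,j) = N P_{ij} \;\simeq\; (\Delta^{1})^{n}.
\end{equation}
Finally, because geometric realization commutes with finite products of simplicial sets (both are Kan complexes and in particular CW), $|(\Delta^{1})^{n}| \cong |\Delta^{1}|^{n} = [0,1]^{n}$, which yields the desired equivalence $Mor_{\mathfrak{C}(\Delta^{I})}(i,j) \simeq [0,1]^{n}$ with $n = \#\{k \in I \mid i < k < j\}$.

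There is no real obstacle here; the only thing to be careful about is the interpretation of $\simeq$. If one reads it as a homeomorphism of geometric realizations, the argument above is complete as written. If instead one reads it as a categorical equivalence of simplicial sets regarded as $\infty$-categories, the same poset isomorphism suffices, since nerves of equivalent (even isomorphic) categories yield categorically equivalent simplicial sets. In either reading the only content is the bijective correspondence $J \leftrightarrow J \setminus \{i,j\}$ between chains containing both endpoints and arbitrary subsets of the interior $S$.
\end{proof-sketch}
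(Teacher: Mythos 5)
Your argument is correct and is precisely the ``simple consideration'' the paper alludes to without spelling out: the poset isomorphism $P_{ij}\cong 2^{\{k\in I\mid i<k<j\}}\cong\prod_{k}\{0<1\}$, followed by the fact that the nerve preserves products, giving $NP_{ij}\simeq(\Delta^1)^n$ and hence $[0,1]^n$ after realization. Your remark on the intended meaning of $\simeq$ is apt, since the paper only ever uses the contractibility of this cube, for which either reading suffices.
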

\begin{proof}
This follows from a simple consideration for the structure of $NP_*.$
\end{proof}

One can extend this definition of $\mathfrak{C}(\cdot)$ to arbitrary simplicial sets $K.$ For our later use, however, it is enough to consider when $K = N(\mathcal{I})$ and $K^T.$ Observe that $\mathfrak{C}\big(N(\mathcal{I})\big)$ should be the same as the above $\mathfrak{C}(\Delta^I)$ except that the finite ordered set $I$ is replaced with $N(\mathcal{I})_0.$

We briefly explain how the functor $\mathfrak{C}$ acts on the simplicial set $K^T.$ Since $\mathfrak{C}$ is left adjoint to the nerve functor $N,$ (see Remark \ref{functorc}), it preserves colimits. We consider the following diagram in the category $sSet$:

\begin{equation}\nonumber
\mathcal{S}:= \coprod\limits_{i \in \mathbb{Z}_{\geq 0}} \Delta^0 \rightrightarrows \coprod\limits_{i \in \mathbb{Z}_{\geq 0}} \Delta^1,
\end{equation}
where the two maps are the inclusion of the vertices in the edges as ending points. Then the corresponding coequalizer is the same as the simplicial set $K^T,$ i.e., $colim \mathcal{S} = K^T.$
On the other hand, we have 
\begin{equation}\nonumber
\mathfrak{C}(S) = \coprod\limits_{i \in \mathbb{Z}_{\geq 0}} \mathfrak{C}({\Delta^0}) \rightrightarrows \coprod\limits_{i \in \mathbb{Z}_{\geq 0}} \mathfrak{C}(\Delta^1).  
\end{equation}
Since we have from the description of  $\mathfrak{C}(\Delta^I)$

\begin{figure}[h]
\centering
\includegraphics[width=0.4\textwidth]{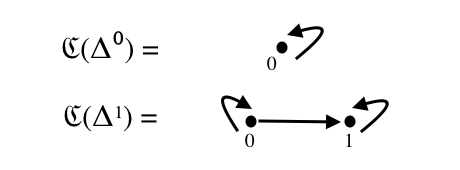}
\end{figure}

We can conclude that colim$\mathfrak{C}(\mathcal{S})$ is the category that looks like 

\ \

\ \

\begin{figure}[h]
\centering
\includegraphics[width=0.4\textwidth]{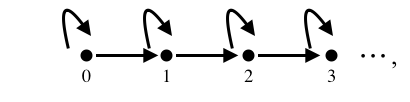}
\end{figure} as we indicate below in (ii).

\begin{remark}\label{functorc}
The functor $\mathfrak{C}$ is in fact left adjoint to the \textit{simplicial} nerve functor $N.$ In fact, in [Lur1] (c.f. Definition 1.1.5.5.), the functor $N$ is {\it{defined}} as such a right adjoint.
\end{remark}

\subsubsection*{} According to the above definition, we have the following description of $\mathfrak{C}(K^T)$ and $\mathfrak{C}(K^a)$ with $a \geq 1.$

\begin{enumerate}[label = (\roman*)]

\item $\mathfrak{C}\big(N(\mathcal{I})\big)$ is a simplicially enriched category which consists of the following data:

\begin{enumerate}[label = \textbullet]
\item $Ob\big(\mathfrak{C}\big(N(\mathcal{I}))\big) = N(\mathcal{I})_0,$
\item $Mor_{\mathfrak{C}(N(\mathcal{I}))}(i,j)$ is a simplicial set such that 
\begin{equation}\nonumber
\begin{split}
-&\text{ If } i \leq j, Mor_{\mathfrak{C}(N(\mathcal{I}))}(i,j) \simeq [0,1]^{\#\{k \mid i < k < j\}}\\
-&\text{ If } i > j, Mor_{\mathfrak{C}(N(\mathcal{I}))}(i,j) = \emptyset.
\end{split}
\end{equation}
\end{enumerate}

\item $\mathfrak{C}(K^T)$ is a simplicially enriched category which consists of the following data:
\begin{enumerate}[label = \textbullet]
\item $Ob\big(\mathfrak{C}(K^T)\big) =K^T_0 = N(\mathcal{I})_0,$
\item If $i \leq j, \ Mor_{\mathfrak{C}(K^T)}(i,j)$ is a simplicial set such that
\begin{equation}\nonumber
\begin{split}
- \big(Mor_{\mathfrak{C}(K^T)}(i,j)\big)_0 := \text{ the one-p}& \text{oint set, } \{f_{i, i+1} \circ \cdots \circ f_{j-1, j}\},\\
- \big(Mor_{\mathfrak{C}(K^T)}(i,j)\big)_k := \text{ degenerat}&\text{e } k \text{ simplices of the form of the point } \\ 
& \{f_{i, i+1} \circ \cdots \circ f_{j-1, j}\} \ k \geq 1.\\
\end{split}
\end{equation}
\item If $i >j, \ Mor_{\mathfrak{C}(K^T)}(i,j) = \emptyset.$ 
\end{enumerate}

\item $\mathfrak{C}(K^a)$ is a simplicially enriched category which consists of the following data:

\begin{enumerate}[label = \textbullet]
\item $Ob\big(\mathfrak{C}(K^a)\big) =K^a_0 = N(\mathcal{I})_0,$
\item If $a \geq 1, \ Mor_{\mathfrak{C}(K^a)}(i,j)$ is a simplicial set such that 
\begin{equation}\nonumber
\begin{split}
-&\text{ If } i \leq j, Mor_{\mathfrak{C}(K^a)}(i,j) \subset [0,1]^{\#\{k \mid i < k < j\}}\\
-&\text{ If } i > j, Mor_{\mathfrak{C}(K^a)}(i,j) = \emptyset.
\end{split}
\end{equation}
\end{enumerate}
\end{enumerate}

\ \

\subsubsection*{Homotopy categories.} Since the category of simplicial sets $sSet$ and the category of compactly generated weak Hausdorff spaces $\mathfrak{C}\mathfrak{G}$ are Quillen equivalent, their homotopy categories are equivalent. We write $\mathcal{H}$ for either of the homotopy categories and call it the \textit{homotopy category of spaces}. 

We consider a functor from the category of simplicially enriched categories to that of $\mathcal{H}$-enriched categories:
\begin{equation}\nonumber
\begin{split}
h : Cat_{\Delta} &\rightarrow Cat_{\mathcal{H}},\\
\mathcal{C} &\mapsto h \mathcal{C},
\end{split}
\end{equation}
where $h \mathcal{C}$ is the category enriched over $\mathcal{H}$ that is given by:
\begin{enumerate}[label = \textbullet]
\item $Ob(h \mathcal{C}) = Ob(\mathcal{C})$
\item $Mor_{h\mathcal{C}}(a,b) = [Mor_{\mathcal{C}}(a,b)].$
\end{enumerate}
Here $[\cdot]$ denotes the set of weak homotopy equivalence classes. Recall that two topological spaces are said to be \textit{weakly homotopy equivalent} if there is a map between them that induces isomorphisms on $\pi_i$ for all $i =0,1, \cdots.$ (If $i=0,$ it is a set isomorphism, and if $i \geq 1,$ it is a group isomorphism.)

We describe $h \mathfrak{C}(K^T)$ and $h \mathfrak{C}(K^i)$ for $i \geq 1:$

\begin{enumerate}[label = (\roman*)]

\item $h \mathfrak{C}(N(\mathcal{I}))$ is the $\mathcal{H}$-enriched category which consists of the following data:

\begin{enumerate}[label = \textbullet]
\item $Ob\big(h\mathfrak{C}\big(N(\mathcal{I})\big)\big) = Ob\big(\mathfrak{C}(K^a)\big),$
\item $Mor_{h \mathfrak{C}(N(\mathcal{I}))}(i,j)$ is given by
\begin{equation}\nonumber
\begin{split}
-\text{If } i \leq j, \ Mor_{h \mathfrak{C}(N(\mathcal{I}))}(i,j) &= [Mor_{\mathfrak{C}(N(\mathcal{I}))}(i,j)]\\ & \simeq \big[[0,1]^{\#\{k \mid i < k < j\}} \big] \simeq [pt],\\
-\text{If }i > j, \ Mor_{h \mathfrak{C}(N(\mathcal{I}))}(i,j) &= \emptyset.
\end{split}
\end{equation}

\end{enumerate}

\item $h \mathfrak{C}(K^T)$ is an $\mathcal{H}$-enriched category which consists of the following data:
\begin{enumerate}[label = \textbullet]
\item $Ob\big(h\mathfrak{C}(K^T)\big) =K^T_0,$
\item $Mor_{h\mathfrak{C}(K^T)}(i,j)$ is given by
\begin{equation}\nonumber
\begin{split}
-&\text{If } i \leq j, \ Mor_{h \mathfrak{C}(K^T)}(i,j) = [Mor_{\mathfrak{C}(K^T)}(i,j)] \simeq [pt], \\
-&\text{If }i > j, \ Mor_{h \mathfrak{C}(K^T)}(i,j) = \emptyset.
\end{split}
\end{equation}
\end{enumerate}

\end{enumerate}

\begin{definition}\label{catequiv}
We say that an edge in an $\infty$-category $\mathcal{C}$ is an {\it{equivalence}} if it induces an isomorphism in the homotopy category.
\end{definition}

\subsubsection*{Cofinality of morphisms} To obtain equivalences among colimits, we need the notion of cofinal morphisms of simplicial sets. We briefly introduce it.

\begin{definition}\label{coflur}
Let $f: X \rightarrow Y$ be a morphism of simplicial sets. 
\begin{enumerate}[label =(\roman*)]
\item $f$ is a \textit{right fibration} if it has the right lifting property with respect to all the inclusions $\Lambda^n_i \hookrightarrow \Delta^n, \ 0 < i \leq n.$
\item $f$ is \textit{cofinal} if for any right fibration $X \rightarrow S,$ the induced map 
\begin{equation}\nonumber
Map(S, X) \rightarrow Map(S, Y)
\end{equation}
is a homotopy equivalence.
\end{enumerate}
\end{definition}

\begin{proposition}([Lur1] Proposition 4.1.1.8)\label{cofprop}
Let $K$ and $K'$ be small simplicial sets. Then for an $\infty$-category $\mathcal{C},$ the following are all equivalent.
\begin{enumerate}[label = (\roman*)]
\item $f : K' \rightarrow K$ is cofinal
\item For a diagram $p : K \rightarrow \mathcal{C},$ the induced map $\mathcal{C}_{p /} \rightarrow \mathcal{C}_{p \circ f /}$ is an equivalence of $\infty$-categories.
\item Let $\overline{p}$ be a colimit of $p : K' \rightarrow \mathcal{C},$ then $\overline{p} \circ \overline{f}$ is a colimit of $p \circ f : K' \rightarrow \mathcal{C},$ where $\overline{f} : K'^{\triangleright} \rightarrow (K)^{\triangleright}$ is the naturally induced map on the corresponding cones.
\end{enumerate}
\end{proposition}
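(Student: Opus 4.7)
I would prove the cycle (i) $\Rightarrow$ (ii) $\Rightarrow$ (iii) $\Rightarrow$ (i). The principal tools are the Joyal model structure on $sSet$ (which characterizes categorical equivalences of $\infty$-categories via a trivial-fibration lifting property), the fact that any slice projection $\mathcal{C}_{q/} \to \mathcal{C}$ is a left fibration, and the straightening/unstraightening correspondence identifying right fibrations over $Y$ with space-valued presheaves $Y^{op} \to \mathcal{S}$.

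For (i) $\Rightarrow$ (ii), note that the canonical functor $F: \mathcal{C}_{p/} \to \mathcal{C}_{p \circ f/}$ is induced by pre-composition with $f: K' \to K$ and fits into a commutative triangle over $\mathcal{C}$ whose legs are both left fibrations. To show $F$ is a categorical equivalence, it suffices (by a fiberwise criterion) to verify a lifting property of $F$ against boundary inclusions $\partial \Delta^n \hookrightarrow \Delta^n$. Each such lift amounts to extending a compatible family of cones from $K'$ to one from $K$ over a given simplex, and the cofinality hypothesis of Definition \ref{coflur}, applied to the right fibration encoded by that simplex, produces the needed extension as a homotopy equivalence on mapping spaces; picking representatives (up to contractible ambiguity) gives the required lifts.

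For (ii) $\Rightarrow$ (iii), I would invoke the fact that any categorical equivalence of $\infty$-categories preserves and reflects initial objects, which follows from Definition \ref{finin} together with the preservation of mapping spaces by a categorical equivalence. A colimit of a diagram $q$ is by definition an initial object of $\mathcal{C}_{q/}$, so the equivalence $\mathcal{C}_{p/} \xrightarrow{\simeq} \mathcal{C}_{p \circ f/}$ transports the initial object $\overline{p}$ to an initial object on the right, and inspection of the definition of the restriction functor identifies this image with $\overline{p} \circ \overline{f}$, proving (iii).

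For (iii) $\Rightarrow$ (i), the hardest direction, the strategy is to convert a universal property into the combinatorial definition of cofinality via the straightening correspondence. Given an arbitrary right fibration $\widetilde{Y} \to Y$, unstraightening reinterprets it as a functor $Y^{op} \to \mathcal{S}$, and spaces of sections become suitable (co)limits in $\mathcal{S}$. After replacing $\mathcal{C}$ with a presentable $\infty$-category such as $\mathcal{P}(\mathcal{C})$ where all relevant colimits exist, one applies (iii) to the appropriate composites $K \to \mathcal{P}(\mathcal{C})$ and $K' \to \mathcal{P}(\mathcal{C})$ to deduce that the map $\mathrm{Map}(K, \widetilde{Y}) \to \mathrm{Map}(K', \widetilde{Y})$ is a weak equivalence, which is the defining property of a cofinal map. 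The main obstacle will be this last direction: it requires the substantial machinery of straightening/unstraightening, together with careful size bookkeeping to make sense of ``all right fibrations'' and to guarantee existence of the auxiliary colimits.
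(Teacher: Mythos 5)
A preliminary remark: the paper offers no proof of this proposition at all — it is quoted from [Lur1] (Proposition 4.1.1.8) and used as a black box — so your proposal can only be compared with Lurie's argument. Your overall architecture does match the standard one: slice projections are left fibrations over $\mathcal{C}$, a fiberwise comparison handles (i) $\Rightarrow$ (ii), preservation of initial objects by equivalences gives (ii) $\Rightarrow$ (iii), and the converse (iii) $\Rightarrow$ (i) is the hard direction, handled via straightening and a cocomplete target.

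The step that would fail as written is the mechanism you give for (i) $\Rightarrow$ (ii). Verifying a right lifting property of $F:\mathcal{C}_{p/}\rightarrow\mathcal{C}_{p\circ f/}$ against the boundary inclusions $\partial\Delta^{n}\hookrightarrow\Delta^{n}$ is the criterion for a \emph{trivial fibration}, which is strictly stronger than being a categorical equivalence and is not what cofinality delivers; moreover, a homotopy equivalence of section spaces cannot be upgraded to strict simplex fillers by ``picking representatives up to contractible ambiguity,'' so no actual lifts are produced. The correct fiberwise criterion is different: $F$ is a map between left fibrations over $\mathcal{C}$, and such a map is a categorical equivalence if and only if it induces a homotopy equivalence on the fiber over every vertex $x$ of $\mathcal{C}$ (a standard consequence of the covariant model structure in [Lur1]). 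The fiber of $\mathcal{C}_{p/}$ (resp. $\mathcal{C}_{p\circ f/}$) over $x$ is, up to natural homotopy equivalence, the Kan complex of sections over $K$ (resp. $K'$) of the right fibration $K\times_{\mathcal{C}}\mathcal{C}_{/x}\rightarrow K$ pulled back along $p$, and Definition \ref{coflur} applied to exactly this right fibration says that restriction along $f$ is a homotopy equivalence of these fibers — no lifting problem needs to be solved. With that repair, your (ii) $\Rightarrow$ (iii) is fine (the image of the initial object $\overline{p}$ under $F$ is indeed $\overline{p}\circ\overline{f}$, and equivalences preserve initial objects by Definition \ref{finin}), and your (iii) $\Rightarrow$ (i) outline is the standard route, provided (iii) is read as quantified over all $\infty$-categories $\mathcal{C}$ and all diagrams $p$ so that it can be applied to the space-valued diagrams produced by straightening, and with the caveat that identifying section spaces of right fibrations with the relevant (co)limits is itself substantial [Lur1] machinery rather than a formality.
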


\begin{corollary}([Lur1] Corollary 4.1.1.9)
A categorical equivalence is cofinal.
\end{corollary}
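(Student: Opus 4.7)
The plan is to deduce the corollary directly from Proposition \ref{cofprop} by verifying criterion (ii): given a categorical equivalence $f : K' \to K$, I would show that for every $\infty$-category $\mathcal{C}$ and every diagram $p : K \to \mathcal{C}$, the restriction map $\mathcal{C}_{p/} \to \mathcal{C}_{p \circ f/}$ is a categorical equivalence of $\infty$-categories. Both simplicial sets are $\infty$-categories by the first half of Proposition \ref{underoverinf}, so the content is to check the weak equivalence.

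The main technical tool will be the Joyal model structure on $sSet$, in which the fibrant objects are precisely the $\infty$-categories and the weak equivalences are precisely the categorical equivalences. The $n$-simplices of the undercategory are $(\mathcal{C}_{p/})_n = \mathrm{Hom}_p(K \star \Delta^n, \mathcal{C})$, so the map of undercategories is induced level-wise by precomposition with the maps $f \star \mathrm{id}_{\Delta^n} : K' \star \Delta^n \to K \star \Delta^n$. The first key step is therefore to establish the stability property: if $f$ is a categorical equivalence, then $f \star \mathrm{id}_L$ is a categorical equivalence for every simplicial set $L$. This is a standard but nontrivial feature of the join construction in the Joyal model structure, and I would prove it by combining the explicit description of the join with the pushout-product (``pushout-join'') compatibility of the Joyal model structure.

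Once the stability of categorical equivalences under joining is in hand, the second step is essentially formal. Because $\mathcal{C}$ is Joyal-fibrant, precomposition with the categorical equivalence $f \star \mathrm{id}_{\Delta^n}$ induces a weak equivalence of internal mapping simplicial sets $\mathrm{Map}(K \star \Delta^n, \mathcal{C}) \xrightarrow{\simeq} \mathrm{Map}(K' \star \Delta^n, \mathcal{C})$ that is natural in $n$. Restricting to the fibers over the fixed diagrams $p$ and $p \circ f$ respectively, these assemble into a simplicial map $\mathcal{C}_{p/} \to \mathcal{C}_{p \circ f/}$ which is a level-wise weak equivalence, hence a categorical equivalence of $\infty$-categories. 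Criterion (ii) of Proposition \ref{cofprop} is thereby verified, and cofinality of $f$ follows.

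The principal obstacle will be the first step: verifying that joining with an arbitrary simplicial set preserves categorical equivalences. Joins are slightly awkward homotopically because they are not formed by a colimit over a fixed shape, so one cannot simply quote a monoidal-model-category argument. The cleanest route is to reduce to verifying the lifting property against Joyal-anodyne inclusions via the pushout-join formalism; alternatively, one can factor $f$ through a Joyal trivial cofibration followed by a Joyal trivial fibration and handle each factor separately. After that reduction, the remainder of the argument is a routine application of mapping-space manipulations in a model category, and no further analytic or categorical input beyond Proposition \ref{cofprop} and Proposition \ref{underoverinf} is required.
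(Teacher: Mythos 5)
The paper does not prove this statement at all: it is quoted verbatim from [Lur1] (Corollary 4.1.1.9), so there is no internal argument to compare yours against. Judged on its own terms, your plan has the right overall shape — verifying criterion (ii) of Proposition \ref{cofprop}, and isolating as a key lemma that $f \star \mathrm{id}_L$ is a categorical equivalence whenever $f$ is (a true and genuinely needed fact) — but the second step contains a real gap.

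The problem is the passage from the equivalences $\mathrm{Fun}(K \star \Delta^n, \mathcal{C}) \xrightarrow{\simeq} \mathrm{Fun}(K' \star \Delta^n, \mathcal{C})$ to an equivalence $\mathcal{C}_{p/} \to \mathcal{C}_{p\circ f/}$ by ``restricting to fibers'' and concluding a ``level-wise weak equivalence.'' The $n$-simplices of $\mathcal{C}_{p/}$ are the fibers of the \emph{set} maps $\mathrm{Hom}(K \star \Delta^n, \mathcal{C}) \to \mathrm{Hom}(K, \mathcal{C})$ over $p$; a level-wise statement about these would mean degreewise bijections, i.e.\ an isomorphism of simplicial sets, which is neither true nor what is needed. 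If instead you take fibers of the mapping \emph{spaces} $\mathrm{Fun}(K \star \Delta^n, \mathcal{C}) \to \mathrm{Fun}(K, \mathcal{C})$ over $p$, you obtain a bisimplicial object whose relation to $\mathcal{C}_{p/}$ is exactly the nontrivial point: the simplicial structure of the slice comes from the $\Delta^n$ inside the join, not from the cartesian enrichment, and the defining adjunction $\mathrm{Hom}_p(K \star L, \mathcal{C}) \cong \mathrm{Hom}(L, \mathcal{C}_{p/})$ converts homotopies $L \times \Delta^1 \to \mathcal{C}_{p/}$ into maps out of $K \star (L \times \Delta^1)$, not out of $(K \star L) \times \Delta^1$, so equivalence data for the Fun-complexes does not transport formally to the slices. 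The standard repair is precisely Lurie's comparison between the slice $\mathcal{C}_{p/}$ and the ``fat slice'' $\mathcal{C} \times_{\mathrm{Fun}(K,\mathcal{C})} \mathrm{Fun}(K \times \Delta^1, \mathcal{C}) \times_{\mathrm{Fun}(K,\mathcal{C})} \{p\}$ (HTT 4.2.1.5), where the product-based description makes your fibration/mapping-space argument legitimate (the restriction maps are categorical fibrations between fibrant objects, so fiber products behave well); but that comparison is a substantive theorem, not a ``routine application of mapping-space manipulations.'' Relatedly, in your first step beware of circularity: one cannot get preservation of categorical equivalences under $- \star L$ by declaring the join left Quillen, since that is essentially the statement being proven; your suggested routes (pushout-join against Joyal-anodyne maps, or factoring $f$) can be made to work, but they too ultimately lean on the same slice/fat-slice technology. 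As it stands, the proposal would need this missing comparison supplied before it constitutes a proof.
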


\begin{corollary}
For a cofinal map $f,$ colimits $\overline{p},$ $\overline{p} \circ \overline{f}$ in Proposition \ref{cofprop} are all isomorphic in the homotopy category.
\end{corollary}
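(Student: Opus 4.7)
The plan is to deduce the corollary directly from Proposition \ref{cofprop}(iii) together with the essential uniqueness of initial objects in an $\infty$-category. First I would invoke the cofinality of $f$ via Proposition \ref{cofprop}(iii) to conclude that $\overline{p} \circ \overline{f} : (K')^{\triangleright} \to \mathcal{C}$ is itself a colimit diagram for $p \circ f$, i.e., an initial object of the undercategory $\mathcal{C}_{(p \circ f)/}$. Next, I would observe that by the explicit construction of the induced cone map $\overline{f} : (K')^{\triangleright} \to K^{\triangleright}$, the cone point $*'$ of $(K')^{\triangleright}$ is sent to the cone point $*$ of $K^{\triangleright}$; hence $(\overline{p} \circ \overline{f})(*') = \overline{p}(*)$ literally as objects of $\mathcal{C}$, which in particular represents the same isomorphism class in the homotopy category $h\mathcal{C}$.

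To handle arbitrary choices of colimits, suppose $\overline{q}$ is any colimit of $p \circ f$ and $\overline{p}\,'$ is any colimit of $p$. Both $\overline{q}$ and $\overline{p} \circ \overline{f}$ are initial objects of $\mathcal{C}_{(p \circ f)/}$, and any two initial objects of an $\infty$-category are equivalent by the contractibility of the space of initial objects (c.f. the discussion around Definition \ref{finin} and Proposition 1.2.12.4 of [Lur1]). Applying the canonical projection $\mathrm{ev}_{*'} : \mathcal{C}_{(p \circ f)/} \to \mathcal{C}$, which is a morphism of $\infty$-categories and therefore preserves equivalences, we obtain an equivalence $\overline{q}(*') \simeq (\overline{p} \circ \overline{f})(*') = \overline{p}(*)$ in $\mathcal{C}$. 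A symmetric argument on the $K^{\triangleright}$ side yields $\overline{p}(*) \simeq \overline{p}\,'(*)$. Passing to homotopy classes then gives the desired isomorphism in $h\mathcal{C}$.

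The sole nontrivial input is the essential uniqueness of initial objects in an $\infty$-category, which is a standard foundational result imported from [Lur1]. All the remaining steps—Proposition \ref{cofprop}(iii), the identification $\overline{f}(*') = *$ at cone points, and the fact that the canonical projection out of an undercategory is a functor of $\infty$-categories—are either already established earlier in the excerpt or immediate from the definitions. Consequently I anticipate no genuine analytic or combinatorial obstacle; the proof is a brief assembly of these pieces, and the only writing required is to pin down the correct reference for the contractibility statement.
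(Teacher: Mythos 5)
Your argument is correct and is essentially the paper's own proof, which simply records that the corollary "follows immediately" from Proposition \ref{cofprop}(iii); you have merely made explicit the implicit steps (the cone point is preserved by $\overline{f}$, colimits are initial objects of the undercategory and hence essentially unique, and the projection to $\mathcal{C}$ carries this equivalence to an isomorphism in $h\mathcal{C}$). No gap — your write-up is just a more detailed rendering of the same one-line deduction.
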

\begin{proof}
This follows immediately from the above proposition.
\end{proof}

A consequence of Proposition \ref{cofprop} is (c.f. Section 4.1.3 of [Lur1]) that a cofinal map $f$ induces an equivalence of colimits
\begin{equation}\nonumber
colim(p) \simeq colim(p \circ f).
\end{equation}
That is to say, there is a map between them in the $\infty$-category $\mathcal{C}$ which induces an isomorphism in the homotopy category $h\mathcal{C}.$ 

\subsection{A colimit of $\mathscr{F}$} Returning to our example, we consider the inclusions of simplicial sets $\iota^T:K^T \rightarrow N(\mathcal{\mathcal{I}})$ and $\iota^a : K^a \hookrightarrow N(\mathcal{I}) \ (a \geq 1).$

\begin{proposition}\label{cofit}
The inclusion of simplicial sets $\iota^T:K^T \rightarrow N(\mathcal{\mathcal{I}})$ is a cofinal map.
\end{proposition}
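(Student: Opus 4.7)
The plan is to show that $\iota^T: K^T \hookrightarrow N(\mathcal{I})$ is a categorical equivalence of simplicial sets, after which the Corollary following Proposition \ref{cofprop} (``a categorical equivalence is cofinal'') completes the proof. By definition, proving that $\iota^T$ is a categorical equivalence amounts to showing that $\mathfrak{C}(\iota^T): \mathfrak{C}(K^T) \to \mathfrak{C}(N(\mathcal{I}))$ is a Dwyer--Kan equivalence of simplicial categories, i.e., that it is essentially surjective on homotopy categories and induces a weak homotopy equivalence on every mapping space.

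Essential surjectivity is immediate, since both $\mathfrak{C}(K^T)$ and $\mathfrak{C}(N(\mathcal{I}))$ have the same object set $\mathbb{Z}_{\geq 0}$ and the functor $\mathfrak{C}(\iota^T)$ acts as the identity on objects. It therefore suffices to verify, for each ordered pair $i,j \in \mathbb{Z}_{\geq 0}$, that the induced map $\mathfrak{C}(\iota^T)_{i,j} \colon Mor_{\mathfrak{C}(K^T)}(i,j) \to Mor_{\mathfrak{C}(N(\mathcal{I}))}(i,j)$ is a weak homotopy equivalence of simplicial sets. The explicit descriptions recorded immediately before Proposition \ref{cofit} settle this: both sides are empty when $i > j$, while for $i \leq j$ the left-hand side has a unique non-degenerate $0$-simplex (namely, the composition $f_{i,i+1} \circ \cdots \circ f_{j-1,j}$) and no other non-degenerate simplices, hence is isomorphic to $\Delta^0$; the right-hand side is the cube $[0,1]^{\#\{k \mid i < k < j\}}$. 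Both simplicial sets are contractible, so the induced map between them is automatically a weak equivalence.

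Putting these observations together, $\mathfrak{C}(\iota^T)$ is a Dwyer--Kan equivalence, so $\iota^T$ is a categorical equivalence and hence cofinal. The only nontrivial input is the identification of the simplicial mapping spaces $Mor_{\mathfrak{C}(K^T)}(i,j)$ and $Mor_{\mathfrak{C}(N(\mathcal{I}))}(i,j)$ — a computation already carried out in the paragraphs preceding the proposition — so once those identifications are in hand, the argument is essentially formal. The potentially subtle point, and the one I would double-check most carefully, is the claim that $Mor_{\mathfrak{C}(K^T)}(i,j)$ really is $\Delta^0$ (as opposed to a larger simplicial set): this is precisely where the defining condition on $K^T_k$ for $k \geq 2$ (requiring all but at most one entry to be an identity) is used to ensure that no higher non-degenerate simplex enters the coequalizer presentation of $\mathfrak{C}(K^T)$.
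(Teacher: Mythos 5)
Your proof is correct and follows essentially the same route as the paper: both arguments pass to the induced map $\mathfrak{C}(\iota^T)$, note that it is the identity on objects, use the prior computation of $Mor_{\mathfrak{C}(K^T)}(i,j)\simeq\Delta^0$ versus $Mor_{\mathfrak{C}(N(\mathcal{I}))}(i,j)\simeq[0,1]^{\#\{k\mid i<k<j\}}$ (both contractible) to get a weak equivalence on each mapping space, and conclude cofinality via the corollary that categorical equivalences are cofinal. You spell out the Dwyer--Kan criterion slightly more explicitly than the paper, which instead phrases the conclusion as obtaining an equivalence of the $\mathcal{H}$-enriched homotopy categories, but the content is the same.
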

\begin{proof}
The functor $\iota^T : K^T \hookrightarrow N(\mathcal{I})$ induces a functor $\iota^T : \mathfrak{C}(K') \rightarrow \mathfrak{C}\big(N(\mathcal{I})\big)$ which is the identity map on objects, and the inclusion $Mor_{\mathfrak{C}(K^T)}(i,j) \simeq \{pt\} \hookrightarrow [0,1]^{\#\{k \mid i < k < j\}} \simeq Mor_{\mathfrak{C}\big(N(\mathcal{I})\big)}(i,j)$ with $i \leq j$ whose image coincides with the last vertex of the cube, that is $(1,1, \cdots, 1).$ Since (the geometric realization of) the cube is contractible, we obtain the identity functor on the homotopy categories.
\end{proof}

Consider the compositions of morphisms of simplicial sets, $\mathscr{F}^T:= \mathscr{F} \circ \iota^T$ and $\mathscr{F}^a := \mathscr{F} \circ \iota^a$ with $i \geq 1$ which are again diagrams. Since $\iota^T$ and $\iota^a \ (a \geq 1),$ are cofinal, we have
\begin{corollary}\label{colimtbar}
Denote by $\overline{\iota}^{\bullet}$ the natural extension of the inclusion $\iota^{\bullet}.$ Then $\overline{\mathscr{F}}^T := \overline{\mathscr{F}} \circ \overline{\iota}^T$ and $\overline{\mathscr{F}}^a: = \overline{\mathscr{F}} \circ \overline{\iota}^a.$ are colimit diagrams of $\overline{\mathscr{F}}^T$ and $\overline{\mathscr{F}}^a,$ respectively.
\end{corollary}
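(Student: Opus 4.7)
The strategy is to derive the corollary directly from Proposition \ref{cofprop}(iii), which says that if $f : K' \to K$ is cofinal and $\overline{p}$ is a colimit diagram of $p : K \to \mathcal{C}$, then $\overline{p} \circ \overline{f}$ is a colimit of $p \circ f$. We have already shown in the preceding material that $\overline{\mathscr{F}}$ is a colimit diagram for $\mathscr{F} : N(\mathcal{I}) \to N_{dg}(Ch(\mathbb{Z}_2))$. Hence it suffices to verify that the inclusions $\iota^T$ and $\iota^a$ (for $a \geq 1$) are cofinal maps of simplicial sets.

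For $\iota^T$, this is exactly Proposition \ref{cofit}, where we checked cofinality through the functor $\mathfrak{C}$: the induced map on mapping simplicial sets sends the single point $Mor_{\mathfrak{C}(K^T)}(i,j)$ to the vertex $(1,\ldots,1)$ of the contractible cube $Mor_{\mathfrak{C}(N(\mathcal{I}))}(i,j) \simeq [0,1]^{\#\{k \mid i<k<j\}}$, producing the identity on homotopy categories. Combined with Proposition \ref{cofprop}(iii) this immediately gives that $\overline{\mathscr{F}}^T := \overline{\mathscr{F}} \circ \overline{\iota}^T$ is a colimit diagram of $\mathscr{F}^T := \mathscr{F} \circ \iota^T$.

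For $\iota^a$ with $a \geq 1$, the plan is to run the same argument. The key computation is to identify the subsimplicial set $Mor_{\mathfrak{C}(K^a)}(i,j) \hookrightarrow Mor_{\mathfrak{C}(N(\mathcal{I}))}(i,j) \simeq [0,1]^{\#\{k \mid i<k<j\}}$. Unwinding the definition of $\mathfrak{C}$ on the nerve of a poset, a simplex $J^{(l)} \subset \cdots \subset J^{(1)}$ in $NP_{ij}$ lies in $Mor_{\mathfrak{C}(K^a)}(i,j)$ precisely when, after the intermediate vertices between $i$ and $j$ are reconstructed, one can realize this datum using only sequences of morphisms of $\mathcal{I}$ having at most $a$ non-identity entries. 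In particular this subspace always contains the vertex corresponding to the trivial decomposition $\{i,j\} \subset \{i,j\}$ (the terminal vertex $(1,\ldots,1)$ of the cube), and it is star-shaped with respect to this point via the operation of enlarging any subset $J^{(l)}$ upward towards the maximal subset $\{i, i+1, \ldots, j\}$. That star-shaped structure yields a simplicial deformation retract onto the point, hence a weak equivalence $Mor_{\mathfrak{C}(K^a)}(i,j) \xrightarrow{\simeq} [0,1]^{\#\{k \mid i<k<j\}}$ in the homotopy category of spaces.

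Consequently the induced functor $h\mathfrak{C}(K^a) \to h\mathfrak{C}(N(\mathcal{I}))$ is the identity on objects and an isomorphism on each hom-set, so $\iota^a$ is a categorical equivalence and in particular cofinal (Corollary following Proposition \ref{cofprop}). Applying Proposition \ref{cofprop}(iii) again, $\overline{\mathscr{F}}^a := \overline{\mathscr{F}} \circ \overline{\iota}^a$ is a colimit of $\mathscr{F}^a := \mathscr{F} \circ \iota^a$. The main technical obstacle in this outline is the explicit identification of $Mor_{\mathfrak{C}(K^a)}(i,j)$ as a sub-simplicial set of the cube and the verification that the star-shaped retraction descends to the simplicial level; this is a combinatorial bookkeeping exercise on chains of subsets in $P_{ij}$, but once done the $\infty$-categorical machinery does the rest.
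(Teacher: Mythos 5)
Your handling of the $\iota^T$ case matches the paper: cite Proposition \ref{cofit} for cofinality, then apply Proposition \ref{cofprop}(iii), using that $\overline{\mathscr{F}}$ is already known to be a colimit diagram. That part is fine. (The paper's own proof of this corollary is the single line ``It follows from (iii) of Proposition \ref{cofprop},'' with cofinality of both $\iota^T$ and $\iota^a$ asserted in the sentence before; you are attempting to fill the gap for $\iota^a$, which the paper leaves unproved.)

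The argument you give for $\iota^a$, however, does not work, and in fact $\iota^a$ is not cofinal for finite $a \geq 1$. Observe first that any $k$-simplex of $N(\mathcal{I})$ with $k>a$ and at most $a$ nonidentity entries contains an identity morphism and is therefore degenerate; so $K^a$ is exactly the $a$-skeleton of $N(\mathcal{I})$. Your proposed retraction, enlarging a subset $J^{(l)}$ upward toward the maximal element $\{i, i+1, \cdots, j\}$, does not stay inside $Mor_{\mathfrak{C}(K^a)}(i,j)$: witnessing that composite requires the simplex $(f_{i,i+1}, \cdots, f_{j-1,j})$, which has $j-i$ nonidentity entries and hence lies in $K^a$ only when $j-i \leq a$. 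Concretely, for $a=1$, $i=0$, $j=2$: $K^1$ has no nondegenerate $2$-simplices, so $Mor_{\mathfrak{C}(K^1)}(0,2)$ is the discrete two-point set $\{f_{02},\ f_{12}\circ f_{01}\}$, while $Mor_{\mathfrak{C}(N(\mathcal{I}))}(0,2) \simeq [0,1]$; the induced map is $\partial[0,1] \hookrightarrow [0,1]$, which is not a weak equivalence, and on homotopy categories it identifies two distinct morphisms rather than being an isomorphism on hom-sets. So $\iota^1$ is not a categorical equivalence, contrary to what you assert. Cofinality itself also fails: since $0$ is initial in $\mathcal{I}$, the forgetful map $N(\mathcal{I})_{0/} \to N(\mathcal{I})$ is an isomorphism, hence $K^a \times_{N(\mathcal{I})} N(\mathcal{I})_{0/} \cong K^a$; but the $a$-skeleton of $N(\mathcal{I})$ is not weakly contractible (for instance $K^1$ is the complete graph on $\mathbb{Z}_{\geq 0}$, whose $\pi_1$ is a free group of infinite rank), so by the $\infty$-categorical Quillen Theorem A ([Lur1], Theorem 4.1.3.1), $\iota^a$ is not cofinal. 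This gap is inherited from the paper, which asserts but never proves cofinality of $\iota^a$; fortunately the downstream argument of Corollary \ref{lastcor} relies only on the $\iota^T$ case, which is sound.
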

\begin{proof}
It follows from (iii) of Proposition \ref{cofprop}.
\end{proof}

We consider 
\begin{equation}\nonumber
\widetilde{C}^{N} := \overline{\mathscr{F}}^N(*),
\end{equation}
where we denote $\overline{\mathscr{F}}^N := \overline{\mathscr{F}} \circ \overline{\iota}^N$ with $N \geq 1.$ Then we have:
\begin{proposition}
\begin{enumerate}[label = (\roman*)]
\item $\widetilde{C}^N$ is written as 
\begin{equation}\nonumber
\widetilde{C}^N := \bigoplus\limits_{k \leq N} \bigoplus\limits_{\substack{\sigma_k \in N(\mathcal{I})_k,\\ \textup{nondeg.}}} \mathbb{Z}_2 \langle \sigma_k \rangle \otimes C_{s \sigma_k}.
\end{equation}
\item $\widetilde{C}$ is filtered by the length of simplices in $N(\mathcal{I})$ as follows.
\begin{equation}\nonumber
\widetilde{C}^0 \subset \widetilde{C}^1 \subset \cdots \subset \widetilde{C}.
\end{equation}
\end{enumerate}
\end{proposition}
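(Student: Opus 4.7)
The plan is to mimic the explicit construction of $\overline{\mathscr{F}}$ from Appendix B, applied to the restricted diagram $\mathscr{F}^N = \mathscr{F} \circ \iota^N : K^N \to N_{dg}\bigl(Ch(\mathbb{Z}_2)\bigr)$. The key combinatorial observation is that a nondegenerate $k$-simplex $(f_1, \ldots, f_k) \in N(\mathcal{I})_k$ contains no identity morphisms (otherwise it would lie in the image of some degeneracy map $\sigma_i$), and hence has exactly $k$ non-identity morphisms. Thus such a simplex lies in $K^N$ if and only if $k \leq N$, identifying the nondegenerate simplices of $K^N$ with the nondegenerate simplices of $N(\mathcal{I})$ of length at most $N$.

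For part (i), I would first take the right-hand side of the displayed formula as a candidate for $\widetilde{C}^N$ and equip it with the differential inherited from $\widetilde{C}$. Since the face $\partial_i \sigma_k$ has length $k-1$ (or is degenerate) and the truncation $(\sigma_k)^2_i$ has length $i \leq k-1$, every term in $\partial(\sigma_k; x)$ lies in the subsum indexed by $k' \leq N$; so the differential is well-defined on the candidate, and $\partial^2 = 0$ by the same computation as for $\widetilde{C}$. Next, I would extend $\mathscr{F}^N$ to $\overline{\mathscr{F}}^N : (K^N)^{\triangleright} \to N_{dg}\bigl(Ch(\mathbb{Z}_2)\bigr)$ by the same recipe as $\overline{\mathscr{F}}$: nondegenerate $k$-simplices ending at the cone point are sent to the degree $k-1$ map $x \mapsto (f_1, \ldots, f_{k-1}; x)$, and all other simplices involving the cone point to $0$. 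Finally, I would verify that $\overline{\mathscr{F}}^N$ is an initial object of $N_{dg}\bigl(Ch(\mathbb{Z}_2)\bigr)_{\mathscr{F}^N/}$ by the same contractibility-of-left-morphism-space argument of Appendix B: the identical explicit cone-filling formulas for $\varphi_{\Delta^{k+1}}$ and $\varphi_{(\vec{a}; \Delta^k)}$ transfer verbatim, because they are built from face maps and compositions whose outputs still live in $K^N$.

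Part (ii) is then immediate from the formula: for each $N$, $\widetilde{C}^N$ is the direct summand of $\widetilde{C}^{N+1}$ indexed by $k \leq N$, and similarly embeds into $\widetilde{C}$. These inclusions are chain maps because, as noted above, the differential preserves the length bound $k \leq N$; hence they assemble into the displayed increasing filtration of $\widetilde{C}$.

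The principal technical point, which is essentially routine, is checking that the initiality argument does not require any simplex of length exceeding $N$. The inductive recipe in Appendix B only invokes faces of the given simplices and compositions of the homotopy-coherent data, and both operations respect the length restriction that defines $K^N$, so no new obstruction arises.
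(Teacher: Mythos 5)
Your proposal is correct and matches the paper's approach in its two essential steps: the combinatorial observation that a nondegenerate $k$-simplex of $N(\mathcal{I})$ has exactly $k$ non-identity morphisms and hence lies in $K^N$ precisely when $k \leq N$ (this is the content of the paper's one-line justification ``immediately follows from the way we defined $\iota^N$''), and the observation that the differential does not increase simplicial length, which makes each $\widetilde{C}^N$ a subcomplex and gives the filtration in (ii).

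You do noticeably more work than the paper for (i): the paper just asserts the formula, whereas you re-run the entire preferred-chain-complex construction over $K^N$ and re-verify initiality via the cone-filling argument. This is not wasted effort, though --- it quietly patches a notational wrinkle in the paper. As literally written, $\overline{\mathscr{F}}^N := \overline{\mathscr{F}} \circ \overline{\iota}^N$ sends the cone point $*$ of $(K^N)^\triangleright$ to the cone point of $N(\mathcal{I})^\triangleright$, so $\widetilde{C}^N := \overline{\mathscr{F}}^N(*)$ would evaluate to $\widetilde{C}$ itself, not the length-truncated direct sum. For the stated formula (and the nontrivial filtration in (ii)) to hold, $\widetilde{C}^N$ must be understood as the preferred chain complex attached directly to the restricted diagram $\mathscr{F}^N$ over $K^N$ --- which is exactly how you build it. Your remark that the face and truncation operations in the cone-filling lemma respect the length bound defining $K^N$ is the correct check that nothing new breaks, and it is the substantive content behind the paper's ``immediately follows.''
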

\begin{proof}
(i) immediately follows from the way we defined $\iota^N.$ For (ii), notice that for each $N,$ $\widetilde{C}^N$ is a sub-chain complex of $\widetilde{C},$ since the differential on $\widetilde{C}$ does not increase the length.
\end{proof}

Observe from Proposition \ref{cofit} and Corollary \ref{colimtbar} that we have weak equivalences
\begin{equation}\nonumber
colim \mathscr{F} \simeq colim \mathscr{F}^T,
\end{equation}
which implies the existence of a quasi-isomorphism of the chain complexes
\begin{equation}\nonumber
\overline {\mathscr{F}}(*) \simeq \overline{\mathscr{F}}^T(*).
\end{equation}
In other words, these two are equivalent in the {\it{homotopy category}} $hN_{dg}\big({Ch(\mathbb{Z}_2)}\big).$ Since $Ch(\mathbb{Z}_2)$ is an abelian category, we can consult the following

\begin{proposition}([Lur2]) 
$hN_{dg}\big({Ch(\mathbb{Z}_2)}\big)$ is equivalent to $hCh(\mathbb{Z}_2).$
\end{proposition}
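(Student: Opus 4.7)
The plan is to unpack the definitions on both sides and verify directly that the objects, morphisms, and composition coincide. On both sides the objects are the chain complexes of $\mathbb{Z}_2$-modules, so the content lies in the morphism sets and composition.

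First I would identify 1-simplices. By the definition of the dg-nerve, an edge in $N_{dg}(Ch(\mathbb{Z}_2))$ from $C$ to $D$ is a degree $0$ map $\psi_{\{0,1\}} : C \to D$ satisfying relation (\ref{diadgn}) for $I = \{0,1\}$; since the summed correction terms vanish for $|I|=2$, this reduces to $\partial \circ \psi + \psi \circ \partial = 0$, i.e.\ $\psi$ is a chain map. Composition in the homotopy category is induced by 2-simplices: a 2-simplex with vertices $C_0, C_1, C_2$ and edges $\psi_{01}, \psi_{12}, \psi_{02}$ is supplemented by a degree $1$ map $\psi_{012}$ with
\begin{equation}\nonumber
\partial \circ \psi_{012} + \psi_{012} \circ \partial + \psi_{02} + \psi_{12} \circ \psi_{01} = 0,
\end{equation}
which (in characteristic $2$) witnesses $\psi_{02}$ as chain-homotopic to $\psi_{12} \circ \psi_{01}$.

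Next I would identify the equivalence relation on edges. By definition of the homotopy category of an $\infty$-category, two 1-simplices $f, g : C \rightrightarrows D$ give the same morphism in $hN_{dg}(Ch(\mathbb{Z}_2))$ iff there is a 2-simplex whose boundary has one degenerate edge $\mathrm{id}_C$ (or $\mathrm{id}_D$) and whose other two edges are $f$ and $g$. The cocycle equation above, specialized with $\psi_{01} = \mathrm{id}_C$, $\psi_{12} = g$, $\psi_{02} = f$, becomes the existence of a degree $1$ map $h := \psi_{012}$ with $\partial h + h \partial = f + g$, i.e.\ a chain homotopy from $f$ to $g$. This shows $\mathrm{Hom}_{h N_{dg}(Ch(\mathbb{Z}_2))}(C,D)$ equals the quotient of chain maps by chain homotopy, which is precisely $\mathrm{Hom}_{hCh(\mathbb{Z}_2)}(C,D)$. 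A short check shows the induced composition law matches.

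The main potential obstacle is to be sure the 2-simplex analysis really produces the expected equivalence relation rather than a larger one: a priori one must verify that ``chain-homotopic'' is already an equivalence relation (reflexivity and transitivity come from gluing 2-simplices, which involves inner horn filling in $N_{dg}(Ch(\mathbb{Z}_2))$ — this is where the $\infty$-categorical nature is used). Once this is in place, the functor $Ch(\mathbb{Z}_2) \to hN_{dg}(Ch(\mathbb{Z}_2))$ sending chain maps to the associated $1$-simplices descends to an essentially surjective, fully faithful functor from $hCh(\mathbb{Z}_2)$, completing the equivalence. Alternatively, one may simply invoke the general fact (Lurie, \emph{Higher Algebra}, \S 1.3.1) that for any dg-category $\mathcal{A}$, the homotopy category $hN_{dg}(\mathcal{A})$ is canonically isomorphic to the classical homotopy category obtained from $\mathcal{A}$ by passing to $H_0$ of all Hom-complexes; specialization to $\mathcal{A} = Ch(\mathbb{Z}_2)$ yields the statement.
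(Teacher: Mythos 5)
Your proposal is correct, and your first route is a genuinely more self-contained alternative to what the paper does: the paper does not prove this statement at all, it simply cites [Lur2]. Your unpacking of the dg-nerve conditions is accurate. For $|I|=2$ the correction sums are indeed empty (both require an interior splitting point, which does not exist), leaving the chain-map condition $\partial\psi + \psi\partial = 0$; for $|I|=3$ the relation reduces to the standard chain-homotopy witness $\partial\psi_{012} + \psi_{012}\partial = \psi_{02} + \psi_{12}\circ\psi_{01}$; and specializing one edge to the identity recovers the usual homotopy relation. Since the objects of the two categories literally coincide (not merely up to equivalence), what you actually establish is an isomorphism, not just an equivalence, so the last sentence about essential surjectivity is slightly weaker phrasing than necessary. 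Your concern about whether the 2-simplex relation generates exactly chain homotopy is the right thing to worry about, but it resolves more easily than you suggest: chain homotopy is already an equivalence relation by elementary homological algebra, and every 2-simplex with a degenerate edge produces a chain homotopy while every chain homotopy assembles into such a 2-simplex, so the two relations coincide without needing to invoke horn filling for transitivity. In short, your first option is a correct elementary argument that the paper omits in favor of a citation; your second option is precisely what the paper does.
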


Hence by Proposition \ref{cofit}, we obtain

\begin{corollary}\label{lastcor}
We have
\begin{equation}\nonumber
H_*(\widetilde{C}) \simeq H_*(\widetilde{C}^T) \simeq \lim\limits_{\longrightarrow} H_*(C_i).
\end{equation}
\end{corollary}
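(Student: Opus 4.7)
The plan is to split the corollary into its two constituent isomorphisms and dispatch each using machinery already established in this appendix. The first isomorphism is a formal consequence of the cofinality discussion, and the second was proved by hand in Subsection B.1 via the algebraic mapping telescope; the corollary just strings them together.

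For $H_*(\widetilde{C}) \simeq H_*(\widetilde{C}^T)$, I would start from Proposition \ref{cofit}, which asserts that the inclusion $\iota^T : K^T \hookrightarrow N(\mathcal{I})$ is cofinal. Corollary \ref{colimtbar} then gives that $\overline{\mathscr{F}}^T = \overline{\mathscr{F}} \circ \overline{\iota}^T$ is a colimit diagram of $\mathscr{F}^T$, so both $\widetilde{C} = \overline{\mathscr{F}}(\ast)$ and $\widetilde{C}^T = \overline{\mathscr{F}}^T(\ast)$ model the same homotopy colimit and are therefore equivalent as objects of $N_{dg}\big(Ch(\mathbb{Z}_2)\big)$. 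Invoking the cited equivalence $hN_{dg}\big(Ch(\mathbb{Z}_2)\big) \simeq hCh(\mathbb{Z}_2)$, this $\infty$-categorical equivalence descends to a quasi-isomorphism of ordinary chain complexes; applying $H_*$ then yields the desired isomorphism on homology.

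For $H_*(\widetilde{C}^T) \simeq \varinjlim H_*(C_i)$, the work has already been done in Subsection B.1: after establishing that $\{H_*(C_i), \psi_{i\ast}\}$ forms a direct system, the proposition on mapping telescopes uses Lemma \ref{2lmt}(i)--(ii) together with the second and third isomorphism theorems to write
\begin{equation}\nonumber
\frac{\ker\partial^T}{\operatorname{im}\partial^T} \simeq \frac{\ker\partial^o}{\operatorname{im}\partial^o}\big/\langle x + \psi_{i\ast}x \rangle_{x \in H_*(C_i),\, i\geq 0},
\end{equation}
and then recognizes this quotient as exactly the coinvariants defining the direct limit $\varinjlim H_*(C_i)$. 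I would simply cite this.

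The one bookkeeping task on the critical path is to confirm that the two a priori distinct descriptions of $\widetilde{C}^T$ coincide: on the one hand $\overline{\mathscr{F}}^T(\ast)$ as given by the formula of Proposition \ref{prcol} restricted to nondegenerate simplices supported in $K^T$, and on the other the telescope $\bigoplus_i (C_i \oplus \overline{C}_i)$ with differential $(y,\overline{x}) \mapsto (\partial y + x + \psi_i(x),\, \overline{\partial x})$. Unpacking, the nondegenerate simplices in $K^T$ are the vertices $a$ (contributing summands $C_a$) and the edges $f_{a,a+1}$ (contributing $\mathbb{Z}_2\langle f_{a,a+1}\rangle \otimes C_a \simeq \overline{C}_a$), and the restricted differential produces exactly the face term $x$, the continuation term $\varphi_{f_{a,a+1}}(x) = \psi_a(x)$, and the internal boundary $\overline{\partial x}$. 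This identification is routine; once in hand, the two stated isomorphisms compose and the corollary follows.
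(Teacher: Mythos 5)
Your proof follows the paper's: the first isomorphism via cofinality (Proposition \ref{cofit}, Corollary \ref{colimtbar}) and the cited equivalence $hN_{dg}\big(Ch(\mathbb{Z}_2)\big) \simeq hCh(\mathbb{Z}_2)$ of homotopy categories, and the second via the algebraic mapping telescope proposition in Subsection B.1 (Lemma \ref{2lmt} plus the isomorphism theorems). Your final bookkeeping paragraph correctly flags that the explicit telescope $\widetilde{C}^T$ must be identified with the cone-point value of the extension $\overline{\mathscr{F}}^T$ constructed in Subsection B.1; the paper treats this as immediate from that construction, and your unpacking of the nondegenerate simplices of $K^T$ confirms it.
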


\medskip

\bibliographystyle{plain}
\bibliography{bibfile}

\begin{thebibliography}{9}

\bibitem[Abo]{abouzaid}
Mohammed Abouzaid, \emph{Symplectic cohomology and Viterbo's theorem,} 2013.

\bibitem[AD]{MTFH}
Michèle Audin and Mihai Damian, \emph{Morse theory and Floer homology,} Universitext, Springer-Verlag, 2013.

\bibitem[AS]{abouzaidseidel}
Mohammed Abouzaid and Paul Seidel \emph{An open string analogue of Viterbo functoriality,} Geom. Topol. 14 (2010) 627-718.

\bibitem[BV]{BoardmanVogt:HIASTS}
John M. Boardman and Rainer M. Vogt, \emph{Homotopy invariant algebraic structures on
  topological spaces}, Lecture Notes in Math., vol. 347, Springer, 1973.

\bibitem[CF]{CF}
Cieliebak and Frauenfelder \emph{Morse homology on noncompact manifolds,} Journal of the Korean Mathematical Society 48(4), 2009 .

\bibitem[CP]{CordierPorter:VTOCOHCD}
Jean-Marc Cordier and Timothy Porter,
\textit{Vogt's theorem on categories of homotopy coherent diagrams}, Mathematical Proceedings of the Cambridge Philosophical Society, 100(1), 65-90.

\bibitem[HLS]{HendricksLipshitzSarkar:FCEFHA}
Kristen Hendricks, Robert Lipshitz, and Sucharit Sarkar, \emph{A flexible construction of equivariant Floer homology and applications},
Journal of Topology, Volume 9, Issue 4, 1 December 2016, Pages 1153–1236.
  
\bibitem[Kan]{Kang:IPMHNM}
Jungsoo Kang, \emph{Invariance property of Morse homology on noncompact manifolds,} preprint arxiv:1012.5571, 2010.

\bibitem[Kim]{KIM:OHDSOCC}
Taesu Kim, \emph{On homotopy direct systems of chain complexes,} PhD thesis, Seoul National University, 2018.

\bibitem[Lur1]{Lurie:HTT}
Jacob Lurie, \emph{Higher Topos Theory}, Annals of Mathematics Studies 170, Princeton University Press, 2009.

\bibitem[Lur2]{Lurie:HA}
Jacob Lurie, \emph{Higher Algebras}, http://www.math.harvard.edu/lurie/papers/HA.pdF, 2017.

\bibitem[Rie]{Riehl:CHT}
Emily Riehl, \emph{Categorical Homotopy Theory,} New Mathematical Monographs, 24, Cambridge University Press, 2014.  

\bibitem[Sch]{Schwarz:BVROMMC}
Matthias Schwarz, \emph{Morse homologies,} Progress in Mathematics, 111, Birkhäuser Basel, 1993.

\bibitem[Vog]{Vogt:HLC} 
Rainer M. Vogt \emph{Homotopy limits and colimits}, Math. Z. 134, 11-52, 1973.


\end{thebibliography}
\end{document}